\newcommand{\blt}{\bullet}
\newcounter{Chapcounter}
\newcommand{\chapter}[1] 
{ {\centering          
  \addtocounter{Chapcounter}{1} \Large \underline{\textbf{ \color{blue} Chapter \theChapcounter: ~#1}} }   
  \addcontentsline{toc}{section}{ \color{blue} Chapter:~\theChapcounter~~ #1}    
}
\theoremstyle{definition}
\theoremstyle{plain}
\newtheorem{thm}{Theorem}[section]
\newtheorem{thm-defn}{Theorem/Definition}[section]
\newtheorem{lem}[thm]{Lemma}
\newtheorem{lem-defn}[thm]{Lemma/Definition}
\newtheorem{prop}[thm]{Proposition}
\newtheorem{cor}[thm]{Corollary}
\newtheorem{conjecture}[thm]{Conjecture}
\newtheorem{prop-defn}[thm]{Proposition-Definition}
\newtheorem{defn}[thm]{Definition}%[section]
\newtheorem{thm-alg}[thm]{Theorem/Algorithm}
\newcommand{\mbf}{\mathbf}
\newcommand{\mbb}{\mathbb}
\newcommand{\mf}{\mathfrak}
\newcommand{\mc}{\mathcal}
\title{Maulik-Okounkov quantum loop groups and Drinfeld double of preprojective $K$-theoretic Hall algebras}
\author{Tianqing Zhu}
\affil{Department of Mathematics, Columbia University \\
       tz2611@columbia.edu}
\begin{document}

\maketitle
\begin{abstract}
In this paper we prove the following results: Given the Drinfeld double $\mathcal{A}^{ext}_{Q}$ of the localised preprojective $K$-theoretic Hall algebra $\mathcal{A}^{+}_{Q}$ of quiver type $Q$ with the Cartan elements, there is a $\mathbb{Q}(q,t_e)_{e\in E}$-Hopf algebra isomorphism between $\mc{A}^{ext}_{Q}$ and the localised Maulik-Okounkov quantum loop group $U^{MO}_{q}(\hat{\mathfrak{g}}_{Q})$ of quiver type $Q$. Moreover, we prove the isomorphism of $\mathbb{Z}[q^{\pm1},t_{e}^{\pm1}]_{e\in E}$-algebras between the positive/negative half of the integral Maulik-Okounkov quantum loop group $U_{q}^{MO,\pm,\mathbb{Z}}(\hat{\mathfrak{g}}_{Q})$ with the (opposite) algebra of the integral preprojective (nilpotent) $K$-theoretic Hall algebra $\mathcal{A}^{+,\mathbb{Z}}_{Q}$ ($(\mathcal{A}^{+,nilp,\mathbb{Z}}_{Q})^{op}$) of the same quiver type $Q$. As the application, we prove that one can identify the wall subalgebra $U_{q}^{MO,\mbb{Z}}(\mf{g}_{w})$ as the root subalgebra $\mc{B}_{\mbf{m},w}^{\mbb{Z}}$ in the slope subalgebra $\mc{B}_{\mbf{m}}^{\mbb{Z}}$ as the quasitriangular Hopf $\mbb{Z}[q^{\pm1},t_e^{\pm1}]_{e\in E}$-algebras. Moreover we use the freeness of the wall subalgebra in MO quantum loop groups to prove the freeness of the preprojective $K$-theoretic Hall algebra for arbitrary torus $\mbb{C}_q^*\subset A\subset T$.

\end{abstract}

\tableofcontents
\section{\textbf{Introduction}}
\subsection{Quantum groups from KHA and stable envelopes}
\subsubsection{} 
The stable envelope is a powerful tool in the study of both geometric representation theory and  enumerative geometry of symplectic resolutions. It was initially constructed by Maulik and Okounkov \cite{MO19} in the equivariant cohomology setting, and then it was later introduced in \cite{OS22}\cite{O15}\cite{AO21}\cite{O21} in the $K$-theory and elliptic cohomology settings.

In the case of Nakajima quiver varieties, one important application of the stable envelope is constructing the geometric $R$-matrix, and then use the FRT formalism to construct the quantum groups. In the cohomology case, the corresponding quantum group is called the Maulik-Okounkov Yangian $Y_{\hbar}^{MO}(\mf{g}_{Q})$, or the MO Yangian. In the $K$-theory case, the corresponding one is called the Maulik-Okounkov quantum loop group $U_{q}^{MO}(\hat{\mf{g}}_{Q})$,or we can call it the MO quantum loop group.

In more detail, stable envelope is a well-defined class that connects the enumerative geometry to the geometric representation theory. Many enumerative invariants, for example, as vertex functions in quasimap counting, or small $J$-functions, can be packed into some difference/differential equations  that appears in the representation theory of quantum groups \cite{MO19}\cite{O15}\cite{OS22}. It has been studied in detail for some examples in \cite{AO17}\cite{AO21}\cite{Dn22}\cite{Dn22-2}\cite{DJ24}\cite{JS25}\cite{KPSZ21}\cite{PSZ20}\cite{S16}\cite{Z24-3}\cite{Z24-4}. Moreover, stable envelope is also a key concept for constructing the 3d mirror symmetry in the context of the enumerative geometry, which has also been studied in detail in \cite{BD23}\cite{BR23}\cite{BD24-2}\cite{KS22}\cite{KS23}\cite{RSZ22}]\cite{RSVZ19}\cite{SZ22}.

\subsubsection{} Fix a quiver $Q=(I,E)$ with vertices $I$ and arrows $E$, in this paper we allow the quiver $Q$ to have multiple edges and multiple loops. Quantum group is the Hopf algebra associated with a quiver type $Q$. The first emergence of quantum groups can be traced back to the 80s in the quantum integrable model theory \cite{FRT16}. It is formulated \cite{Dr86}\cite{J85} as the Hopf algebra deformation of the universal enveloping algebra $U(\mf{g})$ of a Lie algebra $\mf{g}$.

Generally there are two ways to realise the quantum group. The first one is given by the FRT formalism \cite{FRT16}, and it means the quantum group is viewed as the algebra generated by the matrix coefficients of the $R$-matrix, which is a solution for the Yang-Baxter equation. The second one is the Drinfeld realisation \cite{Dr87}, which means that we think of the quantum groups generated by the positive, Cartan and negative half with generators written in a generating function. This formulation is often used for Yangian algebras and quantum affine algebras. 

It is a natural question to ask if we fix the quiver type $Q$ of the Yangian or the quantum affine algebras, whether the algebra generated by FRT formalism is isomorphic to the algebra given by the Drinfeld realisation. For the case of the finite ADE type and some other non simply-laced finite type, these has been proved in many references \cite{Dr87}\cite{DF93}\cite{JLM18}\cite{JLM20}\cite{JLM20-1}\cite{LP20}\cite{LP22}. In general, such an isomorphism for general type quiver Q is still unsolved. It is also a very important problem in both representation theory of quantum groups and quantum integrable systems.

\subsubsection{}
In the geometric representation theory, both Drinfeld realisation and FRT formalism can be realised as the cohomology/K-theory of moduli objects. We still fix the quiver $Q=(I,E)$, and the geometric object over here is the moduli of quiver representations associated to $Q$. 

For the Drinfeld realisation, one usually associate the positive half of the quantum group with the cohomological Hall algebra, or the $K$-theoretic Hall algebra for the moduli stack of quiver representations. It was first introduced by Kontsevich and Soibelman \cite{KoSo08}\cite{KoSo10} in the study of the Donaldson-Thomas invariants and wall-crossing formula for it, which has been generalised to many other cases in the study of representation theory and moduli object counting in the enumerative geometry\cite{Dav17}\cite{Ef12}\cite{YZ18}\cite{YZ20}.

In this paper we focus on the preprojective type and nilpotent type \cite{YZ18}, which means that the quiver moduli are chosen as $[\mu^{-1}_{\mbf{v}}(0)/G_{\mbf{v}}]$ for the double quiver of $Q$ or the nilpotent quiver moduli $\Lambda_{\mbf{v}}\subset[\mu^{-1}_{\mbf{v}}(0)/G_{\mbf{v}}]$, which is substack of nilpotent quiver representations. For the case of the preprojective CoHA of quiver type $Q$, the corresponding algebra is regarded as the positive half of the Yangian $Y_{\hbar}^+(\mf{g}_{Q})$. For the case of the preprojective KHA of quiver $Q$, the corresponding algebra is thought of as the positive half of the quantum affine algebras $U_{q}^{+}(\hat{\mf{g}}_{Q})$. The whole quantum group is then realised as the double of such Hall algebras with the multiplication of tautological classes.

For the FRT formalism, the geometric object here is the Nakajima quiver varieties $\mc{M}_{Q}(\mbf{v},\mbf{w})$ \cite{Nak98}\cite{Nak01}. The quantum group from FRT formalism are constructed from the stable envelope class in $\mc{M}_{Q}(\mbf{v},\mbf{w})^A\times\mc{M}_{Q}(\mbf{v},\mbf{w})$ where $A\subset\text{Ker}(q)$ is some suitable torus acting over $\mc{M}_{Q}(\mbf{v},\mbf{w})$. In the case of the equivariant cohomology, the stable envelope will give the cohomological geometric $R$-matrix, which generates the Maulik-Okounkov Yangian algebra $Y_{\hbar}^{MO}(\mf{g}_{Q})$. Similarly in the equivariant $K$-theory, the stable envelope gives the $K$-theoretic geometric $R$-matrix, which generates the Maulik-Okounkov quantum loop group $U_{q}^{MO}(\hat{\mf{g}}_{Q})$.

It is an important conjecture that the double of the cohomological Hall algebra or the $K$-theoretic Hall algebra is isomorphic to the corresponding MO Yangian algebra or the MO quantum loop group. In the cohomological case, this has been proved in \cite{BD23}\cite{SV23}.
\subsection{Main result of the paper}
In this paper the main goal is to prove the isomorphism of algebras between the double of the preprojective $K$-theoretic Hall algebra and the MO quantum loop group.

We denote by $\mc{A}^{+,\mbb{Z}}_{Q}$ as the preprojective $K$-theoretic Hall algebra of quiver type $Q$, and it is defined in Section \ref{subsection:preprojective_k_theoretic_hall_algebra}. On the other side, we consider the Lusztig nilpotent $K$-theoretic Hall algebra $\mc{A}^{+,nilp,\mbb{Z}}_{Q}$, which is defined in \ref{subsection:nilpotent_k_theoretic_hall_algebra}. We also denote $\mc{A}^{0,\mbb{Z}}_{Q}$ as the $\mbb{Z}[q^{\pm1},t_{e}^{\pm1}]$-algebra generated by the tautological classes. As the $\mbb{Z}[q^{\pm1},t_{e}^{\pm1}]$-module, we consider the following integral form $\mc{A}^{ext,\mbb{Z}}_{Q}$:
\begin{align*}
\mc{A}^{ext,\mbb{Z}}_{Q}:=\mc{A}^{+,\mbb{Z}}_{Q}\otimes\mc{A}^{0,\mbb{Z}}_{Q}\otimes(\mc{A}^{+,nilp,\mbb{Z}}_{Q})^{op}.
\end{align*}

As the $\mbb{Z}[q^{\pm1},t_{e}^{\pm1}]_{e\in E}$-algebras, there is an algebra map $\mc{A}^{+,nilp,\mbb{Z}}_{Q}\rightarrow\mc{A}^{+,\mbb{Z}}_{Q}$ from the nilpotent KHA to the preprojective KHA, which is an isomorphism after being localised to $\mbb{Q}(q,t_e)_{e\in E}$. We denote $\mc{A}^{ext}_{Q}$ as the algebra $\mc{A}^{ext,\mbb{Z}}_{Q}$ after being localised to $\mbb{Q}(q,t_e)_{e\in E}$. By \cite{Nak01}\cite{N22}\cite{VV22}, there is an algebra action of $\mc{A}^{ext,\mbb{Z}}_{Q}$ over the localised equivariant $K$-theory of Nakajima quiver varieties $K(\mbf{w}):=K_{T_{\mbf{w}}}(\mc{M}_{Q}(\mbf{w}))_{loc}$.

The first main result of the paper is that we have the isomorphism of the integral form of the double of KHA $\mc{A}^{ext,\mbb{Z}}_{Q}$ defined in \ref{integral-form-double-KHA} and the integral MO quantum loop group $U_{q}^{MO,\mbb{Z}}(\hat{\mf{g}}_{Q})$ as the following:
\begin{thm}[See Theorem \ref{Main-theorem-on-integral-form:theorem} \ref{Isomorphism-main-theorem-on-negative-half:label} \ref{isomorphism-of-integral-positive-half:theorem}]
The Maulik-Okounkov quantum loop group $U_{q}^{MO,\mbb{Z}}(\hat{\mf{g}}_{Q})$ admits the triangular decomposition:
\begin{align*}
U_{q}^{MO,\mbb{Z}}(\hat{\mf{g}}_{Q})\cong U_{q}^{MO,\mbb{Z},+}(\hat{\mf{g}}_{Q})\otimes U_{q}^{MO,\mbb{Z},0}(\hat{\mf{g}}_{Q})\otimes U_{q}^{MO,\mbb{Z},-}(\hat{\mf{g}}_{Q})
\end{align*}
such that as $\mbb{N}^I$-graded $\mbb{Z}[q^{\pm1},t_{e}^{\pm1}]_{e\in E}$-algebras, the negative half $U_{q}^{MO,\mbb{Z},-}(\hat{\mf{g}}_{Q})$ is isomorphic to $(\mc{A}^{+,\mbb{Z},nilp}_{Q})^{op}$ the opposite algebra of the nilpotent $K$-theoretic Hall algebra, and the positive half is isomorphic to $\mc{A}^{+,\mbb{Z}}_{Q}$ the preprojective $K$-theoretic Hall algebra. The Cartan part $U_{q}^{MO,\mbb{Z},0}$ is isomorphic to $\mc{A}^{0,\mbb{Z}}_{0}$.

In other words, we have the isomorphism of $\mbb{Z}[q^{\pm},t_e^{\pm1}]_{e\in E}$-algebras:
\begin{align*}
\mc{A}^{ext,\mbb{Z}}_{Q}\cong U_{q}^{MO,\mbb{Z}}(\hat{\mf{g}}_{Q}).
\end{align*}

Moreover, the above isomorphisms intertwine the action over $K_{T_{\mbf{w}}}(\mc{M}_{Q}(\mbf{w}))$.
\end{thm}

On both sides for $\mc{A}^{ext,\mbb{Z}}_{Q}$ and $U^{MO,\mbb{Z}}_{q}(\hat{\mf{g}}_{Q})$, they both admit the root factorisation. On the side of the MO quantum loop group $U^{MO,\mbb{Z}}_{q}(\hat{\mf{g}}_{Q})$, it can be factorised as the wall subalgebra $U_{q}^{MO}(\mf{g}_{w})$, where the wall $w$ refers to the affine hyperplane arrangement dual to the affine root $\alpha$ in the real Picard space $\text{Pic}(\mc{M}_{Q}(\mbf{w}))\otimes\mbb{R}\cong\mbb{R}^{|I|}$.

Similarly, on the side of the double KHA $\mc{A}^{ext}_{Q}$, one also admits the factorisation given by the slope subalgebra $\mc{B}_{\mbf{m}}$ with $\mbf{m}\in\mbb{Q}^{|I|}\subset\mbb{R}^{|I|}$. This subalgebra can be thought of as the algebra generated by the wall subalgebra such that the corresponding wall $w$ contains the point $\mbf{m}$. This means that one can give a refined subalgebra $\mc{B}_{\mbf{m},w}$, which we call it the root subalgebra of the slope subalgebra $\mc{B}_{\mbf{m}}$.

It turns out that when the above isomorphism is restricted to the wall subalgebra and the root subalgebra, we can have the isomorphism as the $\mbb{Z}[q^{\pm1},t_e^{\pm1}]_{e\in E}$-Hopf subalgebra on both sides:
\begin{prop}[See Proposition \ref{isomorphism-of-localised-wall-subalgebra:proposition} \ref{isomorphism-of-integral-wall-subalgebra:proposition}]
There is an isomorphism of quasi-triangular $\mbb{Z}[q^{\pm1},t_{e}^{\pm1}]$-Hopf algebras 
\begin{align*}
(\mc{B}_{\mbf{m},w}^{\mbb{Z}},R_{\mbf{m},w}^+,\Delta_{\mbf{m}},S_{\mbf{m}},\epsilon,\eta)\cong (U_{q}^{MO,\mbb{Z}}(\mf{g}_{w}),q^{-\Omega}(R_{w}^-)^{-1},\Delta_{\mbf{m}}^{MO,op},S_{\mbf{m}}^{MO},\epsilon,\eta)
\end{align*}
which intertwines the action over $K_{T_{\mbf{w}}}(\mc{M}_{Q}(\mbf{w}))$.

\end{prop}

The second main result of the paper is that we have the isomorphism of the integral double KHA $\mc{A}^{ext}_{Q}$ and the integral MO quantum loop group as Hopf $\mbb{Z}[q^{\pm1},t_{e}^{\pm1}]_{e\in E}$-algebras:
\begin{thm}[See Theorem \ref{main-theorem-just-localised-algebra} and \ref{isomorphism-of-slope-wall-quasi-triangular:label}]
There is an isomorphism of Hopf $\mbb{Z}[q^{\pm1},t_{e}^{\pm1}]_{e\in E}$-algebras between the Maulik-Okounkov quantum loop group and the integral extended double KHA $\mc{A}^{ext,\mbb{Z}}_{Q}$ defined in \ref{integral-form-double-KHA}:
\begin{align}
(U_{q}^{MO,\mbb{Z}}(\hat{\mf{g}}_{Q}),\Delta_{\mbf{m}}^{MO,op},S_{\mbf{m}},\epsilon,\eta)\cong(\mc{A}^{ext,\mbb{Z}}_{Q},\Delta_{(\mbf{m})},S_{\mbf{m}},\epsilon,\eta)
\end{align}
which intertwines the action over $K_{T_{\mbf{w}}}(\mc{M}_{Q}(\mbf{w}))$. Here the coproduct $\Delta_{(\mbf{m})}$ is defined in \ref{definition-of-m-universal-coproduct}. Moreover, when restricted to the wall subalgebra on both sides, we have an isomorphism of quasi-triangular $\mbb{Z}[q^{\pm1},t_{e}^{\pm1}]$-Hopf algebras 
\begin{align*}
(\mc{B}_{\mbf{m},w}^{\mbb{Z}},R_{\mbf{m},w}^+,\Delta_{\mbf{m}},S_{\mbf{m}},\epsilon,\eta)\cong (U_{q}^{MO,\mbb{Z}}(\mf{g}_{w}),q^{-\Omega}(R_{w}^-)^{-1},\Delta_{\mbf{m}}^{MO,op},S_{\mbf{m}}^{MO},\epsilon,\eta)
\end{align*}
\end{thm}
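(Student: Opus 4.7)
The strategy is to leverage the slope/wall factorization available on both sides of the claimed isomorphism, which reduces the problem to matching individual wall subalgebras and then piecing them together compatibly. A key feature of the setup is that both $\mc{A}_Q^{ext}$ and $U_q^{MO}(\hat{\mf{g}}_Q)$ act faithfully on the tower $\{K(\mbf{w})\}_{\mbf{w}}$, so most identifications can be pulled back to equalities of operators on the Nakajima $K$-theory.

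First, the plan is to construct a natural $\mbb{Q}(q,t_e)_{e\in E}$-algebra homomorphism by comparing the two actions on $K(\mbf{w}) = K_{T_{\mbf{w}}}(\mc{M}_Q(\mbf{w}))_{\rm loc}$. The extended double $\mc{A}^{ext}_Q$ acts via Hecke-type correspondences on the moduli of quiver representations, with Cartan part acting by tautological classes, while $U_q^{MO}(\hat{\mf{g}}_Q)$ acts through matrix coefficients of the geometric $R$-matrix coming from the $K$-theoretic stable envelope. The positive half admits a shuffle-algebra description which, combined with the earlier isomorphism identifying $\mc{A}^{+,nilp,\mbb{Z}}_Q$ with $(U_q^{MO,-,\mbb{Z}}(\hat{\mf{g}}_Q))^{op}$ already established in the paper, pins down the positive half on the MO side. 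The Cartan part matches canonically via tautological classes, and the negative half is treated symmetrically.

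Next, the logical core of the argument is the quasi-triangular Hopf algebra isomorphism $\mc{B}_{\mbf{m},w} \cong U_q^{MO}(\mf{g}_w)$ for each wall $w$ and slope $\mbf{m}$, which I would establish before assembling the full statement. On the geometric side, factorizing the stable envelope across each wall produces the wall $R$-matrix $R_w^{-}$, and $U_q^{MO}(\mf{g}_w)$ is defined as the subalgebra generated by its matrix coefficients. On the algebraic side, the root subalgebra $\mc{B}_{\mbf{m},w}$ carries a universal $R$-matrix $R^+_{\mbf{m},w}$ coming from the Drinfeld pairing restricted to the root block. I would verify the identification $R^+_{\mbf{m},w} \longleftrightarrow q^{-\Omega}(R_w^{-})^{-1}$ by realizing both sides as rational operators on $K(\mbf{w}) \otimes K(\mbf{w}')$ and comparing their poles, residues, and normalizations at the wall, which determine the operators uniquely. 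With the wall-level isomorphism in hand, I would assemble it into the full statement: on the MO side the full $R$-matrix factors (in a suitable completion) as $R^{MO}_{\mbf{m}} = \prod_w R_w$ ordered by slope; on the KHA side $\mc{A}^{ext}_Q$ factors as $\bigotimes_{\mbf{m}} \mc{B}_{\mbf{m}}$ with each slope piece refining over walls. Gluing the wall-wise isomorphisms, together with the matching of Cartan parts and the common faithful action on $K(\mbf{w})$, produces the desired global Hopf algebra isomorphism. Compatibility of the coproducts $\Delta_{(\mbf{m})} \longleftrightarrow \Delta_{\mbf{m}}^{MO}$ holds at each wall by the quasi-triangular identification, and globally because the slope-$\mbf{m}$ coproducts on both sides are uniquely determined by their factorizations through wall subalgebras.

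The main obstacle will be the wall-level matching of $R$-matrices. Geometrically, $R^-_w$ arises as the difference of $K$-theoretic stable envelopes for two chambers separated by $w$, tracked through the slope parameter; algebraically, $R^+_{\mbf{m},w}$ is the canonical element of the Drinfeld pairing restricted to $\mc{B}^+_{\mbf{m},w} \otimes \mc{B}^-_{\mbf{m},w}$. Bridging these two descriptions requires showing that the shuffle pairing, after localization to the wall, reproduces the residue data of the wall stable envelope, and tracking the normalization constant $q^{-\Omega}$ through the comparison. Once this wall identification is pinned down, the remainder of the proof is largely bookkeeping: the slope/wall factorization furnishes a canonical refinement on both Hopf algebras that any algebra map intertwining the $K(\mbf{w})$-action must respect, forcing the global isomorphism.
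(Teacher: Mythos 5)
Your overall strategy---reduce to wall subalgebras via the slope/wall factorization and then assemble---is the same skeleton the paper uses, but there are two substantive gaps in the way you propose to execute it.

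First, you never actually address surjectivity. You get an injection $\mc{B}_{\mbf{m},w}\hookrightarrow U_q^{MO}(\mf{g}_w)$ at each wall (and hence a global injection $\mc{A}^{ext}_Q\hookrightarrow U_q^{MO}(\hat{\mf{g}}_Q)$), and you then say that "the remainder of the proof is largely bookkeeping" because both sides factor and both act faithfully on $K(\mbf{w})$. But injectivity plus a common faithful action does not give an isomorphism: one needs a genuine argument that every matrix coefficient of the geometric $R$-matrix is expressible in terms of Hecke operators. In the paper this is Theorem \ref{main-theorem-just-localised-algebra-without-Hopf}, which requires the surjectivity statement $\mc{A}^{+}_{Q,\mbf{v}}\otimes\mbb{F}_{\mbf{w}}\twoheadrightarrow K_{T_{\mbf{w}}}(\mc{M}_Q(\mbf{v},\mbf{w}))_{loc}$ (Theorem \ref{surjectivity-noetherian-lemma-localised:theorem}) combined with the injectivity of the evaluation $U_q^{MO,+}(\hat{\mf{g}}_Q)\otimes\mbb{K}_\infty\hookrightarrow\prod_{\mbf{w}}K_{T_{\mbf{w}}}(\mc{M}_Q(\mbf{w}))$ (Proposition \ref{universal-injective-positive-half:proposition}) and a careful double-filtration argument $\mc{A}^{\pm}_{Q,[\mbf{m},\mbf{m}']}$ to obtain a splitting. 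The paper then deduces the per-wall surjectivity of $\mc{B}_{\mbf{m},w}\twoheadrightarrow U_q^{MO}(\mf{g}_w)$ (Proposition \ref{isomorphism-of-localised-wall-subalgebra:proposition}) as a corollary of the global isomorphism together with the simultaneous factorizations on both sides; i.e. the paper goes global-to-local for surjectivity, the reverse of your local-to-global assembly.

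Second, your proposed mechanism for the wall-level $R$-matrix identification---comparing "poles, residues and normalizations" of $R^+_{\mbf{m},w}$ and $q^{-\Omega}(R_w^-)^{-1}$ as rational operators---is too weak to pin down equality of two a priori independent universal $R$-matrices on $\mc{B}_{\mbf{m},w}\hat\otimes\mc{B}_{\mbf{m},w}$. The paper's Proposition \ref{identification-of-wall-R-matrices:label} uses the structural fact that $\mc{B}_{\mbf{m},w}$ and $U_q^{MO}(\mf{g}_w)$ are both generated by primitive elements (Theorem \ref{primitivity:Theorem} and Proposition \ref{primitivity-of-wall-subalgebra:proposition}); any primitive $E$ forces the commutation identity $[1\otimes E,\,R_{w,-\mbf{v}}^-]=E\otimes(h_{\mbf{v}}-h_{-\mbf{v}})$ and the same for $R^+_{\mbf{m},w}$, so the difference commutes with all generators and must vanish. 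Without some such rigidity input, the residue comparison you describe is not a proof. You also propose to use the integral nilpotent isomorphism $\mc{A}^{+,nilp,\mbb{Z}}_Q\cong U_q^{MO,-,\mbb{Z}}(\hat{\mf{g}}_Q)^{op}$ as "already established," but in the paper's logical architecture that integral statement (Theorem \ref{Isomorphism-main-theorem-on-negative-half:label}) is proved \emph{after} and \emph{using} the localized Hopf isomorphism, so invoking it here would be circular.
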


\textbf{Remark. }Specifically, when we take $\mbf{m}=\mbf{0}$. It is expected that $\mc{B}_{\mbf{0}}$ should be the Hopf algebra deformation of the universal enveloping algebra of the BPS Lie algebra $U(\mf{g}_{Q}^{BPS})$. On the right hand side, if we think of $U_{q}^{MO}(\mf{g}_{\mbf{0}})$ as the algebra generated by the wall subalgebra $U_{q}^{MO}(\mf{g}_{w})$ such that the wall $w$ contains $\mbf{0}$, we can think of $U_{q}^{MO}(\mf{g}_{\mbf{0}})$ as the Hopf algebra deformation of the universal enveloping algebra of MO Lie algebra $U(\mf{g}_{Q}^{MO})$. Thus this statement can be thought of as the Hopf deformation of the isomorphism of Lie algebras:
\begin{align*}
\mf{g}^{BPS}_{Q}\cong\mf{g}^{MO}_{Q}
\end{align*}
which has been proved in \cite{BD23}.

As the application, we prove the freeness of the preprojective KHA for arbitrary equivariant parametres:
\begin{thm}[See Theorem \ref{freeness-of-preKHA:label}]
Given $\mbb{C}_{q}^*\subset A\subset T$ a subtorus of $T$ which contains $\mbb{C}_{q}^*$. The $A$-equivariant $K$-theory $K_{A}(\mc{Y}_{\mbf{v}})$ of the preprojective stack is a free $K_{A}(pt)$-module.
\end{thm}
It would be really interesting to investigate a proof of the freeness of the preprojective KHA without using the stable envelopes, where the freeness depends on the freeness of the equivariant $K$-theory of the Nakajima quiver varieties.

\subsection{Strategy of the proof}
The proof of our first main Theorem \ref{Main-theorem-on-integral-form:theorem} mostly depends on the proof of the isomorphism of the positive half stated in \ref{isomorphism-of-integral-positive-half:theorem}. Since both preprojective KHA and MO quantum loop group admit the factorisation property, which are stated in Theorem \ref{Main-theorem-on-slope-factorisation:Theorem}, \ref{slope-factoristaion-integral} and Proposition \ref{factorisation-for-wall-subalgebras:label}, it is equivalent to proving the isomorphism on each wall algebra and slope subalgebra pieces. 

The proof can be factorised into the following two steps:
\begin{enumerate}
	\item The Hopf $\mbb{Q}(q,t_e)_{e\in E}$-algebra embedding in Theorem \ref{Hopf-embedding}
	\begin{align*}
    (\mc{B}_{\mbf{m},w},\Delta_{\mbf{m}},S_{\mbf{m}},\eta,\epsilon)\hookrightarrow (U_{q}^{MO}(\mf{g}_{w}),\Delta_{\mbf{m}}^{MO},S_{\mbf{m}}^{MO},\eta,\epsilon).
    \end{align*}
    \item The isomorphism of $\mbb{Z}[q^{\pm1},t_{e}^{\pm1}]_{e\in E}$-algebras
    \begin{align*}
    \mc{B}_{\mu\bm{\theta},w}^{+,\mbb{Z}}\cong U_{q}^{MO,+,\mbb{Z}}(\mf{g}_{w}).
    \end{align*}
    and here $\mu\in\mbb{Q}$ and $\bm{\theta}=(1,\cdots,1)$, which is the key in the proof of Theorem \ref{isomorphism-of-integral-positive-half:theorem}.
\end{enumerate}

For the first step, we first need to prove that the coproduct operation $\Delta_{\mbf{m}}$ coincides with the geometric coproduct $\Delta_{\mbf{m}}^{MO}$ when restricted to $\mc{B}_{\mbf{m},w}$. After that, since $U_{q}^{MO,+,\mbb{Z}}(\mf{g}_{w})$ is generated by the matrix coefficients of the wall $R$-matrix $R_{w}^{\pm}$. One then can follow the computation as given in \ref{Step III:Injectivity as Hopf algebras:Step-proof} to show the injectivity with the Theorem \ref{injectivity-of-localised-double-KHA:theorem}. The first step also implies that we have an injective map of $\mbb{Q}(q,t_e)_{e\in E}$-algebras
\begin{align}
\mc{A}^{ext}_{Q}\hookrightarrow U_{q}^{MO}(\hat{\mf{g}}_{Q}).
\end{align}
For the detail of the proof one can refer to Section \ref{section:_textbf_isomorphism_as_the_hopf_algebras}.

For the second step, the key observation is on two aspects: The action map of both $\mc{B}_{\mbf{m},w,\mbf{v}}^{+,\mbb{Z}}$ and $U_{q}^{MO,+,\mbb{Z}}(\mf{g}_{w})$ on $K_{T_{\mbf{w}}}(\mc{M}_{Q}(\mbf{0},\mbf{w}))\rightarrow K_{T_{\mbf{w}}}(\mc{M}_{Q}(\mbf{v},\mbf{w}))$ factor through $K_{T_{\mbf{w}}}(\mc{M}_{Q}(\mbf{0},\mbf{v},\mbf{w}))$. In fact, when $\mbf{w}=\mbf{v}$, one has the slope factorisation of $K_{T_{\mbf{w}}}(\mc{M}_{Q}(\mbf{0},\mbf{v},\mbf{w}))$ in terms of the slope subalgebra given by Padurariu and Toda \cite{PT25} in Theorem \ref{categorical-slope-factorisation:Theorem}. In this case one can also have the commutative diagram \ref{The-most-important-diagram}, and this commutative diagram implies the isomorphism. For the details one can refer to Section \ref{sub:isomorphism_of_the_positive_integral_half}.

The rest of the proof is for the negative half on both sides, i.e. the isomorphism of the opposite algebra of the nilpotent KHA $(\mc{A}^{+,nilp,\mbb{Z}}_{Q})^+$ and the negative half of the MO quantum loop group $U_q^{MO,-,\mbb{Z}}(\hat{\mf{g}}_{Q})$. The proof is quite similar to the positive half case, and one thing that we can use over here is to replace the negative half of the MO quantum loop group by the positive half of the nilpotent MO quantum loop group $U_{q}^{MO,nilp,+,\mbb{Z}}(\hat{\mf{g}}_{Q})$, which is introduced in Section \ref{subsection:nilpotent_maulik_okounkov_quantum_loop_groups}. It shares the same property as mentioned above as the positive half of the MO quantum loop group. Therefore one can use the similar strategy listed above to prove the isomorphism of the negative half.

\subsection{Outline of the paper}

The structure of the paper is organised as follows:

In Section \ref{section:_textbf_k_theoretic_hall_algebras_and_geometric_modules}, we introduce the basic notion for the preprojective $K$-theoretic Hall algebra $\mc{A}^{+,\mbb{Z}}_{Q}$ and the nilpotent $K$-theoretic Hall algebra $\mc{A}^{+,nilp,\mbb{Z}}_{Q}$. We also introduce their localised form and their double. Then we will introduce their algebra action over the corresponding equivariant $K$-theory on Nakajima quiver varieties $\mc{M}_{Q}(\mbf{w})$ and nilpotent quiver varieties $\mc{L}_{Q}(\mbf{w})$ respectively. We will KHA to stand for either the preprojective $K$-theoretic Hall algebra or the nilpotent $K$-theoretic Hall algebra.

In Section \ref{section:slope_filtration_and_shuffle_realisations_of_k_theoretic_hall_algebras}, we introduce the slope filtration and the shuffle realisation for the KHA $\mc{A}^{+}_{Q}$ and its preprojective and nilpotent integral version $\mc{A}^{+,\mbb{Z}}_{Q}$ and $\mc{A}^{+,nilp,\mbb{Z}}_{Q}$ respectively. We also introduce the slope subalgebra $\mc{B}_{\mbf{m}}^{\pm}$ inside of these algebras using the slope filtrations, and after doing the bialgebra pairing, we can also generate a Hopf algebra $\mc{B}_{\mbf{m}}$ with the coproduct $\Delta_{\mbf{m}}$ defined as \ref{coproduct-m-slope-subalgebra-positive} and \ref{coproduct-m-slope-subalgebra-negative}. Moreover, we show that $\mc{B}_{\mbf{m}}$ is generated by the primitive elements in the sense of \ref{primitive-element-definition}. In this way we introduce the root subalgebra $\mc{B}_{\mbf{m},w}$ and its integral version $\mc{B}_{\mbf{m},w}^{+,\mbb{Z}}$, $\mc{B}_{\mbf{m},w}^{+,nilp,\mbb{Z}}$ respectively.

In Section \ref{section:_textbf_stable_envelopes_and_maulik_okounkov_quantum_loop_groups} we introduce the stable envelopes for both Nakajima quiver varieties $\mc{M}_{Q}(\mbf{v},\mbf{w})$ and the nilpotent quiver varieties $\mc{L}_{Q}(\mbf{v},\mbf{w})$. Using these stable envelopes we introduce the geometric $R$-matrix and the nilpotent geometric $R$-matrix and their factorisation property in \ref{factorisation-geometry} \ref{nilpotent-factorisation}. Using the geometric $R$-matrices, we define the Maulik-Okounkov quantum loop group $U_{q}^{MO}(\hat{\mf{g}}_{Q})$ with its integral form $U_{q}^{MO,\mbb{Z}}(\hat{\mf{g}}_{Q})$, and the corresponding nilpotent Maulik-Okounkov quantum loop group $U_{q}^{MO,nilp}(\hat{\mf{g}}_{Q})$ and its integral form $U_{q}^{MO,nilp,\mbb{Z}}(\hat{\mf{g}}_{Q})$.

In Section \ref{section:_textbf_isomorphism_as_the_hopf_algebras} and Section \ref{section:isomorphism_as_the_integral_form} we give the proof of the main theorems \ref{big-boss-theorem:label} and \ref{Main-theorem-on-integral-form:theorem}.

\subsection{Future directions and related works}
\subsubsection{}
Many aspects of the $K$-theoretic Hall algebra have not been studied as well as those for the cohomological Hall algebra, such as the integrality structure \cite{Dav23} and so on. In fact, in the story of the KHA, this corresponds to the conjecture that the preprojective $K$-theoretic Hall algebra is a free $\mbb{Z}[q^{\pm1},t_{e}^{\pm1}]_{e\in E}$-module. In fact, this would lead to the main theorem \ref{Main-theorem-on-integral-form:theorem} of the isomorphism of $\mbb{Z}[q^{\pm1},t_{e}^{\pm1}]_{e\in E}$-algebras. 

On the other hand, from the aspects of the slope filtration, one can get some more refined structure of the factorisation on $K$-theoretic Hall algebra such as the slope subalgebra from the slope filtration, which should have strong connection with the BPS Lie algebra in cohomological setting \cite{DM20} and KBPS Lie algebra in the $K$-theory setting \cite{Pa19}. The statement can be roughly stated as follows:
\begin{conjecture}
For the slope subalgebra $\mc{B}_{\mbf{0}}^{+}$, it is a Hopf algebra deformation of the universal enveloping algebra of the positive half BPS Lie algebra $U(\mf{n}_{Q}^{BPS})$.
\end{conjecture}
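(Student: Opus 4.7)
The plan is to reduce the statement to two facts: that $\mc{B}_{\mbf{0}}^{+}$ admits a PBW-type decomposition whose associated graded is a symmetric algebra on its primitive part, and that this primitive part, endowed with its commutator-induced bracket in the semi-classical limit, is canonically isomorphic to $\mf{n}_{Q}^{BPS}$. Throughout I would work under the integrality conjecture \ref{integrality-conjecture:conjecture}, so as to have access to the integral structures and the full force of Theorem \ref{Main-theorem-on-integral-form:theorem}.

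First I would exploit the wall factorisation of $\mc{B}_{\mbf{0}}^{+}$ established in Theorem \ref{Main-theorem-on-slope-factorisation:Theorem}, together with the isomorphism $\mc{B}_{\mbf{0},w}^{+}\cong U_{q}^{MO,+}(\mf{g}_{w})$ on each wall $w$ through $\mbf{0}$, to reduce the question to each root subalgebra $\mc{B}_{\mbf{0},w}^{+}$. Using the coproduct $\Delta_{\mbf{0}}$ and the freeness of the primitive part shown in Section \ref{sub:isomorphism_on_the_positive_half}, I would construct a PBW basis of $\mc{B}_{\mbf{0},w}^{+}$ indexed by ordered monomials in a basis of the primitive subspace $\mf{n}_{w}:=\bigoplus_{\mbf{v}}\mc{B}_{\mbf{0},w,\mbf{v}}^{+,prim}$. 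Combined with the Hopf structure, this identifies $\mc{B}_{\mbf{0},w}^{+}$ with a Hopf algebra deformation of $U(\mf{n}_{w})$, where $\mf{n}_{w}$ is the Lie algebra obtained from the first-order commutator in the deformation parameter $q-1$.

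Next I would identify $\mf{n}:=\bigoplus_{w\ni\mbf{0}}\mf{n}_{w}$ with $\mf{n}_{Q}^{BPS}$. The natural route is to pass through the cohomological limit: the Chern character intertwines the $K$-theoretic slope filtration with the cohomological one, and Theorem \ref{main-theorem-just-localised-algebra} together with its cohomological counterpart proved in \cite{BD23}\cite{SV23} identifies the semi-classical limit of $\mc{B}_{\mbf{0}}^{+}$ with the positive half of the cohomological slope subalgebra at slope $\mbf{0}$. By the cohomological identification $\mf{g}^{BPS}_{Q}\cong\mf{g}^{MO}_{Q}$ from \cite{BD23}, and the fact that the cohomological slope subalgebra at $\mbf{0}$ is precisely $U(\mf{n}_{Q}^{BPS})$ as in \cite{Dav23}\cite{DM20}, one obtains the desired identification of $\mf{n}$ with $\mf{n}_{Q}^{BPS}$.

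The hard part will be controlling the passage from the $K$-theoretic to the cohomological setting at the level of the semi-classical limit, while keeping track of integral structures. In particular one needs a $K$-theoretic analogue of Davison's integrality theorem, which is exactly the content of Conjecture \ref{integrality-conjecture:conjecture} and is currently known only in limited cases. A cleaner alternative would be to construct a KBPS Lie algebra directly in the spirit of \cite{Pa19} as the primitive part of $\mc{B}_{\mbf{0}}^{+}$, prove $\mc{B}_{\mbf{0}}^{+}\cong U(\mf{n}_{Q}^{KBPS})$ at the Hopf deformation level without passing to cohomology, and only then invoke the semi-classical limit to identify $\mf{n}_{Q}^{KBPS}$ with $\mf{n}_{Q}^{BPS}$; however this requires developing the $K$-theoretic BPS formalism to a greater extent than is currently available.
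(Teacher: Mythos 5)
This statement is labelled as a \emph{conjecture} in the paper, and the paper offers no proof of it; indeed, immediately below the conjecture the authors remark that the BPS Lie algebra arises from the \emph{perverse} filtration on the CoHA, whereas $\mc{B}_{\mbf{0}}^{+}$ arises from the \emph{slope} filtration on the KHA, and that relating these two filtrations is an open problem for which ``we still lack the precise geometric understanding.'' Your proposal is therefore attempting to prove something the paper explicitly declines to prove, and the gap in your argument is precisely the gap the paper flags.

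The first half of your sketch — using primitivity (Theorem \ref{primitivity:Theorem}), the wall factorisation, and the freeness results to produce a PBW-type decomposition of $\mc{B}_{\mbf{0}}^{+}$ and hence identify it with a Hopf deformation of $U(\mf{n})$ for \emph{some} Lie algebra $\mf{n}$ built from the primitive elements — is reasonable and largely in reach with the paper's tools. The problem is the second half, the identification of $\mf{n}$ with $\mf{n}_{Q}^{BPS}$. You assert that ``the Chern character intertwines the $K$-theoretic slope filtration with the cohomological one,'' but there is no cohomological slope filtration in the relevant sense: the slope filtration is anchored to the vertical $\mbb{Z}$-grading of $K_{T}(\mc{Y}_{\mbf{n}})$ by line-bundle powers, a structure that collapses under the Chern character and has no direct analogue on $H_{T}^{*}(\mc{Y}_{\mbf{n}})$. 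The cohomological identification of the CoHA as $U(\mf{g}_{Q}^{BPS})$ in \cite{Dav23}\cite{DM20} proceeds via the perverse filtration, which is a completely different stratification of the Hall algebra, and \cite{BD23} compares the perverse and MO filtrations cohomologically — none of this supplies the missing comparison between the slope filtration in $K$-theory and the perverse filtration in cohomology. Your ``semi-classical limit'' step silently assumes this comparison, so the argument does not close. Your second suggested route (defining a KBPS Lie algebra directly as the primitive part of $\mc{B}_{\mbf{0}}^{+}$ and then matching it to $\mf{n}_{Q}^{BPS}$) is closer in spirit to what a genuine proof would need, but as you yourself observe it requires developing a $K$-theoretic BPS formalism that is not currently available — which is exactly why the statement remains a conjecture in the paper.
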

While the difference is that the BPS Lie algebra comes from the perverse filtration, and the slope subalgebra $\mc{B}_{\mbf{0}}^+$ comes from the slope filtration. It would be an interesting question to connect the perverse filtration on CoHA and the slope filtration on KHA. Unfortunately, besides the shuffle algebra interpretation, for now we still lack the precise geometric understanding for the slope filtration of various kind of KHA, even for the nilpotent KHA and preprojective KHA that we are using in this paper.

As a result of the conjecture, this implies the Kac polynomial conjecture for the slope subalgebra $\mc{B}_{\mbf{0}}^{+}$, which was stated in \cite{N22}. 

Moreover, the analog of the Kac polynomial for $\mc{B}_{\mbf{m}}$ should be computed from the Kac polynomial on each root subalgebra for $\mc{B}_{\mbf{m},w}$, which can be thought of as a consequence of the above conjecture. Moreover, it is expected that the Kac polynomial for $\mc{B}_{\mbf{m}}$ should also be controlled by the Kac polynomial of $\mc{B}_{\mbf{0}}$ in \cite{NS25}.

\subsubsection{}
In the paper \cite{Z24} \cite{Z24-2}, we have shown the isomorphism of the MO quantum loop group of affine type $A$ and the quantum toroidal algebras using the techniques on computing the monodromy representation for the Dubrovin connections, or the quantum differential equations. The method over there is to use the comparison of the computation of monodromy representations by reducing the quantum difference equations on algebraic and geometric side to compare the universal $R$-matrix on slope subalgebras $\mc{B}_{\mbf{m}}$ and wall subalgebras $U_q^{MO}(\mf{g}_{w})$.

For the general case, we can construct the algebraic and geometric quantum difference equations for arbitrary Nakajima quiver varieties. For now the author does not have too much understanding connecting the two proofs, and it would be a really interesting question.

\subsubsection{}
From the side of the representation theory for the quantum affine algebras, equivariant $K$-theory of Nakajima quiver varieties gives a subclass of weighted representations for the quantum affine algebras. For the other types of the important modules such as the Kirillov-Reshetikhin modules \cite{KR90} for quantum affine algebras $U_{q}(\hat{\mf{g}}_{Q})$ and MacMahon modules for the quantum toroidal algebras \cite{FJMM12}, they can be realised as the equivariant critical K-theory of the some quiver varieties as in \cite{VV22} and \cite{RSYZ20}. It would be really interesting if we can extend the above isomorphism to the critical $K$-theoretic Hall algebra constructed in \cite{Pa19}\cite{Pa23} and the geometric quantum loop group constructed by the critical $K$-theoretic stable envelope which is being developed by \cite{COZZ25}.

\subsection*{Acknowledgements.}
The author is very thankful to Yalong Cao, Andrei Negu\c{t}, Andrei Okounkov, Yehao Zhou and Zijun Zhou for many insightful discussions throughout these topics on stable envelopes, shuffle algebras and quantum groups. The author is partially supported by the international collaboration grant BMSTC and ACZSP (Grant no. Z221100002722017) and by the National Key R \& D Program of
China (Grant no. 2020YFA0713000).

\section{\textbf{K-theoretic Hall algebras and geometric modules}}\label{section:_textbf_k_theoretic_hall_algebras_and_geometric_modules}
In this Section we review the construction of the preprojective $K$-theoretic Hall algebra, nilpotent $K$-theoretic Hall algebra and their geometric modules. For the reference of the $K$-theoretic Hall algebras, one can refer to \cite{VV22}\cite{YZ18}

Let $Q=(I,E)$ be a quiver with a finite vertex set $I$ and a finite edge set $E$. Edge loops and multiple edges are allowed.

We set the base field as:
\begin{align}
\mbb{F}=\mbb{Q}(q,t_e)_{e\in E}.
\end{align}
Let $\mbf{n}=(n_i\geq0)_{i\in I}$ be a sequence of non-negative integers indexed by $I$, and we define
\begin{align*}
\mbf{n}!=\prod_{i\in I}n_i!.
\end{align*}

\subsection{Preprojective K-theoretic Hall algebra}\label{subsection:preprojective_k_theoretic_hall_algebra}

For any $\mbf{n}\in\mbb{N}^I$, we consider the stack of $\mbf{n}$-dimensional quiver representations of $Q$:
\begin{align*}
\mc{X}_{\mbf{n}}=\bigoplus_{ij=e\in E}T^*\text{Hom}(V_{i},V_{j})/\prod_{i\in I}GL(V_i).
\end{align*}
Here $V_i$ denotes a vector space of dimension $n_i$ for every node $i\in I$. We now impose the moment map:
\begin{align*}
\mu:\bigoplus_{ij=e\in E}T^*\text{Hom}(V_i,V_j)\rightarrow\bigoplus_{i\in I}\mf{gl}(V_i)^*,\qquad\mu(A_{e},B_{e})=\sum_{e\in E}(A_eB_e-B_eA_e).
\end{align*}

We consider the moduli stack of preprojective $\mbb{C}[\overline{Q}]$-representations:
\begin{align}\label{preprojective-stack}
\mc{Y}_{\mbf{n}}:=[\mu^{-1}_{\mbf{n}}(0)/\prod_{i\in I}GL(V_i)].
\end{align}
and here $\overline{Q}=(I,E\sqcup E^{op})$ stands for the double quiver of $Q$. As a $\mbb{Z}[q^{\pm1},t_{e}^{\pm1}]_{e\in E}$-module, the \textbf{preprojective K-theoretic Hall algebra }of the quiver type $Q$ is the direct sum of the equivariant algebraic $K$-theory groups of the cotangent bundle of the stack $\mc{Y}_{\mbf{n}}$.
\begin{align}
\mc{A}_{Q}^{+,\mbb{Z}}=(\bigoplus_{\mbf{n}\in\mbb{N}^I}K_{T}(\mc{Y}_{\mbf{n}}),*)
\end{align}
where the torus $T=\mbb{C}^*_q\times\prod_{e\in E}\mbb{C}^*_{t_e}$ acts on $\mc{Y}_{\mbf{n}}$ as follows: 
\begin{align*}
(q,t_e)_{e\in E}\cdot(X_e,Y_e)_{e\in E}=(\frac{X_e}{t_e},\frac{t_eY_e}{q})_{e\in E}.
\end{align*}

The Hall product is given by the following correspondence:
\begin{equation}\label{Preprojective-Hall-Product}
\begin{tikzcd}
&\mc{Y}_{\mbf{n},\mbf{m}}\arrow[ld,"\pi_1"]\arrow[rd,"\pi_2"]&\\
\mc{Y}_{\mbf{n}+\mbf{m}}&&\mc{Y}_{\mbf{n}}\times\mc{Y}_{\mbf{m}}
\end{tikzcd}
\end{equation}
where $\mc{Y}_{\mbf{n},\mbf{m}}$ is the moduli stack of the correspondence:
\begin{equation*}
\begin{aligned}
\mc{Y}_{\mbf{n},\mbf{m}}:=&\{(X_e,Y_e)_{e\in E}\in\bigoplus_{ij=e\in E}\text{Hom}(\mbb{C}^{n_i+m_i},\mbb{C}^{n_j+m_j})\oplus\text{Hom}(\mbb{C}^{n_i+m_i},\mbb{C}^{n_j+m_j})\\
&|\sum_{e\in E}(X_{i(e)}Y_{o(e)}-Y_{i(e)}X_{o(e)})=0\text{ and }(X_e,Y_e)\text{ preserves }\mbb{C}^{n_i}\}.
\end{aligned}
\end{equation*}

The Hall product is defined as:
\begin{equation}\label{Hall-product-expression}
\begin{aligned}
&*:K_{T}(\mc{Y}_{\mbf{n}})\otimes K_{T}(\mc{Y}_{\mbf{m}})\rightarrow K_{T}(\mc{Y}_{\mbf{m}+\mbf{n}})\\
&\alpha\boxtimes\beta\mapsto (\pi_1)_{*}(\text{sdet}[\sum_{i\in I}\frac{\mc{V}_i'}{q\mc{V}_i''}]\cdot\pi_2^!(\alpha\boxtimes\beta)).
\end{aligned}
\end{equation}

Here $\pi_2^!$ is the refined Gysin pullback defined and explained in \cite{YZ18}. The line bundle $\text{sdet}(\cdots)$ was chosen in \cite{N23} in order to match the formula appearing in the computation for the stable envelopes. Here we also fix this line bundle for the computation. The tautological bundles $\mc{V}_i'$, $\mc{V}_{i}''$ are corresponding to the one induced on $\mc{Y}_{\mbf{n}}$ and $\mc{Y}_{\mbf{m}}$ respectively. The Hall product makes $\mc{A}^{+,\mbb{Z}}_{Q}$ as a $\mbb{Z}[q^{\pm1},t_{e}^{\pm1}]_{e\in E}$-algebra.

\textbf{Remark.} It should be noted that the choice of the line bundle $\text{sdet}(\cdots)$ can be set for others if we change the polarisation in the definition of the $K$-theoretic stable envelopes. If we change the line bundle, everything in this Section stated is still true.

The following theorem has been proved in \cite{VV22}:
\begin{thm}[See Lemma 2.4.2 in \cite{VV22}]\label{torsion-freeness-of-KHA:theorem}
The preprojective $K$-theoretic Hall algebra $\mc{A}_{Q}^{+,\mbb{Z}}$ is a torsion-free $\mbb{Z}[q^{\pm1},t_{e}^{\pm1}]_{e\in E}$-module.
\end{thm}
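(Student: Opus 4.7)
I plan to prove, for each dimension vector $\mbf{n}\in\mbb{N}^I$, that the graded piece $K_T(\mc{Y}_{\mbf{n}})$ has no $R$-torsion, where $R = \mbb{Z}[q^{\pm 1}, t_e^{\pm 1}]_{e \in E}$. The first move is to realise $\mu^{-1}_{\mbf{n}}(0)$ as a closed $(T\times G_{\mbf{n}})$-subvariety of the smooth affine variety
\[
M_{\mbf{n}} \;:=\; \bigoplus_{e\in E} T^*\Hom(V_{i(e)}, V_{j(e)}),
\]
so that the ambient stack $\mc{X}_{\mbf{n}} = [M_{\mbf{n}}/G_{\mbf{n}}]$ has $T$-equivariant $K$-theory $K_T(\mc{X}_{\mbf{n}}) = R \otimes_{\mbb{Z}} R(G_{\mbf{n}})$, which is a free, hence torsion-free, $R$-module. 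The closed embedding $j : \mc{Y}_{\mbf{n}}\hookrightarrow\mc{X}_{\mbf{n}}$ induces a proper pushforward $j_{*} : K_T(\mc{Y}_{\mbf{n}}) \to K_T(\mc{X}_{\mbf{n}})$, and the objective reduces to controlling its kernel.

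Next, I would bring in the torus action. Choose a cocharacter $\chi : \mbb{G}_m \to T$ under which all $T$-weights on $M_{\mbf{n}}$ become strictly positive; concretely, $\chi(s) = (q = s^{-2},\, t_e = s^{-1})$ scales both the $X_e$- and $Y_e$-components by $s$ and scales the moment map by $s^2$, so $\mu_{\mbf{n}}^{-1}(0)$ is $\chi(\mbb{G}_m)$-stable and contracts to the origin as $s\to 0$. Thomason localization for $\chi$, after inverting finitely many elements of $R$ corresponding to the $T$-weights on $T_0 M_{\mbf{n}}$, identifies $K_T(\mu_{\mbf{n}}^{-1}(0))_{\mathrm{loc}}$ with $K_T(\pt)_{\mathrm{loc}}$, which is $R$-flat. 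Consequently, any $R$-torsion class in $K_T(\mc{Y}_{\mbf{n}})$ must be killed in this localization, and the plan is to combine this with a Bialynicki-Birula style filtration of $\mu_{\mbf{n}}^{-1}(0)$ by $\chi$-attracting cells to upgrade vanishing in the localization to vanishing in the unlocalised $K$-theory.

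The main obstacle is that $\mu_{\mbf{n}}^{-1}(0)$ is generically singular and not a complete intersection in $M_{\mbf{n}}$, so the Koszul complex associated to $\mu_{\mbf{n}}$ is not a resolution of $\mc{O}_{\mu^{-1}(0)}$ and one cannot directly express $[j_{*}\mc{O}_{\mu^{-1}(0)}]$ as an $R$-linear combination of tautological classes in $K_T(M_{\mbf{n}})$. I would sidestep this by either (i) passing to the derived zero-locus of $\mu_{\mbf{n}}$, whose $K$-theory is computed by the Koszul complex $K_T(M_{\mbf{n}}) \otimes_R \Lambda^{\bullet}(\mf{g}_{\mbf{n}}^{\vee})$ and is manifestly torsion-free as an $R$-module, and then verifying that the classical-to-derived comparison is injective modulo $R$-torsion; or (ii) stratifying $\mu_{\mbf{n}}^{-1}(0)$ by smooth $T$-stable locally closed subsets arising from a Lusztig-type Lagrangian decomposition and running a spectral sequence whose $E_{1}$-page is a direct sum of $R$-free pieces. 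Either route implements the Varagnolo--Vasserot argument for Lemma 2.4.2 of \cite{VV22}, and the delicate geometric input in both cases is the compatibility of the BB filtration of $\mu_{\mbf{n}}^{-1}(0)$ with the Koszul/Lagrangian structure at each stratum.
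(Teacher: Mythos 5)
The paper does not prove this statement itself; it is imported verbatim from Lemma 2.4.2 of Varagnolo--Vasserot. Your proposal has several real gaps. First, the Thomason-localization step does not do what you claim: concentration inverts only the specific Bott-type elements $1 - s^k\chi$ arising from the normal weights of the cocharacter $\chi$, so it kills torsion supported at those primes and nothing more. An $R$-torsion class killed by, say, $t_e - 1$ need not die in that localization, so ``any $R$-torsion class must be killed in this localization'' is not a valid step, and there is no obvious ``upgrade'' from the localized statement to the integral one. Second, workaround (i) does not buy you anything: the $K$-theoretic Hall algebra is built from $G$-theory (coherent sheaves), and by d\'evissage the $G$-theory of the derived zero locus of $\mu_{\mbf{n}}$ is canonically isomorphic to the $G$-theory of the classical $\mu_{\mbf{n}}^{-1}(0)$, so passing to the quasi-smooth derived scheme changes nothing; also, the Koszul complex $K_T(M_{\mbf{n}})\otimes_R\Lambda^\bullet(\mf{g}_{\mbf{n}}^\vee)$ computes the pushforward class $j_*[\mc{O}]$ in $K_T(M_{\mbf{n}})$, not the $K$-theory of the zero locus. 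Third, workaround (ii) misattributes the Lusztig Lagrangian stratification: it stratifies $\Lambda_{\mbf{n}}\subsetneq\mu_{\mbf{n}}^{-1}(0)$ and is exactly what VV22 use for Lemma 2.4.1 (freeness of the nilpotent KHA), but there is no analogous stratification of all of $\mu_{\mbf{n}}^{-1}(0)$ by smooth Lagrangian cells, which is precisely why the preprojective statement is only torsion-freeness rather than freeness.

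The mechanism that actually drives the VV22 argument is a different contraction than the one you chose. Rather than contracting $\mu_{\mbf{n}}^{-1}(0)$ all the way to the origin (whose fixed locus is a point and yields only a localized isomorphism), one uses the one-parameter subgroup that scales only the $Y$-coordinates, $(X_e,Y_e)\mapsto (X_e,sY_e)$. This preserves the moment map equation, and its fixed locus inside $\mu_{\mbf{n}}^{-1}(0)$ is the entire linear space $E_{\mbf{n}}=\bigoplus_{e}\Hom(V_{i(e)},V_{j(e)})$ (since $Y=0$ trivially solves $\mu=0$), which is smooth and has free equivariant $G$-theory. The torsion-freeness then comes from the resulting non-negative grading of $\mbb{C}[\mu_{\mbf{n}}^{-1}(0)]$ over $\mbb{C}[E_{\mbf{n}}]$ together with a weight-filtration/d\'evissage argument on graded equivariant sheaves, relating $G^{T\times G_{\mbf{n}}}(\mu_{\mbf{n}}^{-1}(0))$ to the free module $G^{T\times G_{\mbf{n}}}(E_{\mbf{n}})$; it is this comparison (not a BB cell decomposition, which indeed fails for the singular $\mu_{\mbf{n}}^{-1}(0)$) that is the content of Lemma 2.4.2. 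I encourage you to consult the proof in \cite{VV22} for the precise form of the filtration step.
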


In many parts of the paper, we may consider its localization with respect to the fraction field $\mbb{F}=\mbb{Q}(q,t_{e})_{e\in E}$, and we will use the following notations:
\begin{align*}
\mc{A}_{Q}^+=\mc{A}_{Q}^{+,\mbb{Z}}\otimes_{\mbb{Z}[q^{\pm1},t_{e}^{\pm1}]_{e\in E}}\mbb{Q}(q,t_e)_{e\in E}.
\end{align*}

\subsection{Nilpotent K-theoretic Hall algebra}\label{subsection:nilpotent_k_theoretic_hall_algebra}
Other than the preprojective $K$-theoretic Hall algebra, we also need to introduce the nilpotent $K$-theoretic Hall algebra, one can also refer to \cite{VV22} for the detailed construction.

Now given $(A_e,B_e)\in\bigoplus_{ij\in E}T^*\text{Hom}(V_i,V_j)$, we say that $X$ is \textbf{nilpotent} if there exists a flag $\{L^l\}$ of $I$-graded vector spaces $V=\bigoplus_{i\in I}V_i$ such that:
\begin{align*}
A_e(L^l)\subset L^{l-1},\qquad B_e(L^l)\subset L^{l-1}
\end{align*}
and we denote the subspace of nilpotent representations in $\bigoplus_{ij\in E}T^*\text{Hom}(V_i,V_j)$ as $E^0_{\mbf{v}}$.

The Lusztig nilpotent quiver variety is defined as:
\begin{align}\label{nilpotent-stack}
\Lambda_{\mbf{v}}:=[\mu^{-1}_{\mbf{v}}(0)\cap E^{0}_{\mbf{v}}/G_{\mbf{v}}].
\end{align}

Similar to the construction of the preprojective $K$-theoretic Hall algebra, we can define the nilpotent $K$-theoretic Hall algebra as:
\begin{align*}
\mc{A}^{+,nilp,\mbb{Z}}_{Q}:=(\bigoplus_{\mbf{v}\in\mbb{N}^I}K_{T}(\Lambda_{\mbf{v}}),*)
\end{align*}
where the Hall product is defined similarly as \ref{Preprojective-Hall-Product}:
\begin{equation*}
\begin{tikzcd}
&\Lambda_{\mbf{n},\mbf{m}}\arrow[ld,"\pi_1"]\arrow[rd,"\pi_2"]&\\
\Lambda_{\mbf{n}+\mbf{m}}&&\Lambda_{\mbf{n}}\times\Lambda_{\mbf{m}}
\end{tikzcd}
\end{equation*}
with $\Lambda_{\mbf{n},\mbf{m}}$ the nilpotent version of the correspondence:
\begin{equation*}
\begin{aligned}
\Lambda_{\mbf{n},\mbf{m}}:=&\{(X_e,Y_e)_{e\in E}\in\bigoplus_{ij=e\in E}\text{Hom}(\mbb{C}^{n_i+m_i},\mbb{C}^{n_j+m_j})\oplus\text{Hom}(\mbb{C}^{n_i+m_i},\mbb{C}^{n_j+m_j})\\
&|\sum_{e\in E}(X_{i(e)}Y_{o(e)}-Y_{i(e)}X_{o(e)})=0\text{ and }(X_e,Y_e)\text{ preserves }\mbb{C}^{n_i}\text{ and }(X_e,Y_e)\text{ are nilpotent.}\}.
\end{aligned}
\end{equation*}

It can be checked that the natural closed embedding $i:\Lambda_{\mbf{n}}\hookrightarrow\mc{Y}_{\mbf{n}}$ induce the morphism of the Hall algebras:
\begin{align*}
i_*:\mc{A}^{+,nilp,\mbb{Z}}_{Q}\rightarrow\mc{A}^{+,\mbb{Z}}_{Q}.
\end{align*}

The following has also been proved in \cite{VV22}:
\begin{thm}[See Lemma 2.4.1 in \cite{VV22}]\label{freeness-of-nilpotent-KHA-and-isomorphism-after-localisation:theorem}
The nilpotent $K$-theoretic Hall algebra $\mc{A}^{+,nilp,\mbb{Z}}_{Q}$ is a free $\mbb{Z}[q^{\pm1},t_{e}^{\pm1}]$-module. Moreover, after localising to $\mbb{F}:=\mbb{Q}(q,t_e)_{e\in E}$, the morphism $i_*$ induces an isomorphism between $\mc{A}^{+,\mbb{Z}}_{Q,loc}$ and $\mc{A}^{+,nilp,\mbb{Z}}_{Q,loc}$.
\end{thm}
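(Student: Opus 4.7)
The plan is to prove the two assertions separately. Both hinge on a careful comparison between the equivariant $K$-theory of the nilpotent Lusztig stack $\Lambda_{\mbf{v}}$ and the preprojective stack $\mc{Y}_{\mbf{v}}$, combined with a Thomason-style localization argument.

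For the freeness of $\mc{A}^{+,nilp,\mbb{Z}}_{Q}$ as a $\mbb{Z}[q^{\pm1},t_e^{\pm1}]_{e\in E}$-module, my approach is to exhibit a stratification of $\Lambda_{\mbf{v}}$ whose strata have visibly free equivariant $K$-theory, and to piece these together with the excision long exact sequence. Concretely, I would use Lusztig's parametrization of the irreducible components of $\Lambda_{\mbf{v}}$ by sequences $\mu = (i_1,\ldots,i_N)\in I^N$ with $\sum_k \delta_{i_k} = \mbf{v}$, and the associated locally closed stratification by flag type. Each stratum fibers as an affine bundle over a partial flag variety attached to $\mu$; the equivariant $K$-theory of such a flag variety is free over $\mbb{Z}[q^{\pm1},t_e^{\pm1}]_{e\in E}$ by its Bruhat decomposition, and homotopy invariance of equivariant $K$-theory for affine bundles transfers freeness to each stratum. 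An induction along the closure order on strata, invoking the long exact sequence
\[ K_T(Z) \to K_T(X) \to K_T(X\setminus Z) \to 0 \]
for each closed-open pair in the stratification, then yields the freeness of $K_T(\Lambda_{\mbf{v}})$.

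For the localization isomorphism, I would consider the closed embedding $i:\Lambda_{\mbf{v}}\hookrightarrow\mc{Y}_{\mbf{v}}$ together with its open complement $j:U_{\mbf{v}}\hookrightarrow\mc{Y}_{\mbf{v}}$ parametrizing non-nilpotent preprojective representations, and the associated excision sequence
\[ K_T(\Lambda_{\mbf{v}}) \xrightarrow{i_*} K_T(\mc{Y}_{\mbf{v}}) \xrightarrow{j^*} K_T(U_{\mbf{v}}) \to 0. \]
The claim then reduces to two subclaims: that $K_T(U_{\mbf{v}})\otimes_{\mbb{Z}[q^{\pm1},t_e^{\pm1}]}\mbb{F}$ vanishes, and that $i_*$ becomes injective after base change to $\mbb{F}$. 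The first is a Thomason-type localization statement: one picks a cocharacter of $T$ (for instance, one scaling the $B_e$-coordinates together with $q$) whose fixed substack on $U_{\mbf{v}}$ is empty, and concludes that the localized equivariant $K$-theory of $U_{\mbf{v}}$ vanishes. The injectivity of the localized $i_*$ then follows from Part 1 together with the fact that $K_T(\mc{Y}_{\mbf{v}})_{\mbb{F}}$ and $K_T(\Lambda_{\mbf{v}})_{\mbb{F}}$ are free of the same finite rank, compatible with the torus-fixed point count of a common resolution (for example the Nakajima quiver variety $\mc{M}_Q(\mbf{v},\mbf{w})$ for sufficiently large framing). Compatibility of $i_*$ with the Hall product is routine base change along the squares relating the nilpotent and preprojective Hecke correspondences.

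The main obstacle is the Thomason vanishing step. One must identify a one-parameter subgroup of $T=\mbb{C}^*_q\times\prod_{e\in E}\mbb{C}^*_{t_e}$ whose fixed locus on the open stack $U_{\mbf{v}}$ is empty, which requires a careful stacky argument: a fixed point of the chosen scaling action must have all $B_e$ vanish up to $G_{\mbf{v}}$-conjugacy, and this, combined with the moment-map equation $\mu_{\mbf{v}}=0$ and the hypothesis of non-nilpotence, leads to a contradiction. Making this rigorous at the level of Artin stacks — in particular, keeping track of the $G_{\mbf{v}}$-automorphisms of non-nilpotent representations — is the most delicate point, and is where I would lean on the analysis already carried out in \cite{VV22}. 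The freeness step in Part 1, while technical, is a fairly standard Lusztig-Nakajima-type stratification argument.
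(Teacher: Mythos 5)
The paper itself gives no proof of this statement; it is quoted with a pointer to Lemma 2.4.1 of \cite{VV22}, so there is no internal argument to compare against. Your two-pronged strategy (stratification for freeness, Thomason localization for the isomorphism) is the right template, but Part~1 contains a genuine gap. Thomason's localization sequence for a closed $Z\subset X$ with open complement $U$ reads
\[
G_1^T(U)\;\longrightarrow\;G_0^T(Z)\;\xrightarrow{\;i_*\;}\;G_0^T(X)\;\longrightarrow\;G_0^T(U)\;\longrightarrow\;0,
\]
and freeness of the outer $G_0$'s does not by itself give freeness of $G_0^T(X)$: the boundary map $G_1^T(U)\to G_0^T(Z)$ need not vanish and $i_*$ need not be injective. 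Projectivity of $G_0^T(U)$ splits the surjection, so $G_0^T(X)\cong \ker\oplus G_0^T(U)$, but the kernel is only a quotient of $G_0^T(Z)$ unless one \emph{also} proves that $i_*$ is injective. In the usual cellular-fibration setup this is secured by an additional input (vanishing of the relevant boundary maps, injectivity of restriction to torus-fixed points, or an explicit spanning-set construction), none of which you supply. Moreover "each stratum fibers as an affine bundle over a partial flag variety" needs to be verified carefully for Lusztig's stratification of $\Lambda_{\mbf{v}}$; it is true at the stack level (each stratum is a vector-bundle stack over a classifying stack of a parabolic), but stating it as affine bundles over flag \emph{varieties} misdescribes the objects.

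Part~2 is correct in outline but could be both simplified and tightened. There is no need to hunt for a special cocharacter: the full flavor torus $T=\mbb{C}^*_q\times\prod_e\mbb{C}^*_{t_e}$ acts on $\mu^{-1}_{\mbf{v}}(0)$ by $(X_e,Y_e)\mapsto(X_e/t_e,\,t_eY_e/q)$, so the scheme-theoretic fixed locus is exactly $\{0\}$, which lies in $\Lambda_{\mbf{v}}$. Applying Thomason localization at the generic point of $\mathrm{Spec}\,R(T)$ to the $(G_{\mbf{v}}\times T)$-schemes $\Lambda_{\mbf{v}}$, $\mu^{-1}_{\mbf{v}}(0)$ and their common fixed scheme $\{0\}$ gives compatible isomorphisms $K_T(\Lambda_{\mbf{v}})_{\mbb{F}}\cong K_T(\{0\})_{\mbb{F}}\cong K_T(\mc{Y}_{\mbf{v}})_{\mbb{F}}$ through which $i_*$ factors, so $i_*$ is an isomorphism after localization with no rank count or comparison to a Nakajima variety needed. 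Two further remarks: (a) your worry about stacky fixed loci is a red herring here, since $K_T(\mc{Y}_{\mbf{v}})$ is by definition $(G_{\mbf{v}}\times T)$-equivariant $G$-theory of the scheme $\mu^{-1}_{\mbf{v}}(0)$, and the torus being localized is the flavor torus $T$ (not a torus inside $G_{\mbf{v}}$), so Thomason concerns the honest set-theoretic $T$-fixed subscheme; (b) if you insist on arguing through your excision sequence, establishing injectivity of the localized $i_*$ requires knowing $G_1^T(U_{\mbf{v}})_{\mbb{F}}=0$ as well, not just $G_0^T(U_{\mbf{v}})_{\mbb{F}}=0$ --- which again follows from Thomason applied in degree one.
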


In simplicity, we will use $\mc{A}^+_{Q}$ as the localised form of $\mc{A}^{+,\mbb{Z}}_{Q}$ or $\mc{A}^{+,nilp,\mbb{Z}}_{Q}$. Moreover, it has been shown in Theorem 1.2 of \cite{N21} that the localised KHA $\mc{A}^{+}_{Q}$ is generated by its spherical part $\bigoplus_{i\in I}\mc{A}^{+}_{Q,\mbf{e}_i}$. This means that $\mc{A}^{+}_{Q}$ is generated by $e_{i,d}$ with $i\in I$ and $d\in\mbb{Z}$ such that:
\begin{align}\label{positive-spherical-generator}
e_{i,d}=\mc{L}_{i}^d\in\mc{A}^{+}_{Q,\mbf{e}_i}.
\end{align}

Below in the paper, we will change the twisted line bundle in the definition of the Hall product \ref{Hall-product-expression} of the nilpotent KHA $\mc{A}^{+,nilp,\mbb{Z}}_{Q}$ by $\text{sdet}[\sum_{i\in I}\frac{\mc{V}_i'}{q\mc{V}_i''}-\sum_{e=ij\in E}\frac{t_e\mc{V}_i'}{q\mc{V}_j'}]$.

\subsection{Localised and integral extended double KHA}\label{subsection:localised_and_integral_extended_double_kha}
\subsubsection{Localised form extended double KHA $\mc{A}^{ext}_{Q}$}\label{subsub:localised_and_integral_extended_double_kha}
The extended double KHA algebra $\mc{A}^{ext}_{Q}$ is defined as follows: 
\begin{equation}\label{ext-double-KHA}
\mc{A}^{ext}_{Q}:=\mc{A}_{Q}^+\otimes\mc{A}_{Q}^{0,ext}\otimes\mc{A}_{Q}^{-}
\end{equation}
with:
\begin{align}
\mc{A}^{+}_{Q}:=\mbb{F}[e_{i,d}]_{i\in I,d\in\mbb{Z}},\qquad\mc{A}^{-}_{Q}:=\mbb{F}[f_{i,d}]_{i\in I,d\in\mbb{Z}}
\end{align}
and here we use the same notation $e_{i,d}$ as in \ref{positive-spherical-generator} for the positive half. It is convenient for us to write down the following generating functions:
\begin{align*}
e_{i}(z)=\sum_{d\in\mbb{Z}}\frac{e_{i,d}}{z^d},\qquad f_{i}(z)=\sum_{d\in\mbb{Z}}\frac{f_{i,d}}{z^d}.
\end{align*}

Here $\mc{A}_{Q}^{0,ext}$ is the polynomial ring:
\begin{align}\label{Cartan-part-double-KHA-localised}
\mc{A}_{Q}^{0,ext}:=\mbb{F}[a_{i,\pm d},b_{i,\pm d},q^{\pm\frac{v_i}{2}},q^{\pm\frac{w_i}{2}}]_{i\in I,d\geq1}
\end{align}
with $q^{\pm\frac{w_i}{2}}$ and $b_{i,\pm d}$ are central elements. The rest of the generators have the following relations:
\begin{align*}
&e_{i}(z)h_{j}^{\pm}(w)=h_{j}^{\pm}(w)e_{i}(z)\frac{\zeta_{ij}(z/w)}{\zeta_{ji}(w/z)},\qquad e_i(z)e_j(w)\zeta_{ji}(\frac{w}{z})=e_j(w)e_i(z)\zeta_{ij}(\frac{z}{w})\\
&f_{i}(z)h_{j}^{\pm}(w)=h_{j}^{\pm}(w)f_{i}(z)\frac{\zeta_{ji}(w/z)}{\zeta_{ij}(z/w)},\qquad f_i(z)f_j(w)\zeta_{ij}(\frac{z}{w})=f_j(w)f_i(z)\zeta_{ji}(\frac{w}{z})
\end{align*}
\begin{align*}
e_i(z)q^{\pm\frac{v_j}{2}}=q^{\pm\frac{v_j}{2}}e_j(z)\cdot q^{\mp\frac{\delta_{ij}}{2}}
\end{align*}
\begin{align*}
f_{i}(z)q^{\pm\frac{v_j}{2}}=q^{\pm\frac{v_j}{2}}f_j(z)\cdot q^{\pm\frac{\delta_{ij}}{2}}
\end{align*}
\begin{align*}
[e_i(z),a_{j,d}]=e_{i}(z)\cdot\delta_{ij}z^d(q^{-d}-1)
\end{align*}
\begin{align*}
[f_i(z),a_{j,d}]=f_{i}(z)\cdot\delta_{ij}z^d(1-q^{-d})
\end{align*}

\begin{align*}
[e_{i,d},f_{j,k}]=\delta_{ij}\cdot\gamma_i
\begin{cases}
-h_{i,d+k}&\text{if }d+k>0\\
h_{i,0}^{-1}-h_{i,0}&\text{if }d+k=0\\
h_{i,d+k}&\text{if }d+k<0
\end{cases}.
\end{align*}

Here $h_{i}^{\pm}(z)$ is written as:
\begin{equation}\label{Definition-of-Cartan-current}
\begin{aligned}
h_{i}^{\pm}(z)=&h_{i,0}^{\pm1}+\sum_{d=1}^{\infty}\frac{h_{i,\pm d}}{z^{\pm d}}:=(q^{\pm\frac{1}{2}})^{w_i-2v_i+\sum_{e=ij}v_j+\sum_{e=ji}v_j}\times\\
&\times\exp(\sum_{d=1}^{\infty}\frac{b_{i,\pm d}-a_{i,\pm d}(1+q^{\pm d})+\sum_{e=ij}a_{j,\pm d}q^{\pm d}t_{e}^{\mp d}+\sum_{e=ji}a_{j,\pm d}t_{e}^{\pm d}}{dz^{\pm d}})
\end{aligned}
\end{equation}
and $\zeta_{ij}(x)$ is defined as:
\begin{align}\label{zeta-function}
\zeta_{ij}(x)=(\frac{1-xq^{-1}}{1-x})^{\delta^i_j}\prod_{e=ij\in E}(1-t_ex)\prod_{e=ji\in E}(1-\frac{q}{t_ex})
\end{align}
and $\gamma_i$ is defined as:
\begin{align*}
\gamma_i=\frac{\prod_{e=ii}[(1-t_e^{-1}q)(1-t_e)]}{1-q^{-1}}.
\end{align*}

The Drinfeld coproduct structure over $\mc{A}^{ext}_Q$ can be written as:
\begin{align*}
&\Delta(h_{i}^{\pm}(z))=h_{i}^{\pm}(z)\otimes h_{i}^{\pm}(z)\\
&\Delta(e_i(z))=e_i(z)\otimes 1+h_{i}^+(z)\otimes e_{i}(z)\\
&\Delta(f_{i}(z))=f_{i}(z)\otimes h_{i}^-(z)+1\otimes f_{i}(z).
\end{align*}

In fact the generators for the localised double KHA $\mc{A}^{ext}_{Q}$ can be reduced to some smaller number of generators. The following proposition was proved in \cite{N23}:
\begin{prop}[See Proposition 2.10 in \cite{N23}]\label{minimal-set-generators-KHA:proposition}
The algebra $\mc{A}^{ext}_{Q}$ is generated by $\{e_{i,0},f_{i,0},q^{\pm\frac{v_i}{2}},a_{i,\pm1}\}_{i\in I}$ and $\{q^{\pm\frac{w_i}{2}},b_{i,\pm d}\}_{i\in I,d\geq0}$.
\end{prop}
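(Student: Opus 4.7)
The plan is to start from the listed generators and recursively produce the remaining elements of $\mc{A}^{ext}_{Q}$, namely $e_{i,d}$ and $f_{i,d}$ for $d \neq 0$ and $a_{i,\pm d}$ for $d \geq 2$, using the commutation relations displayed after \ref{Cartan-part-double-KHA-localised}. The argument breaks naturally into three steps.

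First, the relation $[e_i(z), a_{j,d}] = \delta_{ij}(q^{-d}-1) z^d e_i(z)$ gives, upon extracting Laurent coefficients, $[e_{i,k}, a_{i,d}] = (q^{-d}-1)\, e_{i,k-d}$. Iterating $\operatorname{ad}(a_{i,1})$ on $e_{i,0}$ produces every $e_{i,d}$ with $d < 0$ up to the nonzero scalar $(q^{-1}-1)^{|d|}$, and iterating $\operatorname{ad}(a_{i,-1})$ on $e_{i,0}$ produces every $e_{i,d}$ with $d > 0$. The analogous relation for $f_i(z)$ yields all $f_{i,d}$. Second, from the Cartan relation $[e_{i,d}, f_{i,k}] = -\gamma_i h_{i,d+k}$ for $d+k > 0$ (and symmetrically for $d+k<0$), every $h_{i,\pm N}$ with $N \geq 1$ is exhibited as a commutator of elements already constructed, so each coefficient of $h_i^{\pm}(z)$ lies in the subalgebra generated by the listed elements.

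Third, we invert the exponential definition \ref{Definition-of-Cartan-current}. The coefficients $c_{i,\pm d}$ of $\log(h_i^{\pm}(z)/h_{i,0}^{\pm 1})$ are polynomial expressions in $h_{i,0}^{\pm 1}, h_{i,\pm 1}, \dots, h_{i,\pm d}$, hence are already in the subalgebra; on the other hand the explicit formula in \ref{Definition-of-Cartan-current} gives $c_{i,\pm d} = b_{i,\pm d} - a_{i,\pm d}(1 + q^{\pm d}) + \sum_{e=ij} a_{j,\pm d}\, q^{\pm d} t_e^{\mp d} + \sum_{e=ji} a_{j,\pm d}\, t_e^{\pm d}$. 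Since the $b_{i,\pm d}$ are generators, for each fixed $d \geq 2$ this is an $|I| \times |I|$ linear system in the unknowns $\{a_{j,\pm d}\}_{j\in I}$ with right-hand side already in the subalgebra, so solving it expresses each $a_{j,\pm d}$ as a subalgebra element.

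The main obstacle is the invertibility over $\mbb{F}$ of the coefficient matrix $M(d)$ appearing in step three. Its entries are Laurent polynomials in $q$ and the $t_e$, and one verifies that $\det M(d)$ is a nontrivial Laurent polynomial — for instance, by examining a specialization in which the diagonal term $-(1+q^{\pm d})$ dominates the off-diagonal entries, or by expanding in a suitable limit of the $t_e$'s, so that $M(d)$ degenerates to an invertible matrix. Once this nondegeneracy is established, inversion over $\mbb{F}$ completes the extraction of the $a_{j,\pm d}$ and finishes the proof.
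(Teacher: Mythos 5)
The paper does not supply a proof of this proposition; it is simply cited as Proposition~2.10 of~\cite{N23}. So the comparison has to be against what one would expect that reference to say, and your outline is almost certainly the intended argument: bootstrap $e_{i,d}$ and $f_{i,d}$ from $e_{i,0},f_{i,0}$ by commuting with $a_{i,\pm 1}$, recover the Cartan currents from $[e_{i,d},f_{i,k}]$, and then invert the linear change of variables between $\{a_{j,\pm d}\}$ and the $\log$-coefficients of $h_i^\pm(z)$.

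Two remarks on the details. First, a small sign slip: from $[e_i(z),a_{j,d}]=\delta_{ij}(q^{-d}-1)z^d e_i(z)$ and $e_i(z)=\sum_k e_{i,k}z^{-k}$ one extracts $[e_{i,k},a_{j,d}]=\delta_{ij}(q^{-d}-1)e_{i,k+d}$, not $e_{i,k-d}$; so $\operatorname{ad}(a_{i,1})$ moves the index up and $\operatorname{ad}(a_{i,-1})$ moves it down. This is harmless because you use both, but the directions in your write-up are reversed.

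Second, and more substantively, the invertibility of $M(d)$ is the real content of the proof and your justification is too loose to stand as written. The casual specializations you gesture at can fail: for example, for the quiver with one vertex and one loop, at $q=t_e=1$ the diagonal entry $-(1+q^{\pm d})+q^{\pm d}t_e^{\mp d}+t_e^{\pm d}$ vanishes, so ``diagonal dominance'' is not automatic. A clean way to nail it down is a two-step degeneration ordered so that every entry stays finite: for the $+d$ system, first send $q\to 0$ (so all $q^d t_e^{-d}$ terms die), reducing $M(d)$ to $-I+T$ with $T_{ij}=\sum_{e=ji}t_e^{d}$; then send all $t_e\to 0$ to get $-I$, whose determinant is $(-1)^{|I|}$. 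Hence $\det M(d)$, as a Laurent polynomial in $q,t_e$, has a nonzero constant term and is therefore nonzero in $\mathbb{F}=\mathbb{Q}(q,t_e)_{e\in E}$. The $-d$ system is handled symmetrically by $q\to\infty$, $t_e\to\infty$. With this one gap filled, your proof is complete and is the natural argument for the statement.
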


\subsection{Geometric action on quiver varieties}
In this subsection we introduce the geometric action of the KHA over the equivariant $K$-theory of Nakajima quiver varieties. One can refer to \cite{Nak01}\cite{N22}\cite{N23} for details.

\subsubsection{Nakajima quiver variety}
Nakajima quiver variety was first introduced in \cite{Nak98}\cite{Nak01}. Here we review the construction.

For the quiver $Q$ and any vector $\mbf{v},\mbf{w}\in\mbb{N}^{I}$, consider the affine space:
\begin{align*}
T^*\text{Rep}_{Q}(\mbf{v},\mbf{w})=\bigoplus_{ij=e\in E}[\text{Hom}(V_i,V_j)\oplus\text{Hom}(V_j,V_i)]\bigoplus_{i\in I}[\text{Hom}(W_i,V_i)\oplus\text{Hom}(V_i,W_i)].
\end{align*}

Here $\text{dim}(V_i)=v_i,\text{dim}(W_i)=w_i$ for all $i\in I$. And points of the affine space above can be denoted by quadruples:
\begin{align}
(X_e,Y_e,A_i,B_i)_{e\in E,i\in I}.
\end{align}

Consider the action of $G_{\mbf{v}}=\prod_{i\in I}GL(V_i)$ on $T^*\text{Rep}_{Q}(\mbf{v},\mbf{w})$ by conjugating $X_e, Y_e$ via left-multiplying $A_i$ and right-multiplying $B_i$. Now we choose the stability conditionb $\bm{\theta}:G_{\mbf{v}}\rightarrow\mbb{C}^*$:
\begin{align*}
\bm{\theta}(\{g_i\}_{i\in I})=\prod_{i\in I}\text{det}(g_i)^{\theta_i},\qquad\theta_i\in\mbb{Z}.
\end{align*}

In this paper we fix the stability condition to be $\bm{\theta}=(-1,\cdots,-1)$, and it has the corresponding stable points:
\begin{align*}
T^*\text{Rep}_{Q}(\mbf{v},\mbf{w})^{s}\subset T^*\text{Rep}_{Q}(\mbf{v},\mbf{w})
\end{align*}
such that for the quadruples $(X_e,Y_e,A_i,B_i)_{e\in E,i\in I}$ there exists no collection of proper subspaces $\{V_i'\subset V_i\}_{i\in I}$ preserved by the maps $X_{e}$ and
$Y_{e}$, and contains $\text{Im}(A_i)$ for all $i\in I$.

The Hamiltonian action of $G_{\mbf{v}}$ on $T^*\text{Rep}_{Q}(\mbf{v},\mbf{w})$ induces the moment map:
\begin{equation*}
\begin{tikzcd}
T^*\text{Rep}_{Q}(\mbf{v},\mbf{w})\arrow[r,"\mu_{\mbf{v},\mbf{w}}"]&\text{Lie}(G_{\mbf{v}})=\bigoplus_{i\in I}\text{Hom}(V_i,V_i)
\end{tikzcd}
\end{equation*}
which can be written as:
\begin{align*}
\mu_{\mbf{v},\mbf{w}}((X_e,Y_e,A_i,B_i)_{e\in E,i\in I})=\sum_{e\in E}(X_{t(e)}Y_{h(e)}-Y_{t(e)}X_{h(e)})+\sum_{i\in I}A_iB_i.
\end{align*}
If we write $\mu^{-1}_{\mbf{v},\mbf{w}}(0)^{s}=\mu^{-1}_{\mbf{v},\mbf{w}}(0)\cap T^*\text{Rep}_{Q}(\mbf{v},\mbf{w})^s_{\mbf{v},\mbf{w}}$, and then there is a geometric quotient:
\begin{align*}
\mc{M}_{Q}(\mbf{v},\mbf{w})=\mu^{-1}_{\mbf{v},\mbf{w}}(0)^{s}/G_{\mbf{v}}
\end{align*}

which is called the \textbf{Nakajima quiver variety} for the quiver $Q$ associated to the dimension vector $\mbf{v}$, $\mbf{w}$. It is a smooth quasi-projective variety of dimension $2[\langle\mbf{v},\mbf{v}\rangle+\mbf{v}\cdot(\mbf{w}-\mbf{v})]$ \cite{Nak98}. Here:
\begin{align}\label{notation-for-inner-product-on-dimension-vector}
\langle\mbf{a},\mbf{b}\rangle:=\sum_{i,j\in I}a_ib_j\#_{ij},\qquad\mbf{a}\cdot\mbf{b}=\sum_{i\in I}a_ib_i.
\end{align}

\subsubsection{Nilpotent quiver variety}
Fix a quiver variety $\mc{M}_{Q}(\mbf{v},\mbf{w})$, we define its nilpotent quiver variety $\mc{L}_{Q}(\mbf{v},\mbf{w})$ as the attracting set of the $\mbb{C}^*$-action over $\mc{M}_{Q}(\mbf{v},\mbf{w})$ given by:
\begin{align*}
z\cdot(X,Y,I,J)=(z^{a}X,z^{a^*}Y,z^aI,z^{a^*}J),a,a^*\in\mbb{Z}_{<0}.
\end{align*}
It is a projective subvariety of $\mc{M}_{Q}(\mbf{v},\mbf{w})$. Alternatively, the variety $\mc{L}_{Q}(\mbf{v},\mbf{w})$ can also be described as:
\begin{align}\label{GIT-description-for-nilpotent-quiver-variety}
\mc{L}_{Q}(\mbf{v},\mbf{w})\cong(\mu^{-1}(0)^{s}_{\mbf{v},\mbf{w}}\cap(\Lambda_{\mbf{v}}\times(\bigoplus_{i\in I}\text{Hom}(W_i,V_i))))/G_{\mbf{v}}.
\end{align}

Let us denote the natural closed inclusion map by $i:\mc{L}_{Q}(\mbf{v},\mbf{w})\hookrightarrow\mc{M}_{Q}(\mbf{v},\mbf{w})$.

\subsubsection{Equivariant K-theory}

On $\mc{M}_{Q}(\mbf{v},\mbf{w})$ and $\mc{L}_{Q}(\mbf{v},\mbf{w})$ there is an algebraic group action
\begin{align}\label{torus-action-given-fixed}
T_{\mbf{w}}=T\times\prod_{i\in I}GL(W_i),\qquad T:=\mbb{C}^*_q\times \prod_{e\in E}\mbb{C}^*_{t_e}
\end{align}
which is written as:
\begin{align*}
(q,t_e,U_i)_{e\in E,i\in I}\cdot(X_e,Y_e,A_i,B_i)_{e\in E,i\in I}=(\frac{X_e}{t_e},\frac{t_eY_e}{q},A_iU_i^{-1},\frac{U_iB_i}{q})_{e\in E,i\in I}.
\end{align*}
Now with respect to the action above, the $T_{\mbf{w}}$-equivariant algebraic $K$-theory groups of Nakajima quiver varieties are modules over the ring
\begin{align*}
K_{T_{\mbf{w}}}(pt)=\mbb{Z}[q^{\pm1},t_{e}^{\pm1}][a_{ik}^{\pm1}]^{\text{Sym}}_{e\in E,i\in I,1\leq k\leq w_i}
\end{align*}

and here $a_{ik}$ stands for the equivariant parametres of the maximal torus in $\prod_{i\in I}GL(W_i)$.

It has been proved in \cite{KN18} that the equivariant $K$-theory of Nakajima quiver varieties $K_{T_{\mbf{w}}}(\mc{M}_{Q}(\mbf{v},\mbf{w}))$ is generated by the tautological bundles $\mc{V}_i$ and $K_{T_{\mbf{w}}}(pt)$.

The following important theorem has been proved in \cite{Nak01}:
\begin{thm}[See Theorem 7.3.5 in \cite{Nak01}]\label{perfect-pairing:label}
$K_{A\times G_{\mbf{w}}}(\mc{M}_{Q}(\mbf{v},\mbf{w}))$ is a free $K_{A\times G_{\mbf{w}}}(pt)$-module of finite rank for arbitrary $\mbb{C}_q^*\subset A\subset T$. Moreover, there is a perfect pairing:
\begin{align*}
K_{A\times G_{\mbf{w}}}(\mc{M}_{Q}(\mbf{v},\mbf{w}))\otimes K_{A\times G_{\mbf{w}}}(\mc{L}_{Q}(\mbf{v},\mbf{w}))\rightarrow K_{A\times G_{\mbf{w}}}(pt),\qquad(\mc{F},\mc{G})\mapsto p_{*}(\mc{F}\otimes^{\mbb{L}}_{\mc{M}_{Q}(\mbf{v},\mbf{w})}\mc{G})
\end{align*}
of $K_{T_{\mbf{w}}}(pt)$-modules. Here $p$ is the canonical map from $\mc{L}_{Q}(\mbf{v},\mbf{w})$ to a point. $i:\mc{L}_{Q}(\mbf{v},\mbf{w})\hookrightarrow\mc{M}_{Q}(\mbf{v},\mbf{w})$ is the natural closed embedding.
\end{thm}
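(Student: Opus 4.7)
The plan is to prove both statements simultaneously via the Bialynicki--Birula decomposition associated to a generic cocharacter of $T_{\mathbf{w}}$, together with induction on the dimension of $\mc{M}_{Q}(\mbf{v},\mbf{w})$. The crucial input is that the nilpotent variety $\mc{L}_{Q}(\mbf{v},\mbf{w})$ is, by its very definition, the full attracting set of a suitable $\mbb{C}^*$-action contained in $T_{\mathbf{w}}$.

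First I would choose a generic one-parameter subgroup $\rho:\mbb{C}^*\to T_{\mathbf{w}}$ refining the scaling cocharacter used to define $\mc{L}_{Q}(\mbf{v},\mbf{w})$, so that the fixed locus $\mc{M}_{Q}(\mbf{v},\mbf{w})^{\rho}=\bigsqcup_{\alpha}F_{\alpha}$ is a finite disjoint union of smooth compact components; each $F_\alpha$ is in fact a product of Nakajima quiver varieties of strictly smaller dimension vector (for subquivers determined by the weight decomposition of $V_i$, $W_i$ under $\rho$) and therefore has free $K_{T_{\mathbf{w}}}(\text{pt})$-module equivariant $K$-theory of finite rank by induction. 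The attracting sets $\mathsf{Attr}(F_\alpha)$ form a stratification of $\mc{L}_{Q}(\mbf{v},\mbf{w})$, and each is a $T_{\mathbf{w}}$-equivariant affine bundle over $F_\alpha$ by smoothness of $\mc{M}_{Q}(\mbf{v},\mbf{w})$.

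Next, by ordering the $F_\alpha$'s compatibly with a generic linear functional on the cocharacter lattice, one produces an increasing filtration $\emptyset=\mc{M}^{\leq 0}\subset\mc{M}^{\leq 1}\subset\cdots\subset\mc{M}^{\leq N}=\mc{M}_{Q}(\mbf{v},\mbf{w})$ by closed subvarieties with successive differences $\mathsf{Attr}(F_\alpha)$. Iterating the excision long exact sequence in equivariant algebraic $K$-theory together with homotopy invariance along the affine bundle $\mathsf{Attr}(F_\alpha)\to F_\alpha$ yields
\[
K_{T_{\mathbf{w}}}(\mc{M}_{Q}(\mbf{v},\mbf{w}))\cong\bigoplus_{\alpha}K_{T_{\mathbf{w}}}(F_\alpha),
\]
and analogously, restricting to $\mc{L}_{Q}(\mbf{v},\mbf{w})$ and pushing forward along the proper closed inclusions $F_\alpha\hookrightarrow\mathsf{Attr}(F_\alpha)\hookrightarrow\mc{L}_{Q}(\mbf{v},\mbf{w})$,
\[
K_{T_{\mathbf{w}}}(\mc{L}_{Q}(\mbf{v},\mbf{w}))\cong\bigoplus_{\alpha}K_{T_{\mathbf{w}}}(F_\alpha).
\]
This yields freeness of finite rank.

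For the perfect pairing, I would compute $(\mc{F},\mc{G})\mapsto p_*(\mc{F}\otimes^{\mbb{L}}\mc{G})$ in the bases produced above. Using base change and the self-intersection formula $i_\alpha^* i_{\alpha*}=\Lambda^{\bullet}_{-1}(N^\vee_{F_\alpha/\mc{M}_Q})\cdot(-)$, the pairing becomes block-triangular with respect to the partial order on fixed components, since the attracting cell of $F_\alpha$ meets the closure of the repelling cell of $F_\beta$ only when $\alpha\leq\beta$. The diagonal blocks are given, up to a unit in $K_{T_{\mathbf{w}}}(\text{pt})$ coming from the equivariant Euler class of $N^{+}_{F_\alpha/\mc{M}_Q}$ (invertible because $\rho$ is generic), by the Poincar\'e duality pairing $K_{T_{\mathbf{w}}}(F_\alpha)\otimes K_{T_{\mathbf{w}}}(F_\alpha)\to K_{T_{\mathbf{w}}}(\text{pt})$, which is perfect by the inductive hypothesis applied to the smaller quiver varieties making up $F_\alpha$. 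Hence the total pairing matrix is unitriangular over a perfect-pairing diagonal and is itself perfect.

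The main obstacle is verifying that the genericity of $\rho$ can be arranged so that simultaneously the components $F_\alpha$ are compact products of smaller Nakajima quiver varieties (so that induction applies), the stratification by $\mathsf{Attr}(F_\alpha)$ genuinely realises affine bundles rather than only set-theoretic attracting cells, and the triangularity statement for the pairing matrix holds with invertible diagonal Euler classes. Establishing this uses the fixed-point description of $\mc{M}_{Q}(\mbf{v},\mbf{w})^{\rho}$ via weight decompositions of $V_i$ and $W_i$, the Hamiltonian/symplectic structure to identify normal bundles as direct sums of a weight-positive and weight-negative Lagrangian piece, and the properness of the map $\mc{M}_{Q}(\mbf{v},\mbf{w})\to\mc{M}_{0}(\mbf{v},\mbf{w})$ to the affine quotient to pin down compactness of the $F_\alpha$.
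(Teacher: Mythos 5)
Your proposal captures the broad structure of Nakajima's argument (which the paper simply cites, extended to the edge torus $\prod_{e\in E}\mathbb{C}^*_{t_e}$ by noting the hyperk\"ahler metric is preserved), namely a Bialynicki--Birula stratification plus an induction, but two of the key inductive claims do not hold as stated.

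The claim that the fixed components $F_\alpha$ of a generic cocharacter $\rho$ refining the scaling cocharacter are ``products of Nakajima quiver varieties of strictly smaller dimension vector'' is not correct. It is true for cocharacters landing in the framing torus $\prod_i GL(W_i)$ alone, where $\mathbf{w}$ splits into weight pieces. However, the scaling cocharacter defining $\mathcal{L}_Q(\mathbf{v},\mathbf{w})$ necessarily has a nontrivial $\mathbb{C}^*_q$- (and $\mathbb{C}^*_{t_e}$-)component, and for any cocharacter with such a component the fixed locus is a \emph{graded} quiver variety in Nakajima's sense --- still smooth, projective, and with a proper map to a point of the affine quotient --- but \emph{not} a product of ordinary Nakajima quiver varieties for smaller dimension data. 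Your induction therefore cannot terminate by recursing on quiver varieties of smaller dimension vectors; one must handle the graded quiver varieties directly (via their own BB stratification or a Kirwan-surjectivity-type argument), or organize the induction on dimension with a separately established base case.

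A second genuine gap is in the diagonal blocks of the pairing matrix. You assert these are ``the Poincar\'e duality pairing $K_{T_{\mathbf{w}}}(F_\alpha)\otimes K_{T_{\mathbf{w}}}(F_\alpha)\to K_{T_{\mathbf{w}}}(\mathrm{pt})$, which is perfect by the inductive hypothesis applied to the smaller quiver varieties making up $F_\alpha$.'' But the inductive hypothesis concerns the pairing between $K_{T_{\mathbf{w}}}(\mathcal{M}_Q(\mathbf{v},\mathbf{w}))$ and $K_{T_{\mathbf{w}}}(\mathcal{L}_Q(\mathbf{v},\mathbf{w}))$ --- a quiver variety paired against its Lagrangian --- not a self-pairing on a compact fixed component. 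Perfectness of the self-pairing $p_*(\mathcal{F}\otimes\mathcal{G})$ on smooth compact $F_\alpha$ is a genuinely separate Poincar\'e-duality statement (requiring that $F_\alpha$ itself admits a cellular structure, or that equivariant $K$-theory of $F_\alpha$ surjects from $K_{T_{\mathbf{w}}\times G_{\mathbf{v}}}(\mathrm{pt})$), and you should say why this holds rather than invoking the induction. Finally, $T_{\mathbf{w}}=T\times\prod_i GL(W_i)$ is non-abelian, so a generic cocharacter does not commute with $T_{\mathbf{w}}$ and the attracting cells are only invariant for the centralizer of $\rho$; you would first run the argument for the maximal torus $\tilde{T}_{\mathbf{w}}$ and then descend by Weyl-group invariance, and this descent step for freeness should at least be acknowledged.
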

\begin{proof}
The proof is basically the same as the proof in Section seven of \cite{Nak01} that one just need to note that the hyperkahler metric over $\mc{M}_{Q}(\mbf{v},\mbf{w})$ is also invariant under $\prod_{e\in E}\mbb{C}^*$-action.
\end{proof}

On the other hand, the theorem means that we have the following isomorphism of $K_{A\times G_{\mbf{w}}}(pt)$-modules:
\begin{equation*}
\begin{aligned}
&\text{Hom}_{K_{A\times G_{\mbf{w}}}(pt)}(K_{A\times G_{\mbf{w}}}(\mc{M}_{Q}(\mbf{v}_1,\mbf{w})),K_{A\times G_{\mbf{w}}}(\mc{M}_{Q}(\mbf{v}_2,\mbf{w})))\\
\cong&\text{Hom}_{K_{A\times G_{\mbf{w}}}(pt)}(K_{A\times G_{\mbf{w}}}(\mc{L}_{Q}(\mbf{v}_2,\mbf{w})),K_{A\times G_{\mbf{w}}}(\mc{L}_{Q}(\mbf{v}_1,\mbf{w}))).
\end{aligned}
\end{equation*}

This perfect pairing allows us to treat $K_{A\times G_{\mbf{w}}}(\mc{L}_{Q}(\mbf{v},\mbf{w}))$ as the $K_{A\times G_{\mbf{w}}}(pt)$-dual of $K_{A\times G_{\mbf{w}}}(\mc{M}_{Q}(\mbf{v},\mbf{w}))$:
$$K_{A\times G_{\mbf{w}}}(\mc{L}_{Q}(\mbf{v},\mbf{w}))\cong K_{A\times G_{\mbf{w}}}(M_{Q}(\mbf{v},\mbf{w}))^{\vee}.$$

In the following context, unless we have mentioned, we will always use $A$ to be the full torus $T$.

\subsubsection{Non-localised action from $\mc{A}^{+,\mbb{Z}}_{Q}$}
The action of $\mc{A}^{+,\mbb{Z}}_{Q}$ on $K_{T_{\mbf{w}}}(\mc{M}_{Q}(\mbf{w}))$ can be described by the following diagram:
\begin{equation}\label{Hecke-correspondence-preprojective-KHA}
\begin{tikzcd}
&\mc{M}_{Q}(\mbf{v},\mbf{v}+\mbf{n},\mbf{w})\arrow[dl,"p"]\arrow[d,"\pi_{+}"]\arrow[dr,"\pi_{-}"]&\\
\mc{Y}_{\mbf{n}}&\mc{M}_{Q}(\mbf{v}+\mbf{n},\mbf{w})&\mc{M}_{Q}(\mbf{v},\mbf{w})
\end{tikzcd}
\end{equation}
and here $\mc{M}_{Q}(\mbf{v},\mbf{v}+\mbf{n},\mbf{w})$ is the moduli stack parametrising the short exact sequences
\begin{align}\label{short-exact-sequence-for-Hecke-moduli}
0\rightarrow K_{\bullet}\rightarrow V_{\bullet}^{+}\rightarrow V_{\bullet}^{-}\rightarrow0
\end{align}
where $V_{\bullet}^+$ and $V_{\bullet}^{-}$ are stable quiver representations with dimension vector $\mbf{v}+\mbf{n},\mbf{v}$ respectively and of the framing vector $\mbf{w}$. The diagram \ref{Hecke-correspondence-preprojective-KHA} gives the following map:
\begin{equation*}
\begin{aligned}
&\mc{A}^{+,\mbb{Z}}_{Q,\mbf{n}}\otimes K_{T_{\mbf{w}}}(\mc{M}_{Q}(\mbf{v},\mbf{w}))\rightarrow K_{T_{\mbf{w}}}(\mc{M}_{Q}(\mbf{v}+\mbf{n},\mbf{w}))\\
&\alpha\otimes\beta\mapsto\pi_{+*}(\text{sdet}[\sum_{e=ji\in E}\frac{t_e\mc{V}_j^+}{q\mc{K}_i}-\sum_{i\in I}\frac{\mc{V}_i^+}{q\mc{K}_i}]\cdot(p\times\pi_-)^{!}(\alpha\boxtimes\beta)).
\end{aligned}
\end{equation*}

It has been proved in \cite{N23} that this gives the action of $\mc{A}^{+,\mbb{Z}}_{Q}$ on $K_{T_{\mbf{w}}}(\mc{M}_{Q}(\mbf{w}))$. 

On the other hand, since $\pi_{-}$ is not proper, the action
\begin{equation*}
\begin{aligned}
&(\mc{A}^{+,\mbb{Z}}_{Q,\mbf{n}})^{op}\otimes K_{T_{\mbf{w}}}(\mc{M}_{Q}(\mbf{v}+\mbf{n},\mbf{w}))\rightarrow K_{T_{\mbf{w}}}(\mc{M}_{Q}(\mbf{v},\mbf{w}))\\
&\alpha\otimes\beta\mapsto\pi_{-*}(\text{sdet}[\sum_{e=ji\in E}\frac{q\mc{V}_j^-}{t_e\mc{K}_i}-\sum_{i\in I}\frac{q\mc{V}_i^-}{\mc{K}_i}+\sum_{i\in I}\frac{W_i}{\mc{K}_i}]\cdot(p\times\pi_+)^{!}(\alpha\boxtimes\beta))
\end{aligned}
\end{equation*}
can only be defined after localisation to the $T$-fixed point part.

\subsubsection{Localised action from $\mc{A}^{ext}_{Q}$}
\label{subsub:localised_action_from_mc_a}
We denote the localized $K$-theory groups as:
\begin{equation*}
\begin{aligned}
K(\mbf{v},\mbf{w})=K_{T_{\mbf{w}}}(\mc{M}_Q(\mbf{v},\mbf{w}))\otimes_{\mbb{Z}[q^{\pm1},t_{e}^{\pm1}][a_{ik}^{\pm1}]^{\text{Sym}}_{e\in E,i\in I,1\leq k\leq w_i}}\mbb{Q}(q,t_e)(a_{ik})^{\text{Sym}}_{e\in E,i\in I,1\leq k\leq w_i}.
\end{aligned}
\end{equation*}

Thus we can consider the direct sum:
\begin{align*}
K(\mbf{w})=\bigoplus_{\mbf{v}\in\mbb{N}^I}K(\mbf{v},\mbf{w}).
\end{align*}

Here we give the geometric action of $\mc{A}_{Q}$ over $K(\mbf{w})$. We define the stack $\mc{M}_{Q}(\mbf{v},\mbf{v}+\mbf{e}_i,\mbf{w})$ parametrising the short exact sequences:
\begin{align*}
0\rightarrow K_{\bullet}\rightarrow V_{\bullet}^+\rightarrow V_{\bullet}^-\rightarrow0,\qquad K_{\bullet}\in\mc{Y}_{\mbf{e}_i}, V_{\bullet}^{\pm}\in\mc{M}_{Q}(\mbf{v}(+\mbf{e}_i),\mbf{w}).
\end{align*}
This space gives the natural projection map as follows:

\begin{equation}\label{corresponding-for-basics}
\begin{tikzcd}
&\mc{M}_{Q}(\mbf{v},\mbf{v}+\mbf{e}_i,\mbf{w})\arrow[dl,"p"]\arrow[d,"\pi_+"]\arrow[dr,"\pi_-"]&\\
\mc{Y}_{\mbf{e}_i}&\mc{M}_{Q}(\mbf{v}+\mbf{e}_i,\mbf{w})&\mc{M}_{Q}(\mbf{v},\mbf{w}).
\end{tikzcd}
\end{equation}

Moreover, it was proved in \cite{Nak01}\cite{N23} that $\mc{M}_{Q}(\mbf{v},\mbf{v}+\mbf{e}_i,\mbf{w})$ is a quasiprojective scheme. Using the map in the above diagram \ref{corresponding-for-basics}, we can define the operator
\begin{align*}
e_{i,d}=\pi_{+*}(\mc{L}_i^d\cdot\text{sdet}[\sum_{e=ji\in E}\frac{t_e\mc{V}_j^+}{q\mc{L}_i}-\frac{\mc{V}_i^+}{q\mc{L}_i}]\cdot\pi_{-}^*)
\end{align*}
\begin{align*}
f_{i,d}=\pi_{-*}(\mc{L}_i^d\cdot\text{sdet}[\sum_{e=ij\in E}\frac{q\mc{V}_j^-}{t_e\mc{L}_i}-\frac{q\mc{V}_i^-}{\mc{L}_i}+\frac{W_i}{\mc{L}_i}]\cdot\pi_+^*).
\end{align*}

This action is well-defined over the localised $K$-theory $K(\mbf{w}):=\bigoplus_{\mbf{v}\in\mbb{N}^I}K_{T_{\mbf{w}}}(\mc{M}_{Q}(\mbf{v},\mbf{w}))$. Also the action of $\mc{A}^{ext,0}_{Q}$ over $K(\mbf{w})$ is given by the multiplication of the tautological classes
\begin{align*}
a_{i,d}\mapsto p_{d}(\mc{V}_i(1-q^{-1}))\otimes(-)
\end{align*}

\begin{align*}
b_{i,d}\mapsto p_{d}(\mc{W}_i(1-q^{-1}))\otimes(-)
\end{align*}
and here $p_{d}$ is the power sum for the Chern roots, i.e.
\begin{align*}
p_d(x_1,\cdots,x_n):=x_1^d+\cdots+x_n^d.
\end{align*}

Since we know that $\mc{A}_{Q}^+$ is generated by $e_{i,d}$ after being localised over the equivariant parametres $\{q,t_e\}_{e\in E}$, one can write for arbitrary $F\in\mc{A}_{Q}^+$ as the polynomial over $e_{i,d}$ as follows:
\begin{align*}
F=\sum_{\mbf{i},\mbf{d}}a_{\mbf{i},\mbf{d}}e_{i_1,d_1}*\cdots*e_{i_n,d_n}:K(\mbf{w})\rightarrow K(\mbf{w})
\end{align*}
with $a_{\mbf{i},\mbf{d}}\in\mbb{Q}(q,t_e)_{e\in E}$ such that
\begin{align*}
a_{\mbf{i},\mbf{d}}e_{i_1,d_1}*\cdots*e_{i_n,d_n}:K(\mbf{v},\mbf{w})\rightarrow K(\mbf{v}+\mbf{e}_{i_1}+\cdots+\mbf{e}_{i_n},\mbf{w})
\end{align*}
can be written as a chain of the correspondences with a rational coefficients $a_{\mbf{i},\mbf{d}}\in K_{T}(pt)_{loc}$.

Also for each $i\in I$, we can consider the tautological bundle $V_i$ of rank $v_i$, whose fibre over a point is the vector space $V_i$. And we can formally write down:
\begin{align*}
[\mc{V}_i]=x_{i1}+\cdots+x_{iv_i}\in K(\mbf{v},\mbf{w})
\end{align*}
and here the symbols $x_{ia}$ are the Chern roots for any symmetric Laurent polynomial over $x_{ia}$. We abbreviate:
\begin{align*}
\mbf{X}_{\mbf{v}}=\{\cdots,x_{i1},\cdots,x_{iv_i},\cdots\}.
\end{align*}

Now from the Kirwan surjectivity \cite{MN18} we know that the Laurent polynomial $p(\mbf{X}_{\mbf{v}})$ and $K_{T_{\mbf{w}}}(pt)$ generate $K_{T_{\mbf{w}}}(\mc{M}_{Q}(\mbf{v},\mbf{w}))$ for any $\mbf{v},\mbf{w}\in\mbb{N}^{I}$.

\subsubsection{Shuffle formula for the action}
In this subsubsection we use the result in Section \ref{sec:slope_filtration_and_shuffle_realisations_of_k_theoretic_hall_algebras} for the shuffle realisation of the KHA $\mc{A}^{ext}_{Q}$.

It turns out that after the localisation, the geometric action is of $\mc{A}^{ext}_{Q}$ on $K(\mbf{w})$ can be written by the following \cite{N22}: Given $F\in\mc{A}_{Q,\mbf{n}}^+$ and $G\in\mc{A}_{Q,\mbf{n}}^{-}$, we have that:
\begin{align}\label{positive-action}
F\cdot p(\mbf{X}_{\mbf{v}})=\frac{1}{\mbf{n}!}\int^{+}\frac{F(\mbf{Z}_{\mbf{n}})}{\tilde{\zeta}(\frac{\mbf{Z}_{\mbf{n}}}{\mbf{Z}_{\mbf{n}}})}p(\mbf{X}_{\mbf{v}+\mbf{n}}-\mbf{Z}_{\mbf{n}})\tilde{\zeta}(\frac{\mbf{Z}_{\mbf{n}}}{\mbf{X}_{\mbf{v}+\mbf{n}}})\wedge^*(\frac{\mbf{Z}_{\mbf{n}}q}{\mbf{W}})
\end{align}

\begin{align}\label{negative-action}
G\cdot p(\mbf{X}_{\mbf{v}})=\frac{1}{\mbf{n}!}\int^{-}\frac{G(\mbf{Z}_{\mbf{n}})}{\tilde{\zeta}(\frac{\mbf{Z}_{\mbf{n}}}{\mbf{Z}_{\mbf{n}}})}p(\mbf{X}_{\mbf{v}-\mbf{n}}+\mbf{Z}_{\mbf{n}})\tilde{\zeta}(\frac{\mbf{X}_{\mbf{v}-\mbf{n}}}{\mbf{Z}_{\mbf{n}}})^{-1}\wedge^*(\frac{\mbf{Z}_{\mbf{n}}}{\mbf{W}})
\end{align}

\begin{align}\label{Cartan-action}
h_{i}^{\pm}(z_{i1})=\frac{\tilde{\zeta}(\frac{\mbf{Z}_{e_i}}{\mbf{X}_{\mbf{v}}})}{\tilde{\zeta}(\frac{X_{\mbf{v}}}{\mbf{Z}_{e_i}})}\cdot\frac{\wedge^*(\frac{z_{i1}q}{W_i})}{\wedge^*(\frac{z_{i1}}{W_i})}
\end{align}
and here the integral sign $\int^{\pm}$ has been interpreted in Section 4.17 of \cite{N22}.
The integral formula is well-defined on the localised equivariant $K$-theory $K(\mbf{w})$ of quiver varieties. 

The above notations stands for the following:
\begin{align*}
F(\mbf{Z}_{\mbf{n}})=F(\cdots,z_{i1},\cdots,z_{in_i},\cdots)_{i\in I}\in\mc{A}^{\pm}_{Q}
\end{align*}
\begin{align*}
\tilde{\zeta}_{ij}(x)=\frac{\zeta_{ij}(x)}{(1-xq^{-1})^{\delta_{ij}}(1-q^{-1}x^{-1})^{\delta_{ij}}}
\end{align*}
\begin{align*}
\tilde{\zeta}(\frac{\mbf{Z}_{\mbf{n}}}{\mbf{X}_{\mbf{v}}})=\prod_{1\leq a\leq n_i}^{i\in I}\prod_{1\leq b\leq v_j}^{j\in I}\tilde{\zeta}(\frac{z_{ia}}{x_{jb}}),\qquad\tilde{\zeta}(\frac{\mbf{X}_{\mbf{v}}}{\mbf{Z}_{\mbf{n}}})=\prod_{1\leq a\leq n_i}^{i\in I}\prod_{1\leq b\leq v_j}^{j\in I}\tilde{\zeta}(\frac{x_{jb}}{z_{ia}})
\end{align*}
\begin{align*}
\tilde{\zeta}(\frac{\mbf{Z}_{\mbf{n}}}{\mbf{Z}_{\mbf{n}}})=\prod_{\substack{1\leq a\leq n_i,1\leq b\leq n_j\\(i,a)\neq(j,b)}}\frac{\zeta_{ij}(\frac{z_{ia}}{z_{jb}})}{(1-\frac{z_{ia}}{qz_{jb}})^{\delta^i_j}}
\end{align*}
and here $\zeta_{ij}(x)$ is defined in \ref{zeta-function}.
\begin{align*}
\wedge^*(\frac{\mbf{Z}_{\mbf{n}}q^{0,1}}{\mbf{W}})=\prod^{i\in I}_{1\leq a\leq n_i}\wedge^*(\frac{z_{ia}q^{0,1}}{W_{i}}).
\end{align*}

The notation of the integral $\int^{\pm}$ represents the following integral type:
\begin{align*}
\int^{+}T(\cdots,z_{ia},\cdots)=\sum^{\text{functions}}_{\sigma:\{(i,a)\}\rightarrow\{\pm1\}}\int^{|q/t_e|^{\pm1},|t_e|^{\pm1}>1}_{|z_{ia}|=r^{\sigma(i,a)}}T(\cdots,z_{ia},\cdots)\prod_{(i,a)}\frac{\sigma(i,a)dz_{ia}}{2\pi\sqrt{-1}z_{ia}}
\end{align*}

\begin{align*}
\int^{-}T(\cdots,z_{ia},\cdots)=\sum^{\text{functions}}_{\sigma:\{(i,a)\}\rightarrow\{\pm1\}}\int^{|q/t_e|^{\pm1},|t_e|^{\pm1}<1}_{|z_{ia}|=r^{\sigma(i,a)}}T(\cdots,z_{ia},\cdots)\prod_{(i,a)}\frac{\sigma(i,a)dz_{ia}}{2\pi\sqrt{-1}z_{ia}}.
\end{align*}

For example, if $F=e_{i_1,d_1}*\cdots*e_{i_n,d_n}$ or $G=f_{i_1,d_1}*\cdots*f_{i_n,d_n}$, we have that:
\begin{equation*}
\int_{\{0, \infty\} \succ z_1 \succ \cdots \succ z_n}\frac{z_1^{d_1}\cdots z_{n}^{d_n}}{\prod_{1\leq a<b\leq n}\tilde{\zeta}_{i_bi_a}(z_b/z_a)}p(\mbf{X}_{\mbf{v}+\mbf{n}}-\mbf{Z}_{\mbf{n}})\tilde{\zeta}(\frac{\mbf{Z}_{\mbf{n}}}{\mbf{X}_{\mbf{v}+\mbf{n}}})\wedge^*(\frac{\mbf{Z}_{\mbf{n}}q}{\mbf{W}})\prod_{a=1}^{n}\frac{dz_a}{2\pi\sqrt{-1}z_{a}}
\end{equation*}

\begin{equation*}
\int_{\{0, \infty\} \succ z_1 \succ \cdots \succ z_n}\frac{z_1^{d_1}\cdots z_{n}^{d_n}}{\prod_{1\leq a<b\leq n}\tilde{\zeta}_{i_bi_a}(z_b/z_a)}p(\mbf{X}_{\mbf{v}-\mbf{n}}+\mbf{Z}_{\mbf{n}})\tilde{\zeta}(\frac{\mbf{X}_{\mbf{v}-\mbf{n}}}{\mbf{Z}_{\mbf{n}}})^{-1}\wedge^*(\frac{\mbf{Z}_{\mbf{n}}}{\mbf{W}})^{-1}\prod_{a=1}^{n}\frac{dz_a}{2\pi\sqrt{-1}z_{a}}.
\end{equation*}

It has been proved in \cite{N22} that the above integral formula gives a well-defined algebra action of $\mc{A}^{ext}_{Q}$ on $K(\mbf{w})$.

The following two theorems has been proved in \cite{N23} and it will be useful in this paper:
\begin{thm}[See Proposition 2.18 of \cite{N23}]\label{surjectivity-noetherian-lemma-localised:theorem}
The action described above in \ref{positive-action} gives a surjective map of $K_{T_{\mbf{w}}}(pt)_{loc}$-modules:
\begin{align*}
\mc{A}_{Q,\mbf{v}}^+\otimes\mbb{F}_{\mbf{w}}\twoheadrightarrow K_{T_{\mbf{w}}}(\mc{M}_{Q}(\mbf{v},\mbf{w}))_{loc},\qquad\mbb{F}_{\mbf{w}}:=K_{T_{\mbf{w}}}(pt)_{loc}.
\end{align*}

Moreover, associated with the perfect pairing \ref{perfect-pairing:label}, every covector in $\text{Hom}_{K_{T_{\mbf{w}}}(pt)_{loc}}(K_{T_{\mbf{w}}}(\mc{M}_{Q}(\mbf{v},\mbf{w}))_{loc},K_{T_{\mbf{w}}}(pt)_{loc})$ is generated by $\mc{A}^{-}_{Q,\mbf{v}}$ via the action \ref{negative-action}.
\end{thm}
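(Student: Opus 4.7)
The plan is to combine Kirwan surjectivity with the explicit shuffle formula~\ref{positive-action} to reach every tautological class from the vacuum. First, by Kirwan surjectivity (invoked in Subsection~\ref{subsub:localised_action_from_mc_a} via~\cite{MN18}), the module $K_{T_{\mbf{w}}}(\mc{M}_{Q}(\mbf{v},\mbf{w}))_{loc}$ is generated over $\mbb{F}_{\mbf{w}}$ by symmetric Laurent polynomials $p(\mbf{X}_{\mbf{v}})$ in the Chern roots of the tautological bundles $\mc{V}_i$. Since $\mc{M}_{Q}(\mbf{0},\mbf{w})$ is a point, its localized $K$-theory equals $\mbb{F}_{\mbf{w}}$ spanned by the vacuum class $1$, so it suffices to realize every such $p(\mbf{X}_{\mbf{v}})$ as $F\cdot 1$ for some $F\in\mc{A}_{Q,\mbf{v}}^{+}\otimes\mbb{F}_{\mbf{w}}$.

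Second, specializing~\ref{positive-action} to $\mbf{v}=\mbf{0}$ reduces the action on the vacuum to
\[
F\cdot 1 \;=\; \frac{1}{\mbf{n}!}\int^{+}\frac{F(\mbf{Z}_{\mbf{n}})}{\tilde{\zeta}(\mbf{Z}_{\mbf{n}}/\mbf{Z}_{\mbf{n}})}\,\tilde{\zeta}(\mbf{Z}_{\mbf{n}}/\mbf{X}_{\mbf{n}})\,\wedge^{*}(\mbf{Z}_{\mbf{n}}q/\mbf{W}),\qquad F\in\mc{A}_{Q,\mbf{n}}^{+}.
\]
Taking $F=e_{i_1,d_1}*\cdots*e_{i_n,d_n}$ with $\mbf{n}=\sum_{a}\mbf{e}_{i_a}=\mbf{v}$ and evaluating the iterated residue along the ordered contour $\{0,\infty\}\succ z_1\succ\cdots\succ z_n$ recalled at the end of Subsection~\ref{subsub:localised_action_from_mc_a}, the output is a symmetric Laurent polynomial in $\mbf{X}_{\mbf{v}}$. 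I would then show that, ordered lexicographically in the exponents $(d_a)$ and grouped by the indices $(i_a)$, the leading term is (up to a unit in $\mbb{F}_{\mbf{w}}$) the monomial symmetric polynomial $\mathrm{Sym}\bigl(x_{i_1,*}^{d_1}\cdots x_{i_n,*}^{d_n}\bigr)$. Varying $(i_a,d_a)$, these form a triangular spanning set for the free $\mbb{F}_{\mbf{w}}$-module of symmetric Laurent polynomials, yielding the claimed surjectivity.

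Third, for the covector statement, I would use the perfect pairing of Theorem~\ref{perfect-pairing:label} to identify $K_{T_{\mbf{w}}}(\mc{L}_{Q}(\mbf{v},\mbf{w}))_{loc}$ with $\Hom_{\mbb{F}_{\mbf{w}}}(K_{T_{\mbf{w}}}(\mc{M}_{Q}(\mbf{v},\mbf{w}))_{loc},\mbb{F}_{\mbf{w}})$. A completely analogous argument with $f_{i_1,d_1}*\cdots*f_{i_n,d_n}$ and the negative shuffle formula~\ref{negative-action}, starting from the covacuum in $K_{T_{\mbf{w}}}(\mc{L}_{Q}(\mbf{0},\mbf{w}))_{loc}\cong\mbb{F}_{\mbf{w}}$, produces every symmetric Laurent polynomial on the Lagrangian side, and hence every covector under the pairing.

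The main obstacle is the triangularity claim in the second paragraph: one must verify that the ordered iterated residue indeed isolates a single monomial-symmetric leading term, and that its leading coefficient is a unit of $\mbb{F}_{\mbf{w}}$. This is precisely where localization at $\{q,t_e,a_{i,k}\}$ is essential, as the diagonal factor $\tilde{\zeta}(\mbf{Z}_{\mbf{n}}/\mbf{Z}_{\mbf{n}})$ and the polarization factor $\wedge^{*}(\mbf{Z}_{\mbf{n}}q/\mbf{W})$ must be inverted along the residue locus in order to extract the leading symbol; this step is what makes the statement fail integrally and forces passage to $\mbb{F}_{\mbf{w}}$.
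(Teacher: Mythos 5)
The paper does not give its own proof of this statement; it is cited as Proposition~2.18 of~\cite{N23}, so the comparison is with the argument there, not with anything spelled out in this manuscript. With that caveat, here is my assessment of your attempt.

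Your skeleton is reasonable: reducing the surjectivity statement to "the image of the vacuum under the degree-$\mbf{v}$ part of $\mc{A}^+_Q$ is all of $K_{T_{\mbf{w}}}(\mc{M}_Q(\mbf{v},\mbf{w}))_{loc}$," invoking Kirwan surjectivity so that it is enough to hit every symmetric Laurent polynomial modulo the tautological ideal, and dualizing via Theorem~\ref{perfect-pairing:label} for the covector half. Those steps are correct.

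The gap is exactly the triangularity you flag, and it is not a detail one can defer. There are at least three obstacles you have not addressed. First, the $\int^{+}$ contour is by definition a sum over pole configurations: each choice of $\sigma:\{(i,a)\}\to\{\pm1\}$ contributes, and after taking residues each contribution is a rational function of $\mbf{X}_{\mbf{v}}$ in which a given $z_a$ has been set equal to a pole of $\tilde\zeta(\mbf{Z}_{\mbf{n}}/\mbf{X}_{\mbf{n}})$, $\wedge^{*}(\mbf{Z}_{\mbf{n}}q/\mbf{W})$, or the $\tilde\zeta(z_b/z_a)$ denominators. There is no canonical single "leading" branch, and one would need a careful order on residue configurations in addition to the lexicographic order on exponents. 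Second, after taking residues you have a rational, not polynomial, expression in $\mbf{X}_{\mbf{v}}$; the fact that it becomes a Laurent polynomial only modulo the Kirwan ideal, and the "leading monomial" is only meaningful after this reduction, so comparing to $\mathrm{Sym}(x_{i_1,*}^{d_1}\cdots x_{i_n,*}^{d_n})$ inside the full polynomial ring does not directly make sense. Third, for quivers with edge loops or multiple edges the factor $\zeta_{ij}(x)$ has extra zeros, and residues can vanish along some branches of the sum; the claim that the top coefficient is always a unit of $\mbb{F}_{\mbf{w}}$ is then nontrivial and is precisely the sort of thing that can fail if the $(i_a,d_a)$ are poorly ordered.

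I would also note that the label the author gave this theorem (``surjectivity-noetherian-lemma'') and the way it is applied in Section~\ref{sub:isomorphism_as_localised_algebras} — where the surjection is restricted to a double-slope filtration piece $\mc{A}^{+}_{Q,\mbf{v},[\mbf{m},\mbf{m}']}$ by a Noetherian argument on the finitely generated free $\mbb{F}_{\mbf{w}}$-module $K_{T_{\mbf{w}}}(\mc{M}_Q(\mbf{v},\mbf{w}))_{loc}$ — suggest that the intended argument in \cite{N23} leans on finite generation of the target rather than on an explicit monomial triangularity of the residue formula. If you want to complete your approach, the residue calculation needs to be done carefully (and it is genuinely delicate); if you want to match the cited approach, consider exploiting the finite rank of $K_{T_{\mbf{w}}}(\mc{M}_Q(\mbf{v},\mbf{w}))_{loc}$ over $\mbb{F}_{\mbf{w}}$ (say via torus fixed points) instead of trying to span the infinite-rank ring of symmetric Laurent polynomials.
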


\begin{thm}[See Proposition 2.17 of \cite{N23}]\label{injectivity-of-localised-double-KHA:theorem}
There is an injective map of $K_{T}(pt)_{loc}=\mbb{Q}(q,t_e)_{e\in E}$-algebras
\begin{align*}
\mc{A}_{Q}^{ext}\hookrightarrow\prod_{\mbf{w}}\text{End}(K(\mbf{w})).
\end{align*}
\end{thm}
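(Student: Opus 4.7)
The plan is to exploit the triangular decomposition $\mc{A}^{ext}_Q = \mc{A}^{+}_Q \otimes \mc{A}^{0,ext}_Q \otimes \mc{A}^{-}_Q$ together with the shuffle formulas \ref{positive-action}, \ref{negative-action}, \ref{Cartan-action} and the perfect pairing from Theorem \ref{perfect-pairing:label}. Suppose an element $F \in \mc{A}^{ext}_Q$ acts as zero on every $K(\mbf{w})$. By the PBW-type decomposition for $\mc{A}^{ext}_Q$, write $F = \sum_{\alpha,\beta \in \mbb{N}^I} \sum_s F^{+}_{\alpha,\beta,s} F^{0}_{\alpha,\beta,s} F^{-}_{\alpha,\beta,s}$ with $F^{+}_{\alpha,\beta,s} \in \mc{A}^{+}_{Q,\alpha}$ and $F^{-}_{\alpha,\beta,s} \in \mc{A}^{-}_{Q,\beta}$. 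Since the Hall bigrading commutes with the action (the positive part shifts $\mbf{v} \mapsto \mbf{v}+\alpha$ and the negative part shifts $\mbf{v} \mapsto \mbf{v}-\beta$), after fixing the net weight $\alpha - \beta$ one may assume $F$ is homogeneous of weight $\mbf{n} = \alpha - \beta$, and by varying $\mbf{w}$ it suffices to prove the assertion in each weight.

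The core of the proof is faithfulness of the positive half $\mc{A}^{+}_Q$. For fixed $\mbf{n}$, act by $G \in \mc{A}^{+}_{Q,\mbf{n}}$ on the vacuum class $1 \in K(\mbf{0},\mbf{w})$; by the shuffle formula \ref{positive-action} this produces an explicit contour integral
\begin{align*}
G \cdot 1 \;=\; \frac{1}{\mbf{n}!} \int^{+} \frac{G(\mbf{Z}_{\mbf{n}})}{\tilde{\zeta}(\mbf{Z}_{\mbf{n}}/\mbf{Z}_{\mbf{n}})}\,\tilde{\zeta}(\mbf{Z}_{\mbf{n}}/\mbf{X}_{\mbf{n}})\,\wedge^{*}\!\Bigl(\frac{\mbf{Z}_{\mbf{n}} q}{\mbf{W}}\Bigr),
\end{align*}
living inside $K(\mbf{n},\mbf{w})$. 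By letting $\mbf{w}$ be large and generic and reading off coefficients in the framing parameters $a_{ik}$ coming from the factor $\wedge^{*}(\mbf{Z}_{\mbf{n}} q/\mbf{W})$ (equivalently, passing to the torus fixed-point basis of $K(\mbf{n},\mbf{w})$), one can recover the Laurent polynomial $G(\mbf{Z}_{\mbf{n}})$ itself up to the explicit non-vanishing prefactor $\tilde{\zeta}(\mbf{Z}_{\mbf{n}}/\mbf{Z}_{\mbf{n}})^{-1}$. Hence $G \cdot 1 = 0$ for all $\mbf{w}$ forces $G = 0$ in the shuffle realization from Section \ref{section:slope_filtration_and_shuffle_realisations_of_k_theoretic_hall_algebras}. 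The analogous statement for the negative half is obtained by dualizing: by Theorem \ref{perfect-pairing:label}, $K_{T_{\mbf{w}}}(\mc{L}_{Q}(\mbf{v},\mbf{w}))$ pairs perfectly with $K(\mbf{v},\mbf{w})$, and the covector half of Theorem \ref{surjectivity-noetherian-lemma-localised:theorem} guarantees that $\mc{A}^{-}_{Q,\mbf{n}}$ acts faithfully in the same sense, with $\wedge^*(\mbf{Z}_{\mbf{n}}/\mbf{W})$ playing the role of the separating factor. The Cartan part $\mc{A}^{0,ext}_Q$ is a polynomial algebra in $a_{i,\pm d}, b_{i,\pm d}, q^{\pm v_i/2}, q^{\pm w_i/2}$; by \ref{Cartan-action} these generators act by multiplication by algebraically independent symbols (power sums of tautological Chern roots, framing variables, and ranks), so any nonzero element of $\mc{A}^{0,ext}_Q$ acts nontrivially on some $K(\mbf{w})$.

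It remains to decouple the three tensor factors. Grouping $F = \sum_{\beta} F_{\beta}$ with $F_{\beta} = \sum_{\alpha,s} F^{+}_{\alpha,\beta,s} F^{0}_{\alpha,\beta,s} F^{-}_{\alpha,\beta,s}$, pick $\beta_0$ extremal (say, componentwise minimal among those with a nonzero summand) and act on $K(\beta_0,\mbf{w})$. Only the $F^{-}_{\alpha,\beta_0,s}$ can produce nontrivial output from a generic vacuum-type vector of weight $\beta_0$, since any $F^{-}_{\alpha,\beta,s}$ with $\beta \not\leq \beta_0$ vanishes on $K(\beta_0,\mbf{w})$ for dimension reasons, and those with $\beta < \beta_0$ produce classes in strictly higher weight $K(\mbf{n}+\beta_0-\beta,\mbf{w})$, hence contribute to a different summand of $K(\mbf{w})$. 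Thus vanishing of $F$ on $K(\mbf{w})$ forces $\sum_{\alpha,s} F^{+}_{\alpha,\beta_0,s} F^{0}_{\alpha,\beta_0,s} \cdot (F^{-}_{\alpha,\beta_0,s} \cdot v) = 0$ for every $v \in K(\beta_0,\mbf{w})$; using the Cartan action as multiplication by generic rational functions of $\mbf{w}$ (which are linearly independent as $\mbf{w}$ varies), one separates the Cartan factors, and then the faithfulness of $\mc{A}^{+}_Q$ established above forces $F^{+}_{\alpha,\beta_0,s} = 0$ whenever the corresponding $F^{0}_{\alpha,\beta_0,s} F^{-}_{\alpha,\beta_0,s}$ is nontrivial. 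Iterating this extraction on the remaining $F_\beta$ with larger $\beta$ kills the whole of $F$.

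The main obstacle is precisely this decoupling step: although the three halves $\mc{A}^{\pm}_Q$ and $\mc{A}^{0,ext}_Q$ are each faithful, the product structure mixes them, and the Cartan elements do not commute with the positive and negative generators. The clean resolution uses the combination of (i) the weight grading to isolate a single $\beta$, (ii) the explicit shuffle formulas to show that the resulting matrix of "structure coefficients" indexed by varying $\mbf{v}$ and $\mbf{w}$ is non-degenerate, and (iii) Theorem \ref{surjectivity-noetherian-lemma-localised:theorem}, which guarantees enough vectors in $K(\mbf{v},\mbf{w})$ on which to test. Once this is in hand the injectivity of the map $\mc{A}^{ext}_Q \hookrightarrow \prod_{\mbf{w}} \text{End}(K(\mbf{w}))$ follows.
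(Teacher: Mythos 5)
The paper does not give its own proof of this statement: it is imported verbatim from Proposition~2.17 of \cite{N23}, so there is no in-paper argument for you to have matched. That said, your outline has the right overall shape --- triangular decomposition of $\mc{A}^{ext}_{Q}$, evaluation on vacuum vectors, and weight-grading to decouple the three tensor factors --- and this is indeed the standard skeleton for such faithfulness statements.

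Two steps, however, are asserted rather than proved, and they are exactly where the technical content lies. First, the faithfulness of $\mc{A}^{+}_{Q}$ via vacuum evaluation: your claim that one ``can recover the Laurent polynomial $G(\mbf{Z}_{\mbf{n}})$ itself up to the explicit non-vanishing prefactor $\tilde{\zeta}(\mbf{Z}_{\mbf{n}}/\mbf{Z}_{\mbf{n}})^{-1}$'' by reading off coefficients in the framing parameters is not a proof. The $\int^{+}$ in \ref{positive-action} is a residue integral; what survives on $K(\mbf{n},\mbf{w})$ are evaluations of $G$ at the finite set of poles of $\tilde{\zeta}(\mbf{Z}_{\mbf{n}}/\mbf{X}_{\mbf{n}})$ (and, after passing to torus fixed points, at specific monomials in $q$, $t_e$, $a_{ik}$). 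Showing that as $\mbf{w}$ ranges over $\mbb{N}^{I}$ these evaluations separate the infinite-dimensional $\mbb{F}$-vector space $\mc{A}^{+}_{Q,\mbf{n}}$ --- equivalently, that any element of the shuffle algebra satisfying the wheel conditions and vanishing at all such specializations is zero --- is a genuine theorem, and it is precisely what is established in \cite{N23}. Note also that Theorem~\ref{surjectivity-noetherian-lemma-localised:theorem}, which you invoke in passing, is a \emph{surjectivity} statement and does not by itself give the injectivity you need here.

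Second, the decoupling of the Cartan factor from the positive and negative halves is more delicate than ``one separates the Cartan factors'' suggests. The algebra $\mc{A}^{0,ext}_{Q}$ does not commute with $\mc{A}^{\pm}_{Q}$ (the currents $h_{i}^{\pm}(z)$ conjugate $e_{j}(w)$, $f_{j}(w)$ by $\zeta$-ratios), and after you have isolated the minimal $\beta_0$ you are left with $\sum_{\alpha,s} F^{+}_{\alpha,\beta_{0},s} F^{0}_{\alpha,\beta_{0},s} (F^{-}_{\alpha,\beta_{0},s}\cdot v)=0$, where the sum over $s$ runs over potentially many terms at the same horizontal degree. Concluding that each term is separately zero requires a linear-independence statement for the family of operators $\{F^{0}_{\alpha,\beta_{0},s}F^{-}_{\alpha,\beta_{0},s}\}_{s}$ acting on $\bigoplus_{\mbf{w}} K(\beta_0,\mbf{w})$, which is not automatic from ``generic rational functions of $\mbf{w}$.'' The usual way to make this precise is to run a nested filtration (e.g.\ by the vertical degree) or to use the non-degeneracy of the bialgebra pairing \ref{Hopf-algebra-pairing} between $\mc{A}^{\geq}$ and $\mc{A}^{\leq}$ to reduce to a single term; as written, your argument skips this. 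So: correct strategy, but the two load-bearing lemmas are missing.
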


\subsubsection{Integral form of the geometric action}
Since the map $\pi_{+}$ is proper and $\pi_{-}\times p$ is l.c.i, the integral KHA $\mc{A}^{+,\mbb{Z}}_{Q}$ has the natural action over the integral equivariant $K$-theory $K_{T_{\mbf{w}}}(\mc{M}_{Q}(\mbf{w}))$. While for the negative half $\mc{A}^{-}_{Q}$, it is constructed in the following way:

First we can see that there is an algebra action of $\mc{A}^{+,nilp,\mbb{Z}}_{Q}$ over $K_{T_{\mbf{w}}}(\mc{L}_{Q}(\mbf{w}))$ by the following:
\begin{equation*}
\begin{tikzcd}
&\mc{L}_{Q}(\mbf{v},\mbf{v}+\mbf{n},\mbf{w})\arrow[dl,"p_{\mc{L}}"]\arrow[d,"\pi_{+,\mc{L}}"]\arrow[dr,"\pi_{-,\mc{L}}"]&\\
\Lambda_{\mbf{n}}^Q&\mc{L}_{Q}(\mbf{v}+\mbf{n},\mbf{w})&\mc{L}_{Q}(\mbf{v},\mbf{w})
\end{tikzcd}
\end{equation*}
and here the moduli stack $\mc{L}_{Q}(\mbf{v},\mbf{v}+\mbf{n},\mbf{w})$ is similar to the definition of the moduli stack $\mc{M}_{Q}(\mbf{v},\mbf{v}+\mbf{n},\mbf{w})$, it is given by the short exact sequence \ref{short-exact-sequence-for-Hecke-moduli} with the nilpotent conditions on $K_{\bullet}$, $V_{\bullet}^+$ and $V_{\bullet}^-$.

The action is thus written as:
\begin{equation}\label{positive-action-nilpotent}
\begin{aligned}
&\mc{A}^{+,nilp,\mbb{Z}}_{Q,\mbf{n}}\otimes K_{T_{\mbf{w}}}(\mc{L}_{Q}(\mbf{v},\mbf{w}))\rightarrow K_{T_{\mbf{w}}}(\mc{L}_{Q}(\mbf{v}+\mbf{n},\mbf{w}))\\
&\alpha\otimes\beta\mapsto\pi_{+,\mc{L},*}(\text{sdet}[\sum_{e=ji\in E}\frac{q\mc{V}_j^-}{t_e\mc{K}_i}-\sum_{i\in I}\frac{q\mc{V}_i^-}{\mc{K}_i}+\sum_{i\in I}\frac{W_i}{\mc{K}_i}]\cdot(p_{\mc{L}}\times\pi_{-,\mc{L}})^!(\alpha\otimes\beta)).
\end{aligned}
\end{equation}

On the other hand, since $\mc{L}_{Q}(\mbf{v},\mbf{v}+\mbf{n},\mbf{w})$ is a closed subvariety in $\mc{L}_{Q}(\mbf{v},\mbf{w})\times\mc{L}_{Q}(\mbf{v}+\mbf{n},\mbf{w})$, and $\mc{L}_{Q}(\mbf{v},\mbf{w})\times\mc{L}_{Q}(\mbf{v}+\mbf{n},\mbf{w})$ is a projective variety, the maps $\pi_{\pm,\mc{L}}$ are both proper maps. Therefore the inverse side of the action:
\begin{equation}\label{negative-action-nilpotent}
\begin{aligned}
&(\mc{A}^{+,nilp,\mbb{Z}}_{Q,\mbf{n}})^{op}\otimes K_{T_{\mbf{w}}}(\mc{L}_{Q}(\mbf{v}+\mbf{n},\mbf{w}))\rightarrow K_{T_{\mbf{w}}}(\mc{L}_{Q}(\mbf{v},\mbf{w}))\\
&\alpha\otimes\beta\mapsto\pi_{-,\mc{L},*}(\text{sdet}[\sum_{e=ji\in E}\frac{t_e\mc{V}_j^+}{\mc{K}_i}-\sum_{i\in I}\frac{\mc{V}_i^+}{q\mc{K}_i}]\cdot(p_{\mc{L}}\times\pi_{+,\mc{L}})^{!}(\alpha\boxtimes\beta))
\end{aligned}
\end{equation}
is well-defined before the localisation. In this case we denote the corresponding algebra as $\mc{A}^{-,nilp,\mbb{Z}}_{Q}:=(\mc{A}^{+,nilp,\mbb{Z}}_{Q})^{op}$. Using the isomorphism:
\begin{align}\label{transpose-isomorphism}
\text{Hom}_{K_{T_{\mbf{w}}(pt)}}(K_{T_{\mbf{w}}}(\mc{L}_{Q}(\mbf{v},\mbf{w})),K_{T_{\mbf{w}}}(\mc{L}_{Q}(\mbf{v}+\mbf{n},\mbf{w})))\cong\text{Hom}_{K_{T_{\mbf{w}}(pt)}}(K_{T_{\mbf{w}}}(\mc{M}_{Q}(\mbf{v}+\mbf{n},\mbf{w})),K_{T_{\mbf{w}}}(\mc{M}_{Q}(\mbf{v},\mbf{w})))
\end{align}
induced by the perfect pairing \ref{perfect-pairing:label}, one can see that the action of $\mc{A}^{\pm,nilp,\mbb{Z}}_{Q}$ coincides with the one on $(\mc{A}^{\mp,\mbb{Z}}_{Q})$. 

The following proposition will be the key to the construction of the main theorem, which is the analog of the Proposition \ref{surjectivity-noetherian-lemma-localised:theorem}:

\begin{prop}\label{Integral-nilpotent-noetherian-lemam:label}
There is a surjective map of $K_{T_{\mbf{w}}}(pt)$-modules
\begin{equation*}
\mc{A}^{+,nilp,\mbb{Z}}_{Q,\mbf{v}}\otimes K_{T_{\mbf{w}}}(pt)\twoheadrightarrow K_{T_{\mbf{w}}}(\mc{L}_Q(\mbf{v},\mbf{w}))
\end{equation*}
\end{prop}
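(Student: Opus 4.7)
The plan is to prove this by induction on $|\mbf{v}| = \sum_{i \in I} v_i$, combining the integral nilpotent analog of Kirwan surjectivity with the explicit action of the nilpotent KHA on the vacuum vector in $K_{T_{\mbf{w}}}(\mc{L}_Q(\mbf{0},\mbf{w})) = K_{T_{\mbf{w}}}(pt)$. The base case $\mbf{v} = \mbf{0}$ is immediate, since then the map in question is the identity on $K_{T_{\mbf{w}}}(pt)$.

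For the inductive step, I would first invoke a nilpotent analog of the integral Kirwan surjectivity of \cite{MN18}, which follows from the GIT description \ref{GIT-description-for-nilpotent-quiver-variety} together with the closed embedding $\mc{L}_Q(\mbf{v},\mbf{w}) \hookrightarrow \mc{M}_Q(\mbf{v},\mbf{w})$, to reduce the problem to showing that every integral symmetric Laurent polynomial $p(\mbf{X}_{\mbf{v}})$ in the Chern roots of the tautological bundles $\mc{V}_i$ lies in the image. For each multi-index $(\vec{i}, \vec{d}) = \bigl((i_1, d_1), \ldots, (i_{|\mbf{v}|}, d_{|\mbf{v}|})\bigr)$ with multiplicities compatible with $\mbf{v}$, I would then act with the integral element $e_{i_1, d_1} \ast \cdots \ast e_{i_{|\mbf{v}|}, d_{|\mbf{v}|}} \in \mc{A}^{+, nilp, \mbb{Z}}_{Q, \mbf{v}}$ on the vacuum via the iterated nilpotent Hecke correspondence \ref{positive-action-nilpotent}. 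Because $\pi_{\pm, \mc{L}}$ are proper and the sdet factors have monomial Chern roots in $\{q, t_e\}_{e \in E}$, the resulting class remains integral at each stage, and by the shuffle formula \ref{positive-action} its leading term in a natural filtration by the multi-degree $(d_a)$ is, up to explicit units, the symmetrized monomial in the $x_{ia}$.

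The main obstacle will be verifying the upper-triangular structure rigorously and inverting it with $\mbb{Z}[q^{\pm1}, t_e^{\pm1}]_{e \in E}$-coefficients rather than just over the fraction field. Over $\mbb{Q}(q, t_e)_{e \in E}$ the inversion is guaranteed by transferring Theorem \ref{surjectivity-noetherian-lemma-localised:theorem} through the isomorphism \ref{transpose-isomorphism} coming from the perfect pairing \ref{perfect-pairing:label}. Integrally, it should follow because the transition coefficients between the iterated-action classes and the symmetrized tautological monomials have denominators entering only through the sdet terms in \ref{positive-action-nilpotent}, whose contributions are products of factors of the form $1 - q^a \prod_e t_e^{b_e}$ that can be absorbed by enlarging the spanning set and collecting them via the shuffle relations. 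The freeness of $\mc{A}^{+, nilp, \mbb{Z}}_Q$ from Theorem \ref{freeness-of-nilpotent-KHA-and-isomorphism-after-localisation:theorem} guarantees that the integral elements produced by this inversion indeed lie in $\mc{A}^{+, nilp, \mbb{Z}}_{Q, \mbf{v}}$, completing the induction.
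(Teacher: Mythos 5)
Your proposal does not use the key observation that makes the paper's proof work, and the hard step you flag yourself (the integral inversion of the transition matrix between iterated-spherical-action classes and symmetric tautological monomials) is genuinely problematic, not just a technicality to be absorbed.

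The paper's argument is much more direct and avoids anything like a triangularity/inversion step. The map in question is a one-shot Hecke correspondence
\begin{equation*}
\begin{tikzcd}
\mc{L}_{Q}(\mbf{0},\mbf{v},\mbf{w})\arrow[r,"\pi"]\arrow[d,"p"]&\mc{L}_{Q}(\mbf{v},\mbf{w})\\
\Lambda_{\mbf{v}}&
\end{tikzcd}
\end{equation*}
and the crucial observation is that $\mc{L}_{Q}(\mbf{0},\mbf{v},\mbf{w})$ is \emph{isomorphic} to $\mc{L}_{Q}(\mbf{v},\mbf{w})$: when $V_\bullet^-$ has dimension $\mbf{0}$, compatibility of the framing with the short exact sequence forces $B_i=0$, which is exactly the condition defining $\mc{L}_Q(\mbf{v},\mbf{w})$ in the GIT description \ref{GIT-description-for-nilpotent-quiver-variety}. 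Hence $\pi$ is an isomorphism, the $\text{sdet}$ twist is a unit, and the proposition reduces at once to the surjectivity of $p^*\colon K_{T}(\Lambda_{\mbf{v}})\to K_{T}(\mc{L}_{Q}(\mbf{v},\mbf{w}))$. That surjectivity is purely geometric: $\mc{L}_Q(\mbf{v},\mbf{w})$ sits as an open subset $j$ of the total space of a vector bundle $\tau$ over $\Lambda_{\mbf{v}}$, so $p^*=j^*\tau^*$ with $\tau^*$ an isomorphism and $j^*$ surjective. No induction, no expression in terms of spherical generators, no transition matrix.

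Your route has a real gap precisely where you say it might. First, the iterated products $e_{i_1,d_1}*\cdots*e_{i_n,d_n}$ generate only the \emph{localised} algebra $\mc{A}^+_{Q}$ (this is Negu\c{t}'s result cited after Theorem \ref{freeness-of-nilpotent-KHA-and-isomorphism-after-localisation:theorem}); they do not span $\mc{A}^{+,nilp,\mbb{Z}}_{Q,\mbf{v}}$ over $\mbb{Z}[q^{\pm1},t_e^{\pm1}]_{e\in E}$, so restricting attention to their images is already giving up exactly the integral headroom you need. Second, the claim that the denominators arising from the $\text{sdet}$ factors and the $\tilde\zeta$ factors in \ref{positive-action} are ``products of $1-q^a\prod_e t_e^{b_e}$ that can be absorbed by enlarging the spanning set and collecting them via the shuffle relations'' is not an argument; the shuffle relations do not make these factors into units, and there is no reason to expect the transition matrix to be unimodular over $\mbb{Z}[q^{\pm1},t_e^{\pm1}]_{e\in E}$. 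Finally, your induction on $|\mbf{v}|$ never actually invokes the inductive hypothesis, and the ``base case'' $\mbf{v}=\mbf{0}$ buys nothing, since you then immediately switch to constructing classes at level $\mbf{v}$ directly. If you want to salvage the idea of reducing to tautological classes, the correct replacement for your ``nilpotent Kirwan surjectivity'' is exactly the diagram \ref{nilpotent-important-diagram}: one should recognize that the one-step action from the vacuum already \emph{is} $p^*$ (up to a unit), and then the factorization $p^*=j^*\tau^*$ closes the argument with no filtration or inversion at all.
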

\begin{proof}
Recall that $\mc{L}_{Q}(\mbf{v},\mbf{w})$ can also be written as in \ref{GIT-description-for-nilpotent-quiver-variety}, 
and the map of the correspondence in the proposition can be written as:
\begin{equation*}
\begin{tikzcd}
\mc{L}_{Q}(\mbf{0},\mbf{v},\mbf{w})\arrow[r,"\pi"]\arrow[d,"p"]&\mc{L}_{Q}(\mbf{v},\mbf{w})\\
\Lambda_{\mbf{v}}
\end{tikzcd}
\end{equation*}
while by definition $\mc{L}_{Q}(\mbf{0},\mbf{v},\mbf{w})$ is isomorphic to $\mc{M}_{Q}(\mbf{0},\mbf{v},\mbf{w})\cap\mc{L}_{Q}(\mbf{v},\mbf{w})$, i.e. stable nilpotent representations with the condition $B_i=0\in\bigoplus_{i\in I}\text{Hom}(W_i,V_i)$. While this means that $\mc{L}_{Q}(\mbf{0},\mbf{v},\mbf{w})$ is the same as $\mc{L}_{Q}(\mbf{v},\mbf{w})$. Thus it only remains to prove that $p^*$ is surjective. 

Using the proof in \cite{N23}, one can have the following commutative diagram:
\begin{equation}\label{nilpotent-important-diagram}
\begin{tikzcd}
\mc{L}_{Q}(\mbf{v},\mbf{w})\arrow[r,hook,"j"]\arrow[dr,"p"]&\text{Tot}_{\Lambda_{\mbf{v}}}(\bigoplus_{i\in I}\text{Hom}(W_i,V_i))\arrow[d,"\tau"]\\
&\Lambda_{\mbf{v}}
\end{tikzcd}
\end{equation}
and here $j$ is an open embedding and $\tau$ is an affine fibration. Since both $\tau^*$ and $j^*$ are surjective, we can conclude that $p^*$ is surjective. Thus the proposition is proved.

\end{proof}

\section{Slope filtration and Shuffle realisations of K-theoretic Hall algebras}\label{sec:slope_filtration_and_shuffle_realisations_of_k_theoretic_hall_algebras}
One of the important tool for us to make full use of the $K$-theoretic Hall algebra is to give a shuffle realisation of the KHA. This will transfer geometric conditions into the combinatorics of the color-symmetric Laurent polynomials. 

\label{section:slope_filtration_and_shuffle_realisations_of_k_theoretic_hall_algebras}
\subsection{Slope filtration for derived categories of quiver moduli}
At the beginning, we give a geometric introduction of what is a slope filtration for the $K$-theoretic Hall algebra which will be introduced in Section \ref{sub:slope_subalgebras_and_factorizations_of_r_matrices}.

Now we focus on the derived categories $D^b(\text{Coh}_{T}(\mc{Y}_{\mbf{n}}))$ and $D^b(\text{Coh}_{T}(\Lambda_{\mbf{n}}))$ of $T$-equivariant coherent sheaves over the quiver moduli $\mc{Y}_{\mbf{n}}$ and $\Lambda_{\mbf{n}}$ as defined in \ref{preprojective-stack} and \ref{nilpotent-stack}.

Recall that the diagonal one-dimensional torus $z\cdot\text{Id}\subset\prod_{i\in I}GL(V_i)$ acts trivially on $\mu^{-1}_{\mbf{n}}(0)$ and $\mu^{-1}_{\mbf{n}}(0)\cap E^{0}_{\mbf{v}}$. We define $D^b(\text{Coh}_{T}(\mc{Y}_{\mbf{n}}))_{k}$ and $D^b(\text{Coh}_{T}(\Lambda_{\mbf{n}}))_{k}$ be the category of complexes on which $z\cdot\text{Id}$ acts with weight $k\in\mbb{Z}$. We have the following orthogonal decomposition:
\begin{equation*}
D^b(\text{Coh}_{T}(\mc{Y}_{\mbf{n}}))=\bigoplus_{k\in\mbb{Z}}D^b(\text{Coh}_{T}(\mc{Y}_{\mbf{n}}))_{k},\qquad D^b(\text{Coh}_{T}(\Lambda_{\mbf{n}}))=\bigoplus_{k\in\mbb{Z}}D^b(\text{Coh}_{T}(\Lambda_{\mbf{n}}))_{k}.
\end{equation*}

Thus this induce the orthogonal decomposition over the equivariant $K$-theory:
\begin{align*}
K_{T}(\mc{Y}_{\mbf{n}})=\bigoplus_{k\in\mbb{Z}}K_{T}(\mc{Y}_{\mbf{n}})_{k},\qquad K_{T}(\Lambda^Q_{\mbf{n}})=\bigoplus_{k\in\mbb{Z}}K_{T}(\Lambda^Q_{\mbf{n}})_{k}.
\end{align*}

This induces the horizontal degree decomposition for the preprojective and nilpotent $K$-theoretic Hall algebras:
\begin{equation*}
\begin{aligned}
&\mc{A}^{+,\mbb{Z}}_{Q}=\bigoplus_{(k,\mbf{n})\in\mbb{Z}\times\mbb{N}^I}\mc{A}^{+,\mbb{Z}}_{Q,k,\mbf{n}},\qquad\mc{A}^{+,\mbb{Z}}_{Q,k,\mbf{n}}:=K_{T}(\mc{Y}_{\mbf{n}})_{k}\\
&\mc{A}^{+,nilp,\mbb{Z}}_{Q}=\bigoplus_{(k,\mbf{n})\in\mbb{Z}\times\mbb{N}^I}\mc{A}^{+,nilp,\mbb{Z}}_{Q,k,\mbf{n}},\qquad\mc{A}^{+,nilp,\mbb{Z}}_{Q,k,\mbf{n}}:=K_{T}(\Lambda_{\mbf{n}})_{k}.
\end{aligned}
\end{equation*}

Now we can define the slope filtration for the $K$-theoretic Hall algebras in the following way.

We first introduce some notations for the torus action. We denote $\sigma_{\mbf{k}}:[\text{pt}/\mbb{C}^*]\times\mc{Y}_{\mbf{n}}\rightarrow\mc{Y}_{\mbf{n}}$ as the cocharacter acting as $\text{diag}(\underbrace{z,\cdots,z}_{k_i copies},1,\cdots,1)\in GL(V_i)$ for the $i$-th node of the group. Given $\mc{F}\in D^b(\text{Coh}_{T}(\mc{Y}_{\mbf{n}}))$ or in $D^b(\text{Coh}(\Lambda_{\mbf{n}}))$, we define $\text{deg}_{\mbf{k}}(\mc{F})$ as the degree of the complex $\mc{F}$ under the action of the torus $\sigma_{\mbf{k}}$.

\begin{defn}\label{Slope-filtration:Definition}
Given a rational vector $\mbf{m}\in\mbb{Q}^I$, we define the slope $\leq\mbf{m}$-subspace $\mc{A}^{+,\mbb{Z}}_{Q,\leq\mbf{m}}$ (resp. $\mc{A}^{+,nilp,\mbb{Z}}_{Q,\leq\mbf{m}}$) of $\mc{A}^{+,\mbb{Z}}_{Q}$ (resp. $\mc{A}^{+,nilp,\mbb{Z}}_{Q}$) as the subspace generated by the elements $\mc{F}\in D^b(\text{Coh}_{T}(\mc{Y}_{\mbf{n}}))$ (resp. $D^b(\text{Coh}_{T}(\Lambda_{\mbf{n}}))$) such that:
\begin{align*}
\text{deg}_{\mbf{k}}(\mc{F})\leq\mbf{m}\cdot\mbf{k}+\langle\mbf{k},\mbf{n}-\mbf{k}\rangle
\end{align*}
and here $-\cdot-$ and $\langle-,-\rangle$ are defined as \ref{notation-for-inner-product-on-dimension-vector}.
\end{defn}

\begin{defn}\label{Slope-subalgebra-definition-derived
:label}
Given a rational vector $\mbf{m}\in\mbb{Q}^I$, we define the \textbf{slope subalgebra} $\mc{B}^{+,\mbb{Z}}_{\mbf{m}}$ (resp. $\mc{B}_{\mbf{m}}^{+,nilp,\mbb{Z}}$) as the subspace generated by the elements $\mc{F}\in D^b(\text{Coh}_{T}(\mc{Y}_{\mbf{n}}))$ (resp. $D^b(\text{Coh}_{T}(\Lambda_{\mbf{n}}))$) such that:
\begin{align*}
\mbf{m}\cdot\mbf{k}-\langle\mbf{k},\mbf{n}-\mbf{k}\rangle\leq\text{deg}_{\mbf{k}}(\mc{F})\leq\mbf{m}\cdot\mbf{k}+\langle\mbf{k},\mbf{n}-\mbf{k}\rangle.
\end{align*}

\end{defn}

\textbf{Remark.} Note that this definition coincides with the definition of the quasi-BPS categories given in Definition 2.30 in \cite{PT25}. For the nilpotent case, one just need to use the matrix factorization category of tripled quivers $(\tilde{Q},\tilde{W})$ with tripled potential with the nilpotent support.

Later we will use the shuffle algebra model to describe these subalgebras.

\subsection{Shuffle realisation of the KHA}
One good algebraic model to described the $K$-theoretic Hall algebra is given by the shuffle algebra realisation. i.e. the space of colored-symmetric Laurent polynomials. For the details of the construction, one can refer to \cite{N20}\cite{N21}\cite{N22}

Note that we have the following chain of closed embedding of quotient stacks
\begin{align*}
[\mu^{-1}_{\mbf{v}}(0)\cap E^{0}_{\mbf{v}}/G_{\mbf{v}}]\hookrightarrow[\mu^{-1}_{\mbf{v}}(0)/G_{\mbf{v}}]\hookrightarrow[\bigoplus_{ij\in E}\text{Hom}(V_i,V_j)/\prod_{i\in I}GL(V_i)]
\end{align*}
these closed embeddings induce the algebra morphism:
\begin{align*}
\mc{A}^{+,nilp,\mbb{Z}}_{Q}\rightarrow\mc{A}^{+,\mbb{Z}}_{Q}\rightarrow(\bigoplus_{\mbf{n}\in\mbb{N}^I}K_{T}([\bigoplus_{ij\in E}\text{Hom}(V_i,V_j)/\prod_{i\in I}GL(V_i)]),*).
\end{align*}

Contracting to the original point will give the isomorphism:
\begin{align*}
K_{T}([\bigoplus_{ij\in E}\text{Hom}(V_i,V_j)/\prod_{i\in I}GL(V_i)])\cong K_{T}([pt/\prod_{i\in I}GL(V_i)])\cong\mbb{Z}[q^{\pm1},t_{e}^{\pm1}]_{e\in E}[\cdots,z_{i1}^{\pm1},\cdots, z_{in_i}^{\pm1},\cdots]_{i\in I}^{\text{Sym}}.
\end{align*}

We have the algebra morphism:
\begin{align}\label{integral-shuffle-realisation}
\mc{A}^{+,nilp,\mbb{Z}}_{Q}\rightarrow\mc{A}^{+,\mbb{Z}}_{Q}\rightarrow(\bigoplus_{\mbf{n}\in\mbb{N}^I}\mbb{Z}[q^{\pm1},t_{e}^{\pm1}]_{e\in E}[\cdots,z_{i1}^{\pm1},\cdots, z_{in_i}^{\pm1},\cdots]_{i\in I}^{\text{Sym}},*).
\end{align}

We denote the left hand side as the integral big shuffle algebra:
\begin{align*}
\mc{V}^{\mbb{Z}}_{Q}:=(\bigoplus_{\mbf{n}\in\mbb{N}^I}\mbb{Z}[q^{\pm1},t_{e}^{\pm1}]_{e\in E}[\cdots,z_{i1}^{\pm1},\cdots, z_{in_i}^{\pm1},\cdots]_{i\in I}^{\text{Sym}},*).
\end{align*}

The Hall product on $\mc{V}_{Q}^{\mbb{Z}}$ can be written as the following shuffle product:
\begin{align*}
&F(\cdots,z_{i1},\cdots,z_{in_i},\cdots)*F'(\cdots,z_{i1},\cdots,z_{in_i'},\cdots)=\\
&\text{Sym}[\frac{F(\cdots,z_{i1},\cdots,z_{in_i},\cdots)F'(\cdots,z_{i,n_i+1},\cdots,z_{i,n_i+n_i'},\cdots)}{\mbf{n}!\cdot\mbf{n}'!}\prod^{i,j\in I}_{\substack{1\leq a\leq n_i\\ n_j<b\leq n_j+n_j'}}\zeta_{ij}(\frac{z_{ia}}{z_{jb}})].
\end{align*}

Here:
\begin{align}\label{zeta-function}
\zeta_{ij}(x)=(\frac{1-xq^{-1}}{1-x})^{\delta^i_j}\prod_{e=ij\in E}(1-t_ex)\prod_{e=ji\in E}(1-\frac{q}{t_ex}).
\end{align}
It can be seen that $\zeta_{ij}(x)$ has simple poles at $z_{ia}-z_{ib}$ for all $i\in I$ and all $a<b$. Also these poles vanish when taking the symmetrization, as the orders if such poles in a symmetric rational function must be even.

It has been proved in \cite{VV22} that the above algebra morphism \ref{integral-shuffle-realisation} is an injective $\mbb{Z}[q^{\pm1},t_{e}^{\pm1}]$-algebra morphism. This means that one can use the shuffle elements in $\mc{V}_{Q}^{\mbb{Z}}$ to describe the elements in $\mc{A}^{+,nilp,\mbb{Z}}_{Q}$ and $\mc{A}^{+,\mbb{Z}}_{Q}$.

For the localised form $\mc{A}^{+}_{Q}$, we consider the localised big shuffle algebra over $\mbb{F}:=\mbb{Q}(q,t_e)_{e\in E}$
\begin{align*}
\mc{V}_{Q}=\bigoplus_{\mbf{n}\in\mbb{N}^I}\mbb{F}[\cdots,z_{i1}^{\pm1},\cdots,z_{in_i}^{\pm1},\cdots]^{Sym}_{i\in I}.
\end{align*}

\begin{defn}[See \cite{N21}]
The \textbf{shuffle algebra} is defined as the subspace:
\begin{align*}
\mc{S}_{Q}^+\subset\mc{V}_{Q}
\end{align*}
of Laurent polynomials $F(\cdots,z_{i1},\cdots,z_{in_i},\cdots)$ that satisfy the "wheel conditions":
\begin{align*}
F|_{z_{ia}=\frac{qz_{jb}}{t_e}}=F|_{z_{ja}=t_ez_{ib}=qz_{jc}}=0
\end{align*}
for all edges $e=ij$ and all $a\neq c$( and further $a\neq b\neq c$ if $i=j$)
\end{defn}

The following theorem \cite{N21} shows that $\mc{A}_{Q}^+$ is the shuffle realisation of the preprojective K-theoretic Hall algebra:

\begin{thm}\label{wheel-shuffle-theorem:theorem}[See \cite{N21}]
There is an isomorphism of $\mbb{F}$-algebras:
\begin{align*}
Y:\mc{A}_{Q}^+\rightarrow\mc{S}_{Q}^+,\qquad e_{i,d}\mapsto z_{i1}^d.
\end{align*}
\end{thm}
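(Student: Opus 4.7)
The plan is to leverage the injective $\mbb{Z}[q^{\pm 1}, t_e^{\pm 1}]$-algebra homomorphism $\mc{A}^{+,\mbb{Z}}_{Q} \hookrightarrow \mc{V}^{\mbb{Z}}_{Q}$ from \eqref{integral-shuffle-realisation}, localize to $\mbb{F} = \mbb{Q}(q,t_e)_{e \in E}$, and then show that the image of the localized map is exactly the wheel subalgebra $\mc{S}^+_Q \subset \mc{V}_Q$. Since the localized map is automatically still injective, the content of Theorem \ref{wheel-shuffle-theorem:theorem} reduces to two statements: (i) every shuffle product of the spherical classes $e_{i,d}\mapsto z_{i1}^d$ satisfies the wheel conditions, and (ii) every wheel polynomial is expressible as a shuffle polynomial in the $z_{i1}^d$. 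The fact that $\mc{A}^+_Q$ is generated by its spherical part, already recalled in \eqref{positive-spherical-generator}, means that for step (ii) it is enough to check that the subalgebra of $\mc{V}_Q$ shuffle-generated by $\{z_{i1}^d\}_{i\in I, d \in \mbb{Z}}$ is all of $\mc{S}^+_Q$.

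For step (i), I would examine the explicit shuffle expression for $e_{i_1,d_1} \ast \cdots \ast e_{i_n,d_n}$, where the product is a symmetrization weighted by factors of $\zeta_{ij}$ from \eqref{zeta-function}. At a wheel specialization $z_{ia} = q z_{jb}/t_e$ and $z_{ja} = t_e z_{ib} = q z_{jc}$, the three factors $\zeta_{ij}$ associated to the colliding pairs produce simultaneous zeros which dominate any single pole coming from the $\delta^i_j$-term in $\zeta_{ij}$, and the combinatorial double-vanishing survives the symmetrization. Geometrically this corresponds to the fact that the moment map relation $\sum_e [A_e, B_e] = 0$ cutting out $\mc{Y}_{\mbf{n}}$ enforces precisely these vanishings on the $K$-theoretic pushforward along $\pi_1$ in diagram \eqref{Preprojective-Hall-Product}, so the shuffle image of $\mc{A}^+_Q$ automatically lies inside $\mc{S}^+_Q$.

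Step (ii) is the main obstacle, and is essentially the combinatorial heart of the theorem. The approach I would pursue is to introduce a total ordering on monomials in $\mbb{F}[z_{ia}^{\pm 1}]^{\text{Sym}}$ and, for $F \in \mc{S}^+_{Q,\mbf{n}}$, extract a leading specialization obtained by sending $z_{i,1} \gg z_{i,2} \gg \cdots$ (or more intrinsically by the cocharacters $\sigma_{\mbf{k}}$ of Definition \ref{Slope-filtration:Definition}). A direct computation of $z_{i_1, 1}^{d_1} \ast \cdots \ast z_{i_n, 1}^{d_n}$ shows that its leading term under this limit is the standard monomial $\prod z_{i_j, j}^{d_j}$ times a nonzero rational prefactor in $\mbb{F}$ coming from the $\zeta$-symmetrization. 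Subtracting a matching $\mbb{F}$-multiple of a shuffle product and iterating on a well-ordered quantity (say lexicographic degree) should reduce any $F$ to zero, provided the subtracted residue still satisfies the wheel conditions. The delicate point is to show this induction terminates and reaches every wheel polynomial; the wheel conditions are what prevent the leading exponents from being arbitrary and thereby bound the set of possible leading terms. If a direct leading-term induction proves unwieldy, an alternative, which I would keep in reserve, is to compare graded $\mbb{F}$-dimensions on both sides in each multi-degree $\mbf{n}$: bound $\dim_{\mbb{F}} \mc{S}^+_{Q,\mbf{n}}$ from above by a residue/duality argument relating wheel polynomials to linear functionals on the shuffle space, and bound $\dim_{\mbb{F}} \mc{A}^+_{Q,\mbf{n}}$ from below using the stacky description of $\mc{Y}_{\mbf{n}}$ and Theorem \ref{torsion-freeness-of-KHA:theorem}, and check the two bounds coincide.
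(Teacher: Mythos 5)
The paper does not give its own proof of this theorem: it is imported wholesale from \cite{N21} (the citation is part of the theorem statement), so there is nothing internal to compare your proposal against. Your decomposition into (i) the image of the KHA lies in $\mc{S}_Q^+$ (easy direction) and (ii) every wheel polynomial is a shuffle polynomial in $\{z_{i1}^d\}$ (hard direction), with injectivity borrowed from \cite{VV22} plus torsion-freeness (Theorem \ref{torsion-freeness-of-KHA:theorem}), is the correct framing, and it is essentially the strategy of \cite{N21}. But the proposal has a concrete error and a substantive gap in the surjectivity direction.

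The error: your leading-term claim in step (ii), that $z_{i_1,1}^{d_1}\ast\cdots\ast z_{i_n,1}^{d_n}$ has leading term $\prod_j z_{i_j,j}^{d_j}$ under $z_{i,1}\gg z_{i,2}\gg\cdots$, is false because the $\zeta$-kernels are not degree-preserving in this limit. Already for the Jordan quiver and $n=2$, the term associated to the identity permutation in $\mathrm{Sym}[z_1^{d_1}z_2^{d_2}\zeta(z_1/z_2)]$ behaves as $-\tfrac{t}{q}\,z_1^{d_1+1}z_2^{d_2-1}$ as $z_1/z_2\to\infty$, since $\zeta(x)\sim -\tfrac{t}{q}x$. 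The shift by one in each exponent is precisely the $\langle\mathbf{k},\mathbf{n}-\mathbf{k}\rangle$ term appearing in the slope bound \ref{positive-slope-condition}, and it accumulates as you increase $n$; the leading exponent vector is not $(d_1,\ldots,d_n)$, so the induction as you set it up does not close. The wheel conditions and the slope filtration are exactly what keep this shifted leading exponent under control, but you have to build that into the induction explicitly. Step (i) is also stated too loosely: the statement that "simultaneous zeros dominate any single pole and survive the symmetrization" needs an actual count of zero/pole orders at the wheel specialization across all terms of the symmetrization; as your own sentence indicates, at generic $q,t_e$ the three specialized variables remain distinct so there is no pole from the $\delta^i_j$ factor at all, which makes the phrasing misleading even though the conclusion is fine.

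The gap: surjectivity, i.e. that the $z_{i1}^d$ shuffle-generate all of $\mc{S}_Q^+$, is the genuinely hard theorem, and neither of your two routes is close to complete. The leading-term induction requires knowing a priori which exponent vectors can occur as leading terms of elements of $\mc{S}_{Q,\mathbf{n}}^+$, and that after peeling off a matching shuffle monomial the remainder is still a wheel polynomial of strictly smaller leading data; both of these require the wheel conditions in an essential, quantitative way, and neither is addressed. The dimension-count backup is circular as posed: the paper's description of $\mc{Y}_{\mathbf{n}}$ and the torsion-freeness theorem do not by themselves give a closed-form graded dimension of $\mc{A}^{+}_{Q,\mathbf{n}}$, and bounding $\dim_{\mbb{F}}\mc{S}_{Q,\mathbf{n}}^+$ "by a residue/duality argument" is precisely the kind of analysis that constitutes the content of \cite{N21}, not an input to it. In short: the skeleton is right, the injectivity reduction is fine, but the surjectivity argument would need to be rebuilt around the corrected (shifted) leading terms and the slope machinery, which is exactly what \cite{N21} supplies.
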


From now on we will always use $\mc{A}_{Q}^+$ as both the localised preprojective KHA and the shuffle algebra realisation of the localised preprojective KHA.

We list some properties about the shuffle algebra $\mc{A}^+_{Q}$:
\begin{itemize}
	\item As an $\mbb{F}$-algebra, $\mc{A}^+$ is generated by $\{z_{i1}^d\}^{d\in\mbb{Z}}_{i\in I}$.
	\item The algebra $\mc{A}^+$ is $\mbb{N}^I\times\mbb{Z}$ graded via:
	\begin{align*}
	\text{deg}(F)=(\mbf{n},d)
	\end{align*}
	if $F$ lies in the $\mbf{n}$-th direct summand and has homogeneous degree $d$. And we denote the \textbf{horizontal degree and vertical degree} as:
	\begin{align}\label{horizontal-vertical-degree}
	\text{hdeg}(F)=\mbf{n},\qquad\text{vdeg}(F)=d
	\end{align}
	We denote the graded pieces of the shuffle algebra by:
	\begin{align*}
	\mc{A}^+=\bigoplus_{\mbf{n}\in\mbb{N}^I}\mc{A}^+_{\mbf{n}}=\bigoplus_{(\mbf{n},d)\in\mbb{N}^I\times\mbb{Z}}\mc{A}^+_{\mbf{n},d}.
	\end{align*}
	\item For any $\mbf{k}\in\mbb{Z}^{I}$ we have a shift automorphism:
	\begin{equation}\label{Shift-automorphism}
	\begin{tikzcd}
	\mc{A}^+_{Q}\arrow[r,"\tau_{\mbf{k}}"]&\mc{A}^+_{Q}
	\end{tikzcd}
	,\qquad F(\cdots,z_{ia},\cdots)\mapsto F(\cdots,z_{ia},\cdots)\prod_{i\in I,a\geq1}z_{ia}^{k_i}.
	\end{equation}

	Similarly, for the $\mc{A}^{+,op}$ the opposite algebra, we can also have the shift automorphism:
	\begin{equation*}
	\begin{tikzcd}
	\mc{A}^{+,op}_{Q}\arrow[r,"\tau_{\mbf{k}}"]&\mc{A}^{+,op}_{Q}
	\end{tikzcd}
	,\qquad G(\cdots,z_{ia},\cdots)\mapsto G(\cdots,z_{ia},\cdots)\prod_{i\in I,a\geq1}z_{ia}^{-k_i}.
	\end{equation*}
\end{itemize}

Also note that the shift automorphism \ref{Shift-automorphism} can be restricted to the integral shuffle algebra $\mc{A}^{+,nilp,\mbb{Z}}$ and $\mc{A}^{+,\mbb{Z}}_{Q}$. i.e. $\tau_{\mbf{k}}$ also gives the automorphisms:
\begin{align*}
\tau_{\mbf{k}}:\mc{A}^{+,nilp,\mbb{Z}}_{Q}\rightarrow \mc{A}^{+,nilp,\mbb{Z}}_{Q},\qquad\tau_{\mbf{k}}:\mc{A}^{+,\mbb{Z}}_{Q}\rightarrow \mc{A}^{+,\mbb{Z}}_{Q}.
\end{align*}

\subsubsection{\textbf{Drinfeld double of shuffle algebras}}

Another way of realising the Drinfeld double of $\mc{A}^{+}_Q$ is by the following:
\begin{align}\label{Hopf-double-drinfeld}
\mc{A}_{Q}=\mc{A}^+_{Q}\otimes\mbb{F}[h_{i,\pm0}^{\pm1},h_{i,\pm1},h_{i,\pm2},\cdots]_{i\in I}\otimes\mc{A}^{+,op}_{Q}/(\text{relation})
\end{align}
We denote the generators in $\mc{A}^+$ and $\mc{A}^{+,op}$ by:
\begin{align*}
e_{i,d}=z_{i1}^d\in\mc{A}^+_{Q},\qquad f_{i,d}=z_{i1}^d\in\mc{A}^{+,op}_{Q}.
\end{align*}

We can write them into the generating series:
\begin{align*}
e_{i}(z)=\sum_{d\in\mbb{Z}}\frac{e_{i,d}}{z^d},\qquad f_{i}(z)=\sum_{d\in\mbb{Z}}\frac{f_{i,d}}{z^d},\qquad h_{i}^{\pm}(w)=\sum_{d=0}^{\infty}\frac{h_{i,\pm d}}{w^{\pm d}}.
\end{align*}

We set:
\begin{align*}
&e_{i}(z)h_{j}^{\pm}(w)=h_{j}^{\pm}(w)e_{i}(z)\frac{\zeta_{ij}(z/w)}{\zeta_{ji}(w/z)}\\
&f_{i}(z)h_{j}^{\pm}(w)=h_{j}^{\pm}(w)f_{i}(z)\frac{\zeta_{ji}(w/z)}{\zeta_{ij}(z/w)}.
\end{align*}

The grading can be extended to the whole of $\mc{A}_{Q}$ by setting
\begin{align*}
\text{deg}(h_{i,\pm d})=(0,\pm d)
\end{align*}
the shift automorphism can be extended to automorphisms:
\begin{align*}
\tau_{\mbf{k}}:\mc{A}_{Q}\rightarrow\mc{A}_{Q}
\end{align*}
by setting $\tau_{\mbf{k}}(h_{i,\pm d})=h_{i,\pm d}$ for all $i\in I$ and $d\in\mbb{N}$.

The coproduct can be defined as:
\begin{align*}
&\Delta(h_{i}^{\pm}(z))=h_{i}^{\pm}(z)\otimes h_{i}^{\pm}(z)\\
&\Delta(e_i(z))=e_i(z)\otimes 1+h_{i}^+(z)\otimes e_{i}(z)\\
&\Delta(f_{i}(z))=f_{i}(z)\otimes h_{i}^-(z)+1\otimes f_{i}(z).
\end{align*}

We denote the extended subalgebras:
\begin{align*}
&\mc{A}^{\geq}=\mc{A}^+\otimes\mbb{F}[h_{i,+0}^{\pm1},h_{i,1},h_{i,2},\cdots]_{i\in I}\\
&\mc{A}^{\leq}=\mc{A}^{+,op}\otimes\mbb{F}[h_{i,-0}^{\pm1},h_{i,-1},h_{i,-2},\cdots]_{i\in I}.
\end{align*}
Also one can construct the nondegenerate Hopf pairing:
\begin{align}\label{Hopf-algebra-pairing}
\langle-,-\rangle:\mc{A}^{\geq}\otimes\mc{A}^{\leq}\rightarrow\mbb{F}
\end{align}
which is defined by the following formulas:
\begin{align*}
\langle h_{i}^{+}(z),h_{j}^{-}(w)\rangle=\frac{\zeta_{ij}(z/w)}{\zeta_{ji}(w/z)}
\end{align*}
and 
\begin{align*}
\langle e_{i,d},f_{j,k}\rangle=\delta^{i}_{j}\gamma_{i}\delta_{d+k}^0,\qquad\gamma_i=\frac{\prod_{e=ii\in E}[(1-t_e)(1-\frac{q}{t_e})]}{(1-\frac{1}{q})}.
\end{align*}

Using the pairing, we can define the Hopf algebra structure on $\mc{A}_{Q}$ defined in \ref{Hopf-double-drinfeld}. It is easy to see that $\mc{A}_{Q}$ is a subalgebra of $\mc{A}^{ext}_{Q}$ defined in \ref{ext-double-KHA}. Throughout the paper we will only use the realisation as $\mc{A}^{ext}_{Q}$.

\subsubsection{\textbf{Coproduct and pairing formula in terms of the shuffle algebra}}
Using the Drinfeld pairing for the double of the shuffle algebra, one can induce a coproduct structure over the shuffle algebra which can be expressed as follows: 
For $F\in\mc{A}^+_{Q,\mbf{n}}$ and $G\in\mc{A}^{+,op}_{\mbf{n}}$, we have the coproduct formula:
\begin{align}\label{shuffle-infty-coproduct-positive}
\Delta(F)=\sum_{[0\leq k_i\leq n_i]_{i\in I}}\frac{\prod_{k_j<b\leq n_j}^{j\in I}h_{j}^{+}(z_{jb})F(\cdots,z_{i1},\cdots,z_{ik_i}\otimes z_{i,k_i+1},\cdots,z_{in_i},\cdots)}{\prod_{1\leq a\leq k_i}^{i\in I}\prod_{k_j<b\leq n_j}^{j\in I}\zeta_{ji}(z_{jb}/z_{ia})}\in\mc{A}^{0}_{Q}\mc{A}^{+}_{Q}\hat{\otimes}\mc{A}^{+}_{Q}
\end{align}

\begin{align}\label{shuffle-infty-coproduct-negative}
\Delta(G)=\sum_{[0\leq k_i\leq n_i]_{i\in I}}\frac{F(\cdots,z_{i1},\cdots,z_{ik_i}\otimes z_{i,k_i+1},\cdots,z_{in_i},\cdots)\prod_{1\leq a\leq k_i}^{i\in I}h_{j}^{-}(z_{ia})}{\prod_{1\leq a\leq k_i}^{i\in I}\prod_{k_j<b\leq n_j}^{j\in I}\zeta_{ji}(z_{ia}/z_{jb})}\in\mc{A}^{-}_{Q}\hat{\otimes}\mc{A}^{-}_{Q}\mc{A}^0_{Q}.
\end{align}

We expand the denominator as a power series in the range $\lvert z_{ia}\lvert<<\lvert z_{jb}\lvert$, and place all the powers of $z_{ia}$ to the left of the $\otimes$ sign and all the powers of $z_{jb}$ to the right of the $\otimes$ sign.

The bialgebra pairing can be written in terms of the residue integral:
\begin{align*}
\langle F,f_{i_1,d_1}*\cdots*f_{i_n,d_n}\rangle=\int_{\lvert z_1\lvert<<\cdots<<\lvert z_{n}\lvert}\frac{z_1^{d_1}\cdots z_{n}^{d_n}F(z_1,\cdots,z_n)}{\prod_{1\leq a<b\leq n}\zeta_{i_ai_b}(z_a/z_b)}\prod_{a=1}^{n}\frac{dz_a}{2\pi iz_a}
\end{align*}

\begin{align*}
\langle e_{i_1,d_1}*\cdots*e_{i_n,d_n},G\rangle=\int_{|z_1|>>\cdots>>|z_{n}|}\frac{z_1^{d_1}\cdots z_{n}^{d_n}G(z_1,\cdots,z_n)}{\prod_{1\leq a<b\leq n}\zeta_{i_bi_a}(z_b/z_a)}\prod_{a=1}^{n}\frac{dz_a}{2\pi iz_a}.
\end{align*}

\subsection{Slope subalgebras and factorizations of $R$-matrices}\label{sub:slope_subalgebras_and_factorizations_of_r_matrices}
One of the convenient application for the shuffle realisation of the KHA is that one can introduce the slope filtration and slope subalgebras for $\mc{A}^{\pm}_{Q}$. One can also refer to \cite{N22} for details.

\subsubsection{Slope filtration in the shuffle settings}
Fix a rational vector $\mbf{m}\in\mbb{Q}^{I}$, we have defined the slope filtration $\mc{A}^{+,\mbb{Z}}_{Q,\leq\mbf{m}}$ in Definition \ref{Slope-filtration:Definition}. Such a filtration can be lifted to the localised form in a similar way, and we denote the corresponding subspace as $\mc{A}^{+}_{Q,\leq\mbf{m}}$. For now we consider such a filtration in terms of the shuffle algebras.

Now consider $\mbb{N}^I\subset\mbb{Z}^I\subset\mbb{Q}^I$ the space of sequence of non-negative integers. We denote:
\begin{align*}
\mbf{e}_i=\underbrace{(0,\cdots,0,1}_{i\text{-th position}},0).
\end{align*}

\begin{defn}[See \cite{N21}]
Let $\mbf{m}\in\mbb{Q}^{I}$. We say that a shuffle element $F\in\mc{V}_{Q}$ has slope $\leq\mbf{m}$ if:
\begin{align}\label{positive-slope-condition}
\lim_{\xi\rightarrow\infty}\frac{F(\cdots,\xi z_{i1},\cdots,\xi z_{ik_i},z_{i,k_i+1},\cdots,z_{in_i},\cdots)}{\xi^{\mbf{m}\cdot\mbf{k}+\langle\mbf{k},\mbf{n}-\mbf{k}\rangle}}
\end{align}
is finite for all $\mbf{0}\leq\mbf{k}\leq\mbf{n}$. Similarly, we will say that $G\in\mc{A}^{-}$ has slope $\leq\mbf{m}$ if:
\begin{align}\label{negative-slope-condition}
\lim_{\xi\rightarrow0}\frac{G(\cdots,\xi z_{i1},\cdots,\xi z_{ik_i},z_{i,k_i+1},\cdots,z_{in_i},\cdots)}{\xi^{-\mbf{m}\cdot\mbf{k}-\langle\mbf{n}-\mbf{k},\mbf{k}\rangle}}
\end{align}
is finite for all $\mbf{0}\leq\mbf{k}\leq\mbf{n}$. Here $\langle-,-\rangle$ and $-\cdot-$ are defined as \ref{notation-for-inner-product-on-dimension-vector}.
\end{defn}

It turns out that the above condition on the vertical degree bounding coincides with the definition \ref{Slope-filtration:Definition}.
\begin{lem}
The image of $\mc{A}^{+}_{Q,\leq\mbf{m}}$ in the shuffle algebra $\mc{V}_{Q}$ are the elements in $\mc{S}_{Q}^+\cong\mc{A}^{+}_{Q}$ of slope $\leq\mbf{m}$.
\end{lem}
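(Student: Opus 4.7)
The plan is to identify the geometric $\sigma_{\mbf{k}}$-weight on $K_T(\mc{Y}_{\mbf{n}})$ with the scaling of the first $k_i$ shuffle variables of color $i$; once this identification is in hand, the two slope bounds coincide inequality-by-inequality.

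First I would observe that under the shuffle realization $Y\colon \mc{A}^{+}_{Q,\mbf{n}} \hookrightarrow \mc{V}_{Q,\mbf{n}}$ of Theorem \ref{wheel-shuffle-theorem:theorem}, a class $\mc{F} \in K_T(\mc{Y}_{\mbf{n}})$ becomes a symmetric Laurent polynomial $F(z_{i,a})$ whose variables are the Chern roots of the tautological bundle $\mc{V}_i$. Since the cocharacter $\sigma_{\mbf{k}}$ scales the first $k_i$ basis vectors of $V_i$ by $z$ and fixes the rest, the variable $z_{i,a}$ carries $\sigma_{\mbf{k}}$-weight $1$ when $a \leq k_i$ and weight $0$ otherwise. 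Consequently, the $\sigma_{\mbf{k}}$-weight of a monomial appearing in $F$ equals its total degree in the variables $\{z_{i,a} : i \in I,\, 1 \leq a \leq k_i\}$, and $\deg_{\mbf{k}}(\mc{F})$ is the maximum such degree over the monomials of $F$.

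Next, I would reinterpret the shuffle condition \ref{positive-slope-condition}: after the substitution $z_{i,a} \mapsto \xi z_{i,a}$ for $a \leq k_i$, the leading power of $\xi$ in the numerator is precisely the maximum total degree of $F$ in those variables. Hence finiteness of the limit as $\xi \to \infty$ is equivalent to
\begin{equation*}
\deg_{\mbf{k}}(\mc{F}) \leq \mbf{m}\cdot\mbf{k} + \langle\mbf{k},\mbf{n}-\mbf{k}\rangle,
\end{equation*}
which is the geometric slope condition of Definition \ref{Slope-filtration:Definition}. Imposing this inequality for every $\mbf{0} \leq \mbf{k} \leq \mbf{n}$ yields the claimed identification of subspaces.

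The main technical subtlety is verifying that the shuffle realization $Y$ strictly preserves the $\sigma_{\mbf{k}}$-grading, with no spurious shift coming from the geometry of $\mc{Y}_{\mbf{n}}$. This follows because $Y$ factors as the pushforward along the closed embedding $\mc{Y}_{\mbf{n}} \hookrightarrow \mc{X}_{\mbf{n}}$ composed with the inverse of the pullback isomorphism $K_T([\text{pt}/G_{\mbf{n}}]) \xrightarrow{\sim} K_T(\mc{X}_{\mbf{n}})$ along the affine contraction, and $\sigma_{\mbf{k}} \subset G_{\mbf{n}}$ acts equivariantly on all of these spaces while preserving the moment-map cut $\mu^{-1}(0)$. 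Once this compatibility is in place, the slope conditions on the two sides become the same family of inequalities, so the lemma follows.
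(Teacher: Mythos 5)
Your argument is correct and follows the same route as the paper's proof: identify the $\sigma_{\mbf{k}}$-weight of a class with the leading $\xi$-degree under scaling the first $k_i$ Chern-root variables of each color $i$, then observe that boundedness of that weight is precisely the finiteness of the limit in the shuffle slope condition. The paper states this identification in a single sentence; your version spells out why the shuffle realization respects the $\sigma_{\mbf{k}}$-grading (equivariance of the pushforward and of the affine contraction), which is the step the paper leaves implicit.
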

\begin{proof}
By the injectivity, every element in $\mc{A}^{+}_{Q,\leq\mbf{m}}$ can be described as the shuffle elements in $\mc{S}_{Q}^+$. Morevoer, given an element $F(\cdots,z_{i1},\cdots,z_{in_i},\cdots)\in\mc{A}^{+}_{Q,\leq\mbf{m}}\subset\mc{S}_{Q}$, the torus action $\sigma_{\mbf{k}}$ on $F$ is equivalent to giving the scaling such that:
\begin{align*}
\lim_{\xi\rightarrow\infty}\frac{F(\cdots,\xi z_{i1},\cdots,\xi z_{ik_i},z_{i,k_i+1},\cdots,z_{in_i},\cdots)}{\xi^{\mbf{m}\cdot\mbf{k}+\langle\mbf{k},\mbf{n}-\mbf{k}\rangle}}<\infty.
\end{align*}

Then it is obvious that the vertical degree condition in Definition \ref{Slope-filtration:Definition} is equivalent to the condition \ref{positive-slope-condition}. Thus the proof is finished.
\end{proof}

We denote the corresponding subspace by $\mc{A}_{\leq\mbf{m},Q}^+$ and $\mc{A}_{\leq\mbf{m},Q}^{-}$. It has been proved in \cite{N22} that $\mc{A}_{\leq\mbf{m},Q}^{\pm}$ are subalgebras of $\mc{A}^{\pm}_{Q}$.

If we put the coproduct $\Delta(F)$ over the elements $\mc{F}\in\mc{A}_{Q,\leq\mbf{m}}^{\pm}$, it can be factorised as:
\begin{align}\label{infty-coproduct-on-slope-m-elements}
&\Delta(F)=\Delta_{\mbf{m}}(F)+(\text{anything})\otimes(\text{slope }<\mbf{m}),\qquad F\in\mc{A}_{Q,\leq\mbf{m}}^+\\
&\Delta(G)=\Delta_{\mbf{m}}(G)+(\text{slope }<\mbf{m})\otimes(\text{anything}),\qquad G\in\mc{A}_{Q,\leq\mbf{m}}^{-}.
\end{align}

These coproduct formulas $\Delta_{\mbf{m}}$ can be written in a more concrete way:
\begin{align}\label{definition-of-slope-coproduct-positive}
\Delta_{\mbf{m}}(F)=\sum_{\mbf{0}\leq\mbf{k}\leq\mbf{n}}\lim_{\xi\rightarrow\infty}\frac{h_{\mbf{n}-\mbf{k}}F(\cdots,z_{i1},\cdots,z_{ik_i}\otimes\xi z_{i,k_i+1},\cdots,\xi z_{in_i})}{\xi^{\mbf{m}\cdot(\mbf{n}-\mbf{k})}\cdot\text{lead}[\prod_{1\leq a\leq k_i}^{i\in I}\prod_{k_j<b\leq n_j}^{j\in I}\zeta_{ji}(\frac{\xi z_{jb}}{z_{ia}})]},\qquad\forall F\in\mc{A}_{Q,\mbf{m}|\mbf{n}}^+
\end{align}

\begin{align}\label{definition-of-slope-coproduct-negative }
\Delta_{\mbf{m}}(G)=\sum_{0\leq\mbf{k}\leq\mbf{n}}\lim_{\xi\rightarrow0}\frac{G(\cdots,\xi z_{i1},\cdots,\xi z_{ik_i}\otimes z_{i,k_i+1},\cdots,z_{in_i},\cdots)h_{-\mbf{k}}}{\xi^{-\mbf{m}\cdot\mbf{k}}\cdot\text{lead}[\prod_{1\leq a\leq k_i}^{i\in I}\prod_{k_j<b\leq n_j}^{j\in I}\zeta_{ji}(\frac{\xi z_{ia}}{z_{jb}})]},\qquad\forall G\in\mc{A}_{Q,\mbf{m}|-\mbf{n}}^-.
\end{align}

Also we say that $F\in\mc{A}^+$, respectively $G\in\mc{A}^-$, has naive slope $\leq\mbf{m}$ if:
\begin{align*}
&\text{vdeg}(F)\leq\mbf{m}\cdot\text{hdeg}(F)\\
&\text{vdeg}(G)\geq\mbf{m}\cdot\text{hdeg}(G).
\end{align*}

For the coproduct on $F\in\mc{A}^+$ of slope $\leq\mbf{m}$, it can be written in the following form:
\begin{align*}
\Delta(F)=(\text{anything})\otimes(\text{naive slope}\leq\mbf{m}).
\end{align*}

Similarly for an element $G\in\mc{A}^-$ has slope $\leq\mbf{m}$ we have
\begin{align*}
\Delta(G)=(\text{naive slope}\leq\mbf{m})\otimes(\text{anything}).
\end{align*}

Using the slope filtration, we still denote the subspaces of shuffle elements of slope $\leq\mbf{m}$ by $\mc{A}^{\pm}_{Q,\leq\mbf{m}}\subset\mc{A}^{\pm}_{Q}$. Via computation, one can actually show that these are subalgebras of $\mc{A}^{\pm}_{Q}$.

It is easy to see that the $\mbb{Z}\times\mbb{N}^I$-graded pieces
\begin{align*}
\mc{A}_{Q,\leq\mbf{m}|\pm\mbf{n},\pm d}^{\pm}=\mc{A}_{Q,\pm\mbf{n},\pm d}^{\pm}\cap\mc{A}^{\pm}_{Q,\leq\mbf{m}}
\end{align*}
are finite-dimensional for any $(\mbf{n},d)\in\mbb{N}^I\times\mbb{Z}$ since there are upper (lower) bounds on the exponents of the variables that make up the Laurent polynomials, and with the condition of the fixing of the total homogeneous vertical degree of such a Laurent polynomial, it is obvious that there are only finitely many choices.

\subsubsection{Slope subalgebras and factorisations}
Now we define the \textbf{positive/negative slope subalgebra} of slope $\mbf{m}$ is 
\begin{align*}
\mc{B}^{\pm}_{\mbf{m}}\subset\mc{A}^{\pm}_{Q}
\end{align*}
as the subspace consisting of elements of slope $\leq\mbf{m}$ and naive slope $=\mbf{m}$. Or equivalently, the graded pieces of $\mc{B}^{\pm}_{\mbf{m}}$ can be easily seen that
\begin{align*}
\mc{B}^{\pm}_{\mbf{m}}=\bigoplus_{\mbf{n}\in\mbb{N}^I}^{\mbf{m}\cdot\mbf{n}\in\mbb{Z}}\mc{B}_{\mbf{m}|\pm\mbf{n}}^{\pm}
\end{align*}
with
\begin{align*}
\mc{B}_{\mbf{m}|\pm\mbf{n}}^{\pm}=\mc{A}_{Q,\leq\mbf{m}|\pm\mbf{n},\pm\mbf{m}\cdot\mbf{n}}^{\pm}.
\end{align*}

It has been proved in \cite{N22} that $\mc{B}_{\mbf{m}}^{\pm}$ is a $\mbb{Q}(q,t_e)_{e\in E}$-subalgebra of $\mc{A}^{\pm}_{Q}$. It is not hard to check that this definition of the shuffle algebra coincides with the one in the Definition \ref{Slope-subalgebra-definition-derived
:label}.

We define the extended $\mc{B}_{\mbf{m}}^{\geq,\leq}$ as:
\begin{align}\label{extended-half-slope-subalgebra}
&\mc{B}_{\mbf{m}}^{\geq}=\mc{B}_{\mbf{m}}^{+}\otimes\mbb{F}[h_{i,+0}^{\pm1}]/\text{relation}\\
&\mc{B}_{\mbf{m}}^{\leq}=\mc{B}_{\mbf{m}}^{-}\otimes\mbb{F}[h_{i,-0}^{\pm1}]/\text{relation}.
\end{align}

Here relation means the relation stated in Section \ref{subsection:localised_and_integral_extended_double_kha}.

The coproduct $\Delta_{\mbf{m}}$ can be defined as:
\begin{align*}
\Delta_{\mbf{m}}(h_{i,\pm0})=h_{i,\pm0}\otimes h_{i,\pm0}
\end{align*}
and for any $F\in\mc{B}_{\mbf{m}|\mbf{n}}$ and $G\in\mc{B}_{\mbf{m}|-\mbf{n}}$:
\begin{align}\label{coproduct-m-slope-subalgebra-positive}
\Delta_{\mbf{m}}(F)=\sum_{\mbf{0}\leq\mbf{k}\leq\mbf{n}}\lim_{\xi\rightarrow\infty}\frac{h_{\mbf{n}-\mbf{k}}F(\cdots,z_{i1},\cdots,z_{ik_i}\otimes\xi z_{i,k_{i}+1},\cdots,\xi z_{in_i},\cdots)}{\xi^{\mbf{m}\cdot(\mbf{n}-\mbf{k})}\text{lead}[\prod_{1\leq a\leq k_i}^{i\in I}\prod_{k_j<b\leq n_j}^{j\in I}\zeta_{ji}(\xi z_{jb}/z_{ia})]}\in\mc{B}_{\mbf{m}}^{\geq}\otimes\mc{B}_{\mbf{m}}^{+}
\end{align}
\begin{align}\label{coproduct-m-slope-subalgebra-negative}
\Delta_{\mbf{m}}(G)=\sum_{\mbf{0}\leq\mbf{k}\leq\mbf{n}}\lim_{\xi\rightarrow0}\frac{F(\cdots,\xi z_{i1},\cdots,\xi z_{ik_i}\otimes z_{i,k_{i}+1},\cdots, z_{in_i},\cdots)h_{-\mbf{k}}}{\xi^{-\mbf{m}\cdot\mbf{k}}\text{lead}[\prod_{1\leq a\leq k_i}^{i\in I}\prod_{k_j<b\leq n_j}^{j\in I}\zeta_{ji}(\xi z_{ia}/z_{jb})]}\in\mc{B}_{\mbf{m}}^{-}\otimes\mc{B}_{\mbf{m}}^{\leq}.
\end{align}

Here we have the notation:
\begin{align*}
h_{\pm\mbf{n}}=\prod_{i\in I}h_{i,\pm0}^{n_i}.
\end{align*}

Here $\Delta_{\mbf{m}}$ consists of the leading naive slope terms in formulas in the sense that:
\begin{align*}
\Delta_{\mbf{m}}(F)=\text{component of }\Delta(F)\text{ in }\bigoplus_{\mbf{n}=\mbf{n}_1+\mbf{n}_2}h_{\mbf{n}_2}\mc{A}_{\mbf{n}_1,\mbf{m}\cdot\mbf{n}_1}\otimes\mc{A}_{\mbf{n}_2,\mbf{m}\cdot\mbf{n}_2}
\end{align*}
\begin{align*}
\Delta_{\mbf{m}}(G)=\text{component of }\Delta(G)\text{ in }\bigoplus_{\mbf{n}=\mbf{n}_1+\mbf{n}_2}\mc{A}_{-\mbf{n}_1,-\mbf{m}\cdot\mbf{n}_1}\otimes\mc{A}_{-\mbf{n}_2,-\mbf{m}\cdot\mbf{n}_2}h_{-\mbf{n}_1}.
\end{align*}

\begin{prop}[\cite{N22}]\label{Drinfeld-pairing-to-slope-subalgebra:prop}
The restriction of the pairing $\langle-,-\rangle:\mc{A}^{\geq}_{Q}\otimes\mc{A}^{\leq}_{Q}\rightarrow\mbb{F}$ to $\mc{B}_{\mbf{m}}^{\geq,\leq}$ is also the Drinfeld pairing with resepct to the coproduct $\Delta_{\mbf{m}}$.
\end{prop}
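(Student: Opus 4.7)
The plan is to bootstrap from the Hopf pairing axioms satisfied by the original coproduct $\Delta$ established in the construction of $\mc{A}_Q$, and to show that the ``error terms'' in the decomposition $\Delta=\Delta_{\mbf{m}}+(\text{lower slope part})$ displayed in \ref{infty-coproduct-on-slope-m-elements} pair trivially with elements of $\mc{B}_{\mbf{m}}^{\geq,\leq}$. The crucial structural input is that the pairing $\langle-,-\rangle$ is bigraded with respect to $(\text{hdeg},\text{vdeg})$ defined in \ref{horizontal-vertical-degree}: namely $\langle F,G\rangle=0$ unless $\text{hdeg}(F)=-\text{hdeg}(G)$ and $\text{vdeg}(F)=-\text{vdeg}(G)$, which is visible directly from the generator-level formula $\langle e_{i,d},f_{j,k}\rangle=\delta^i_j\gamma_i\delta^0_{d+k}$ and extends to all of $\mc{A}^{\geq}\otimes\mc{A}^{\leq}$ by multiplicativity.

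The first axiom to verify is $\langle F_1 F_2,G\rangle=\langle F_1\otimes F_2,\Delta_{\mbf{m}}(G)\rangle$ for $F_1,F_2\in\mc{B}_{\mbf{m}}^{\geq}$ and $G\in\mc{B}_{\mbf{m}}^{\leq}$. From the original Drinfeld pairing one has $\langle F_1 F_2,G\rangle=\langle F_1\otimes F_2,\Delta(G)\rangle$. By \ref{infty-coproduct-on-slope-m-elements} we can write $\Delta(G)=\Delta_{\mbf{m}}(G)+\sum G'_{(1)}\otimes G'_{(2)}$ where each $G'_{(1)}$ has naive slope strictly less than $\mbf{m}$, i.e. $\text{vdeg}(G'_{(1)})>\mbf{m}\cdot\text{hdeg}(G'_{(1)})$. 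Pairing against $F_1\otimes F_2$, the bigrading forces $\text{hdeg}(F_1)=-\text{hdeg}(G'_{(1)})$ and $\text{vdeg}(F_1)=-\text{vdeg}(G'_{(1)})$; but $F_1\in\mc{B}_{\mbf{m}|\mbf{n}_1}^{\geq}$ has naive slope exactly $\mbf{m}$, so $\text{vdeg}(F_1)=\mbf{m}\cdot\mbf{n}_1$, and hence $\text{vdeg}(G'_{(1)})=\mbf{m}\cdot\text{hdeg}(G'_{(1)})$, contradicting strict inequality. Thus $\langle F_1\otimes F_2,\sum G'_{(1)}\otimes G'_{(2)}\rangle=0$ and the axiom reduces to $\langle F_1\otimes F_2,\Delta_{\mbf{m}}(G)\rangle$. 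The dual axiom $\langle F,G_1 G_2\rangle=\langle\Delta_{\mbf{m}}(F),G_1\otimes G_2\rangle$ is proved symmetrically, using the decomposition $\Delta(F)=\Delta_{\mbf{m}}(F)+(\text{anything})\otimes(\text{slope}<\mbf{m})$ and showing the error terms annihilate $G_1\otimes G_2$ via the bigrading argument applied to the second tensor factor.

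For the Cartan-Cartan piece, the formula $\Delta_{\mbf{m}}(h_{i,\pm0})=h_{i,\pm0}\otimes h_{i,\pm0}$ coincides with $\Delta(h_{i,\pm0})$ (the Cartan elements are group-like and of slope-$\mbf{m}$ for every $\mbf{m}$ since they sit in horizontal degree zero), and the pairing between $h_{i,\pm0}$'s is inherited from the constant term in $z,w$ of $\langle h_i^{+}(z),h_j^{-}(w)\rangle=\zeta_{ij}(z/w)/\zeta_{ji}(w/z)$; these are compatible with the axioms on the nose. The counit axioms $\langle 1,G\rangle=\varepsilon(G)$ and $\langle F,1\rangle=\varepsilon(F)$ are inherited from the ambient pairing without modification.

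The main obstacle I anticipate is the careful bookkeeping of the $h_{\mbf{n}-\mbf{k}}$ and $h_{-\mbf{k}}$ Cartan prefactors appearing in the formulas \ref{coproduct-m-slope-subalgebra-positive}, \ref{coproduct-m-slope-subalgebra-negative} for $\Delta_{\mbf{m}}$: one must convince oneself that the $\xi$-limits defining the leading-slope component do capture precisely the $\bigoplus_{\mbf{n}=\mbf{n}_1+\mbf{n}_2}h_{\mbf{n}_2}\mc{A}_{\mbf{n}_1,\mbf{m}\cdot\mbf{n}_1}\otimes\mc{A}_{\mbf{n}_2,\mbf{m}\cdot\mbf{n}_2}$ isotypic component of $\Delta$ (as recorded in the remark following \ref{coproduct-m-slope-subalgebra-negative}), and that the resulting Cartan factors are exactly the ones that make the bigrading argument above match up with the original Hopf pairing formulas for $\mc{A}^{\geq}\otimes\mc{A}^{\leq}$. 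Once this identification is in place, the whole statement reduces to the bigraded-pairing vanishing principle and the known axioms for $\Delta$.
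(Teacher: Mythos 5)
The paper itself does not give a proof of this proposition; it is stated as a citation to \cite{N22}, so there is no in-paper argument to compare against. Your proposal is correct and is the argument one would expect: the Hopf pairing $\langle-,-\rangle$ is bihomogeneous with respect to the $(\text{hdeg},\text{vdeg})$-grading (nonzero only when the two degrees sum to zero in both slots, as you verify on the generators $e_{i,d}$, $f_{j,k}$ and on the Cartan currents, then extend by the Hopf axioms and the fact that $\Delta$ is degree-preserving), while the elements of $\mc{B}_{\mbf{m}}^{\geq,\leq}$ sit on the line $\text{vdeg}=\mbf{m}\cdot\text{hdeg}$. Since the difference $\Delta-\Delta_{\mbf{m}}$ on $\mc{B}_{\mbf{m}}^{\pm}$ lands, by \ref{infty-coproduct-on-slope-m-elements} and the isotypic-component description following \ref{coproduct-m-slope-subalgebra-negative}, in tensor factors that fall strictly off this line (in the appropriate slot), the bihomogeneity forces these correction terms to annihilate $\mc{B}_{\mbf{m}}^{\geq}\otimes\mc{B}_{\mbf{m}}^{\leq}$, reducing the $\Delta_{\mbf{m}}$-Hopf axiom to the known $\Delta$-Hopf axiom. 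This is precisely the intended mechanism.

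One small caution on rigor: you are right that the $h$-prefactor bookkeeping is where the argument could go wrong if handled carelessly. The $\xi$-limits in \ref{coproduct-m-slope-subalgebra-positive}, \ref{coproduct-m-slope-subalgebra-negative} collapse the Cartan currents $h_j^{\pm}(\xi z)$ to the constant modes $h_{j,\pm0}$, which sit in hdeg $0$, vdeg $0$; this is what keeps every homogeneous element of $\mc{B}_{\mbf{m}}^{\geq}$ (and not just of $\mc{B}_{\mbf{m}}^{+}$) on the line $\text{vdeg}=\mbf{m}\cdot\text{hdeg}$, which your vanishing argument implicitly needs. It would be cleaner to say this explicitly rather than leaving it as an "anticipated obstacle," but it is not a gap — the identity of the isotypic component is already recorded in the paper, and the Cartan–Cartan pairing restricts to the constant term of $\zeta_{ij}(z/w)/\zeta_{ji}(w/z)$, which is compatible with the group-like coproduct on $h_{i,\pm0}$. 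With that point made explicit, your proof is complete and correct.
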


In other words, we can use the pairing $\langle-,-\rangle$ to define the Drinfeld double of $\mc{B}_{\mbf{m}}^{\geq,\leq}$, and we denote the corresponding algebra as $\mc{B}_{\mbf{m}}$. This is called the \textbf{slope subalgebra}. The Drinfeld pairing above induces a quasi-triangular $\mbb{Q}(q,t_e)_{e\in E}$-Hopf algebra structure on $\mc{B}_{\mbf{m}}$, and we denote the corresponding universal $R$-matrices as $R_{\mbf{m}}^+$, and the reduced universal $R$-matrices as $R_{\mbf{m}}'$, i.e. the universal $R$-matirx without the Cartan elements. Here we can choose the Cartan part of $R_{\mbf{m}}$ such that:
\begin{align}\label{decomposition-of-universal-R-matrix}
R_{\mbf{m}}=q^{-\Omega}R_{\mbf{m}}'
\end{align}
when acting on $K(\mbf{w}_1)\otimes K(\mbf{w}_2)$. Here $\Omega=\frac{1}{2}(\mbf{v}_1\cdot\mbf{w}_2+\mbf{v}_2\cdot\mbf{w}_1-\langle\mbf{v}_1,\mbf{v}_2\rangle)$ on the component $K(\mbf{v}_1,\mbf{w}_1)\otimes K(\mbf{v}_2,\mbf{w}_2)$\footnote{If you do the computation on the original Cartan part of the universal $R$-matrix, you can see there ares some term of the form $\mbf{w}_1\cdot\mbf{w}_2$ dropped over here, and we will not need these terms. It will not affect the result.}.

Here we use the convention that the universal $R$-matrix will be written as:
\begin{align}\label{algebraic-upper-triangular-universal-R-matrix-convention}
R_{\mbf{m}}^+:=R_{\mbf{m}}\in\mc{B}_{\mbf{m}}^{\geq}\hat{\otimes}\mc{B}_{\mbf{m}}^{\leq}
\end{align}
and this stands for the lower-triangular one with respect to the coproduct $\Delta_{\mbf{m}}$ as being described before the formula \ref{triangular-rule}.

The slope subalgebras turn out to give a factorisation of the preprojective KHA, the following theorem was proved by Negu\c{t} in \cite{N22}:
\begin{thm}[See Theorem 1.1 in \cite{N22}]\label{Main-theorem-on-slope-factorisation:Theorem}
Fixing $\mbf{m}\in\mbb{Z}^{I}$ and $\bm{\theta}\in\mbb{Q}^I_{+}$, the multiplication map:
\begin{align*}
\bigotimes^{\rightarrow}_{r\in\mbb{Q}}\mc{B}^{\pm}_{\mbf{m}+r\bm{\theta}}\rightarrow\mc{A}^{\pm}_{Q}
\end{align*}
gives an isomorphism. Moreover, we have the following isomorphism:
\begin{align*}
\mc{A}_{Q}^{ext}\cong\bigotimes^{\rightarrow}_{r\in\mbb{Q}\sqcup\{\infty\}}\mc{B}_{\mbf{m}+r\bm{\theta}}
\end{align*}
with $\mc{B}_{\infty}:=\mc{A}^{0}_{Q}$, and the isomorphism preserves the Drinfeld pairing and gives a factorization of the universal $R$-matrix with respect to the Drinfeld coproduct structure:
\begin{align*}
R=\prod_{r\in\mbb{Q}\sqcup\{\infty\}}^{\rightarrow}R'_{\mbf{m}+r\bm{\theta}}
\end{align*}
\end{thm}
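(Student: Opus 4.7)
The plan is to reduce everything to the positive-half statement $\bigotimes^{\rightarrow}_{r\in\mbb{Q}}\mc{B}^{+}_{\mbf{m}+r\bm{\theta}}\cong\mc{A}^{+}_{Q}$, then invoke the parallel negative-half factorization, and finally use the Hopf pairing \ref{Hopf-algebra-pairing} to couple the two halves and extract the $R$-matrix factorization. Since $\bm{\theta}\in\mbb{Q}_+^I$ has strictly positive entries, the real parameter $r$ totally orders the rays $\{\mbf{m}+r\bm{\theta}\}_{r\in\mbb{Q}}$, so the ordered tensor product on the left has a canonical associative multiplication.

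To show the multiplication map is well-defined, I would verify $\mc{B}^+_{\mbf{m}_1}\cdot\mc{B}^+_{\mbf{m}_2}\subset\mc{A}^+_{Q,\leq\mbf{m}_2}$ whenever $\mbf{m}_1\cdot\bm{\theta}<\mbf{m}_2\cdot\bm{\theta}$. This follows from a direct rescaling estimate on the shuffle product combined with the slope condition \ref{positive-slope-condition} and the leading asymptotics of the kernels $\zeta_{ij}$, whose cross-terms contribute precisely the combinatorial factor $\langle\mbf{k},\mbf{n}-\mbf{k}\rangle$ that defines the slope filtration. For injectivity, I filter the tensor product by the maximal ray appearing in a pure tensor, so that the image of $\bigotimes^{\rightarrow}_{r\leq r_0}\mc{B}^+_{\mbf{m}+r\bm{\theta}}$ lands in $\mc{A}^+_{Q,\leq\mbf{m}+r_0\bm{\theta}}$, and induct on $r_0$: the leading-slope component of a pure tensor is detected by $\Delta_{\mbf{m}+r_0\bm{\theta}}$ from \ref{coproduct-m-slope-subalgebra-positive}, which acts on the rightmost factor as the identity and kills strictly lower slopes, giving an inverse to multiplication on the associated graded.

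For surjectivity I would use induction on naive slope: given $F\in\mc{A}^+_Q$, the shuffle realization Theorem \ref{wheel-shuffle-theorem:theorem} furnishes a finite maximal naive slope $\mbf{m}+r_0\bm{\theta}$, and applying $\Delta_{\mbf{m}+r_0\bm{\theta}}$ extracts a component in $\mc{A}^+_Q\otimes\mc{B}^+_{\mbf{m}+r_0\bm{\theta}}$; substituting back shows $F$ differs from a product in the image by an element of strictly lower slope, and the induction terminates since only finitely many rays meet any given horizontal degree strip. Assembling this with the negative half and inserting $\mc{B}_\infty:=\mc{A}^0_Q$ between them yields the claimed factorization of $\mc{A}^{ext}_Q$. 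For the pairing, the essential point is slope-orthogonality, $\langle\mc{B}^+_{\mbf{m}_1},\mc{B}^-_{\mbf{m}_2}\rangle=0$ unless $\mbf{m}_1=\mbf{m}_2$, which I would obtain by examining the residue formula for $\langle-,-\rangle$ and deforming the integration contours to pick up only the leading asymptotics on each side; the surviving diagonal blocks recover the internal Drinfeld pairing on $\mc{B}_{\mbf{m}}$ of Proposition \ref{Drinfeld-pairing-to-slope-subalgebra:prop}. The $R$-matrix factorization $R=\prod_r R'_{\mbf{m}+r\bm{\theta}}$ then follows formally, since the canonical element of a pairing that splits as a product of pairings is the product of the canonical elements, with the Cartan contribution isolated into $q^{-\Omega}$ via \ref{decomposition-of-universal-R-matrix}.

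The main difficulty I anticipate is the slope-orthogonality of the Hopf pairing. The multiplicative side is tightly controlled by \ref{positive-slope-condition} and the limiting coproduct $\Delta_{\mbf{m}}$, which cleanly projects onto the associated graded; but the pairing side requires matching the slope filtrations across the $\pm$ halves, and the residue formula mixes all slopes through the $\zeta_{ij}$-denominators. Pinning down exactly which contour specializations kill off-diagonal blocks, and simultaneously verifying compatibility between $\Delta_{\mbf{m}}$ (as used on the positive side) and its opposite analogue on the negative side, is the technical heart of the argument and the step most likely to demand extended residue computation.
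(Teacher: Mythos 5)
The paper does not supply its own proof of this theorem; it is imported verbatim from Theorem~1.1 of~\cite{N22} (a partial sketch of the surjectivity step appears in the source as a commented-out block, but is not part of the paper). So the comparison must be against Negu\c{t}'s argument rather than an internal one. Your sketch does capture the broad strategy used there: prove the positive-half factorization by a slope-filtration induction, mirror it on the negative half, and then couple the two via the Hopf pairing to extract the $R$-matrix product. Two points, however, need correction.

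First, the termination argument for surjectivity is wrong as stated. You claim ``the induction terminates since only finitely many rays meet any given horizontal degree strip,'' but infinitely many rational values of $r$ give rays $\mbf{m}+r\bm{\theta}$ that intersect any fixed degree strip. The correct reason, as in the argument this paper's commented block rehearses, is that a fixed homogeneous $F\in\mc{A}^+_{Q,\mbf{n},d}$ has only finitely many ``hinges'' — pairs $(\mbf{k},e)$ for which $\Delta(F)$ has a nonzero component in $\mc{A}^+_{\mbf{n}-\mbf{k},d-e}\otimes\mc{A}^+_{\mbf{k},e}$ — and each subtraction step kills the maximal bad hinge without introducing new ones, so the finite set of bad hinges strictly decreases. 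Without this or an equivalent argument, your induction is not visibly well-founded.

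Second, you overestimate the difficulty of slope-orthogonality and underdescribe what ``preserves the Drinfeld pairing'' actually requires. Orthogonality of $\mc{B}^+_{\mbf{m}_1}$ and $\mc{B}^-_{\mbf{m}_2}$ for $\mbf{m}_1\neq\mbf{m}_2$ on the ray is a pure degree argument, not a contour deformation: an element of $\mc{B}^+_{\mbf{m}_1|\mbf{n}}$ has vertical degree $\mbf{m}_1\cdot\mbf{n}$, one of $\mc{B}^-_{\mbf{m}_2|-\mbf{n}}$ has vertical degree $-\mbf{m}_2\cdot\mbf{n}$, and the pairing is only nonzero on opposite degrees; since $\bm{\theta}\in\mbb{Q}^I_{+}$ and $\mbf{n}\neq0$, $(\mbf{m}_1-\mbf{m}_2)\cdot\mbf{n}=0$ forces $\mbf{m}_1=\mbf{m}_2$. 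What is less trivial — and what your sketch does not isolate — is showing that on the diagonal block $\mc{B}^{\geq}_{\mbf{m}}\otimes\mc{B}^{\leq}_{\mbf{m}}$ the ambient Hopf pairing agrees with the internal Drinfeld pairing for the coproduct $\Delta_{\mbf{m}}$ (Proposition~\ref{Drinfeld-pairing-to-slope-subalgebra:prop}), and that the pairing of products against products factorizes across slopes via $\langle ab,c\rangle=\langle a\otimes b,\Delta(c)\rangle$ and the triangular behaviour of $\Delta$ with respect to the slope filtration. Only with both pieces in hand does the universal $R$-matrix split as $\prod_r R'_{\mbf{m}+r\bm{\theta}}$. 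Also note that the factorization in the theorem is \emph{slope-interleaved} — each $\mc{B}_{\mbf{m}+r\bm{\theta}}$ is a full Drinfeld double sitting in its slot — not the triangular $(\bigotimes\mc{B}^+)\otimes\mc{A}^0_Q\otimes(\bigotimes\mc{B}^-)$ decomposition your final assembly describes; the two are equivalent as vector spaces, but passing between them is exactly the reordering controlled by the $R$-matrices and should be stated.
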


\subsubsection{Integral slope subalgebras and integral slope factorisations}
Since we are concentrating on the integral case, we can define the integral analog of the slope subalgebras.

For each integral KHA $\mc{A}^{+,\mbb{Z}}_{Q}$ and $\mc{A}^{+,nilp,\mbb{Z}}_{Q}$, one can define its corresponding integral model for the slope subalgebra as:
\begin{equation}\label{integral-slope-subalgebras}
\mc{B}_{\mbf{m}}^{+,\mbb{Z}}:=\mc{B}_{\mbf{m}}^{+}\cap\mc{A}^{+,\mbb{Z}}_{Q},\qquad\mc{B}_{\mbf{m}}^{+,nilp,\mbb{Z}}:=\mc{B}_{\mbf{m}}^{+}\cap\mc{A}^{+,nilp,\mbb{Z}}_{Q}.
\end{equation}

One can do the similar proof as in \cite{N22} to show that $\mc{B}^{+,\mbb{Z}}_{\mbf{m}}$ and $\mc{B}^{+,nilp,\mbb{Z}}_{\mbf{m}}$ are $\mbb{Z}[q^{\pm1},t_{e}^{\pm1}]_{e\in E}$-subalgebras of $\mc{A}^{+,\mbb{Z}}_{Q}$ and $\mc{A}^{+,nilp,\mbb{Z}}_{Q}$ respectively. It is not hard to check that these coincides with the definition in the Definition \ref{Slope-subalgebra-definition-derived
:label}.

Similar to the factorisation theorem \ref{Main-theorem-on-slope-factorisation:Theorem}, one can also have the factorisation for the integral KHA, and the categorical version has been proved in \cite{PT25}:
\begin{align}\label{slope-factoristaion-integral}
\mc{A}^{+,\mbb{Z}}_{Q}=\bigotimes^{\rightarrow}_{r\in\mbb{Q}}\mc{B}^{+,\mbb{Z}}_{\mbf{m}+r\bm{\theta}},\qquad\mc{A}^{+,nilp,\mbb{Z}}_{Q}=\bigotimes^{\rightarrow}_{r\in\mbb{Q}}\mc{B}^{+,nilp,\mbb{Z}}_{\mbf{m}+r\bm{\theta}}.
\end{align}

\subsection{Primitivity of the slope subalgebra}\label{primitivity:_textbf_primitivity_of_the_slope_subalgebra}

For the Hopf algebra $\mc{B}_{\mbf{m}}$, we say that an element $F,G$ in $\mc{B}_{\mbf{m},\mbf{n}}^{\pm}$ is \textbf{primitive} if the coproduct $\Delta_{\mbf{m}}$ on $F$ can be written as:
\begin{align}\label{primitive-element-definition}
\Delta_{\mbf{m}}(F)=F\otimes\text{Id}+h_{\mbf{n}}\otimes F\text{ or }\Delta_{\mbf{m}}(G)=G\otimes h_{-\mbf{n}}+\text{Id}\otimes G\text{ respectively}.
\end{align}

In fact, here we can show that $\mc{B}_{\mbf{m}}$ is generated by the primitive elements. This will be important when we are trying to do the decomposition of the $K$-theoretic Hall algebras.

\begin{thm}\label{primitivity:Theorem}
For arbitrary slope subalgebra $\mc{B}_{\mbf{m}}$, it is generated by the primitive elements with respect to the coproduct $\Delta_{\mbf{m}}$.
\end{thm}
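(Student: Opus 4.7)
The plan is to prove the statement for $\mc{B}_{\mbf{m}}^+$ by strong induction on $|\mbf{n}| = \sum_{i\in I} n_i$; the statement for $\mc{B}_{\mbf{m}}^-$ follows by the symmetric argument using the analogous coproduct \ref{coproduct-m-slope-subalgebra-negative}, or by duality via the Hopf pairing \ref{Hopf-algebra-pairing}. Let $\mc{B}'_{\mbf{m}} \subseteq \mc{B}_{\mbf{m}}^+$ denote the subalgebra generated by the primitive elements with respect to $\Delta_{\mbf{m}}$, and assume inductively that $\mc{B}_{\mbf{m}|\mbf{n}'}^+ \subseteq \mc{B}'_{\mbf{m}}$ for every $\mbf{n}' < \mbf{n}$.

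The base case $|\mbf{n}| = 1$ is immediate: when $\mbf{n} = \mbf{e}_i$, the sum over $\mbf{k}$ in the coproduct formula \ref{coproduct-m-slope-subalgebra-positive} contains only the extremal terms $\mbf{k} = \mbf{0}$ and $\mbf{k} = \mbf{e}_i$, so every $F \in \mc{B}_{\mbf{m}|\mbf{e}_i}^+$ satisfies $\Delta_{\mbf{m}}(F) = F \otimes 1 + h_{\mbf{e}_i} \otimes F$ and is primitive.

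For the inductive step, fix $F \in \mc{B}_{\mbf{m}|\mbf{n}}^+$ with $|\mbf{n}|\geq 2$. The reduced coproduct
\[
\bar{\Delta}_{\mbf{m}}(F) := \Delta_{\mbf{m}}(F) - F \otimes 1 - h_{\mbf{n}} \otimes F \;\in\; \bigoplus_{\substack{\mbf{n}_1 + \mbf{n}_2 = \mbf{n} \\ \mbf{n}_1, \mbf{n}_2 > \mbf{0}}} h_{\mbf{n}_2}\mc{B}_{\mbf{m}|\mbf{n}_1}^+ \otimes \mc{B}_{\mbf{m}|\mbf{n}_2}^+
\]
has both tensor factors in $\mc{B}'_{\mbf{m}}$ by the induction hypothesis. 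It therefore suffices to construct $G \in \mc{B}'_{\mbf{m}, \mbf{n}}$ with $\bar{\Delta}_{\mbf{m}}(G) = \bar{\Delta}_{\mbf{m}}(F)$, since then $F - G$ is primitive and $F = (F-G) + G$ lies in $\mc{B}'_{\mbf{m}}$. To exhibit $G$, expand $\bar{\Delta}_{\mbf{m}}(F) = \sum_s h_{\mbf{n}_2^{(s)}} F_1^{(s)} \otimes F_2^{(s)}$ with $F_j^{(s)} \in \mc{B}'_{\mbf{m}}$ of strictly smaller dimension vectors, and look for $G$ of the form $\sum_s c_s F_1^{(s)} * F_2^{(s)}$; using the multiplicativity of $\Delta_{\mbf{m}}$ on shuffle products, the requirement $\bar{\Delta}_{\mbf{m}}(G) = \bar{\Delta}_{\mbf{m}}(F)$ becomes a linear system in the coefficients $c_s$ whose consistency is forced by the coassociativity relation $(\Delta_{\mbf{m}} \otimes \id) \circ \Delta_{\mbf{m}} = (\id \otimes \Delta_{\mbf{m}}) \circ \Delta_{\mbf{m}}$ applied to $F$.

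The main obstacle is establishing the solvability of this linear system, equivalently showing that every coassociative tensor in $\bigoplus h_{\mbf{n}_2}\mc{B}'_{\mbf{m}, \mbf{n}_1} \otimes \mc{B}'_{\mbf{m}, \mbf{n}_2}$ is realized as $\bar{\Delta}_{\mbf{m}}$ of some shuffle product of elements of $\mc{B}'_{\mbf{m}}$. A cleaner route bypasses the explicit construction by invoking the non-degenerate Hopf pairing of Proposition \ref{Drinfeld-pairing-to-slope-subalgebra:prop}: if some $F \in \mc{B}_{\mbf{m}|\mbf{n}}^+$ failed to lie in $\mc{B}'_{\mbf{m}}$, the quotient $\mc{B}_{\mbf{m}|\mbf{n}}^+ / \mc{B}'_{\mbf{m},\mbf{n}}$ would pair to a nonzero $\psi \in \mc{B}_{\mbf{m}|-\mbf{n}}^-$ annihilating all products of primitives; by the symmetric induction hypothesis applied to $\mc{B}_{\mbf{m}}^-$, this $\psi$ would be a nonzero primitive of $\mc{B}_{\mbf{m}}^-$, and the non-degeneracy of the pairing on the subspace of primitives (which is seen directly from \ref{shuffle-infty-coproduct-positive}--\ref{shuffle-infty-coproduct-negative} and the explicit Drinfeld pairing formulas) forces the existence of a primitive of $\mc{B}_{\mbf{m}}^+$ detecting $\psi$, giving a contradiction. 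The delicate bookkeeping of running the induction simultaneously on both halves is what makes this step nontrivial.
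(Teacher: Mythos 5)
Your approach and the paper's are close in spirit: both exploit the non-degenerate pairing between $\mc{B}_{\mbf{m}}^{+}$ and $\mc{B}_{\mbf{m}}^{-}$ and an induction on the horizontal degree. However, your "cleaner route" has a genuine gap at its pivotal step. You need the restriction of the Hopf pairing to $P^{+}\times P^{-}$ (primitives against primitives) in degree $\mbf{n}$ to be non-degenerate, and you assert this can "be seen directly" from the shuffle formulas. That lemma is not elementary, and in fact it is essentially equivalent to the statement you are trying to prove. To see the issue concretely, set $a=\dim P^{+}$, $b=\dim D^{+}$ (decomposables), $c=\dim P^{-}$, $d=\dim D^{-}$, $n=\dim\mc{B}_{\mbf{m}|\mbf{n}}^{+}=\dim\mc{B}_{\mbf{m}|-\mbf{n}}^{-}$. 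The standard Hopf-duality facts and the disjointness $P^{\pm}\cap D^{\pm}=0$ only yield the inequalities $a+d\leq n$, $b+c\leq n$, $a+b\leq n$, $c+d\leq n$. These do not force $a+b=n$; in particular a nonzero primitive $\psi\in\mc{B}_{\mbf{m}|-\mbf{n}}^{-}$ can a priori annihilate all of $P^{+}+D^{+}$ without contradiction, which is exactly the situation your argument needs to exclude. The "simultaneous induction on both halves" you flag as delicate is not merely delicate bookkeeping — it does not close as written.

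The paper supplies the missing input by using the quasi-triangular structure rather than the pairing alone. Writing the universal $R$-matrix of $\mc{B}_{\mbf{m}}$ as $R_{\mbf{m}}^{+}=q^{\sum_i H_i\otimes H_{-i}}\sum_{\mbf{n},\alpha}E_{\mbf{n}}^{(\alpha)}\otimes F_{\mbf{n}}^{(\alpha)}$ in dual bases, the relation $(\Delta_{\mbf{m}}\otimes\id)(R_{\mbf{m}}^{+})=(R_{\mbf{m}}^{+})_{13}(R_{\mbf{m}}^{+})_{23}$ gives an explicit formula $\Delta_{\mbf{m}}(E_{\mbf{n}}^{(\alpha)})=\sum a^{\alpha}_{\alpha_1\alpha_2}h_{\mbf{n}_2}E_{\mbf{n}_1}^{(\alpha_1)}\otimes E_{\mbf{n}_2}^{(\alpha_2)}$, where $a^{\alpha}_{\alpha_1\alpha_2}$ are the structure constants of the multiplication in $\mc{B}_{\mbf{m}}^{-}$. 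Choosing the basis $\{F_{\mbf{n}}^{(\alpha)}\}$ adapted to $D^{-}$ (so that a subset of the $F$'s spans $D^{-}$ and the rest spans a complement), this formula shows that $E_{\mbf{n}}^{(\alpha)}$ is primitive precisely when $F_{\mbf{n}}^{(\alpha)}$ lies in the chosen complement to $D^{-}$; hence $a\geq n-d$, which combined with the Hopf-orthogonality $a\leq n-d$ gives $a=n-d$, and by symmetry $c=n-b$. Together with $a+b\leq n$ and $c+d\leq n$, this forces $a+b=c+d=n$, i.e.\ both halves are spanned by primitives plus decomposables, which completes the induction. This extra identity coming from the $R$-matrix is exactly what your argument lacks, and it is not a reformulation of the conclusion — it is a consequence of the quasi-triangular axioms, available before the theorem is known.
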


\begin{proof}
We only give the proof of the statement for the positive half, and the proof for the negative half is similar.

We now prove the theorem by induction, and obviously the elements of the lowest vertical degree are primitive. Let us suppose that for $\mc{B}_{\mbf{m}|\mbf{n}_i}$ with the elements in $\mbf{n}_i<\mbf{n}$ being generated by the primitive elements. We now suppose that given $E_{k}\in\mc{B}_{\mbf{m}|\mbf{n}}^{-}$ such that $\mbf{n}$ can be decomposed into two nonzero vectors such that the corresponding elements can be generated by the primitive elements. Moreover, we will say that an element $E,F\in\mc{B}^{\pm}_{\mbf{m}}$ is \textbf{indecomposable} if it cannot be written as the product of elements in $\mc{B}_{\mbf{m}}^{\pm}$ of lower vertical degree respectively.

By the property of the universal $R$-matrix $R^{+}_{\mbf{m}}$, any element $E\in\mc{B}_{\mbf{m},\mbf{n}}^{+}$, if we write down its coproduct $\Delta_{\mbf{m}}(E)$ as:
\begin{align}
\Delta_{\mbf{m}}(E)=E\otimes\text{Id}+\sum_{\mbf{0}\leq\mbf{k}<\mbf{n}}\sum_{\alpha}h_{\mbf{n}-\mbf{k}}E_{\mbf{k}}^{(\alpha)}\otimes E_{\mbf{n}-\mbf{k}}^{(\alpha)}.
\end{align}

Note that $\mc{B}^{\pm}_{\mbf{m}|\mbf{n}}$ are finite-dimensional, thus we denote the orthogonal basis by $\{E_{\mbf{n}}^{(\alpha)},F_{\mbf{n}}^{(\alpha)}\}_{\alpha\in I_{\mbf{n}}}$. Thus we write down the decomposition of the universal $R$-matrix:

\begin{align*}
R_{\mbf{m}}^{+}=\sum_{\substack{\mbf{n}\in\mbb{N}^I|\mbf{m}\cdot\mbf{n}\in\mbb{Z}}}R_{\mbf{m}|\mbf{n}}^+=q^{\sum_{i\in I}H_i\otimes H_{-i}}\sum_{\substack{\mbf{n}\in\mbb{N}^I|\mbf{m}\cdot\mbf{n}\in\mbb{Z}\\\alpha\in I_{\mbf{n}}}}E_{\mbf{n}}^{(\alpha)}\otimes F_{\mbf{n}}^{(\alpha)}.
\end{align*}

Now we use the coproduct formula $(\Delta_{\mbf{m}}\otimes\text{id})R=R_{13}R_{23}$:
\begin{equation}\label{quasi-triangular-relations}
\begin{aligned}
&(\Delta_{\mbf{m}}\otimes\text{id})R_{\mbf{m}}^+=(\Delta_{\mbf{m}}\otimes\text{id})q^{\sum_{i\in I}H_i\otimes H_{-i}}\sum_{\substack{\mbf{n}\in\mbb{N}^I|\mbf{m}\cdot\mbf{n}\in\mbb{Z}\\\alpha\in I_{\mbf{n}}}}\Delta_{\mbf{m}}(E_{\mbf{n}}^{(\alpha)})\otimes F_{\mbf{n}}^{(\alpha)}\\
&(R_{\mbf{m}}^+)_{13}(R_{\mbf{m}}^+)_{23}=\sum_{\substack{\mbf{n}_1,\mbf{n}_2,\mbf{m}\cdot\mbf{n}_1,\mbf{m}\cdot\mbf{n}_2\in\mbb{Z}\\\alpha_1,\alpha_2\in I_{\mbf{n}_1},I_{\mbf{n}_2}}}q^{\sum_{i\in I}H_i\otimes1\otimes H_{-i}}E_{\mbf{n}_1}^{(\alpha_1)}\otimes q^{\sum_{i\in I}1\otimes H_i\otimes H_{-i}}E_{\mbf{n}_2}^{(\alpha_2)}\otimes F_{\mbf{n}_1}^{(\alpha_1)}F_{\mbf{n}_2}^{(\alpha_2)}.
\end{aligned}
\end{equation}

So if we write down $F^{(\alpha_1)}_{\mbf{n}_1}F^{(\alpha_2)}_{\mbf{n}_2}=\sum_{\alpha}a^{\alpha}_{\alpha_1\alpha_2}F_{\mbf{n}}^{(\alpha)}$, one could write down the coproduct formula for $E_{\mbf{n}}^{(\alpha)}$ as:
\begin{align}
\Delta_{\mbf{m}}(E_{\mbf{n}}^{(\alpha)})=\sum_{\substack{\mbf{n}_1+\mbf{n}_2=\mbf{n}\\\mbf{n}_1,\mbf{n}_2\in\mbb{N}^I}}a^{\alpha}_{\alpha_1\alpha_2}h_{\mbf{n}_2}E_{\mbf{n}_1}^{(\alpha_1)}\otimes E_{\mbf{n}_2}^{(\alpha_2)}.
\end{align}

From this formula, one can see that if $F_{\mbf{n}}^{(\alpha)}$ is indecomposable, it cannot appear in the expansion of the product $F^{(\alpha_1)}_{\mbf{n}_1}F^{(\alpha_2)}_{\mbf{n}_2}$ for arbitrary $F^{(\alpha_1)}_{\mbf{n}_1}$ and $F^{(\alpha_2)}_{\mbf{n}_2}$. Therefore if $F_{\mbf{n}}^{(\alpha)}$ is indecomposable, $E_{\mbf{n}}^{(\alpha)}$ is primitive. But being primitive implies that $E_{\mbf{n}}^{(\alpha)}$ is indecomposable, and then we use the above argument again on $\Delta_{\mbf{m}}(F_{\mbf{n}}^{(\alpha)})$, and we can see that in this case $F_{\mbf{n}}^{(\alpha)}$ is primitive. Thus we have that element in $\mc{B}^{\pm}_{\mbf{m}}$ being indecomposable is equivalent to being primitive. Using the induction again, one obtain that the theorem is true.

\end{proof}

Using these we can define the root subalgebra in the slope subalgebra. 

\begin{defn}\label{Definition-of-wall-subalgebra-in-root-subalgebra:Definition}
Given $w$ a wall dual to a vector $\alpha$ such that $\mbf{m}\cdot\alpha\in\mbb{Z}$, one can define the \textbf{root subalgebra} $\mc{B}_{\mbf{m},w}\subset\mc{B}_{\mbf{m}}$ as the subalgebra of $\mc{B}_{\mbf{m}}$ generated by the primitive elements in $\bigoplus_{k\geq0}\mc{B}^{\pm}_{\mbf{m}|k\alpha}$.
\end{defn}

Similar for the case of the slope subalgebra $\mc{B}_{\mbf{m}}^{\pm}$, it also admits the factorisation property:
\begin{lem}\label{root-factorisation-for-slope:label}
The multiplication map induces the isomorphism
\begin{align}\label{root-factorisation-for-slope}
\bigotimes^{\rightarrow}_{\mbf{m}\in w}\mc{B}_{\mbf{m},w}^{\pm}\cong\mc{B}_{\mbf{m}}^{\pm}
\end{align}
which preserves the Drinfeld bialgebra pairing on both sides. Here the order of the tensor product on right hand side.
\end{lem}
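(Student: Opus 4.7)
The plan is to mimic the strategy used for the slope factorization Theorem \ref{Main-theorem-on-slope-factorisation:Theorem}, refined one level deeper using the primitivity result Theorem \ref{primitivity:Theorem}. First I would unpack the grading: each nonzero graded piece $\mc{B}_{\mbf{m}|\pm\mbf{n}}^{\pm}$ has $\mbf{n}\in\mbb{N}^I$ with $\mbf{m}\cdot\mbf{n}\in\mbb{Z}$, and $\mbf{n}$ determines an indivisible vector $\alpha$ (so $\mbf{n}=k\alpha$) and hence a unique wall $w\ni\mbf{m}$ dual to $\alpha$. Thus the space of primitive elements of $\mc{B}_{\mbf{m}}^{\pm}$ decomposes as an $\mbb{F}$-vector space:
\begin{align*}
P(\mc{B}_{\mbf{m}}^{\pm})=\bigoplus_{w\ni\mbf{m}}P(\mc{B}_{\mbf{m},w}^{\pm}),
\end{align*}
where by Definition \ref{Definition-of-wall-subalgebra-in-root-subalgebra:Definition} the summand labeled by $w$ is exactly the primitive space generating $\mc{B}_{\mbf{m},w}^{\pm}$. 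Applying Theorem \ref{primitivity:Theorem}, $\mc{B}_{\mbf{m}}^{\pm}$ is generated as an $\mbb{F}$-algebra by its primitives, hence by $\bigcup_{w\ni\mbf{m}}\mc{B}_{\mbf{m},w}^{\pm}$, which gives surjectivity of the multiplication map.

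For injectivity I would use a quantum PBW argument analogous to the one behind Theorem \ref{Main-theorem-on-slope-factorisation:Theorem}. Fix a total order on the walls through $\mbf{m}$ matching the tensor order of the statement. The key computation is that for primitives $F_{w_1}\in\mc{B}_{\mbf{m},w_1}^{\pm}$ at dimension $k_1\alpha_1$ and $F_{w_2}\in\mc{B}_{\mbf{m},w_2}^{\pm}$ at dimension $k_2\alpha_2$ with $w_1\ne w_2$, the Drinfeld coproduct applied to a product $F_{w_2}F_{w_1}$ produces, via the Cartan twisting $\Delta_{\mbf{m}}(F)=F\otimes 1+h_{\mbf{n}}\otimes F$, a well-controlled expression which lets one rewrite $F_{w_2}F_{w_1}$ in the prescribed order up to terms of strictly smaller dimension vector. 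Since the commutator $[F_{w_1},F_{w_2}]$ lives in $\mc{B}_{\mbf{m}|k_1\alpha_1+k_2\alpha_2}^{\pm}$ and, being primitive modulo lower terms by the usual coproduct computation, must itself lie in some $\mc{B}_{\mbf{m},w_3}^{\pm}$, the rewriting terminates by induction on dimension vector. This gives the PBW-type linear isomorphism.

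Third, I would verify that the restriction of the Drinfeld pairing \eqref{Hopf-algebra-pairing} to each $\mc{B}_{\mbf{m},w}^{\geq,\leq}$ agrees with the Drinfeld pairing intrinsic to the coproduct $\Delta_{\mbf{m}}$ on that root subalgebra: this is immediate from Proposition \ref{Drinfeld-pairing-to-slope-subalgebra:prop} combined with the $\mbb{N}^I$-homogeneity of the pairing, since elements whose dimension vectors lie on distinct rays pair to zero. Consequently, the global pairing on $\mc{B}_{\mbf{m}}^{\geq}\otimes\mc{B}_{\mbf{m}}^{\leq}$ factors as the product of the pairings on the individual root subalgebras along the tensor decomposition.

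The main obstacle will be the PBW injectivity step: because $\Delta_{\mbf{m}}$ is not cocommutative but only Cartan-twisted, the standard Milnor--Moore argument has to be adapted, keeping careful track of the $h_{\mbf{n}}$-factors while commuting primitives of distinct wall directions. This is the same technical point that underlies the proof of Theorem \ref{Main-theorem-on-slope-factorisation:Theorem}, so I expect it to be handled by an entirely parallel induction on dimension vector, now refined one level deeper since walls replace slopes.
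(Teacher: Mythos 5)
Your surjectivity argument (primitives decompose by wall, then invoke Theorem \ref{primitivity:Theorem}) and your pairing-compatibility argument (homogeneity of the $\mbb{N}^I$-grading forces cross-wall pairings to vanish) both match the paper's. Where you diverge is injectivity, and there your argument has a genuine gap: a PBW-type straightening procedure shows that the ordered products \emph{span} $\mc{B}_{\mbf{m}}^{\pm}$, which is just surjectivity again. It does not, on its own, show that the ordered monomials are \emph{linearly independent}. You flag this yourself as the "main obstacle" and appeal to an unproved adapted Milnor--Moore argument, which is exactly where the proposal becomes a sketch rather than a proof. The issue is also compounded by the fact that the commutator $[F_{w_1},F_{w_2}]$ in degree $k_1\alpha_1+k_2\alpha_2$ need not lie in a single root subalgebra $\mc{B}_{\mbf{m},w_3}^{\pm}$: it is only a polynomial in primitives from possibly several walls, so the "primitive modulo lower terms" rewriting step is less clean than stated and must be tracked under a filtration.

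What you are missing is that the pairing compatibility you already established in your third step does the injectivity work for free; this is precisely the route the paper takes. If $\gamma=\sum_s a_s\,\gamma_{s_1}\cdots\gamma_{s_n}$ maps to zero in $\mc{B}_{\mbf{m}}^{+}$, then $\langle\gamma,\delta\rangle=0$ for every $\delta\in\mc{B}_{\mbf{m}}^{-}$. Because the Drinfeld pairing factorizes across the ordered tensor decomposition (your third step) and each root-subalgebra pairing is non-degenerate (it is the Drinfeld pairing defining the double $\mc{B}_{\mbf{m},w}$), the pulled-back pairing on $\bigotimes^{\rightarrow}_{w}\mc{B}_{\mbf{m},w}^{\pm}$ is non-degenerate, and this forces all $a_s=0$. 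A linear map intertwining a non-degenerate pairing is automatically injective, so no PBW straightening or independence argument is needed. I'd recommend replacing your second paragraph with this two-line argument and dropping the Milnor--Moore speculation entirely.
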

\begin{proof}
For the Drinfeld bialgebra pairing preserving, this follows from the fact that the elements in different root subalgebra $\mc{B}_{\mbf{m},w}^{\pm}$ will not have the same vertical degree.

The surjectivity of the map \ref{root-factorisation-for-slope} is obvious. For the injectivity, note that given arbitrary $\gamma\in\mc{B}^{+}_{\mbf{m}}$ written as the product form $\sum_{s}a_s\gamma_{s_1}\cdots\gamma_{s_n}$ with $\gamma_{s_i}\in\mc{B}_{\mbf{m},w_i}^+$, if it were zero, it means that for arbitrary $\delta\in\mc{B}_{\mbf{m}}^-$, we have that:
\begin{align}
\langle\gamma,\delta\rangle=\sum_{s}a_s\langle\gamma_{s_1}\cdots\gamma_{s_n},\delta\rangle=0,\qquad\forall\delta\in\mc{B}_{\mbf{m}}^-.
\end{align}

Since the Drinfeld bialgebra pairing is preserved by the factorisation, the above expression would be zero if and only if $a_s=0$. Thus the Lemma is proved.

\end{proof}

Similarly one can define the integral forms of the root subalgebra $\mc{B}_{\mbf{m},w}^{+,\mbb{Z}}$ and $\mc{B}_{\mbf{m},w}^{+,nilp,\mbb{Z}}$ as follows:
\begin{align}\label{integral-root-subalgebra}
\mc{B}_{\mbf{m},w}^{+,\mbb{Z}}:=\mc{B}_{\mbf{m},w}^{+}\cap\mc{A}^{+,\mbb{Z}}_{Q},\qquad\mc{B}_{\mbf{m},w}^{+,nilp,\mbb{Z}}:=\mc{B}_{\mbf{m},w}^{+}\cap\mc{A}^{+,nilp,\mbb{Z}}_{Q}.
\end{align}

Later on we will see that the root subalgebra will be treated as the algebraic analog of the wall subalgebra in the MO quantum loop group setting.

\section{\textbf{Stable envelopes and Maulik-Okounkov quantum loop groups}}\label{section:_textbf_stable_envelopes_and_maulik_okounkov_quantum_loop_groups}
In this Section we introduce the Maulik-Okounkov quantum loop group from the $K$-theoretic stable envelopes. For the introduction of the $K$-theoretic stable envelope, one can refer to \cite{O15}\cite{OS22}. In this Section we also give the nilpotent $K$-theoretic stable envelope, which can be thought of as the $K$-theory analog of the nilpotent stable envelope considered in \cite{SV23}.

Given a smooth quasi-projective variety $X$ with a torus $A$ action, we denote $\text{cochar}(A)$ as the lattice of cocharacters over $A$:
\begin{align*}
\sigma:\mbb{C}^*\rightarrow A.
\end{align*}

We denote
\begin{align*}
\mf{a}_{\mbb{R}}:=\text{cochar}(A)\otimes_{\mbb{Z}}\mbb{R}\subset\mf{a}.
\end{align*}

We will say that a vector $\alpha\in\mf{a}_{\mbb{R}}$ is the $A$-\textbf{weight} if $\alpha$ appears in the normal bundle of $X^A\subset X$. The associated hyperplane $\alpha^{\perp}$ will divide the vector space $\mf{a}_{\mbb{R}}$ into finitely many chambers:
\begin{align*}
\mf{a}_{\mbb{R}}\backslash\bigcup_{i}\alpha_{i}^{\perp}=\bigsqcup_{i}\mf{C}_{i}.
\end{align*}

Given a connected component $F\subset X^A$, we denote:
\begin{align*}
\text{Attr}_{\mf{C}}(F)=\{x\in X|\lim_{\mf{C}}(x)\in F\}.
\end{align*}
Here the notation $\lim_{\mf{C}}(x)\in F$ means that choosing the cocharacter $\sigma\in\mf{C}$ in the chamber, we have:
\begin{align*}
\lim_{t\rightarrow0}\sigma(t)\cdot x\in F,\qquad\forall\sigma\in\mf{C}.
\end{align*}

For two fixed-point connected components $Z_i,Z_j\subset X^A$, we say that $Z_i\geq Z_j$ if
\begin{align*}
\overline{\text{Attr}_{\mf{C}}(Z_i)}\cap Z_j\neq\emptyset.
\end{align*}

If we denote $X^A=\sqcup_{i}Z_{i}$, we also denote:
\begin{align*}
\text{Attr}_{\mf{C}}^{\leq}:=\bigsqcup_{i\leq j}\text{Attr}_{\mf{C}}(Z_i)\times Z_j\subset X\times X^A.
\end{align*}

We denote the full attracting set $\text{Attr}^{f}_{\mf{C}}\subset X\times X^A$ as the smallest $A$-invariant subspace such that:
\begin{align*}
(p',p)\in\text{Attr}^f\text{ and }\lim_{t\rightarrow0}\sigma(t)\cdot x=p'\text{ implies that }(x,p)\in\text{Attr}^f_{\mf{C}}.
\end{align*}

By definition, $\text{Attr}_{\mf{C}}^f\subset\text{Attr}_{\mf{C}}^{\leq}$ and it is closed in $X\times X^A$. Fix a connected component $Z\subset X^A$ of $X^A$, we denote:
\begin{align*}
\text{Attr}^{f}_{\mf{C}}(Z):=\text{Attr}^f_{\mf{C}}\cap(Z\times X).
\end{align*}

Alternatively, one can use the language of closed flow lines and half-open flow lines to describe the elements of $\text{Attr}^{f}_{\mf{C}}$.

For the Nakajima quiver variety $X=\mc{M}_{Q}(\mbf{v},\mbf{w})$, we choose $\bm{\theta}=(-1,\cdots,-1)$. If we take the cocharacter $\sigma:\mbb{C}^*\rightarrow\prod_{i\in I}GL(W_i)$ such that $\mbf{w}=\mbf{w}_1+a\mbf{w}_2$, in this case the partial order is written as:
\begin{align*}
\mc{M}_{Q}(\mbf{v}_1,\mbf{w}_1)\times\mc{M}_{Q}(\mbf{v}_2,\mbf{w}_2)\geq\mc{M}_{Q}(\mbf{v}_1-\mbf{n},\mbf{w}_1)\times\mc{M}_{Q}(\mbf{v}_2+\mbf{n},\mbf{w}_2).
\end{align*}

Here we will always fix the stability condition to be $\bm{\theta}=(-1,\cdots,-1)$.

The following lemma has been proved in Lemma 3.11 in \cite{N23}, which reveals the relation between $\text{Attr}^{f}_{\mf{C}}$ and $\mc{M}_{Q}(\mbf{v},\mbf{v}+\mbf{n},\mbf{w})$.
\begin{lem}\label{flow-condition:lemma}
Given $(\tilde{V}_{\bullet}',\tilde{V}_{\bullet}'')\in\mc{M}_{Q}(\mbf{v}'-\mbf{n},\mbf{w}')\times\mc{M}_{Q}(\mbf{v}''+\mbf{n},\mbf{w}'')$ and $(V_{\bullet}',V_{\bullet}'')\in\mc{M}_{Q}(\mbf{v}',\mbf{w}')\times\mc{M}_{Q}(\mbf{v}'',\mbf{w}'')$. Then there is a closed flow from $(\tilde{V}_{\bullet}',\tilde{V}_{\bullet}'')$ to $(V_{\bullet}',V_{\bullet}'')$ if and only if $(\tilde{V}_{\bullet}',V_{\bullet}')\in\mc{M}_{Q}(\mbf{v}'-\mbf{n},\mbf{v}',\mbf{w}')$ and $(V_{\bullet}'',\tilde{V}_{\bullet}'')\in\mc{M}_{Q}(\mbf{v}'',\mbf{v}''+\mbf{n},\mbf{w}')$ and their projection to $\mc{Y}_{\mbf{n}}$ are the same point.
\end{lem}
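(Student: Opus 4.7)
The plan is to realize a closed flow line as a $\mathbb{C}^\times$-equivariant family of stable quiver data over $\mathbb{P}^1$ and read off the two Hecke short exact sequences (with a common kernel in $\mc{Y}_{\mbf{n}}$) from the weight grading of the tautological bundle along the family.

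First I would translate the closed flow line into GIT data. By definition, a closed flow from $(\tilde V'_\bullet,\tilde V''_\bullet)$ to $(V'_\bullet, V''_\bullet)$ is an $A$-equivariant map $\gamma:\mathbb{P}^1\to\mc{M}_Q(\mbf{v},\mbf{w})$ with $\gamma(0)=(V'_\bullet,V''_\bullet)$ and $\gamma(\infty)=(\tilde V'_\bullet,\tilde V''_\bullet)$, where $A=\mathbb{C}^\times$ acts on $\mathbb{P}^1$ by scaling and on $\mc{M}_Q(\mbf{v},\mbf{w})$ via $\sigma$. Pulling back the universal bundle, $\gamma^*\mc{V}$ is a $\mathbb{C}^\times$-equivariant vector bundle on $\mathbb{P}^1$ that splits as a sum of weight line bundles; the weight decomposition of $\mc{W}$ is the constant $W=W'\oplus a W''$, while $\gamma^*\mc{V}$ decomposes into pieces of various $\sigma$-weights. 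Specializing at $0$ recovers $V=V'\oplus V''$ (weights $0$ and $a$) and specializing at $\infty$ recovers $\tilde V=\tilde V'\oplus\tilde V''$.

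Next I would extract the Hecke data. Since $\gamma^*\mc{V}$ is locally free on $\mathbb{P}^1$, the global sections of its weight-$0$ subsheaf form a subspace of $V'$ of dimension $\mbf{v}'-\mbf{n}$ that at $\infty$ equals $\tilde V'$ and at $0$ is contained in $V'$; this inclusion is $\mathbb{C}[\overline Q]$-linear and framed, so it produces the triple $(\tilde V'_\bullet, V'_\bullet)\in\mc{M}_Q(\mbf{v}'-\mbf{n},\mbf{v}',\mbf{w}')$ with kernel $K_\bullet\in\mc{Y}_{\mbf{n}}$. Dually, the weight-$a$ piece gives $V''_\bullet\hookrightarrow\tilde V''_\bullet$ with quotient $K'_\bullet\in\mc{Y}_{\mbf{n}}$, hence $(V''_\bullet,\tilde V''_\bullet)\in\mc{M}_Q(\mbf{v}'',\mbf{v}''+\mbf{n},\mbf{w}'')$. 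The key observation is that the two kernels $K_\bullet,K'_\bullet$ are canonically identified with the same graded piece of $\gamma^*\mc{V}$ responsible for the jump between weights $0$ and $a$, so they have the same image in $\mc{Y}_{\mbf{n}}$.

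Conversely, given the two short exact sequences with a common $K_\bullet\in\mc{Y}_{\mbf{n}}$, I would build $\gamma$ explicitly by the Rees construction: set
\begin{equation*}
\mc{V}_t := t\cdot \tilde V'_\bullet \,\oplus\, V''_\bullet \;\subset\; \tilde V'_\bullet\oplus\tilde V''_\bullet,
\end{equation*}
with the maps $(X_t,Y_t,A_t,B_t)$ glued from the data of the two short exact sequences via the common $K_\bullet$. Flatness and $\mathbb{C}^\times$-equivariance are automatic from the construction, and the fibers at $t=0,\infty$ are the prescribed representations. The main obstacle I expect is checking that the Rees family lies in the $\theta$-stable locus for every $t\in\mathbb{A}^1$: this requires showing that a hypothetical destabilizing subspace at generic $t$ would specialize to a destabilizing subspace at $t=0$ or $t=\infty$, contradicting the stability of $(V'_\bullet,V''_\bullet)$ and $(\tilde V'_\bullet,\tilde V''_\bullet)$. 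The other delicate point is the identification of the two kernels in the forward direction, where one must see them not merely as abstractly isomorphic but as the same point of $\mc{Y}_{\mbf{n}}$, which is precisely the content of the $\mathbb{C}^\times$-equivariant family structure.
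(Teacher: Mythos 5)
The paper does not prove this lemma at all; it is quoted verbatim from Lemma~3.11 of \cite{N23} (``The following lemma has been proved in Lemma 3.11 in~\cite{N23}\ldots''), so there is no internal proof to compare against. Your framework — a closed flow as a $\mathbb{C}^\times$-equivariant map $\gamma:\mathbb{P}^1\to\mc{M}_Q(\mbf{v},\mbf{w})$, the splitting of $\gamma^*\mc{V}$ into equivariant line bundles, and a Rees-type construction for the converse — is the right ambient setting and almost certainly what the cited proof also uses.

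However, there is a genuine gap in the forward direction. The moduli $\mc{M}_Q(\mbf{v},\mbf{v}+\mbf{n},\mbf{w})$ is defined in the paper via short exact sequences $0\to K_\bullet\to V^+_\bullet\to V^-_\bullet\to 0$, i.e.\ it parametrises \emph{surjections} $V'_\bullet\twoheadrightarrow\tilde V'_\bullet$ and $\tilde V''_\bullet\twoheadrightarrow V''_\bullet$ with kernel of dimension $\mbf{n}$. Your ``weight-$0$ global sections'' construction (once made precise — the equivariant bundle $\gamma^*\mc{V}$ has no weight-$0$ \emph{subsheaf}, only its space of global sections carries a weight grading) produces a map $\tilde V'_\bullet\to V'_\bullet$ of dimensions $\mbf{v}'-\mbf{n}\to\mbf{v}'$, i.e.\ an inclusion going the wrong way; an inclusion with cokernel in $\mc{Y}_{\mbf{n}}$ is not the same data as a surjection with kernel in $\mc{Y}_{\mbf{n}}$ for non-semisimple quiver representations. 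Moreover you never check that the map is injective, and in fact there can be summands of $\gamma^*\mc{V}$ of both positive and negative degree, so injectivity is not automatic. What is actually needed is the interplay of the two \emph{limit filtrations} on the representation $V$ at an interior point of the orbit: the $t\to\infty$ limit exhibits $V'_\bullet$ as a framed-by-$W'$ subrepresentation of $V$ and the $t\to 0$ limit exhibits $\tilde V''_\bullet$ as a framed-by-$W''$ subrepresentation; $\theta$-stability of $V$ (with $\theta=(-1,\dots,-1)$, i.e.\ no proper subrepresentation containing $\mathrm{Im}(A)$) forces $V'_\bullet+\tilde V''_\bullet=V_\bullet$, so that $V'_\bullet\hookrightarrow V_\bullet\twoheadrightarrow V_\bullet/\tilde V''_\bullet\cong\tilde V'_\bullet$ and $\tilde V''_\bullet\hookrightarrow V_\bullet\twoheadrightarrow V_\bullet/V'_\bullet\cong V''_\bullet$ are the required surjections, with \emph{literally the same} kernel $V'_\bullet\cap\tilde V''_\bullet$ (which has trivial framing, hence lies in $\mc{Y}_{\mbf{n}}$). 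You never invoke stability, which is exactly what controls these dimensions, and the ``same graded piece of $\gamma^*\mc{V}$'' justification for identifying the two kernels is not a substitute for this argument. The converse via the Rees construction is the right idea, but as you acknowledge, the stability of the family for all $t$ is the substantive step and you have not supplied it.
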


\subsection{$K$-theoretic stable envelopes}\label{subsection:_k_theoretic_stable_envelopes}

We first review the definition of the $K$-theoretic stable envelopes for the quiver varieties. For details one can see \cite{O15}\cite{OS22}.

Given $X:=\mc{M}_{Q}(\mbf{v},\mbf{w})$ a Nakajima quiver variety and $G:=T_{\mbf{w}}$ given in \ref{torus-action-given-fixed} acting on $\mc{M}_{Q}(\mbf{v},\mbf{w})$. Now we fix the following data:
\begin{itemize}
	\item Given a subtorus $A\subset T_{\mbf{w}}$ in the kernel of $q$. Choose a chamber $\mf{C}$ of the torus $A$, which divides the normal direction to $X^A$ into the attractive and repelling side that determines the support $\text{Attr}^{f}_{\mf{C}}$.
	\item A fractional line bundle $s\in\text{Pic}(X)\otimes\mbb{R}$, which is chosen to be outside of the wall set \ref{wall-set:proposition}.
	\item a choice of the polarisation $T^{1/2}$ for $TX$, i.e.
	\begin{align}\label{polarisation-of-tangent-bundle}
	TX=T^{1/2}\oplus q^{-1}(T^{1/2})^{\vee}\in K_{G}(X).
	\end{align}
\end{itemize}

By definition, the $K$-theoretic stable envelope is a $K$-theory class
\begin{align*}
\text{Stab}_{\mf{C},s,T^{1/2}}\subset K_{G}(X\times X^A).
\end{align*}

supported on $\text{Attr}_{\mf{C}}^f$, such that it induces the morphism
\begin{align*}
\text{Stab}_{\mf{C},s}:K_{G}(X^A)\rightarrow K_{G}(X)
\end{align*}
such that if we write $X^A=\sqcup_{\alpha}F_{\alpha}$ into components:
\begin{itemize}
	\item The diagonal term is given by the structure sheaf of the attractive space:
	\begin{align*}
	\text{Stab}_{\mf{C},s}|_{F_{\alpha}\times F_{\alpha}}=(-1)^{\text{rk }T_{>0}^{1/2}}(\frac{\text{det}(\mc{N}_{-})}{\text{det}T_{\neq0}^{1/2}})^{1/2}\otimes\mc{O}_{\text{Attr}}|_{F_{\alpha}\times F_{\alpha}}=(-1)^{\text{rk }T_{>0}^{1/2}}(\frac{\text{det}(\mc{N}_{-,F_{\alpha}})}{\text{det}T_{\neq0}^{1/2}})^{1/2}(\wedge^*\mc{N}_{-,F_{\alpha}}^{\vee}).
	\end{align*}

	\item The $A$-degree of the stable envelope has the bounding condition for $F_{\beta}\leq F_{\alpha}$:
	\begin{align*}
	\text{deg}_{A}\text{Stab}_{\mf{C},s}|_{F_{\beta}\times F_{\alpha}}+\text{deg}_{A}s|_{F_{\alpha}}\subset\text{deg}_{A}\text{Stab}_{\mf{C},s}|_{F_{\beta}\times F_{\beta}}+\text{deg}_{A}s|_{F_{\beta}}.
	\end{align*}

	Here $\text{deg}_{A}(\mc{F})$ means the Newton polytope of the $K$-theory class $\mc{F}\in K_{G}(F_{\beta}\times F_{\alpha})$ treated as a Laurent polynomial over the group characters of $T$ under the isomorphism $K_{T_{\mbf{w}}}(F_{\beta}\times F_{\alpha})\cong K_{T_{\mbf{w}}/A}(F_{\beta}\times F_{\alpha})\otimes K_{A}(pt)$. We require that for $F_{\beta}<F_{\alpha}$, the inclusion $\subset$ is strict.
\end{itemize}

It is easy to see that the stable envelope map is an isomorphism after localisation. In the case of the Nakajima quiver variety, it is an injective map of $K_{A}(pt)$-modules since $K_{T_{\mbf{w}}}(\mc{M}_{Q}(\mbf{v},\mbf{w}))$ is a free $K_{T_{\mbf{w}}}(pt)$-module.

The uniqueness and existence of the $K$-theoretic stable envelope was given in \cite{AO21} and \cite{O21}. In \cite{AO21}, the consturction is given by the abelinization of the quiver varieties. In \cite{O21}, the construction is given by the stratification of the complement of the attracting set, which is much more general.

The stable envelope has the factorisation property called the triangle lemma \cite{O15}. Given a subtorus $A'\subset A$ with the corresponding chamber $\mf{C}_{A'},\mf{C}_{A}$, we have the following diagram commute:
\begin{equation}\label{triangle-lemma}
\begin{tikzcd}
K_{G}(X^A)\arrow[rr,"\text{Stab}_{\mf{C}_A,s}"description]\arrow[dr,"\text{Stab}_{\mf{C}_{A/A'},s}"description]&&K_{G}(X)\\
&K_{G}(X^{A'})\arrow[ur,"\text{Stab}_{\mf{C}_{A'},s}"description]&
\end{tikzcd}.
\end{equation}

In this paper we will fix the polarisation to be:
\begin{align*}
T^{1/2}\mc{M}_{Q}(\mbf{v},\mbf{w})=\sum_{e=ij\in E}\frac{\mc{V}_j}{t_e\mc{V}_i}-\sum_{i\in I}\frac{\mc{V}_i}{\mc{V}_i}+\sum_{i\in I}\frac{\mc{W}_i}{q\mc{V}_i}
\end{align*}
and throughout the paper, we will write down the stable envelope as $\text{Stab}_{\mf{C},s}$ since we always fix the polarisation. Note that usually the slope $s\in\text{Pic}(X)\otimes\mbb{R}$ should not be "rational" since usually rational points are on the walls $w$, while by construction in \cite{O21} we should avoid the walls to make the definition of the stable envelope unique. Throughout the paper, when we mention the slope point $s$ as a rational point in $\text{Pic}(X)\otimes\mbb{Q}$. We usually mean that we choose a point $s+\epsilon$ that is suitably close to $s$ and outside of the wall.

Moreover, we will often use the torus action $\sigma:\mbb{C}^*\rightarrow A$ such that $\mbf{w}=\mbf{w}_1+a\mbf{w}_2$, and in this case the degree condition can be written as:
\begin{equation}
\begin{aligned}
-\text{max deg}_{A}(\text{Stab}_{\mf{C},\mbf{m}}|_{F_{\beta}\times F_{\beta}})+\mbf{m}\cdot\mbf{l}
\leq\text{deg}_{A}(\text{Stab}_{\mf{C},\mbf{m}}|_{F_{\beta}\times F_{\alpha}})
\leq\text{max deg}_{A}(\text{Stab}_{\mf{C},\mbf{m}}|_{F_{\beta}\times F_{\beta}})+\mbf{m}\cdot\mbf{l}
\end{aligned}
\end{equation}
and here $F_{\alpha}=\mc{M}_{Q}(\mbf{v}',\mbf{w}')\times\mc{M}_{Q}(\mbf{v}'',\mbf{w}'')$, $F_{\beta}=\mc{M}_{Q}(\mbf{v}'-\mbf{l},\mbf{w}')\times\mc{M}_{Q}(\mbf{v}''+\mbf{l},\mbf{w}'')$.

\subsection{Maulik-Okounkov quantum loop groups and wall subalgebras}\label{subsection:maulik_okounkov_quantum_algebra_and_wall_subalgebras}

Let us focus on the case of the quiver varieties $\mc{M}_{Q}(\mbf{v},\mbf{w})$. Choose the framing torus $\sigma:A\rightarrow T_{\mbf{w}}$ and the chamber $\mf{C}$ such that:
\begin{align*}
\mbf{w}=a_1\mbf{w}_1+\cdots+a_k\mbf{w}_k.
\end{align*}

In this case the fixed point is given by:
\begin{align*}
\mc{M}_{Q}(\mbf{v},\mbf{w})^{\sigma}=\bigsqcup_{\mbf{v}_1+\cdots+\mbf{v}_k=\mbf{v}}\mc{M}_{Q}(\mbf{v}_1,\mbf{w}_1)\times\cdots\times \mc{M}_{Q}(\mbf{v}_k,\mbf{w}_k).
\end{align*}

Denote $K(\mbf{w}):=\bigoplus_{\mbf{v}}K_{T_{\mbf{w}}}(\mc{M}_{Q}(\mbf{v},\mbf{w}))_{loc}$, it is easy to see that the stable envelope $\text{Stab}_{s}$ gives the map:
\begin{align*}
\text{Stab}_{\mf{C},s}:K(\mbf{w}_1)\otimes\cdots\otimes K(\mbf{w}_k)\rightarrow K(\mbf{w}_1+\cdots+\mbf{w}_k).
\end{align*}

Using the $K$-theoretic stable envelope, we can define the geometric $R$-matrix as:
\begin{align*}
\mc{R}^{s}_{\mf{C}}:=\text{Stab}_{-\mf{C},s}^{-1}\circ\text{Stab}_{\mf{C},s}:K(\mbf{w}_1)\otimes\cdots\otimes K(\mbf{w}_k)\rightarrow K(\mbf{w}_1)\otimes\cdots\otimes K(\mbf{w}_k).
\end{align*}

Written in the component of the weight subspaces, the geometric $R$-matrix can be written as:
\begin{equation*}
\begin{aligned}
&\mc{R}^{s}_{\mf{C}}:=\text{Stab}_{-\mf{C},s}^{-1}\circ\text{Stab}_{\mf{C},s}:\bigoplus_{\mbf{v}_1+\cdots+\mbf{v}_k=\mbf{v}}K(\mbf{v}_1,\mbf{w}_1)\otimes\cdots\otimes K(\mbf{v}_k,\mbf{w}_k)\\
&\rightarrow \bigoplus_{\mbf{v}_1+\cdots+\mbf{v}_k=\mbf{v}}K(\mbf{v}_1,\mbf{w}_1)\otimes\cdots\otimes K(\mbf{v}_k,\mbf{w}_k).
\end{aligned}
\end{equation*}

From the triangle diagram \ref{triangle-lemma} of the stable envelope, we can further factorise the geometric $R$-matrix into the smaller parts:
\begin{align}\label{abstract-decomposition}
\mc{R}^s_{\mf{C}}=\prod_{1\leq i<j\leq k}\mc{R}^s_{\mf{C}_{ij}}(\frac{a_i}{a_j}),\qquad \mc{R}^s_{\mf{C}_{ij}}(\frac{a_i}{a_j}):K(\mbf{w}_i)\otimes K(\mbf{w}_j)\rightarrow K(\mbf{w}_i)\otimes K(\mbf{w}_j).
\end{align}

Each $\mc{R}^{s}_{\mf{C}_{ij}}(u)$ satisfies the trigonometric Yang-Baxter equation with the spectral parametres:
\begin{align}\label{Yang-Baxter-equation}
\mc{R}^{s}_{\mf{C}_{12}}(\frac{a_1}{a_2})\mc{R}^s_{\mf{C}_{13}}(\frac{a_1}{a_3})\mc{R}^s_{\mf{C}_23}(\frac{a_2}{a_3})=\mc{R}^s_{\mf{C}_{23}}(\frac{a_2}{a_3})\mc{R}^s_{\mf{C}_{13}}(\frac{a_1}{a_3})\mc{R}^{s}_{\mf{C}_{12}}(\frac{a_1}{a_2}).
\end{align}

In the language of the representation theory, we denote $V_{i}(a_i)$ as the modules of type $K(\mbf{w}_i)$ defined above with the spectral parametre $a_i$. The formula \ref{abstract-decomposition} means that:
\begin{align*}
\mc{R}^s_{\bigotimes^{\leftarrow}_{i\in I}V_i(a_i),\bigotimes^{\leftarrow}_{j\in J}V_j(a_j)}=\prod_{i\in I}^{\rightarrow}\prod_{j\in J}^{\leftarrow}\mc{R}^{s}_{V_i,V_j}(\frac{a_i}{a_j}).
\end{align*}

We can also consider the dual module $V_i^*(u_i)$ as the module isomorphic to $V_i(u_i)$ as graded vector space, with the $R$-matrices defined as:
\begin{equation*}
\begin{aligned}
&\mc{R}^s_{V_1^*,V_2}=((\mc{R}^s_{V_1,V_2})^{-1})^{*_1}\\
&\mc{R}^s_{V_1,V_2^*}=((\mc{R}^s_{V_1,V_2})^{-1})^{*_2}\\
&\mc{R}^s_{V_1^*,V_2^*}=(\mc{R}^s_{V_1,V_2})^{*_{12}}
\end{aligned}
\end{equation*}
$*_{k}$ means transpose with respect to the $k$-th factor.

\begin{defn}
The Maulik-Okounkov quantum loop group $U_{q}^{MO}(\hat{\mf{g}}_{Q})$ is the $\mbb{Q}(q,t_e)_{e\in E}$-subalgebra of $\prod_{\mbf{w}}\text{End}(K(\mbf{w}))$ generated by the matrix coefficients of $\mc{R}^{s}_{\mf{C}}$.
\end{defn}

In other words, given an auxillary space $V_0=\bigotimes_{\mbf{w}\in I}K(\mbf{w})$, $V=\bigotimes_{\mbf{w}'\in J}K(\mbf{w}')$ with $I$ and $J$ are finite subsets of the set of dimension vectors, for arbitrary finite rank operator
\begin{align*}
m(a_0)\in\text{End}(V_0)(a_0).
\end{align*}

Now the element of $U_{q}^{MO}(\hat{\mf{g}}_{Q})$ is generated by the following operators:
\begin{align*}
\oint_{a_0=0,\infty}\frac{da_0}{2\pi ia_0}\text{Tr}_{V_0}((1\otimes m(a_0))\mc{R}^{s}_{V,V_0}(\frac{a}{a_0}))\in\text{End}(V(a))
\end{align*}
or on the other hand, it is given by the matrix coefficients:
\begin{align*}
\langle\mbf{y},\mc{R}^{s}_{\mf{C}}(\frac{a}{a_0})\mbf{x}\rangle\in\text{End}(V)(a),\qquad\forall\mbf{y},\mbf{x}\in V_0
\end{align*}
and here the inner product $\langle-,-\rangle$ comes from the perfect pairing \ref{perfect-pairing:label}.

It has been proved in \cite{OS22} that for different choice of $s\in\mbb{Q}^I$, the corresponding MO quantum loop groups $U_{q}^{MO}(\hat{\mf{g}}_{Q})$ are isomorphic to each other. That is the reason why we omit the sign $s$ for the quantum loop groups.

The coproduct structure, antipode map and the counit map can be defined as follows:

Fix the slope point $s\in\mbb{Q}^{|I|}$, and for the coproduct $\Delta_{s}$ on $U_{q}^{MO}(\hat{\mf{g}}_{Q})$ is defined via the conjugation by $\text{Stab}_{\mf{C},s}$, i.e. for $a\in U_{q}^{MO}(\hat{\mf{g}}_{Q})$ as $a:K(\mbf{w})\rightarrow K(\mbf{w})$, $\Delta_{s}(a)$ is defined as:
\begin{equation}\label{coproduct-geometric}
\begin{tikzcd}
K(\mbf{w}_1)\otimes K(\mbf{w}_2)\arrow[r,"\text{Stab}_{\mf{C},s}"]&K(\mbf{w}_1+\mbf{w}_2)\arrow[r,"a"]&K(\mbf{w}_1+\mbf{w}_2)\arrow[r,"\text{Stab}_{\mf{C},s}^{-1}"]&K(\mbf{w}_1)\otimes K(\mbf{w}_2).
\end{tikzcd}
\end{equation}

The antipode map $S_{s}:U_{q}^{MO}(\hat{\mf{g}}_{Q})\rightarrow U_{q}^{MO}(\hat{\mf{g}}_{Q})$ is given by:
\begin{align*}
\oint_{a_0=0,\infty}\frac{da_0}{2\pi ia_0}\text{Tr}_{V_0}((1\otimes m(a_0))\mc{R}^{s}_{V,V_0}(\frac{a}{a_0}))\mapsto\oint_{a_0=0,\infty}\frac{da_0}{2\pi ia_0}\text{Tr}_{V_0}((1\otimes m(a_0))\mc{R}^{s}_{V^*,V_0}(\frac{a}{a_0})).
\end{align*}

The projection of the module $V$ to the trivial module $\mbb{C}$ induce the counit map:
\begin{align*}
\epsilon:U_{q}^{MO}(\hat{\mf{g}}_{Q})\rightarrow\mbb{C}.
\end{align*}

Since $\mc{M}_{Q}(0,\mbf{w})$ is just a point, we denote the vector in $K_{\mbf{0},\mbf{w}}$ as $v_{\varnothing}$, and we call it the \textbf{vacuum vector}. 

\begin{comment}
We define the evaluation map:
\begin{align}\label{evaluation-module}
\text{ev}:U_{q}^{MO}(\hat{\mf{g}}_{Q})\rightarrow\prod_{\mbf{w}}K(\mbf{w}),\qquad F\mapsto Fv_{\varnothing}
\end{align}

\begin{defn}
We define the \textbf{positive half of the Maulik-Okounkov quantum loop groups} $U_{q}^{MO,+}(\hat{\mf{g}}_{Q})$ as the quotient by the kernel of the evaluation map:
\begin{align} 
U_{q}^{MO,+}(\hat{\mf{g}}_{Q}):=U_{q}^{MO}(\hat{\mf{g}}_{Q})/\text{Ker}(ev)
\end{align}
\end{defn}
\end{comment}

\subsubsection{\textbf{Wall subalgebra}}\label{ssub:_textbf_wall_subalgebra}

It is known that and has been proved \cite{OS22} the $K$-theoretic stable envelope $\text{Stab}_{s}$ is locally constant on $s\in\text{Pic}(X)\otimes\mbb{Q}$. It changes as $s$ crosses certain ratioanl hyperplanes:
\begin{prop}[See \cite{OS22} or \cite{Z24-2} for a detailed proof]\label{wall-set:proposition}
The $K$-theoretic stable envelope $\text{Stab}_{\mf{C},s}$ is locally constant on $s$ if and only if $s$ crosses the following hyperplanes.
\begin{align*}
w=\{s\in\text{Pic}(X)\otimes\mbb{R}|(s,\alpha)+n=0,\forall\alpha\in\text{Pic}(X)\}\subset\text{Pic}(X)\otimes\mbb{R}.
\end{align*}
\end{prop}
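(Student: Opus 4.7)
The plan is to invoke the uniqueness theorem for the $K$-theoretic stable envelope of Aganagic--Okounkov and track how the slope enters only through the Newton-polytope (degree) condition. The claim to prove is really two-sided: constancy of $\Stab_{\mf{C},s}$ as a function of $s$ on the complement of a locally finite hyperplane arrangement, and the possibility of jumps precisely when $s$ crosses one of those hyperplanes.

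First, I would fix a pair of fixed components $F_\beta \leq F_\alpha$ of $X^A$ and recall that $\Stab_{\mf{C},s}|_{F_\beta \times F_\alpha}$ is uniquely determined, given the normalization on the diagonal, by the strict Newton-polytope containment
\begin{align*}
\deg_A \Stab_{\mf{C},s}|_{F_\beta \times F_\alpha} + \deg_A s|_{F_\alpha} \subsetneq \deg_A \Stab_{\mf{C},s}|_{F_\beta \times F_\beta} + \deg_A s|_{F_\beta}.
\end{align*}
Since the diagonal contribution $(-1)^{\rk T^{1/2}_{>0}}(\det \mc{N}_-/\det T^{1/2}_{\neq 0})^{1/2} \wedge^\bullet \mc{N}_{-,F_\beta}^\vee$ is independent of $s$, and the difference $s|_{F_\alpha}-s|_{F_\beta}$ is linear in $s$ via the identification $\Pic(X) \otimes \mbb{R} \cong \mbb{R}^{|I|}$ together with the fixed-point decomposition $F_\alpha - F_\beta = \mathbf{l}$ for some $\mathbf{l} \in \mbb{Z}^I$, the whole containment condition becomes a system of linear inequalities in $s$ of the shape $(s,\mathbf{l}) + n \leq 0$ (or $\geq 0$), where the integers $n$ range over the finite set of $A$-weights appearing in the extremal vertices of the relevant Newton polytope.

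Next, I would conclude constancy off the walls: if $s_0$ lies in the open complement of the arrangement
\begin{align*}
\bigcup_{\alpha \in \Pic(X),\, n \in \mbb{Z}} \{(s,\alpha)+n = 0\},
\end{align*}
then every one of the finitely many relevant strict inequalities remains strict in a small neighborhood of $s_0$. Consequently $\Stab_{\mf{C},s_0}$ satisfies the defining conditions for all nearby $s'$, and uniqueness forces $\Stab_{\mf{C},s'} = \Stab_{\mf{C},s_0}$. For the converse direction, I would show that on each wall the characterizing containment admits genuinely different admissible classes on the two sides, producing a non-trivial jump; concretely, the matrix coefficients of $\Stab_{\mf{C},s}$ are rational in the equivariant parameters with poles of shape $(1 - q^n z^\alpha)^{-1}$, and crossing the wall $(s,\alpha)+n=0$ forces one such pole to switch sides of the contour used to extract residues, thereby altering the stable envelope.

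The main obstacle is the middle step of identifying the $(\alpha, n)$: one must argue that the extremal weights entering the degree condition are exactly of the form $(\alpha, n)$ with $\alpha \in \Pic(X)$ and $n \in \mbb{Z}$, with no other fractional or non-Picard weights appearing. This reduces to the combinatorics of the polarization $T^{1/2}\mc{M}_Q(\mathbf{v},\mathbf{w}) = \sum_{e=ij} \mc{V}_j/(t_e \mc{V}_i) - \sum_i \mc{V}_i/\mc{V}_i + \sum_i \mc{W}_i/(q\mc{V}_i)$ restricted to fixed components, whose $A$-weights are integral $\mbb{Z}$-combinations of the tautological line bundles $\det \mc{V}_i$ (which generate $\Pic(X)$) tensored with integer powers of $q$. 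Once this integrality is established, the wall description follows, and the proposition is proved.
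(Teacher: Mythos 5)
Your proposal is correct and takes essentially the same route as the paper's own argument (which appears in a commented-out block and otherwise defers to \cite{OS22} and \cite{Z24-2}): both reduce the question to the fact that $s$ enters the uniqueness axioms for $\text{Stab}_{\mathfrak{C},s}$ only through $\deg_A s|_{F}$, which is affine-linear in $s$ with integer data determined by the tautological line bundles and powers of $q$, so that the Newton-polytope containment becomes a locally finite system of strict linear inequalities whose failure locus is exactly the stated arrangement. The main difference is cosmetic: the paper's sketch tracks the Newton polytope $P_A$ and its ``special'' configurations explicitly via the linear map $\deg_A:\Pic(X)\otimes\mathbb{R}\to\mathbb{R}^k$, whereas you package the same information as a family of inequalities $(s,\mathbf{l})+n\le 0$; both land on the lattice condition $(s,\alpha)+n=0$. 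One minor point worth tightening is your converse direction: the stable envelope itself is an integral $K$-theory class with Laurent-polynomial restrictions, not a rational function with poles of the form $(1-q^n z^\alpha)^{-1}$ — it is the abelianization integral representation (or the $R$-matrix $\text{Stab}^{-1}\circ\text{Stab}$) that exhibits such poles, and it is the shift of those intermediate poles across the contour that produces the jump. As stated this is a heuristic rather than a proof of the ``only if'' half, but it matches the level of detail the paper itself gives, and the cited references supply the rigorous version.
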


Now for each quiver variety $\mc{M}_{Q}(\mbf{v},\mbf{w})$, we can associate the wall set $w(\mbf{v},\mbf{w})$, it is clear that via the identification $\text{Pic}(\mc{M}_{Q}(\mbf{v},\mbf{w}))\otimes\mbb{R}=\mbb{R}^n$ for different $\mbf{v}$, we can define the \textbf{wall set} of $\mc{M}_{Q}(\mbf{w}):=\sqcup_{\mbf{v}}\mc{M}_{Q}(\mbf{v},\mbf{w})$ as $w(\mbf{w}):=\sqcup w(\mbf{v},\mbf{w})$. 

In many cases we will also denote the wall $w$ as $(\alpha,n)\in\text{Pic}(X)^+\times\mbb{Z}\cong\mbb{N}^I\times\mbb{Z}$.

Now fix the slope $\mbf{m}$ and the cocharacter $\sigma:\mbb{C}^*\rightarrow T_{\mbf{w}}$ such that $\mbf{w}=\mbf{w}_1+a\mbf{w}_2$, and we denote the corresponding torus as $A$. We choose an ample line bundle $\mc{L}\in\text{Pic}(X)$ with $X=\mc{M}_{Q}(\mbf{v},\mbf{w}_1+\mbf{w}_2)$ and a suitable small positive number $\epsilon$ such that $\mbf{m}-\epsilon\mc{L}$ and $\mbf{m}+\epsilon\mc{L}$ are separated by just one wall $w$ with $\mbf{m}$ being contained in $w$, we define the \textbf{wall $R$-matrices} as:
\begin{equation}\label{wall-R-matrix-definition}
\begin{aligned}
&R_{w}^{\pm}:=\text{Stab}_{\pm\sigma,\mbf{m}+\epsilon\mc{L}}^{-1}\circ\text{Stab}_{\pm\sigma,\mbf{m}-\epsilon\mc{L}}:\bigoplus_{\mbf{v}_1+\mbf{v}_2=\mbf{v}}K_{T_{\mbf{w}}}(\mc{M}_{Q}(\mbf{v}_1,\mbf{w}_1))\otimes K_{T_{\mbf{w}}}(\mc{M}_{Q}(\mbf{v}_2,\mbf{w}_2))\rightarrow\\
&\rightarrow\bigoplus_{\mbf{v}_1+\mbf{v}_2=\mbf{v}}K_{T_{\mbf{w}}}(\mc{M}_{Q}(\mbf{v}_1,\mbf{w}_1))\otimes K_{T_{\mbf{w}}}(\mc{M}_{Q}(\mbf{v}_2,\mbf{w}_2)).
\end{aligned}
\end{equation}

It is an integral $K$-theory class in $K_{T}(X^A\times X^A)$. Note that the choice of $\epsilon$ depends on $\mc{M}_{Q}(\mbf{v},\mbf{w}_1+\mbf{w}_2)$ just to make sure that there is only one wall between $\mbf{m}-\epsilon\mc{L}$ and $\mbf{m}+\epsilon\mc{L}$ corresponding to $w$. By definition it is easy to see that $R_{w}^{+}$ is upper-triangular , and $R_{w}^{-}$ is lower triangular, with respect to the partial ordering on the fixed-point component, i.e. If we decompose $R_{w}^{\pm}=\text{Id}+\sum_{\mbf{n}\in\mbb{N}^I}R_{w,\pm\mbf{n}}^{\pm}$, we have that:
\begin{align}\label{triangular-rule}
R_{w,\pm\mbf{n}}^{\pm}:K_{T_{\mbf{w}_1}}(\mc{M}_{Q}(\mbf{v}_1,\mbf{w}_1))\otimes K_{T_{\mbf{w}_2}}(\mc{M}_{Q}(\mbf{v}_2,\mbf{w}_2))\rightarrow K_{T_{\mbf{w}_1}}(\mc{M}_{Q}(\mbf{v}_1\mp\mbf{n},\mbf{w}_1))\otimes K_{T_{\mbf{w}_2}}(\mc{M}_{Q}(\mbf{v}_2\pm\mbf{n},\mbf{w}_2)).
\end{align}

Note that the definition of $R_{w}^{\pm}$ still depends on the choice of the slope points $\mbf{m}$, but over here we neglect $\mbf{m}$ for simplicity.

If we denote the operator $q^{\Omega}:K_{T_{\mbf{w}}}(X^A)\rightarrow K_{T_{\mbf{w}}}(X^A)$ by $\Omega=\frac{1}{4}\text{codim}(X^A)$. It has been proved in \cite{OS22} that $q^{\Omega}R_{w}^{\pm}$ and $R_{w}^{\pm}q^{\Omega}$ satisfies the Yang-Baxter equation:
\begin{equation}\label{Yang-Baxter-equation-for-wall-R-matrices}
\begin{aligned}
&(q^{\Omega}R_{w}^{\pm})_{12}(q^{\Omega}R_{w}^{\pm})_{13}(q^{\Omega}R_{w}^{\pm})_{23}=(q^{\Omega}R_{w}^{\pm})_{23}(q^{\Omega}R_{w}^{\pm})_{13}(q^{\Omega}R_{w}^{\pm})_{12}\\
&(R_{w}^{\pm}q^{\Omega})_{12}(R_{w}^{\pm}q^{\Omega})_{13}(R_{w}^{\pm}q^{\Omega})_{23}=(R_{w}^{\pm}q^{\Omega})_{23}(R_{w}^{\pm}q^{\Omega})_{13}(R_{w}^{\pm}q^{\Omega})_{12}.
\end{aligned}
\end{equation}

Also one can compute that $q^{\Omega}$ generates the subalgebra $\mbb{Z}[q^{\pm1},t_{e}^{\pm1}]_{e\in E}[h_{i,\pm 0}^{\pm1}]_{i\in I}$ for the Cartan part of the slope subalgebra in \ref{extended-half-slope-subalgebra} and coincides with the one mentioned below in \ref{decomposition-of-universal-R-matrix}.

It has also been proved in \cite{OS22} that the wall $R$-matrices are monomial in spectral parametre $a$:
\begin{align}\label{monomiality-for-wall-R-matrices}
R_{w}^{\pm}|_{F_2\times F_1}=
\begin{cases}
1&F_1=F_2\\
(\cdots)a^{\langle\mu(F_2)-\mu(F_1),\mbf{m}\rangle}&F_1\geq F_2\text{ or }F_1\leq F_2\\
0&\text{Otherwise}.
\end{cases}
\end{align}

Here $(\cdots)$ is some element in $K_{T_{\mbf{w}}/A}(X^A)$, and $\mu$ is a locally constant map $\mu:X^A\rightarrow H_2(X,\mbb{Z})\otimes A^{\wedge}$ defined up to an overall translation such that $\mu(F_1)-\mu(F_2)=[C]\otimes v$ with $C$ an irreducible curve joining $F_1$ and $F_2$ with tangent weight $v$ at $F_1$. Usually it is convenient for us to choose $A$ to be one-dimensional torus such that $A^{\wedge}\cong\mbb{Z}$. 

In the case of the wall $R$-matrices $R_{w}^{\pm}$, $\mu(F_2)-\mu(F_1)$ corresponds to $\pm k\alpha$ with $\alpha$ being the root corresponding to the wall $w$. In this case the inner product $\langle\mu(F_1)-\mu(F_2),\mbf{m}\rangle$ is equal to $\pm k\mbf{m}\cdot\alpha$ with respect to the torus action $\mbf{w}=a\mbf{w}_1+\mbf{w}_2$ and equal to $\mp k\mbf{m}\cdot\alpha$ with respect to the torus action $\mbf{w}=\mbf{w}_1+a\mbf{w}_2$.

Given $V_0=\bigotimes_{\mbf{w}_0\in I}K_{T_{\mbf{w}}}(\mc{M}_{Q}(\mbf{w}_0))$ as before with $I$ a finite subset of dimension vectors in $\mbb{N}^I$, and a finite-rank operator $m\in\text{End}(V_0)$. We define the \textbf{positive half of the wall subalgebra} $U_{q}^{MO,+}(\mf{g}_{w})$ as the $\mbb{Q}(q,t_e)_{e\in E}$-algebra generated by the operators:
\begin{align}\label{positive-wall-generators}
\text{Tr}_{V_0}((m\otimes 1)(R^{+}_{w})_{V_0,V}|_{a_0=1})\in\text{End}(V)
\end{align}
or on the other hand, generated by the matrix coefficients written in the following way:
\begin{align*}
\langle\mbf{y},R^{+}_{w}\mbf{x}\rangle,\forall\mbf{x},\mbf{y}\in V_0.
\end{align*}

Similarly, the \textbf{negative half of the wall subalgebra} $U_{q}^{MO,-}(\mf{g}_{w})$ is defined as the algebra generated by the operators:
\begin{align}\label{negative-wall-generators}
\text{Tr}_{V_0}((m\otimes 1)(R^{-}_{w})_{V_0,V}|_{a_0=1})\in\text{End}(V),\qquad V=K_{T_{\mbf{w}}}(\mc{M}_{Q}(\mbf{w}))
\end{align}
or on the other hand, generated by the matrix coefficients written in the following way:
\begin{align*}
\langle\mbf{y},R^{-}_{w}\mbf{x}\rangle,\forall\mbf{x},\mbf{y}\in V_0.
\end{align*}

Since $R_{w}^{\pm}$ is a strictly upper(lower) triangular matrix, we can see that the algebra $U_{q}^{MO,\pm}(\mf{g}_{w})$ is generated by the operators $m$ such that:
\begin{align*}
m:K_{T_{\mbf{w}}}(\mc{M}_{Q}(\mbf{v},\mbf{w}))\rightarrow K_{T_{\mbf{w}}}(\mc{M}_{Q}(\mbf{v}\pm\mbf{n},\mbf{w})),\qquad\mbf{n}\in\mbb{N}^I.
\end{align*}

Since the operator $q^{\Omega}R_{w}^{\pm}$ satisfy the Yang-Baxter equation. In this way, similarly we can define the non-negative(non-positive) half of the wall subalgebra $U_{q}^{MO,\geq(\leq)}(\mf{g}_{w})$ and also the whole wall subalgebra $U_{q}^{MO}(\mf{g}_{w})$. Obviously we have $U_{q}^{MO,\pm}(\mf{g}_{w})\subset U_{q}^{MO,\geq(\leq)}(\mf{g}_{w})$.

One could define the graded pieces of $U_{q}^{MO}(\mf{g}_w)$ as:
\begin{align*}
U_{q}^{MO,\pm}(\mf{g}_w)=\bigoplus_{\mbf{n}\in\mbb{N}^{I}}U_{q}^{MO,\pm}(\mf{g}_{w})_{\pm\mbf{n}}
\end{align*}
with $a\in U_{q}^{MO,\pm}(\mf{g}_{w})_{\pm\mbf{n}}$ the elements such that $a:K(\mbf{v},\mbf{w})\rightarrow K(\mbf{v}\pm\mbf{n},\mbf{w})$.

In the following context, we will denote $\pm\mbf{n}$ in the graded pieces as the horizontal degree, which is the same as the terminology for the shuffle algebras in \ref{horizontal-vertical-degree}.

\subsubsection{Integral wall subalgebra}
Since the wall $R$-matrices $R_{w}^{\pm}$ are the integral $K$-theory, we can still use the above definition of generators \ref{positive-wall-generators} and \ref{negative-wall-generators} in the integral $K$-theory $\text{End}_{K_{T_{\mbf{w}}}(pt)}(K_{T_{\mbf{w}}}(\mc{M}_{Q}(\mbf{w})))$. Similarly we can define the integral wall subalgebra $U_{q}^{MO,\mbb{Z}}(\mf{g}_{w})$ as the $\mbb{Z}[q^{\pm1},t_{e}^{\pm1}]_{e\in E}$-subalgebra of $\prod_{\mbf{w}}\text{End}_{K_{T_{\mbf{w}}}(pt)}(K_{T_{\mbf{w}}}(\mc{M}_{Q}(\mbf{w})))$ generated by the integral version of the generators in the formulas \ref{positive-wall-generators} and \ref{negative-wall-generators}. Also \ref{positive-wall-generators} will give an integral positive wall subalgebra $U_{q}^{MO,+,\mbb{Z}}(\mf{g}_{w})$, and \ref{negative-wall-generators} will give an integral negative wall subalgebra $U_{q}^{MO,-,\mbb{Z}}(\mf{g}_{w})$.

In this case the integral wall subalgebra $U_{q}^{MO,\mbb{Z}}(\mf{g}_{w})$ admits the decomposition:
\begin{align*}
U_{q}^{MO,\mbb{Z}}(\mf{g}_{w})=U_{q}^{MO,+,\mbb{Z}}(\mf{g}_{w})\otimes U_{q}^{MO,0,\mbb{Z}}(\mf{g}_{w})\otimes U_{q}^{MO,-,\mbb{Z}}(\mf{g}_{w})
\end{align*}
and here $U_{q}^{MO,0,\mbb{Z}}(\mf{g}_{w})$ is the subalgebra generated by $q^{\Omega}$. Each positive and negative half are actually $\mbb{N}^I$-graded:
\begin{align*}
U_{q}^{MO,\pm,\mbb{Z}}(\mf{g}_{w})=\bigoplus_{\mbf{n}\in\mbb{N}^I}U^{MO,\pm,\mbb{Z}}(\mf{g}_{w})_{\pm\mbf{n}}
\end{align*}
such that elements in $U^{MO,\pm,\mbb{Z}}(\mf{g}_{w})_{\pm\mbf{n}}$ send $K_{T_{\mbf{w}}}(\mc{M}_{Q}(\mbf{v},\mbf{w}))\rightarrow K_{T_{\mbf{w}}}(\mc{M}_{Q}(\mbf{v}\pm\mbf{n},\mbf{w}))$. 

\textbf{Remark. }Note that in general the wall subalgebra $U_{q}^{MO,\mbb{Z}}(\mf{g}_{w})$ is also dependent on the choice of the slope $\mbf{m}$. Thus rigorously one should write down the wall subalgebra as $U_{q}^{MO,\mbb{Z}}(\mf{g}_{\mbf{m},w})$.

Also note that for the special slope points $\mbf{m}$, it might be possible that there might be multiple walls between $\mbf{m}-\epsilon\mc{L}$ and $\mbf{m}+\epsilon\mc{L}$. In this case the corresponding subalgebra will be denoted by $U_{q}^{MO,\mbb{Z}}(\mf{g}_{\mbf{m}})$. This algebra can be thought of as the algebra generated by the wall subalgebra $U_{q}^{MO}(\mf{g}_{w})$ such that $\mbf{m}$ contains the wall $w$.

\subsection{Factorisation of geometric $R$-matrices and integral Maulik-Okounkov quantum loop groups}\label{subsection:factorisation_of_geometric_r_matrices_and_integral_maulik_okounkov_quantum_affine_algebras}
\subsubsection{KT-type factorisation for the geometric $R$-matrix}\label{subsubsection:factorisation_of_geometric_r_matrices_and_integral_maulik_okounkov_quantum_affine_algebras}
Fix the stable envelope $\text{Stab}_{\sigma,\mbf{m}}$ and $\text{Stab}_{\sigma,\infty}$, we can have the following factorisation of $\text{Stab}_{\pm,\mbf{m}}$ near $a=0,\infty$:
\begin{equation}\label{factorisation-stable-envelopes-negative}
\begin{aligned}
\text{Stab}_{\sigma,\mbf{m}}=&\text{Stab}_{\sigma,-\infty}\cdots\text{Stab}_{\sigma,\mbf{m}_{-2}}\text{Stab}_{\sigma,\mbf{m}_{-2}}^{-1}\text{Stab}_{\sigma,\mbf{m}_{-1}}\text{Stab}_{\sigma,\mbf{m}_{-1}}^{-1}\text{Stab}_{\sigma,\mbf{m}}\\
=&\text{Stab}_{\sigma,-\infty}\cdots R_{\mbf{m}_{-2},\mbf{m}_{-1}}^+R_{\mbf{m}_{-1},\mbf{m}}^+
\end{aligned}
\end{equation}

\begin{equation}\label{factorisation-stable-envelopes-positive}
\begin{aligned}
\text{Stab}_{-\sigma,\mbf{m}}=&\text{Stab}_{-\sigma,\infty}\cdots\text{Stab}_{-\sigma,\mbf{m}_2}\text{Stab}_{-\sigma,\mbf{m}_2}^{-1}\text{Stab}_{-\sigma,\mbf{m}_1}\text{Stab}_{-\sigma,\mbf{m}_1}^{-1}\text{Stab}_{-\sigma,\mbf{m}}\\
=&\text{Stab}_{-\sigma,\infty}\cdots R_{\mbf{m}_2,\mbf{m}_1}^{-}R_{\mbf{m}_1,\mbf{m}}^{-}.
\end{aligned}
\end{equation}

Here $\mbf{m}_i$ with $i<0$ are the points between $-\infty$. $\mbf{m}_i$ with $i>0$ are the points between $\infty$ and $\mbf{m}$. Also here $R_{\mbf{m}_1,\mbf{m}_2}^{\pm}=\text{Stab}_{\pm\sigma,\mbf{m}_1}^{-1}\text{Stab}_{\pm\sigma,\mbf{m}_2}$ is the wall $R$-matrix. For simplicity we always choose generic slope points $\mbf{m}_i$ such that there is only one wall between $\mbf{m}_1$ and $\mbf{m}_2$. In this case we use $R_{w}^{\pm}$ as $R_{\mbf{m}_1,\mbf{m}_2}^{\pm}$. 

Note that this notation does not mean that $R_{w}^{\pm}$ only depends on the wall $w$, but we still use the notation for simplicity.

These two factorisations \ref{factorisation-stable-envelopes-negative} and \ref{factorisation-stable-envelopes-positive} give the factorisation of the geometric $R$-matrix:
\begin{align}\label{factorisation-geometry}
\mc{R}^{s}(a)=\prod_{i<s}^{\leftarrow}R_{w_i}^{-}\mc{R}^{\infty}\prod_{i\geq s}^{\leftarrow}R_{w_i}^{+}
\end{align}
and here $\mc{R}^{\infty}$ is defined as:
\begin{align}\label{infinite-slope-R-matrix}
\mc{R}^{\infty}:=\text{Stab}_{-\sigma,\infty}^{-1}\circ\text{Stab}_{\sigma,\infty}=(\text{polarisation line bundle})\frac{\wedge^*\mc{N}^+_{X^{\sigma}/X}}{\wedge^*\mc{N}^-_{X^{\sigma}/X}}.
\end{align}
In the case such that $\sigma$ corresponds to $\mbf{w}=\mbf{w}'+a\mbf{w}''$, $X=\mc{M}_{Q}(\mbf{v},\mbf{w})$ and $F=\bigsqcup_{\mbf{v}'+\mbf{v}''=\mbf{v}}\mc{M}_{Q}(\mbf{v}',\mbf{w}')\times\mc{M}_{Q}(\mbf{v}'',\mbf{w}'')$. $\mc{R}^{\infty}$ can be written using the formula \ref{Normal-bundle-formula} and \ref{Positive-half-normal-bundle-formula}.

This has been proved in \cite{OS22} that this factorisation is well-defined in the topology of the Laurent formal power series in the spectral parametre $a$ around $0$ and $\infty$. We also understand $\mc{R}^{\infty}$ as the Laurent power series expansion of its rational form. In this way the formula \ref{factorisation-geometry} is in the formal completion $K_{T_{\mbf{w}}}(X^{A}\times X^A)[[a^{\pm1}]]$.

\subsubsection{Integral Maulik-Okounkov quantum loop groups}\label{subsubsection:integral_maulik_okounkov_quantum_affine_algebras}
The above Laurent series expansion implies that one can also use these integral $K$-theory coefficients to define an integral form of the Maulik-Okounkov quantum loop groups.
\begin{defn}
The integral form Maulik-Okounkov quantum loop group $U_q^{MO,\mbb{Z}}(\hat{\mf{g}}_{Q})$ is a $\mbb{Z}[q^{\pm1},t_e^{\pm1}]$-subalgebra of $\prod_{\mbf{w}}\text{End}_{K_{T_{\mbf{w}}}}(K_{T_{\mbf{w}}}(\mc{M}_{Q}(\mbf{w})))$ generated by the matrix coefficients of the geometric $R$-matrix in the factorised form as in \ref{factorisation-geometry}.
\end{defn}

From the definition one can also define the integral form Maulik-Okounkov quantum loop group $U_q^{MO,\mbb{Z}}(\hat{\mf{g}}_{Q})$ as the algebra generated by the Laurent expansion of the geometric $R$-matrix as in \ref{factorisation-geometry}. 

Here we denote $U_q^{MO,\mbb{Z},\pm}(\hat{\mf{g}}_{Q})$ are the subalgebra of positive and negative parts of $U_q^{MO,\mbb{Z}}(\hat{\mf{g}}_{Q})$ generated by the matrix coefficients of $\oint_{a_0=0,\infty}\frac{da_0}{2\pi ia_0}a_0^k\text{Tr}_{V_0}((1\otimes m)R^{\pm}_{w})$ for the arbitrary walls $w$. $U_q^{MO,\mbb{Z},0}(\hat{\mf{g}}_{Q})$ is the subalgebra generated by the Laurent expansion of $\mc{R}^{\infty}$, i.e. generated by the tautological classes.

\begin{prop}\label{triangular-decomposition:label}
The integral form MO quantum loop group admits the triangular decomposition
\begin{equation}\label{triangular-decomposition}
U_q^{MO,\mbb{Z}}(\hat{\mf{g}}_{Q})\cong U_q^{MO,\mbb{Z},\geq}(\hat{\mf{g}}_{Q})\otimes_{U_q^{MO,\mbb{Z},0}(\hat{\mf{g}}_{Q})}U_q^{MO,\mbb{Z},\leq}(\hat{\mf{g}}_{Q})
\end{equation}
here $U_{q}^{MO,\mbb{Z},\geq}(\hat{\mf{g}}_{Q}):=U_{q}^{MO,\mbb{Z},+}(\mf{g}_{Q})U_{q}^{MO,\mbb{Z},0}(\mf{g}_{Q})$ and $U_{q}^{MO,\mbb{Z},\leq}(\hat{\mf{g}}_{Q}):=U_{q}^{MO,\mbb{Z},0}(\mf{g}_{Q})U_{q}^{MO,\mbb{Z},-}(\mf{g}_{Q})$, and the isomorphism is given by the multiplication map on right hand side.
\end{prop}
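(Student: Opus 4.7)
The plan is to deduce the triangular decomposition from two ingredients: the KT-type factorisation of the geometric $R$-matrix recalled in \ref{factorisation-geometry}, which drives surjectivity of the multiplication map, together with the strict triangularity and $\mathbb{N}^I$-grading of the wall $R$-matrices $R_w^{\pm}$ from \ref{triangular-rule}, which drive injectivity.

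For surjectivity, I would argue as follows. By definition, any generator of $U_q^{MO,\mathbb{Z}}(\hat{\mathfrak{g}}_{Q})$ is a matrix coefficient (partial trace) of the geometric $R$-matrix $\mathcal{R}^s(a)$. The formal identity
\begin{equation*}
\mc{R}^{s}(a)=\prod_{i<s}^{\leftarrow}R_{w_i}^{-}\,\mc{R}^{\infty}\,\prod_{i\geq s}^{\leftarrow}R_{w_i}^{+}
\end{equation*}
holds in $K_{T_{\mbf{w}}}(X^{A}\times X^{A})[[a^{\pm1}]]$, and upon inserting resolutions of the identity between the three factor blocks, any matrix coefficient $\langle\mbf{y},\mc{R}^{s}(a)\mbf{x}\rangle$ becomes a sum (finite on each $\mathbf{v}$-component, by the triangularity \ref{triangular-rule}) of products of matrix coefficients of $R_{w_i}^-$, of $\mc{R}^\infty$, and of $R_{w_j}^+$. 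This exhibits the generator as an element of $U_q^{MO,\mbb{Z},-}\cdot U_q^{MO,\mbb{Z},0}\cdot U_q^{MO,\mbb{Z},+}$. To get that arbitrary products of such generators can be brought into this ordered form, one has to rewrite $U_q^{MO,\mbb{Z},+}\cdot U_q^{MO,\mbb{Z},-}\subset U_q^{MO,\mbb{Z},-}\cdot U_q^{MO,\mbb{Z},0}\cdot U_q^{MO,\mbb{Z},+}$, which follows by applying the Yang--Baxter relations \ref{Yang-Baxter-equation-for-wall-R-matrices} together with the commutation of the Cartan part $U_q^{MO,\mbb{Z},0}$ (generated by $q^{\Omega}$ and tautological classes) with both halves up to explicit rational expressions in $q,t_e$; since everything is integral by construction, the result stays in $U_q^{MO,\mbb{Z}}$.

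For injectivity, I would exploit the horizontal $\mathbb{N}^I$-grading $U_q^{MO,\mbb{Z},\pm}=\bigoplus_{\mbf{n}\in\mbb{N}^I}U_q^{MO,\mbb{Z},\pm}(\hat{\mathfrak{g}}_{Q})_{\pm\mbf{n}}$ and the fact that $U_q^{MO,\mbb{Z},0}$ preserves every weight space. Suppose $\sum_{\alpha}F_{-}^{(\alpha)}H^{(\alpha)}F_{+}^{(\alpha)}=0$ as an operator in $\prod_{\mbf{w}}\End_{K_{T_{\mbf{w}}}(pt)}(K_{T_{\mbf{w}}}(\mc{M}_Q(\mbf{w})))$, with each $F_{\pm}^{(\alpha)}$ of pure horizontal degree $\pm\mbf{n}_{\pm}^{(\alpha)}$. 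Restricting to $K_{T_{\mbf{w}}}(\mc{M}_Q(\mbf{v},\mbf{w}))$, the summand indexed by $\alpha$ lands in $K_{T_{\mbf{w}}}(\mc{M}_Q(\mbf{v}-\mbf{n}_{-}^{(\alpha)}+\mbf{n}_{+}^{(\alpha)},\mbf{w}))$, so collecting by the pair $(\mbf{n}_{-}^{(\alpha)},\mbf{n}_{+}^{(\alpha)})$ separates terms into disjoint $K_{T_{\mbf{w}}}(pt)$-summands. Thus the vanishing forces each $(\mbf{n}_{+},\mbf{n}_{-})$-graded component to vanish individually, and one reduces to showing that for fixed $(\mbf{n}_{+},\mbf{n}_{-})$ the multiplication map $U_q^{MO,\mbb{Z},-}_{-\mbf{n}_{-}}\otimes_{\mbb{Z}[q^{\pm1},t_{e}^{\pm1}]}U_q^{MO,\mbb{Z},0}\otimes_{\mbb{Z}[q^{\pm1},t_{e}^{\pm1}]}U_q^{MO,\mbb{Z},+}_{\mbf{n}_{+}}$ is injective in $\End(K(\mbf{w}))$.

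The main obstacle will be this final step in the integral setting. Over $\mbb{Q}(q,t_e)_{e\in E}$ it is standard: the factorisation \ref{factorisation-geometry} and the freeness of $K_{T_{\mbf{w}}}(\mc{M}_Q(\mbf{w}))$ over $K_{T_{\mbf{w}}}(pt)$ from Theorem \ref{perfect-pairing:label} give a clean PBW argument, and Theorem \ref{injectivity-of-localised-double-KHA:theorem} shows that the action on $\prod_{\mbf{w}}K(\mbf{w})$ is faithful. Over $\mbb{Z}[q^{\pm1},t_{e}^{\pm1}]_{e\in E}$ the same manipulations go through because the geometric $R$-matrix factorisation is an integral identity in $K_{T_{\mbf{w}}}(X^{A}\times X^{A})[[a^{\pm1}]]$ and $K_{T_{\mbf{w}}}(\mc{M}_Q(\mbf{v},\mbf{w}))$ is torsion-free (indeed free) over $K_{T_{\mbf{w}}}(pt)$, so weight-by-weight independence lifts from the localised statement without introducing torsion. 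I would therefore conclude that the multiplication map of the proposition is bijective, establishing the claimed triangular decomposition.
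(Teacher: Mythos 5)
Your surjectivity idea is essentially the paper's: use the factorisation $\mc{R}^{s}(a)=\prod^{\leftarrow}R_{w}^{-}\,\mc{R}^{\infty}\,\prod^{\leftarrow}R_{w}^{+}$ and insert resolutions of the identity in the auxiliary space. However, you have the output ordering backwards. In the paper's expression the auxiliary space is the \emph{second} tensor factor, $\mc{R}^{\mbf{m}}_{V,V_0}$ with $(1\otimes m(a_0))$, and since $R_w^{+}$ decreases the weight on the first factor and increases on the second (formula \ref{triangular-rule}), the matrix coefficient $(R^+)^c_d$ on the $V_0$ indices acts as a \emph{weight-lowering} operator on $V$, i.e.\ it lives in $U_q^{MO,\mbb{Z},-}$. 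Thus the decomposition of the trace actually reads $(R^-)^a_b\,(\mc{R}^\infty)^b_c\,(R^+)^c_d\in U^{+}\cdot U^{0}\cdot U^{-}$, which is exactly the $E_IH_IF_I$ form of the paper and matches the proposition's $U^{\geq}\otimes_{U^0}U^{\leq}$. Your version $U^{-}\cdot U^{0}\cdot U^{+}$ then forces you to invoke a rewrite rule $U^{+}U^{-}\subset U^{-}U^{0}U^{+}$, which is itself a consequence of the triangular decomposition you are trying to prove and is not obtainable from the Yang--Baxter identities \ref{Yang-Baxter-equation-for-wall-R-matrices} alone; as written, this step is circular.

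The injectivity argument also has a genuine gap. You claim that restricting $\sum_\alpha F_{-}^{(\alpha)}H^{(\alpha)}F_{+}^{(\alpha)}$ to $K_{T_{\mbf{w}}}(\mc{M}_Q(\mbf{v},\mbf{w}))$ and ``collecting by the pair $(\mbf{n}_-^{(\alpha)},\mbf{n}_+^{(\alpha)})$'' separates the sum into disjoint $K_{T_{\mbf{w}}}(pt)$-summands. That is false: the image lands in $K_{T_{\mbf{w}}}(\mc{M}_Q(\mbf{v}+\mbf{n}_+^{(\alpha)}-\mbf{n}_-^{(\alpha)},\mbf{w}))$, which depends only on the \emph{difference} $\mbf{n}_+^{(\alpha)}-\mbf{n}_-^{(\alpha)}$. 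Two distinct pairs with the same difference land in the same weight space, and the naive weight-grading argument cannot distinguish them. One would instead need some kind of filtration-by-leading-term argument (e.g.\ look at the component of maximal $\mbf{n}_+$ contributing to a fixed weight shift, using the $A$-degree bounds on the wall $R$-matrices), which your proposal does not supply. For what it is worth, the paper's own proof only addresses surjectivity of the multiplication map and leaves injectivity entirely implicit, so your instinct to supply an injectivity argument is reasonable — it just needs a different mechanism.
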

\begin{proof}
Using the factorisation property \ref{factorisation-geometry} for the geometric $R$-matrix $\mc{R}^m$, and the matrix elements can be written as follows:
\begin{equation}
\begin{aligned}
&\oint\frac{da_0}{2\pi ia_0}a_0^l\text{Tr}_{V_0}((1\otimes m(a_0))\mc{R}^{\mbf{m}}_{V,V_0}(\frac{a}{a_0}))\\
=&\oint\frac{da_0}{2\pi ia_0}a_0^l\text{Tr}_{V_0}((1\otimes m(a_0))\prod_{i<\mbf{m}}^{\leftarrow}R_{w_i}^{-}\mc{R}^{\infty}\prod_{i\geq\mbf{m}}^{\leftarrow}R_{w_i}^{+}).
\end{aligned}
\end{equation}

From the integral one can see that each choice of $m(a_0)$ will give a formula of the elements in $U_{q}^{MO,\mbb{Z}}(\hat{\mf{g}}_{Q})$ as:
\begin{align}\label{PBW-generators}
\sum_{I}a_{I}E_IH_IF_I,\qquad E_I\in U_{q}^{MO,\mbb{Z},+}(\hat{\mf{g}}_{Q})_{<\mbf{m}},H_I\in U_{q}^{MO,\mbb{Z},0}(\hat{\mf{g}}_{Q}),F_I\in U_{q}^{MO,\mbb{Z},-}(\hat{\mf{g}}_{Q})_{\geq\mbf{m}}
\end{align}
and since $U_q^{MO,\mbb{Z},+}(\hat{\mf{g}}_{Q})_{<\mbf{m}}U_{q}^{MO,\mbb{Z},0}(\hat{\mf{g}}_{Q})$ and $U_q^{MO,\mbb{Z},0}(\hat{\mf{g}}_{Q})_{<\mbf{m}}U_{q}^{MO,\mbb{Z},-}(\hat{\mf{g}}_{Q})$ is the same as $U_q^{MO,\mbb{Z},\geq}(\hat{\mf{g}}_{Q}), U_q^{MO,\mbb{Z},\leq}(\hat{\mf{g}}_{Q})$ respectively. The proof is finished.

\end{proof}

\begin{comment}
\begin{lem}
$U_{q}^{MO,+}(\hat{\mf{g}}_{Q})$ is generated by the positive half of the wall subalgebras $U_{q}^{MO,+}(\mf{g}_{w})$ for arbitrary walls $w$. .
\end{lem}
\begin{proof}
Note that the elements of $U_{q}^{MO}(\hat{\mf{g}}_{Q})$ are of the form:
\begin{align}
\oint_{a_0=0,\infty}\frac{da_0}{2\pi ia_0}\text{Tr}_{V_0}((1\otimes \mc{M}_{Q}(a_0))\mc{R}^{s}_{V,V_0}(\frac{a}{a_0}))\in\text{End}(V(a))
\end{align}
Now via the mapping $a\mapsto av_{\varnothing}$ we have that:
\begin{equation}
\begin{aligned}
&\oint_{a_0=0,\infty}\frac{da_0}{2\pi ia_0}\text{Tr}_{V_0}((1\otimes \mc{M}_{Q}(a_0))\mc{R}^{s}_{V,V_0}(\frac{a}{a_0}))v_{\varnothing}\\
=&\oint_{a_0=0,\infty}\frac{da_0}{2\pi ia_0}\text{Tr}_{V_0}((1\otimes \mc{M}_{Q}(a_0))\prod_{i<0}^{\leftarrow}R_{w_i}^{-}R_{\infty}\prod_{i\geq0}^{\leftarrow}R_{w_i}^{+})v_{\varnothing}\\
=&\oint_{a_0=0,\infty}\frac{da_0}{2\pi ia_0}\text{Tr}_{V_0}((1\otimes \mc{M}_{Q}(a_0))\prod_{i<0}^{\leftarrow}R_{w_i}^{-}v_{\varnothing}
\end{aligned}
\end{equation}

which implies that the lemma.
\end{proof}
\end{comment}

The MO quantum loops groups $U_{q}^{MO,\mbb{Z}}(\hat{\mf{g}}_{Q})$ can be thought of as an integral form of the quantum affine algebras with central charge being trivial of the quiver type $Q$. Moreover, one can think of the geometric $R$-matrix $\mc{R}^{s}(a)$ as the evaluation of the \textbf{universal R-matrix} $R^{s,MO}\in U_{q}^{MO,\mbb{Z}}(\hat{\mf{g}}_{Q})\hat{\otimes}U_{q}^{MO,\mbb{Z}}(\hat{\mf{g}}_{Q})$ with respect to the coproduct $\Delta^{MO}_{s}$ which satisfies the following properties:
\begin{itemize}
	\item It satisfies the Yang-Baxter equation:
	\begin{align}
	R^{s,MO}_{12}R^{s,MO}_{13}R^{s,MO}_{23}=R^{s,MO}_{23}R^{s,MO}_{13}R^{s,MO}_{12}\in U_{q}^{MO,\mbb{Z}}(\hat{\mf{g}}_{Q})\hat{\otimes}U_{q}^{MO,\mbb{Z}}(\hat{\mf{g}}_{Q})\hat{\otimes}U_{q}^{MO,\mbb{Z}}(\hat{\mf{g}}_{Q}).
	\end{align}
	\item It is admits the inverse and satisfies the unitarity condition:
	\begin{align}
	R^{s,MO}_{21}=(R^{s,MO})^{-1}\in U_{q}^{MO,\mbb{Z}}(\hat{\mf{g}}_{Q})\hat{\otimes}U_{q}^{MO,\mbb{Z}}(\hat{\mf{g}}_{Q}).
	\end{align}
\end{itemize}

Similarly, one can also think of $q^{\pm\Omega}R_{w}^{\pm}$ as the \textbf{universal R-matrix} of the integral wall subalgebra $U_{q}^{MO,\mbb{Z}}(\mf{g}_{w})$ satisfies the Yang-Baxter equation as written in \ref{Yang-Baxter-equation-for-wall-R-matrices} such that the reduced part $R_{w}^{\pm}\in U_{q}^{MO,\mbb{Z},\mp}(\mf{g}_{w})\hat{\otimes}U_{q}^{MO,\mbb{Z},\pm}(\mf{g}_{w})$ is upper-triangular or lower-triangular respectively.

Similar to Theorem \ref{Main-theorem-on-slope-factorisation:Theorem}, we also have the factorisation for the positive and negative half of the MO quantum loop group after localisation:
\begin{prop}\label{factorisation-for-wall-subalgebras:label}
As the $\mbb{Z}[q^{\pm1},t_e^{\pm1}]_{e\in E}$-modules, the multiplication map gives the isomorphism:
\begin{equation}\label{multiplication-map}
\begin{tikzcd}
\bigotimes_{t\in\mbb{Q},\mbf{m}+t\bm{\theta}\in w}^{\rightarrow}U_{q}^{MO,\pm,\mbb{Z}}(\mf{g}_{\mbf{m}+t\bm{\theta},w})\arrow[r,"\cong"]&U_{q}^{MO,\pm,\mbb{Z}}(\hat{\mf{g}}_{Q}).
\end{tikzcd}
\end{equation}
\end{prop}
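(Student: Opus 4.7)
The plan is to use the factorisation of the geometric $R$-matrix \ref{factorisation-geometry} as the main structural input, together with the monomiality property \ref{monomiality-for-wall-R-matrices} of the wall $R$-matrices. We focus on the positive half; the negative half is treated by symmetric argument using \ref{factorisation-stable-envelopes-negative} in place of \ref{factorisation-stable-envelopes-positive}. Surjectivity of the multiplication map \ref{multiplication-map} follows essentially tautologically from the definitions: by construction $U_q^{MO,+,\mbb{Z}}(\hat{\mf{g}}_Q)$ is generated by matrix coefficients of the form $\oint \frac{da_0}{2\pi i a_0} a_0^k \mathrm{Tr}_{V_0}((1 \otimes m) R^+_{w})$ over all walls $w$, and by the factorisation formula $\mc{R}^{\mbf{m}}(a) = \prod_{i<\mbf{m}}^{\leftarrow} R^-_{w_i}\, \mc{R}^{\infty}\, \prod_{i \geq \mbf{m}}^{\leftarrow} R^+_{w_i}$ every such matrix coefficient coming from positively ordered walls along the ray $\mbf{m}+t\bm\theta$ is realised as a product of elements, each individually lying in a single wall subalgebra $U_q^{MO,+,\mbb{Z}}(\mf{g}_{\mbf{m}+t\bm\theta,w})$ by the definition \ref{positive-wall-generators}.

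For injectivity I would invoke the monomiality \ref{monomiality-for-wall-R-matrices}: the wall $R$-matrix $R_w^+|_{F_2 \times F_1}$ is monomial in the spectral parameter $a$ with exponent $\langle \mu(F_2)-\mu(F_1), \mbf{m}+t\bm\theta \rangle = k\,(\mbf{m}+t\bm\theta)\cdot\alpha$, where $\alpha$ is the root dual to $w$ and $k\alpha$ is the horizontal shift; the points $\mbf{m}+t\bm\theta$ lying on walls are indexed by $t \in \mbb{Q}$ and since $\bm\theta$ is positive, the spectral weights at a fixed horizontal degree distinguish all walls along the ray. An element in the ordered tensor product can then be recovered uniquely from its image: one filters $\prod_{\mbf{w}}\mathrm{End}(K_{T_{\mbf{w}}}(\mc{M}_Q(\mbf{w})))$ by the lexicographic order induced on ordered sequences of walls crossed, using the spectral asymptotics, and the associated graded of the multiplication map is precisely the ordered tensor product on the right-hand side of \ref{multiplication-map}. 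This is the natural geometric analog of the algebraic factorisation \ref{slope-factoristaion-integral} of the integral KHA into slope subalgebras.

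The main obstacle is maintaining the integral $\mbb{Z}[q^{\pm1},t_e^{\pm1}]$-structure throughout: the factorisation \ref{factorisation-geometry} is an identity in $K_T(X^A \times X^A)[[a^{\pm1}]]$ as a Laurent formal power series, which is manifestly integral, but one must verify that the extraction of each wall factor as a matrix coefficient does not force denominators. This reduces to the fact that each $R_w^{\pm}$ is itself an integral class in $K_T(X^A \times X^A)$, together with the observation that the monomiality \ref{monomiality-for-wall-R-matrices} gives an integral graded filtration (by $a$-degree) whose associated graded pieces are free $\mbb{Z}[q^{\pm1},t_e^{\pm1}]$-modules. A secondary subtlety to check carefully is the direction of the ordering in $\bigotimes^{\rightarrow}$: for the positive half the geometric factor $\prod_{i \geq \mbf{m}}^{\leftarrow} R_{w_i}^+$ dictates increasing $t$, while for the negative half the product $\prod_{i < \mbf{m}}^{\leftarrow} R_{w_i}^-$ reverses the order, and these conventions must be reconciled with the statement before invoking monomiality.
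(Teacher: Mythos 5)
Your surjectivity argument is in line with the paper's: both read it off from the factorisation $R_{\mbf{m},\infty}^{+}=\text{Stab}_{\infty}^{-1}\circ\text{Stab}_{\mbf{m}}=\prod^{\rightarrow}_{w\leq\mbf{m}}R_{w}^{\pm}$. Where you diverge is injectivity. The paper reduces to the localised case, rewrites a vanishing matrix coefficient $\langle v'\otimes w',R_{\mbf{m},\infty}^{+}(v\otimes w)\rangle=0$ as a vanishing pairing against $\text{Stab}_{\mbf{m}}(v\otimes w)$, and then invokes the fact that the $K$-theoretic stable envelope is an isomorphism after localisation together with the perfect pairing of Theorem \ref{perfect-pairing:label} to conclude $\text{Stab}_{\mbf{m}}(v\otimes w)=0$, hence $v=0$. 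You instead propose a filtration by spectral $a$-asymptotics resting on the monomiality property \ref{monomiality-for-wall-R-matrices}. These are genuinely different strategies: yours is closer in spirit to the KHA slope filtration \ref{slope-factoristaion-integral}, while the paper's is more geometric and avoids any grading-by-degree bookkeeping.

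There is, however, a real gap in your injectivity argument. The generators of each $U_{q}^{MO,\pm,\mbb{Z}}(\mf{g}_{w})$ are the operators $\text{Tr}_{V_0}((m\otimes 1)(R^{\pm}_{w})_{V_0,V}|_{a_0=1})$ — the spectral parameter is evaluated at $a_0=1$ in \ref{positive-wall-generators} (equivalently, integrated out). Thus elements of a wall subalgebra, and hence elements in the image of the multiplication map, carry \emph{no} $a$-dependence to filter by. The monomiality \ref{monomiality-for-wall-R-matrices} is a statement about $R_{w}^{\pm}$ as a class in $K_{T}(X^A\times X^A)$ where $X^A$ retains an $A$-action; once you pass to matrix coefficients in $\text{End}(V)$ the spectral weight $a^{\langle\mu(F_2)-\mu(F_1),\mbf{m}\rangle}$ collapses to a scalar. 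To make your approach work you would need to first restore the spectral parameter, for instance by applying $\Delta_{\mbf{m}}^{MO}$ (conjugation by $\text{Stab}_{\mbf{m}}$, so that the operator again acts on $K(\mbf{w}_1)\otimes K(\mbf{w}_2)$ with its $A$-grading), and then show that the induced $A$-degree filtration on the image recovers the ordered factorisation. As written, the sentence claiming that "the associated graded of the multiplication map is precisely the ordered tensor product" is an assertion, not a proof, and it is exactly the place where the missing coproduct step is needed. Your discussion of the ordering of $\bigotimes^{\rightarrow}$ and of integrality is reasonable, but those are downstream of the injectivity step that is not yet in place.
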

\begin{proof}
We will prove the proposition for the positive half, and the proof for the negative half is similar. 

It is enough to prove it after localisation. Note that since $U_{q}^{MO,\pm}(\hat{\mf{g}}_{Q})$ is generated by the matrix coefficients of $R_{\mbf{m},\infty}^{+}:=\text{Stab}_{\infty}^{-1}\circ\text{Stab}_{\mbf{m}}=\prod^{\rightarrow}_{w\leq\mbf{m}}R_{w}^{\pm}$. Thus the image of the multiplication map is just:
\begin{align}
\langle v',R_{\mbf{m},\infty}^{+}v\rangle\in\text{End}(K(\mbf{w}'))
\end{align}
for some vectors $v,v'\in K(\mbf{w})$, and here the pairing is given by \ref{perfect-pairing:label}. This gives the surjectivity of the map.

For the injectivity, it is equivalent to prove the following: Without loss of generality, we fix the vector $v\in K(\mbf{w})$. If for arbitrary vectors $v'\in K(\mbf{w})$ and $w,w'\in K(\mbf{w}')$, we have
\begin{align}\label{explicit-matrix-coefficients-for-positive-half}
\langle v'\otimes w',R_{\mbf{m},\infty}^{+}(v\otimes w)\rangle=0
\end{align}
then $v=0$.

Note that by definition of $R_{\mbf{m},\infty}^{+}$, the above formula \ref{explicit-matrix-coefficients-for-positive-half} can be written as:
\begin{align}
\langle\text{Stab}_{\infty,-\mf{C},T^{1/2}_{op}}(v'\otimes w'),\text{Stab}_{\mbf{m},\mf{C},T^{1/2}}(v\otimes w)\rangle=0
\end{align}
and now since the stable envelope is an isomorphism after localisation, the left-hand side of the bracket can be represented by arbitrary vectors $u'\in K(\mbf{w}+\mbf{w}')$. While we know that the bracket is a perfect pairing by \ref{perfect-pairing:label}. This implies that $\text{Stab}_{\mbf{m},\mf{C},T^{1/2}}(v\otimes w)=0$, which implies that $v=0$.

\end{proof}

For the positive/negative half $U_{q}^{MO,\pm}(\hat{\mf{g}}_{Q})$, note that they are generated by the matrix coefficients of $R_{\mbf{m},\infty}^{\pm}=\text{Stab}_{\pm\sigma,\infty}^{-1}\circ\text{Stab}_{\pm\sigma,\mbf{m}}$ for arbitrary $\mbf{m}\in\mbb{Q}^I$. Moreover, the generators for $U_{q}^{MO,\pm}(\hat{\mf{g}}_{Q})$ can be simpler:
\begin{lem}\label{generators-for-positive-negative-half:label}
$U_{q}^{MO,\pm}(\hat{\mf{g}}_{Q})$ is generated by $\langle v_{\emptyset},R_{\mbf{m},\infty}^{+}v\rangle$ and $\langle v,(R_{\mbf{m},\infty}^{-})^{-1}v_{\emptyset}\rangle$ for arbitrary $v\in K(\mbf{w})$ respectively.
\end{lem}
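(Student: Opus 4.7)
The plan is to prove the statement for the positive half; the negative half follows by a symmetric argument with $(R^{-}_{\mbf{m},\infty})^{-1}$ in place of $R^{+}_{\mbf{m},\infty}$ and the vacuum placed in the opposite tensor slot. By definition $U_{q}^{MO,+}(\hat{\mf{g}}_{Q})$ is generated by all matrix coefficients $\langle v',R^{+}_{\mbf{m},\infty}v\rangle$ with $v,v'\in K(\mbf{w})$ for arbitrary $\mbf{w}$, so the task is to reduce the left argument of the pairing to $v_{\emptyset}$.

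First I would extract the consequence of the fusion identity $(\Delta^{MO}_{\mbf{m}}\otimes\id)R^{+}_{\mbf{m},\infty}=(R^{+}_{\mbf{m},\infty})_{13}(R^{+}_{\mbf{m},\infty})_{23}$ at the level of matrix coefficients. Setting $A_i=\langle v_{\emptyset},R^{+}_{\mbf{m},\infty}v_i\rangle$ for $v_i\in K(\mbf{w}_i)$, the composition $A_1A_2$ acting on $u\in V$ equals $\langle v_{\emptyset}\otimes v_{\emptyset},(R^{+}_{\mbf{m},\infty})_{V_0^{(1)}\otimes V_0^{(2)},V}(v_1\otimes v_2\otimes u)\rangle$. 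Since the stable envelope sends $v_{\emptyset}\otimes v_{\emptyset}$ to $v_{\emptyset}\in K(\mbf{w}_1+\mbf{w}_2)$ (the weight-zero component admits no lower attractor), and since the image of $\text{Stab}_{\sigma,\infty}$ spans $K(\mbf{w}_1+\mbf{w}_2)$ after localization, iterating this identity shows that the algebra generated by the $\langle v_{\emptyset},R^{+}_{\mbf{m},\infty}\cdot\rangle$ already contains every matrix coefficient of the form $\langle v_{\emptyset},R^{+}_{\mbf{m},\infty}u\rangle$ with $u\in K(\mbf{w})$ for arbitrary $\mbf{w}$.

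Second I would appeal to Theorem \ref{surjectivity-noetherian-lemma-localised:theorem}, which after localization expresses any $v'\in K(\mbf{w}_0)$ as $\sum_i c_i\,X_i\cdot v_{\emptyset}$ with $X_i\in\mc{A}^{+}_{Q}$. Transposing with respect to the perfect pairing \ref{perfect-pairing:label} gives $\langle v',R^{+}_{\mbf{m},\infty}v\rangle=\sum_i c_i\langle v_{\emptyset},X_i^{*}R^{+}_{\mbf{m},\infty}v\rangle$, and the universal intertwining relation $R^{+}_{\mbf{m},\infty}\Delta^{MO}_{\mbf{m}}(X)=\Delta^{MO,\text{op}}_{\mbf{m}}(X)R^{+}_{\mbf{m},\infty}$ lets one push $X_i^{*}$ across $R^{+}_{\mbf{m},\infty}$ at the cost of splitting into coproduct components, each of which reduces by the first step to the desired form. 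The hard part will be controlling this push: strict upper-triangularity of $R^{+}_{\mbf{m},\infty}$ forces only the raising components of $\Delta^{MO}_{\mbf{m}}(X_i)$ to survive pairing against $v_{\emptyset}$ on the auxiliary slot, and one has to verify that these surviving pieces really consume into generators of $U_{q}^{MO,+}(\hat{\mf{g}}_{Q})$ rather than leaking into the Cartan or negative half; the triangularity of the factorization \ref{factorisation-geometry}, which isolates $R^{+}_{\mbf{m},\infty}$ from $R^{-}_{\mbf{m},\infty}$ and the Cartan piece $\mc{R}^{\infty}$, is precisely what makes this bookkeeping work.
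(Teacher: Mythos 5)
Your proposal lands on the same two pillars as the paper's proof — closure under multiplication via the fusion relation, and Theorem~\ref{surjectivity-noetherian-lemma-localised:theorem} to rewrite the covector $v'$ in terms of the negative half acting from the vacuum — so in broad strokes the approach is the same. Where you diverge is in how you discharge the remaining step. You propose to commute the lowering element $X_i^{*}$ directly across $R^{+}_{\mbf{m},\infty}$ via a coproduct intertwining relation and then absorb the resulting tensor components using your multiplicative closure from the first step, and you explicitly flag the bookkeeping of which coproduct components survive as the hard part. The paper sidesteps exactly that bookkeeping by a different device: it re-represents $f$ itself as a matrix coefficient $\langle w',\mc{R}^{\mbf{m}}w\rangle$ of the geometric $R$-matrix on a fresh auxiliary framing, then uses the fusion $(1\otimes\Delta_{\mbf{m}})(\mc{R}^{\mbf{m}})=(\mc{R}^{\mbf{m}})_{13}(\mc{R}^{\mbf{m}})_{12}$ and the fact that $\Delta_{\mbf{m}}$ is conjugation by $\text{Stab}_{\mbf{m}}$ to absorb the new auxiliary space into the physical slot, reducing immediately to $\langle v_{\emptyset},\mc{R}^{\mbf{m}}u\rangle$ for a new vector $u$; the factorisation \ref{factorisation-geometry} then strips off the Cartan piece. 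That trick is what makes the termination automatic, whereas your route would still need an inductive argument (on horizontal degree, say) to close the loop — the closure statement in your first step helps, but you haven't spelled out why the iteration stops. One small slip: the intertwining you cite should read $R^{+}_{\mbf{m},\infty}\Delta^{MO}_{\mbf{m}}(X)=\Delta^{MO}_{\infty}(X)R^{+}_{\mbf{m},\infty}$, since $R^{+}_{\mbf{m},\infty}=\text{Stab}_{\infty}^{-1}\text{Stab}_{\mbf{m}}$ is a transition matrix between two stable bases, not the full universal $R$-matrix; writing $\Delta^{MO,\text{op}}_{\mbf{m}}$ on the right conflates it with the universal $R$-matrix relation and would have to be corrected before the push-through can be carried out.
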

\begin{proof}
We are showing the proof for the positive, and the proof for the negative half is similar.

Given $E\in U_{q}^{MO,+}(\hat{\mf{g}}_{Q})$, and we suppose that it can be represented by $\langle v',\mc{R}^{\mbf{m}}v\rangle$ for some vectors $v,v'\in K(\mbf{w})$. By Theorem \ref{surjectivity-noetherian-lemma-localised:theorem}, we can express $v'$ as the covector $v_{\emptyset}\circ f$ for some $f\in\mc{A}^{-}_{Q}$ and therefore the above element can be written as:
\begin{align*}
\langle v_{\emptyset},(f\otimes\text{Id})\mc{R}^{\mbf{m}}v\rangle.
\end{align*}

Then we can also represent $f$ as $\langle w',(\mc{R}^{\mbf{m}})w\rangle$ for some vectors $w,w'\in K(\mbf{w}')$. Thus we can further expand the above formula as: 
\begin{equation*}
\begin{aligned}
&\langle  v_{\emptyset}\otimes w',(\mc{R}^{\mbf{m}})_{12}(\mc{R}^{\mbf{m}})_{13}v\otimes w\rangle\\
=&\langle v_{\emptyset}\otimes(-)\otimes w'(\mc{R}^{\mbf{m}})_{13}(\mc{R}^{\mbf{m}})_{12}v\otimes(-)\otimes w\rangle\\
=&\langle v_{\emptyset}\otimes(-)\otimes w'(1\otimes\Delta_{\mbf{m}})(\mc{R}^{\mbf{m}})v\otimes(-)\otimes w\rangle\\
=&\langle v_{\emptyset}\otimes(-)\otimes w'(1\otimes\text{Stab}_{\mbf{m}}^{-1})(\mc{R}^{\mbf{m}})(1\otimes\text{Stab}_{\mbf{m}})v\otimes(-)\otimes w\rangle.
\end{aligned}
\end{equation*}

Thus we can use the stable envelope to replace the vectors in the bracket $(-)$ by some new vectors $\text{Stab}_{\mbf{m}}((-)\otimes w)$ and covectors $(-)\otimes w'(\text{Stab}_{\mbf{m}}^{-1})$. In this way one can see that it is now reduced to compute the matrix coefficients of the form:
\begin{align}
\langle v_{\emptyset},\mc{R}^{\mbf{m}}v\rangle.
\end{align}

Using the factorisation property, we have that:
\begin{equation}
\begin{aligned}
\langle v_{\emptyset},\mc{R}^{\mbf{m}}v\rangle=&\langle v_{\emptyset},\prod^{\leftarrow}_{w<\mbf{m}}R_{w}^{-}\mc{R}^{\infty}\prod_{w\geq\mbf{m}}R_{w}^+v\rangle\\
=&\text{(tautological classes)}\circ\langle v_{\emptyset},\prod_{w\geq\mbf{m}}R_{w}^+v\rangle.
\end{aligned}
\end{equation}

Thus if we forget the tautological classes, we can see that the part of $\langle v_{\emptyset},\prod_{w\geq\mbf{m}}R_{w}^+v\rangle$ gives the generators of $U_{q}^{MO,+}(\hat{\mf{g}}_{Q})$.

\end{proof}

\subsection{Freeness of wall subalgebras}\label{subsection:freeness_of_wall_subalgebras}
In this section we prove the freeness of the wall subalgebras, and this is one of the key aspects of the MO quantum loop groups.
\begin{thm}\label{freeness-of-wall-subalgebra:theorem}
The wall subalgebra $U_{q}^{MO,\mbb{Z}}(\mf{g}_{w})$ is an $\mbb{N}^{I}$-graded finitely generated free $\mbb{Z}[q^{\pm1},t_{e}^{\pm1}]_{e\in E}$-module
\end{thm}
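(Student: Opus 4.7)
The plan is to combine a triangular decomposition of $U_q^{MO,\mbb{Z}}(\mf{g}_w)$ with the integrality of the wall $R$-matrix $R_w^\pm$. First, I would use the strict upper and lower triangular structure of $R_w^\pm$ from \ref{triangular-rule}, together with the Yang-Baxter equation \ref{Yang-Baxter-equation-for-wall-R-matrices}, to establish a triangular decomposition
\[
U_q^{MO,\mbb{Z}}(\mf{g}_w) \cong U_q^{MO,+,\mbb{Z}}(\mf{g}_w) \otimes U_q^{MO,0,\mbb{Z}}(\mf{g}_w) \otimes U_q^{MO,-,\mbb{Z}}(\mf{g}_w)
\]
of $\mbb{Z}[q^{\pm 1}, t_e^{\pm 1}]$-modules, in the same spirit as Proposition \ref{triangular-decomposition:label}. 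The Cartan part is the Laurent polynomial ring in $q^{\pm\Omega}$ and is manifestly free; both halves inherit an $\mbb{N}^I$-grading directly from the triangular shift formula \ref{triangular-rule}. It therefore suffices to prove that each graded piece $U_q^{MO,\pm,\mbb{Z}}(\mf{g}_w)_{\pm\mbf{n}}$ is a finite-rank free $\mbb{Z}[q^{\pm 1}, t_e^{\pm 1}]$-module.

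Second, for finite generation, I would view $R_w^\pm$ as the universal $R$-matrix of the wall subalgebra: integrally it is a class in $K_{T_{\mbf{w}}}(X^A\times X^A)$, which by Theorem \ref{perfect-pairing:label} is finite-rank and free on each weight component. Combined with the monomiality \ref{monomiality-for-wall-R-matrices}, which cuts every graded piece $R_{w,\pm\mbf{n}}^\pm$ down to a single monomial in the spectral parameter whose coefficient lives in the transverse $K$-theory, one obtains a finite expansion
\[
R_{w,\pm\mbf{n}}^\pm \;=\; \sum_\alpha F_\alpha^{(\mbf{n})} \otimes E_\alpha^{(\mbf{n})}
\]
with $F_\alpha^{(\mbf{n})},E_\alpha^{(\mbf{n})}$ integral operators. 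An arbitrary matrix coefficient of $R_w^+$ can then be reduced to one against the vacuum vector $v_{\varnothing}$ by the same argument as in Lemma \ref{generators-for-positive-negative-half:label}, using the coassociativity encoded in \ref{Yang-Baxter-equation-for-wall-R-matrices}. This shows that the $E_\alpha^{(\mbf{n})}$, as the auxiliary dimension vectors vary, already span $U_q^{MO,+,\mbb{Z}}(\mf{g}_w)_{\mbf{n}}$, and likewise for the negative half.

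Third, I would deduce freeness (and the precise integral rank) from the integral non-degeneracy of the Drinfeld-type pairing
\[
\langle\,,\,\rangle \colon U_q^{MO,+,\mbb{Z}}(\mf{g}_w)_{\mbf{n}} \otimes U_q^{MO,-,\mbb{Z}}(\mf{g}_w)_{-\mbf{n}} \to \mbb{Z}[q^{\pm1},t_e^{\pm1}]
\]
induced by the wall $R$-matrix together with the geometric pairing of Theorem \ref{perfect-pairing:label}. After localisation to $\mbb{Q}(q,t_e)_{e\in E}$ non-degeneracy is standard quasi-triangular Hopf theory; integrally, any relation $\sum c_\alpha E_\alpha^{(\mbf{n})}=0$ would pair to zero with every $F_\beta^{(\mbf{n})}$ and force $c_\alpha=0$ by the perfect integral pairing of Theorem \ref{perfect-pairing:label}.

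The main obstacle will be this last step, namely lifting non-degeneracy from $\mbb{Q}(q,t_e)_{e\in E}$ to $\mbb{Z}[q^{\pm1},t_e^{\pm1}]$ without introducing denominators. The localised statement is classical, but integrally one must carefully bookkeep that $R_w^\pm$ is an honest integral $K$-theory class on $X^A\times X^A$ and that Theorem \ref{perfect-pairing:label} provides a pairing that is perfect already over $\mbb{Z}[q^{\pm1},t_e^{\pm1}]$; converting these geometric facts into an integral PBW basis of the wall subalgebra is the crux of the argument.
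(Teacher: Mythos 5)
Your proposal diverges from the paper's argument at the decisive step, and the divergence is exactly at the point you flag as the obstacle. The paper does not attempt to prove freeness through an integral Drinfeld pairing. Instead, it runs the following chain: (i) show that $U_q^{MO,\mbb{Z}}(\mf{g}_w)$ is generated by primitive elements (Proposition \ref{primitivity-of-wall-subalgebra:proposition}, mirroring Theorem \ref{primitivity:Theorem}); (ii) use the commutator identity \ref{commutation-relation-for-primitive-elements} to show the evaluation map $E\mapsto Ev_\emptyset$ is injective first on the primitive part (Lemma \ref{first-evaluation-injective-lemma:lemma}) and then, via an inductive coproduct argument with enough tensor factors, on all of $U_q^{MO,+,\mbb{Z}}(\mf{g}_w)_{\mbf{v}}$ for suitably large $\mbf{w}$ (Lemma \ref{second-evaluation-injective-lemma:lemma}); (iii) observe that the operator $P_{w,\mbf{v}}=q^\Omega R_{w,\mbf{w}}^-$ satisfies $P_{w,\mbf{v}}^2=q^{-\Omega}P_{w,\mbf{v}}$ (Lemma \ref{projection-operator:lemma}), so $q^\Omega P_{w,\mbf{v}}$ is an honest idempotent; and (iv) identify $U_q^{MO,+,\mbb{Z}}(\mf{g}_w)_{\mbf{v}}$ with the image of this idempotent inside the free module $\bigoplus_{\mbf{w}}K_{T_{\mbf{w}}}(\mc{M}_Q(\mbf{v},\mbf{w}))$, making it projective, hence free over $\mbb{Z}[q^{\pm1},t_e^{\pm1}]_{e\in E}$.

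The gap in your third step is more than a bookkeeping issue: non-degeneracy of an integral pairing only yields torsion-freeness (injection into a dual), not freeness, and over a ring like $\mbb{Z}[q^{\pm1},t_e^{\pm1}]_{e\in E}$, which is not a PID, a torsion-free or reflexive module need not be free. The argument "any relation $\sum c_\alpha E_\alpha=0$ would pair to zero and force $c_\alpha=0$" requires the pairing matrix $\langle E_\alpha,F_\beta\rangle$ to be invertible over the integral ring, which is equivalent to the conclusion you want and is not supplied by Theorem \ref{perfect-pairing:label} — that theorem gives a perfect pairing on $K$-theory of the quiver variety and its nilpotent counterpart, not between the two halves of the wall subalgebra. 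The paper circumvents this entirely by the idempotent argument, which needs neither integral non-degeneracy of a Hopf pairing nor any PBW-type independence statement. You also omit the primitivity input entirely, which is what powers the injectivity of evaluation (your step towards finite generation); without Lemmas \ref{first-evaluation-injective-lemma:lemma}--\ref{second-evaluation-injective-lemma:lemma} the identification of the graded piece as the image of a projector on a free module does not get off the ground.
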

\begin{proof}
The proof follows the strategy from \cite{MO19}. It is $\mbb{N}^{I}$-graded finitely generated since the wall $R$-matrices are integral $K$-theory and $K_{T_{\mbf{w}}}(\mc{M}_{Q}(\mbf{v},\mbf{w}))$ is a finitely-generated free $K_{T_{\mbf{w}}}(pt)$-module.  Then note that imitating the proof of Theorem \ref{primitivity:Theorem}, one can first show the following the wall subalgebra is also generated by the primitive elements in the sense of \ref{primitive-element-definition}.
\begin{prop}\label{primitivity-of-wall-subalgebra:proposition}
$U_{q}^{MO,\mbb{Z}}(\mf{g}_{w})$ is generated by the primitive elements.
\end{prop}
\textbf{Remark.} The proof of the above Proposition can be reduced to the localised case, since primitivity is independent of being localised or not.

Moreover since $U_{q}^{MO,\mbb{Z}}(\mf{g}_{w})$
has the triangular decomposition:
\begin{align*}
U_{q}^{MO,\mbb{Z}}(\mf{g}_{w})=U_{q}^{MO,+,\mbb{Z}}(\mf{g}_{w})\otimes U_{q}^{MO,0,\mbb{Z}}(\mf{g}_{w})\otimes U_{q}^{MO,-,\mbb{Z}}(\mf{g}_{w}).
\end{align*}
Without loss of generality, we will only show that $U_{q}^{MO,+,\mbb{Z}}(\mf{g}_{w})$ is a free $\mbb{Z}[q^{\pm1},t_{e}^{\pm1}]_{e\in E}$-module. Since $U_{q}^{MO,\mbb{Z}}(\mf{g}_{w})$ is graded by $\mbf{n}\in\mbb{N}^{I}$, it remains to show that the graded piece $U_{q}^{MO,+,\mbb{Z}}(\mf{g}_{w})_{\mbf{n}}$ is a free $\mbb{Z}[q^{\pm1},t_{e}^{\pm1}]_{e\in E}$-module.

We denote $U_{q}^{MO,prim,\mbb{Z}}(\mf{g}_{w})$ the submodule of $U_{q}^{MO,\mbb{Z}}(\mf{g}_{w})$ as a $\mbb{Z}[q^{\pm1},t_{e}^{\pm1}]_{e\in E}$-module. Obviously it can be graded as $U_{q}^{MO,prim,\mbb{Z}}(\mf{g}_{w})_{\mbf{n}}$ with $\mbf{n}\in\mbb{N}^I$.

First given $E\in U_{q}^{MO,+,prim,\mbb{Z}}(\mf{g}_{w})_{\mbf{v}}$, using the identity:
\begin{align*}
R_{w}^{-}q^{\Omega}\Delta_{\mbf{m}}^{MO,op}(E)=\Delta_{\mbf{m}}^{MO}(E)R_{w}^{-}q^{\Omega}.
\end{align*}
Since $\Delta^{MO,op}_{\mbf{m}}(E)=E\otimes h_{\mbf{v}}+\text{Id}\otimes E$, we have that:
\begin{align*}
R_{w}^-q^{\Omega}(E\otimes h_{\mbf{v}}+\text{Id}\otimes E)=(h_{\mbf{v}}\otimes E+E\otimes\text{Id})R_{w}^-q^{\Omega}.
\end{align*}
Using the decomposition $R_{w}^{-}=\text{Id}+\sum_{\mbf{v}\in\mbb{N}^I}R_{w,-\mbf{v}}^-$, we have that:
\begin{align}\label{commutation-relation-for-primitive-elements}
[(1\otimes E),R_{w,-\mbf{v}}^{-}]=E\otimes (h_{\mbf{v}}-h_{-\mbf{v}}).
\end{align}

\begin{lem}\label{first-evaluation-injective-lemma:lemma}
The evaluation map 
\begin{align*}
U_{q}^{MO,+,prim,\mbb{Z}}(\mf{g}_{w})_{\mbf{v}}\otimes K_{T_{\mbf{w}}}(pt)\rightarrow K_{T_{\mbf{w}}}(\mc{M}_{Q}(\mbf{v},\mbf{w})),\qquad E\mapsto Ev_{\emptyset}
\end{align*}
is an injective map of $\mbb{Z}[q^{\pm1},t_{e}^{\pm1}]_{e\in E}$-modules.
\end{lem}
\begin{proof}
This follows from \ref{commutation-relation-for-primitive-elements} that given $E\in U_{q}^{MO,+,prim,\mbb{Z}}(g_{w})_{\mbf{v}}$ and $F\in U_{q}^{MO,-,prim,\mbb{Z}}(g_{w})_{-\mbf{v}}$, we have that:
\begin{align*}
FEv_{\emptyset}=(h_{-\mbf{v}}-h_{\mbf{v}})v_{\emptyset}\neq0.
\end{align*}
\end{proof}

\begin{lem}\label{second-evaluation-injective-lemma:lemma}
For the dimension vector $\mbf{w}\in\mbb{N}^I$ which is bigger or equal to $\mbf{v}$ the evaluation map
\begin{align*}
U_{q}^{MO,+,\mbb{Z}}(\mf{g}_{w})_{\mbf{v}}\otimes K_{T_{\mbf{w}}}(pt)\rightarrow K_{T_{\mbf{w}}}(\mc{M}_{Q}(\mbf{v},\mbf{w})),\qquad E\mapsto Ev_{\emptyset,\mbf{w}}
\end{align*}
is an injective map of $\mbb{Z}[q^{\pm1},t_{e}^{\pm1}]_{e\in E}$-modules.
\end{lem}
\begin{proof}
This can be proved using the induction on the horizontal degree $\mbf{v}\in\mbb{N}^I$. For $\mbf{v}$ being of the minimal degree $\mbf{v}_0$, since in this case $U_{q}^{MO,+,\mbb{Z}}(\mf{g}_{w})_{\mbf{v}_0}$ consists of primitive elements, it is injective by Lemma \ref{first-evaluation-injective-lemma:lemma}.

By Lemma \ref{primitivity-of-wall-subalgebra:proposition}, one can write down $E$ as follows:
\begin{align}
E=\sum_{\mbf{l}}a_{\mbf{l}}E_{l_1}\cdots E_{l_k},\qquad E_{l_i}\in U_{q}^{MO,prim,+,\mbb{Z}}(\mf{g}_{w}).
\end{align}

Then we consider the coproduct operation $(\Delta_{\mbf{m}}^{MO})^{q}(E)$ on $E$ such that $q$ is large enough (i.e. $q>\sum_{\mbf{l}}\lvert\mbf{l}\rvert$). In this way one can write down $(\Delta_{\mbf{m}}^{MO})^{q}(E)$ in the following way:
\begin{align}\label{large-coproduct-formula}
(\Delta_{\mbf{m}}^{MO})^{q}(E)=\sum_{i=1}^q h_{\mbf{v}+\mbf{v}_i}^{\otimes(i-1)}\otimes E\otimes\text{Id}^{\otimes(q-i)}+(\cdots).
\end{align}
Here $(\cdots)$ stands for resting terms in $\bigoplus_{\mbf{n}_j<\mbf{v}+\mbf{v}_i}U_{q}^{MO,prim,+,\mbb{Z}}(\mf{g}_{w})_{\mbf{n}_1}\otimes\cdots\otimes U_{q}^{MO,prim,+,\mbb{Z}}(\mf{g}_{w})_{\mbf{n}_q}$. One can see that even if for many $\mbf{w}\in\mbb{N}^I$ such that $Ev_{\emptyset,\mbf{w}}=0$, one can have the resting term in \ref{large-coproduct-formula} living in 
$$\bigoplus_{\substack{\mbf{n}_1+\cdots+\mbf{n}_q=\mbf{v}+\mbf{v}_i\\\mbf{n}_j<\mbf{v}+\mbf{v}_i}}U_{q}^{MO,+,\mbb{Z}}(\mf{g}_{w})_{\mbf{n}_1}\otimes\cdots\otimes U_{q}^{MO,+,\mbb{Z}}(\mf{g}_{w})_{\mbf{n}_q}$$ 
Using the induction, we have the embedding:
\begin{equation*}
\begin{aligned}
&\bigoplus_{\substack{\mbf{n}_1+\cdots+\mbf{n}_q=\mbf{v}+\mbf{v}_i\\\mbf{n}_j<\mbf{v}+\mbf{v}_i}}U_{q}^{MO,+,\mbb{Z}}(\mf{g}_{w})_{\mbf{n}_1}\otimes\cdots\otimes U_{q}^{MO,+,\mbb{Z}}(\mf{g}_{w})_{\mbf{n}_q}\\
\hookrightarrow&\bigoplus_{\substack{\mbf{n}_1+\cdots+\mbf{n}_q=\mbf{v}+\mbf{v}_i\\\mbf{n}_j<\mbf{v}+\mbf{v}_i}}K_{T_{\mbf{w}_1}}(\mc{M}_Q(\mbf{v}_1,\mbf{w}_1))\otimes\cdots\otimes K_{T_{\mbf{w}_q}}(\mc{M}_Q(\mbf{v}_q,\mbf{w}_q))
\end{aligned}
\end{equation*}
and we can choose $\mbf{w}_1,\cdots,\mbf{w}_q$ to make the resting term in \ref{large-coproduct-formula} acting on the vacuum to be nonzero. Thus 
now if we composite with the stable envelope map $\text{Stab}_{\mbf{m},\mf{C}}:\bigoplus_{\substack{\mbf{n}_1+\cdots+\mbf{n}_q=\mbf{v}+\mbf{v}_i\\\mbf{n}_j<\mbf{v}+\mbf{v}_i}}K_{T_{\mbf{w}_1}}(\mc{M}_Q(\mbf{v}_1,\mbf{w}_1))\otimes\cdots\otimes K_{T_{\mbf{w}_q}}(\mc{M}_Q(\mbf{v}_q,\mbf{w}_q))\rightarrow K_{T_{\mbf{w}}}(\mc{M}_{Q}(\mbf{v},\mbf{w}))$, the left-hand side of \ref{large-coproduct-formula} acting on $v_{\emptyset,\mbf{w}'}$ with $\mbf{w}'=\mbf{w}_1+\cdots+\mbf{w}_q$ will be $Ev_{\emptyset,\mbf{w}'}$, which will be nonzero since the stable envelope map is injective. 

Moreover, if we choose the framing dimension $\mbf{w}$ injective map given in Lemma \ref{first-evaluation-injective-lemma:lemma} as $\mbf{w}=\mbf{v}$, we get the injective map with $\mbf{w}=\mbf{v}$.
Thus we have finished the proof.
\end{proof}

Then we consider the operator $P_{w,\mbf{v}}:=q^{\Omega}R_{w,\mbf{w}}^-$ restricted to the component:
\begin{align*}
K_{T_{\mbf{w}}}(\mc{M}_{Q}(\mbf{0},\mbf{w}))\otimes K_{T_{\mbf{w}}}(\mc{M}_{Q}(\mbf{v},\mbf{w}))\rightarrow K_{T_{\mbf{w}}}(\mc{M}_{Q}(\mbf{v},\mbf{w}))\otimes K_{T_{\mbf{w}}}(\mc{M}_{Q}(\mbf{0},\mbf{w})).
\end{align*}

Using the Yang-Baxter equation \ref{Yang-Baxter-equation-for-wall-R-matrices} for $q^{\Omega}R_{w}^+$, one has the following result:
\begin{lem}\label{projection-operator:lemma}
One has the following relations:
\begin{align*}
P_{w,\mbf{v}}^2=q^{-\Omega}P_{w,\mbf{v}}\in\text{End}_{K_{T_{\mbf{w}}}(pt)}(K_{T_{\mbf{w}}}(\mc{M}_{Q}(\mbf{v},\mbf{w})))
\end{align*}
\end{lem}

Now if we do the sum over arbitrary dimension vector $\mbf{w}$, one has the diagonal operator $ P_{w,\mbf{v}}$ over $\bigoplus_{\mbf{w}}K_{T_{\mbf{w}}}(\mc{M}_{Q}(\mbf{v},\mbf{w}))$. Thus the following result is obvious:
\begin{lem}\label{freeness-of-positive-half:label}
$U^{MO,+,\mbb{Z}}_q(\mf{g}_{w})_{\mbf{v}}$ is isomorphic to the image of $P_{w,\mbf{v}}$ in $\bigoplus_{\mbf{w}}K_{T_{\mbf{w}}}(\mc{M}_{Q}(\mbf{v},\mbf{w}))$. Moreover, $U^{MO,+,\mbb{Z}}_q(\mf{g}_{w})_{\mbf{v}}$ is a projective $\mbb{Z}[q^{\pm1},t_{e}^{\pm1}]_{e\in E}$-module.
\end{lem}
\begin{proof}
Since $U^{MO,+,\mbb{Z}}_q(\mf{g}_{w})_{\mbf{v}}$ is generated by the matrix coefficients of $R_{w,\mbf{v}}^{+}$, this follows from Lemma \ref{second-evaluation-injective-lemma:lemma} and Lemma \ref{projection-operator:lemma} and the fact that $K_{T_{\mbf{w}}}(\mc{M}_{Q}(\mbf{v},\mbf{w}))$ is a free $K_{T_{\mbf{w}}}(pt)$-module.
\end{proof}
Combining these facts, we conclude that $U^{MO,+}_{q}(\mf{g}_{w})$ is an $\mbb{N}^I$-graded projective $\mbb{Z}[q^{\pm1},t_e^{\pm1}]$-module, and hence an $\mbb{N}^I$-graded free $\mbb{Z}[q^{\pm1},t_e^{\pm1}]$-module. Similar proof also applies for $U^{MO,-}_{q}(\mf{g}_{w})$. Thus the proof is finished.

\end{proof}

\subsection{Relations with the double of the preprojective $K$-theoretic Hall algebra}

In this subsection we prove that the stable envelope $\text{Stab}_{\pm\sigma,\infty}$ of the infinite slope intertwines the Drinfeld coproduct:
\begin{thm}\label{Commutativity-of-infty-coproduct-with-infty-stable-envelopes}
Denote $\text{Stab}_{\infty}:=\text{Stab}_{\sigma,\infty}$ and given $\forall F\in \mc{A}_{Q}$, the following diagram commute:
\begin{equation}
\begin{tikzcd}
K(\mbf{w}_1)\otimes K(\mbf{w}_2)\arrow[r,"\text{Stab}_{\infty}"]\arrow[d,"\Delta(F)"]&K(\mbf{w}_1+\mbf{w}_2)\arrow[d,"F"]\\
K(\mbf{w}_1)\otimes K(\mbf{w}_2)\arrow[r,"\text{Stab}_{\infty}"]&K(\mbf{w}_1+\mbf{w}_2).
\end{tikzcd}
\end{equation}
\end{thm}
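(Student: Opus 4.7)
The plan is to reduce the verification to a finite set of generators and then compare the geometric splitting of the Hecke correspondence at $X^A$ with the Drinfeld coproduct formulas.

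First I would invoke Proposition \ref{minimal-set-generators-KHA:proposition}, which says that $\mc{A}^{ext}_Q$ is generated as an $\mbb{F}$-algebra by the finite set $\{e_{i,0}, f_{i,0}, q^{\pm v_i/2}, q^{\pm w_i/2}, a_{i,\pm 1}, b_{i,\pm d}\}$. Since both $F \mapsto F \circ \text{Stab}_\infty$ and $F \mapsto \text{Stab}_\infty \circ \Delta(F)$ are algebra homomorphisms $\mc{A}^{ext}_Q \to \Hom(K(\mbf{w}_1) \otimes K(\mbf{w}_2), K(\mbf{w}_1 + \mbf{w}_2))$ (with the target viewed as a bimodule), it suffices to check the commutativity on these generators.

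For the Cartan generators the verification is essentially formal: $q^{\pm w_i/2}$ is central and acts by scalars depending on $\mbf{w}_1 + \mbf{w}_2 = \mbf{w}_1 + \mbf{w}_2$; the tautological classes $a_{i,d}$ and $q^{\pm v_i/2}$ are encoded by the restrictions $\mc{V}_i|_{F} = \mc{V}_i^{(1)} \oplus \mc{V}_i^{(2)}$ on each component $F = \mc{M}_{Q}(\mbf{v}_1,\mbf{w}_1) \times \mc{M}_{Q}(\mbf{v}_2,\mbf{w}_2)$ of $X^A$, and the corresponding power sums add, which matches $\Delta(a_{i,d}) = a_{i,d}\otimes 1 + 1 \otimes a_{i,d}$ exactly. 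Since $\text{Stab}_\infty$ is $K_{T_{\mbf{w}}/A}(pt)$-linear away from the spectral parameter and the Cartan classes land in $K_{T_{\mbf{w}}/A}$, they commute with $\text{Stab}_\infty$.

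The substantive content is the case $F = e_{i,0}$ (and symmetrically $f_{i,0}$). Here I would restrict the Hecke correspondence $\mc{M}_Q(\mbf{v}, \mbf{v}+\mbf{e}_i, \mbf{w})$ to the fixed locus $X^A$. By Lemma \ref{flow-condition:lemma}, the restriction splits according to whether the new vector in the short exact sequence lives on the first or second factor, producing two geometric components which, after passing through $\text{Stab}_\infty$, must be identified with the two summands $e_i(z) \otimes 1$ and $h_i^+(z) \otimes e_i(z)$ of $\Delta(e_i(z))$. The Cartan current $h_i^+(z)$ arises precisely because when the $\mbf{e}_i$ is added on the right factor, one must pass through the other factor, and the excess contribution is exactly the ratio of $K$-theoretic Thom classes that defines $h_i^+$ via formula \ref{Definition-of-Cartan-current}. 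This is the shadow, on the level of generators, of the general factorisation \ref{factorisation-geometry} and the explicit formula \ref{infinite-slope-R-matrix} for $\mc{R}^\infty$; compare also with the shuffle description of the geometric action in \ref{positive-action}--\ref{Cartan-action}, which already packages the normal bundle contributions into exactly these $h_i^\pm(z)$ factors.

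The main obstacle is the precise matching of the normal bundle contribution in $\text{Stab}_\infty$ with the Cartan current $h_i^+(z)$ in \ref{Definition-of-Cartan-current}: both sides involve products of the form $\wedge^*(\mc{V}_j^{(k)} q^{?} t_e^{?})$ over the edges $e=ij$ and $e=ji$, and one has to verify that the polarization correction $(-1)^{\text{rk } T^{1/2}_{>0}}(\det \mc{N}_-/\det T^{1/2}_{\neq 0})^{1/2}$ built into the diagonal of $\text{Stab}_\infty$ matches the chosen $\text{sdet}[\cdots]$ twists in the definition \ref{Hall-product-expression} of the Hall product and in the geometric action of $e_{i,0}$. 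This is bookkeeping-heavy but essentially determined by the fixed polarization \ref{polarisation-of-tangent-bundle}; once verified for $e_{i,0}$, the case of $f_{i,0}$ is symmetric via the pairing \ref{Hopf-algebra-pairing}, and the intertwining property on all of $\mc{A}^{ext}_Q$ follows by the algebra homomorphism reduction of the first paragraph.
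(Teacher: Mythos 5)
Your strategy matches the paper's: reduce to generators, then verify the intertwining explicitly for the spherical positive generators and the Cartan currents, with the substantive content being that the normal-bundle/Thom-class factor built into $\text{Stab}_\infty$ reproduces $h_i^+(z)$. The paper carries out this check by writing $\text{Stab}_\infty|_{F\times F}$ explicitly as a rational function of tautological classes (formula \ref{formula-for-infty-stable-envelope}) and comparing term-by-term with the shuffle action formulas \ref{positive-action}--\ref{Cartan-action}; you instead frame it as a splitting of the Hecke correspondence over $X^A$ into the two pieces where $\mathbf{e}_i$ lands on the first or second factor. These are the same computation viewed from two angles, and your framing is geometrically cleaner but you stop short of the bookkeeping (which is exactly where the content of the proof lives). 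Two small points: the target $\Hom(K(\mbf{w}_1)\otimes K(\mbf{w}_2),K(\mbf{w}_1+\mbf{w}_2))$ is not an algebra, so "algebra homomorphism" is not literally the right phrase — what makes the reduction to generators work is the identity $(FG)\circ\text{Stab}_\infty = F\circ\text{Stab}_\infty\circ\Delta(G) = \text{Stab}_\infty\circ\Delta(FG)$, which you should state directly. Also, Lemma \ref{flow-condition:lemma} describes closed flow lines in $\text{Attr}^f$; it is the right ambient statement but doesn't by itself give the clean two-component splitting of the Hecke correspondence you invoke, so that step needs to be spelled out (or replaced, as the paper does, by direct evaluation against $p_1(\mathbf{X}_{\mbf{v}_1})\otimes p_2(\mathbf{X}_{\mbf{v}_2})$).
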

\begin{proof}

The strategy of the proof follows from \cite{N15}. Since $\mc{A}_{Q}$ is generated by $\{e_i(z),f_i(z),\psi_i^{\pm}(z)\}_{i\in I}$. For now we use $\text{Stab}_{\infty}$, and we only need to prove the commutativity of the following diagram:
\begin{equation*}
\begin{tikzcd}
K(\mbf{w}_1)\otimes K(\mbf{w}_2)\arrow[r,"\text{Stab}_{\infty}"]\arrow[d,"\Delta(e_i(z))"]&K(\mbf{w}_1+\mbf{w}_2)\arrow[d,"e_i(z)"]\\
K(\mbf{w}_1)\otimes K(\mbf{w}_2)\arrow[r,"\text{Stab}_{\infty}"]&K(\mbf{w}_1+\mbf{w}_2)
\end{tikzcd}
\begin{tikzcd}
K(\mbf{w}_1)\otimes K(\mbf{w}_2)\arrow[r,"\text{Stab}_{\infty}"]\arrow[d,"\Delta(f_i(z))"]&K(\mbf{w}_1+\mbf{w}_2)\arrow[d,"f_i(z)"]\\
K(\mbf{w}_1)\otimes K(\mbf{w}_2)\arrow[r,"\text{Stab}_{\infty}"]&K(\mbf{w}_1+\mbf{w}_2).
\end{tikzcd}
\end{equation*}

In other words:
\begin{align*}
e_i(z)\cdot\text{Stab}_{\infty}(p_{\bm{\lambda}_1}\otimes p_{\bm{\lambda}_2})=\text{Stab}_{\infty}(\Delta(e_i(z))(p_{\bm{\lambda}_1}\otimes p_{\bm{\lambda}_2})).
\end{align*}

For simplicity we only prove the theorem for $e_{i}(z)$ and $\psi_{i}^{\pm}(z)$.

Recall that the action of $e_{i}(z)$ can be written as:
\begin{equation*}
\begin{aligned}
&e_{i}(z)\cdot p(\mbf{X}_{\mbf{v}})=\int\frac{dz_i}{2\pi\sqrt{-1}z_i}\delta(\frac{z_{i1}}{z})p(\mbf{X}_{\mbf{v}+\mbf{e}_i}-z_{i1})\tilde{\zeta}(\frac{z_{i1}}{\mbf{X}_{\mbf{v}+\mbf{e}_i}})\wedge^*(\frac{z_{i1}q}{\mbf{W}})\\
=&\tilde{\zeta}(\frac{z}{X_{\mbf{v}+\mbf{e}_i}})\wedge^*(\frac{zq}{\mbf{W}})p(\mbf{X}_{\mbf{v}+\mbf{e}_i}-z).
\end{aligned}
\end{equation*}

While we have the following normal bundle formula in terms of the tautological classes:

\begin{equation}\label{Normal-bundle-formula}
\begin{aligned}
\text{Nor}_{\mc{M}_{Q}(\mbf{v}',\mbf{w}')\times\mc{M}_{Q}(\mbf{v}'',\mbf{w}'')}(\mc{M}_{Q}(\mbf{v},\mbf{w}))=\sum_{e=ij\in E}(\frac{V_{j}'}{t_eV_{i}''}+\frac{t_eV_{i}'}{qV_{j}''}+\frac{V_{j}''}{t_eV_{i}'}+\frac{t_eV_{i}''}{qV_{j}'})\\
-\sum_{i\in I}(1+\frac{1}{q})(\frac{V_i'}{V_i''}+\frac{V_{i}''}{V_{i}'})+\sum_{i\in I}(\frac{V_i'}{W_{i}''}+\frac{W_{i}'}{qV_{i}''}+\frac{V_{i}''}{W_{i}'}+\frac{W_{i}''}{qV_{i}'}).
\end{aligned}
\end{equation}

Thus the negative half is written as:
\begin{equation}\label{Positive-half-normal-bundle-formula}
\begin{aligned}
\text{Nor}_{\mc{M}_{Q}(\mbf{v}',\mbf{w}')\times\mc{M}_{Q}(\mbf{v}'',\mbf{w}'')}^-(\mc{M}_{Q}(\mbf{v},\mbf{w}))=\sum_{e=ij\in E}(\frac{V_{j}''}{t_eV_{i}'}+\frac{t_eV_{i}''}{qV_{j}'})
-\sum_{i\in I}(1+\frac{1}{q})(\frac{V_i''}{V_i'})+\sum_{i\in I}(\frac{V_i''}{W_{i}'}+\frac{W_{i}''}{qV_{i}'}).
\end{aligned}
\end{equation}

Following the strategy in \cite{N23}, we choose a suitable polarisation such that the stable envelope with infinite slope $\text{Stab}_{\infty}|_{F\times F}$ with $F=\mc{M}_{Q}(\mbf{v}',\mbf{w}')\times\mc{M}_{Q}(\mbf{v}'',\mbf{w}'')$ can be written as :
\begin{equation}\label{formula-for-infty-stable-envelope}
\begin{aligned}
&\text{Stab}_{\infty}|_{F\times F}=q^{\frac{\mbf{w}''\cdot\mbf{v}'-\langle\mbf{v}'',\mbf{v}'\rangle}{2}}\frac{\prod_{e=ij\in E}\wedge^*(\frac{t_eV_i'}{V_j''})\wedge^*(\frac{qV_i''}{t_eV_j'})}{\prod_{i\in I}\wedge^*(\frac{V_i'}{V_i''})\wedge^*(\frac{V_i''}{qV_i'})}\prod_{i\in I}\wedge^*(\frac{V_i'}{W_i''})\wedge^*(\frac{qV_i''}{W_i'})\\
=&q^{\frac{\mbf{w}''\cdot\mbf{v}'-\langle\mbf{v}'',\mbf{v}'\rangle}{2}}\tilde{\zeta}(\frac{V_i'}{V_i''})\prod_{i\in I}\wedge^*(\frac{qV_i'}{W_{i}''})\wedge^*(\frac{V_{i}''}{W_{i}'}).
\end{aligned}
\end{equation}
Since the Cartan element $q^{\frac{\mbf{w}''\cdot\mbf{v}'-\langle\mbf{v}'',\mbf{v}'\rangle}{2}}$ will be eliminated in the computation, in the following computation we will ignore it.
Thus we have that:
\begin{align*}
\text{Stab}_{\infty}(p_{1}(\mbf{X}_{\mbf{v}_1})\otimes p_{2}(\mbf{X}_{\mbf{v}_2}))=\text{Sym}(\text{Stab}_{\infty}|_{F\times F}p_{1}(\mbf{X}_{\mbf{v}_1})p_{2}(\mbf{X}_{\mbf{v}_2})).
\end{align*}

Thus we have that:
\begin{equation}\label{stable-envelope-formula-infty-1}
\begin{aligned}
&e_{i}(z)\cdot\text{Stab}_{\infty}(p_{1}(\mbf{X}_{\mbf{v}_1})\otimes p_{2}(\mbf{X}_{\mbf{v}_2}))\\
=&\tilde{\zeta}(\frac{z}{X_{\mbf{v}_1+\mbf{v}_2+\mbf{e}_i}})\wedge^*(\frac{zq}{W_1})\wedge^*(\frac{zq}{W_2})\text{Sym}(\tilde{\zeta}(\frac{X_{\mbf{v}_1}}{X_{\mbf{v}_2+\mbf{e}_i}-z})\wedge^*(\frac{qX_{\mbf{v}_1}}{W_2})\wedge^*(\frac{(X_{\mbf{v}_2+\mbf{e}_i}-z)}{W_1})p_{1}(\mbf{X}_{\mbf{v}_1})p_{2}(\mbf{X}_{\mbf{v}_2+\mbf{e}_i}-z))\\
&+\tilde{\zeta}(\frac{z}{X_{\mbf{v}_1+\mbf{v}_2+\mbf{e}_i}})\wedge^*(\frac{zq}{W_1})\wedge^*(\frac{zq}{W_2})\text{Sym}(\tilde{\zeta}(\frac{X_{\mbf{v}_1+\mbf{e}_i}-z}{X_{\mbf{v}_2}})\prod_{i\in I}\wedge^*(\frac{q(X_{\mbf{v}_1+\mbf{e}_i}-z)}{W_2})\wedge^*(\frac{X_{\mbf{v}_2}}{W_1})p_{1}(\mbf{X}_{\mbf{v}_1+\mbf{e}_i}-z)p_{2}(\mbf{X}_{\mbf{v}_2})).
\end{aligned}
\end{equation}

Meanwhile:
\begin{equation}\label{stable-envelope-formula-infty-2}
\begin{aligned}
&\text{Stab}_{\infty}(\Delta(e_{i}(z))(p_{1}(\mbf{X}_{\mbf{v}_1})\otimes p_{2}(\mbf{X}_{\mbf{v}_2})))\\
=&\text{Stab}_{\infty}(e_i(z)p_{1}(\mbf{X}_{\mbf{v}_1})\otimes p_{2}(\mbf{X}_{\mbf{v}_2})+h_{i}^+(z)p_{1}(\mbf{X}_{\mbf{v}_1})\otimes e_i(z)p_{2}(\mbf{X}_{\mbf{v}_2}))\\
=&\text{Stab}_{\infty}(\tilde{\zeta}(\frac{z}{X_{\mbf{v}_1+\mbf{e}_i}})\wedge^*(\frac{zq}{W_1})p_1(X_{\mbf{v}_1+\mbf{e}_i}-z)\otimes p_2(X_{\mbf{v}_2}))\\
&+\text{Stab}_{\infty}(\frac{\tilde{\zeta}(\frac{z}{X_{\mbf{v}_1}})}{\tilde{\zeta}(\frac{X_{\mbf{v}_1}}{z})}\frac{\wedge^*(\frac{zq}{W_1})}{\wedge^*(\frac{z}{W_1})}\tilde{\zeta}(\frac{z}{X_{\mbf{v}_2+\mbf{e}_i}})\wedge^*(\frac{zq}{W_2})p_1(X_{\mbf{v}_1})\otimes p_2(X_{\mbf{v}_2+\mbf{e}_i}-z)).
\end{aligned}
\end{equation}
Now using the definition of $\text{Stab}_{\infty}$ in \ref{formula-for-infty-stable-envelope}, one can calculate that the formula \ref{stable-envelope-formula-infty-2} matches \ref{stable-envelope-formula-infty-1}.

For the Cartan current $\psi_{i}^{\pm}(z)$, by doing the computation on both sides:
\begin{equation*}
\begin{aligned}
\psi_{i}^{\pm}(z)\text{Stab}_{\infty}(p_{1}(X_{\mbf{v}_1})\otimes p_2(X_{\mbf{v}_2}))=\frac{\tilde{\zeta}(\frac{z}{X_{\mbf{v}_1+\mbf{v}_2}})}{\tilde{\zeta}(\frac{X_{\mbf{v}_1+\mbf{v}_2}}{z})}\frac{\wedge^*(\frac{zq}{W_1})\wedge^*(\frac{zq}{W_2})}{\wedge^*(\frac{z}{W_1})\wedge^*(\frac{z}{W_2})}\text{Stab}_{\infty}(p_{1}(X_{\mbf{v}_1})\otimes p_2(X_{\mbf{v}_2})).
\end{aligned}
\end{equation*}

\begin{equation*}
\begin{aligned}
&\text{Stab}_{\infty}(\Delta(\psi_{i}^{\pm}(z))p_{1}(X_{\mbf{v}_1})\otimes p_2(X_{\mbf{v}_2}))=\text{Stab}_{\infty}(\psi_{i}^{\pm}(z)\otimes \psi_{i}^{\pm}(z)(p_{1}(X_{\mbf{v}_1})\otimes p_2(X_{\mbf{v}_2})))\\
=&\text{Stab}_{\infty}(\frac{\tilde{\zeta}(\frac{z}{X_{\mbf{v}_1}})}{\tilde{\zeta}(\frac{X_{\mbf{v}_1}}{z})}\frac{\tilde{\zeta}(\frac{z}{X_{\mbf{v}_2}})}{\tilde{\zeta}(\frac{X_{\mbf{v}_2}}{z})}\frac{\wedge^*(\frac{zq}{W_1})\wedge^*(\frac{zq}{W_2})}{\wedge^*(\frac{z}{W_1})\wedge^*(\frac{z}{W_2})}(p_{1}(X_{\mbf{v}_1})\otimes p_2(X_{\mbf{v}_2}))).
\end{aligned}
\end{equation*}

Thus both sides coincides by the definition of $\text{Stab}_{\infty}$ in \ref{formula-for-infty-stable-envelope}.
\end{proof}

\textbf{Remark.} One aspect of the Theorem \ref{Commutativity-of-infty-coproduct-with-infty-stable-envelopes} implies that we can write down the expression $\Delta(F)(p_{\bm{\lambda_1}}\otimes p_{\bm{\lambda_2}})$ not only in the formal power series of the tautological classes, but also we can pack them into the rational function of the tautological classes via $\text{Stab}_{\infty}^{-1}(F)\text{Stab}_{\infty}(p_{\bm{\lambda_1}}\otimes p_{\bm{\lambda_2}})$.
\begin{cor}\label{rationality-of-drinfeld-coproduct:corollary}
For the action of $\Delta(F)$ on $K(\mbf{w}_1)\otimes K(\mbf{w}_2)$, it can be written as the rational function of the tautological classes.
\end{cor}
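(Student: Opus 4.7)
The plan is to deduce the corollary directly from Theorem \ref{Commutativity-of-infty-coproduct-with-infty-stable-envelopes}, which furnishes the operator identity
\begin{align*}
\Delta(F) \;=\; \text{Stab}_{\infty}^{-1} \circ F \circ \text{Stab}_{\infty}
\end{align*}
on $K(\mbf{w}_1)\otimes K(\mbf{w}_2)$. It therefore suffices to observe that each of the three factors on the right-hand side is manifestly rational in the tautological classes of the two factors, and then assemble the composite.

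First I would invoke the explicit formula \eqref{formula-for-infty-stable-envelope} from the proof of Theorem \ref{Commutativity-of-infty-coproduct-with-infty-stable-envelopes}: restricted to a fixed component $F_{\alpha} = \mc{M}_{Q}(\mbf{v}',\mbf{w}')\times\mc{M}_{Q}(\mbf{v}'',\mbf{w}'')$, the operator $\text{Stab}_{\infty}|_{F_{\alpha}\times F_{\alpha}}$ is multiplication by an explicit rational expression in the tautological bundles $\mc{V}_i'$, $\mc{V}_i''$, $\mc{W}_i'$, $\mc{W}_i''$ (up to a harmless Cartan scalar), and the full stable envelope is recovered by symmetrization across the fixed components. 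In particular, after passing to the localized equivariant $K$-theory both $\text{Stab}_{\infty}$ and its inverse are realized as rational operators in the tautological Chern roots $\mbf{X}_{\mbf{v}_1}$, $\mbf{X}_{\mbf{v}_2}$ and the framing parameters.

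Next, for any $F\in\mc{A}^{ext}_{Q}$, the geometric action on $K(\mbf{w}_1+\mbf{w}_2)$ is governed by the integral/shuffle formulas \eqref{positive-action}, \eqref{negative-action}, \eqref{Cartan-action}; after performing the prescribed contour integrals these produce symmetric rational functions in the Chern roots $\mbf{X}_{\mbf{v}}$ and the framing parameters. Composing the rational operator $F$ with the two stable-envelope factors therefore yields a rational operator on $K(\mbf{w}_1)\otimes K(\mbf{w}_2)$, which is exactly the statement of the corollary.

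The only conceptual point requiring attention, and the one I expect to be the main (mild) obstacle, is reconciling two a priori different meanings of $\Delta(F)$ acting on a tensor product: on the one hand, the formal power-series expansion dictated by \eqref{shuffle-infty-coproduct-positive}, \eqref{shuffle-infty-coproduct-negative} in which denominators are expanded in the region $|z_{ia}|\ll|z_{jb}|$; and on the other, the rational operator obtained by the conjugation above. Theorem \ref{Commutativity-of-infty-coproduct-with-infty-stable-envelopes} already identifies the two as operators on localized $K$-theory, so the corollary amounts to the observation that the formal series must in fact be the Laurent expansion of a single rational function, with the prescribed expansion region simply selecting a chamber in which the conjugate operator is to be expanded. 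No additional technical work is required beyond unwinding this identification.
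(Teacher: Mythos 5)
Your proposal is correct and follows essentially the same route as the paper. The paper's Remark preceding the corollary says precisely this: Theorem \ref{Commutativity-of-infty-coproduct-with-infty-stable-envelopes} lets one repackage the formal power-series expression for $\Delta(F)(p_{\bm{\lambda}_1}\otimes p_{\bm{\lambda}_2})$ as the rational operator $\text{Stab}_{\infty}^{-1}\circ F\circ\text{Stab}_{\infty}$, and since $\text{Stab}_{\infty}$ (by formula \ref{formula-for-infty-stable-envelope} plus symmetrization) and the action of $F$ (by the shuffle integral formulas) are both rational in the tautological classes, so is the conjugate. Your closing paragraph about the formal series being the Laurent expansion of that rational function in a prescribed chamber is exactly the reconciliation the paper's remark has in mind; no gap.
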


Since for now we can see the Drinfeld coproduct corresponds to the stable envelope of the infinite slope. In the following context we will denote the Drinfel coproduct as $\Delta_{\infty}$ or $\Delta$.

\subsubsection{Integral version}
The Theorem \ref{Commutativity-of-infty-coproduct-with-infty-stable-envelopes} can also be lifted to the integral version:
\begin{thm}\label{Integral-KHA-coproduct:label}
Given arbitrary $F\in\mc{A}^{+,\mbb{Z}}_{Q}$, the following diagrams commute:
\begin{equation*}
\begin{tikzcd}
K_{T}(\mc{M}_{Q}(\mbf{w}_1))\otimes K_{T}(\mc{M}_{Q}(\mbf{w}_2))\arrow[r,"\text{Stab}_{\infty}"]\arrow[d,"\Delta_{\infty}(F)"]&K_{T}(\mc{M}_{Q}(\mbf{w}_1+\mbf{w}_2))\arrow[d,"F"]\\
K_{T}(\mc{M}_{Q}(\mbf{w}_1))\otimes K_{T}(\mc{M}_{Q}(\mbf{w}_2))\arrow[r,"\text{Stab}_{\infty}"]&K_{T}(\mc{M}_{Q}(\mbf{w}_1+\mbf{w}_2).
\end{tikzcd}
\end{equation*}
Moreover, if we denote $\mc{A}^{\geq,\mbb{Z}}_{Q}:=\mc{A}^{+,\mbb{Z}}_{Q}\otimes\mc{A}^{0,\mbb{Z}}_{Q}$, we have that:
\begin{equation*}
\Delta_{\infty}(\mc{A}^{\geq,\mbb{Z}}_{Q})\subset\mc{A}^{\geq,\mbb{Z}}_{Q}\hat{\otimes}\mc{A}^{\geq,\mbb{Z}}_{Q}.
\end{equation*}
\end{thm}
\begin{proof}
Since we have already known the result for the localised case in Theorem \ref{Commutativity-of-infty-coproduct-with-infty-stable-envelopes}, it is equivalent to prove that given $F\in\mc{A}^{+,\mbb{Z}}_{Q}$, we have that:
\begin{align}
\text{Stab}_{\infty}^{-1}(F)\text{Stab}_{\infty}\in\mc{A}^{\geq,\mbb{Z}}_{Q}\hat{\otimes}\mc{A}^{\geq,\mbb{Z}}_{Q}.
\end{align}

For now we can write down the above equation as:
\begin{equation*}
\begin{aligned}
&\text{Stab}_{\infty}^{-1}(F)\text{Stab}_{\infty}(\alpha\otimes\beta)\\
=&(i^*\text{Stab}_{\infty})^{-1}i^*F(i^*)^{-1}i^*\text{Stab}_{\infty}(\alpha\otimes\beta)
\end{aligned}
\end{equation*}
and here $(i^*)^{-1}$ is defined via the equivariant localisation as:
\begin{align*}
(i^*)^{-1}=i_*(\frac{1}{\wedge^*\mc{N}_{F}^{\vee}}).
\end{align*}

Then we consider the following commutative diagram:
\begin{equation*}
\begin{tikzcd}
&\mc{M}_{Q}(\mbf{v},\mbf{v}+\mbf{n},\mbf{w})\arrow[dl,"p\times\pi_{-}"]\arrow[dr,"\pi_{+}"]&\\
\mc{Y}_{\mbf{n}}\times\mc{M}_{Q}(\mbf{v},\mbf{w})&\mc{M}_{Q}(\mbf{v},\mbf{v}+\mbf{n},\mbf{w})^A\arrow[u,hook,"i"]\arrow[dl,"p\times\pi_{-}"]\arrow[dr,"\pi_{+}"]&\mc{M}_{Q}(\mbf{v}+\mbf{n},\mbf{w})\\
\mc{Y}_{\mbf{n}}^A\times\mc{M}_{Q}(\mbf{v},\mbf{w})^A\arrow[u,hook,"i"]&&\mc{M}_{Q}(\mbf{v}+\mbf{n},\mbf{w})^A\arrow[u,hook,"i"].
\end{tikzcd}
\end{equation*}
Rewinding the definition, we have that:
\begin{equation*}
\begin{aligned}
&(i^*\text{Stab}_{\infty})^{-1}i^*F(i^*)^{-1}i^*\text{Stab}_{\infty}(\alpha\otimes\beta)\\
=&\frac{1}{\wedge^{*}\mc{N}^{-}_{F'/X'}}i^*[(\pi_{+})_*(\text{sdet}(p\times\pi_{-})^{!})(\mc{F}\boxtimes i_*((\frac{\wedge^*\mc{N}^{-}_{F/X}}{\wedge^*\mc{N}^{\vee}_{F/X}})\alpha\otimes\beta)]\\
=&\frac{1}{\wedge^{*}\mc{N}^{-}_{F'/X'}}(\pi_{+})_*[i^*[(\text{sdet}(p\times\pi_{-})^{!})(\mc{F}\boxtimes i_*((\frac{\wedge^*\mc{N}^{-}_{F/X}}{\wedge^*\mc{N}^{\vee}_{F/X}})\alpha\otimes\beta)]]\\
=&\frac{1}{\wedge^{*}\mc{N}^{-}_{F'/X'}}(\pi_{+})_*[i^*(\text{sdet})i^{!}(p\times\pi_{-})^{!}(\mc{F}\boxtimes i_*((\frac{\wedge^*\mc{N}^{-}_{F/X}}{\wedge^*\mc{N}^{\vee}_{F/X}})\alpha\otimes\beta))]\\
=&\frac{1}{\wedge^{*}\mc{N}^{-}_{F'/X'}}(\pi_{+})_*[i^*(\text{sdet})(p\times\pi_{-})^{!}i^{!}(\mc{F}\boxtimes i_*((\frac{\wedge^*\mc{N}^{-}_{F/X}}{\wedge^*\mc{N}^{\vee}_{F/X}})\alpha\otimes\beta))]\\
=&\frac{1}{\wedge^{*}\mc{N}^{-}_{F'/X'}}(\pi_{+})_*[i^*(\text{sdet})(p\times\pi_{-})^{!}(i^{!}\mc{F}\boxtimes (\wedge^*\mc{N}^{-}_{F/X})(\alpha\otimes\beta))].
\end{aligned}
\end{equation*}
In the language of the action of $\mc{A}^{ext,\mbb{Z}}_{Q}\hat{\otimes}\mc{A}^{ext,\mbb{Z}}_{Q}$ on $K_{T}(\mc{M}_{Q}(\mbf{w}_1))\otimes K_{T}(\mc{M}_{Q}(\mbf{w}_2))$, the above computation is equivalent to the following:
\begin{align*}
\frac{1}{\wedge^{*}\mc{N}^{-}_{F'/X'}}i^!(\mc{F})\cdot(\wedge^*\mc{N}^{-}_{F/X})\cdot(\alpha\otimes\beta).
\end{align*}
Obviously we have that $i^!(\mc{F})\in\mc{A}^{+,\mbb{Z}}_{Q}\otimes\mc{A}^{+,\mbb{Z}}_{Q}$. Since the Laurent expansion of $\frac{1}{\wedge^{*}\mc{N}^{-}_{F'/X'}}$ can be written as the formal series of tautological classes, and thus one can conclude that:
\begin{align*}
\Delta_{\infty}(F)\in\mc{A}^{\geq,\mbb{Z}}_{Q}\hat{\otimes}\mc{A}^{\geq,\mbb{Z}}_{Q}.
\end{align*}
\end{proof}

Using the similar proof, one can also state the similar theorem for the integral nilpotent KHA:
\begin{thm}\label{Integral-nilpotent-KHA-coproduct:label}
Given arbitrary $F\in\mc{A}^{+,nilp,\mbb{Z}}_{Q}$, the following diagrams commute:
\begin{equation*}
\begin{tikzcd}
K_{T}(\mc{M}_{Q}(\mbf{w}_1))\otimes K_{T}(\mc{M}_{Q}(\mbf{w}_2))\arrow[r,"\text{Stab}_{\infty}"]\arrow[d,"\Delta_{\infty}(F)"]&K_{T}(\mc{M}_{Q}(\mbf{w}_1+\mbf{w}_2))\arrow[d,"F"]\\
K_{T}(\mc{M}_{Q}(\mbf{w}_1))\otimes K_{T}(\mc{M}_{Q}(\mbf{w}_2))\arrow[r,"\text{Stab}_{\infty}"]&K_{T}(\mc{M}_{Q}(\mbf{w}_1+\mbf{w}_2).
\end{tikzcd}
\end{equation*}
Moreover, if we denote $\mc{A}^{\geq,nilp,\mbb{Z}}_{Q}:=\mc{A}^{+,nilp,\mbb{Z}}_{Q}\otimes\mc{A}^{0,\mbb{Z}}_{Q}$, we have that:
\begin{equation*}
\Delta_{\infty}(\mc{A}^{\geq,nilp,\mbb{Z}}_{Q})\subset\mc{A}^{\geq,nilp,\mbb{Z}}_{Q}\hat{\otimes}\mc{A}^{\geq,nilp,\mbb{Z}}_{Q}.
\end{equation*}
\end{thm}

\subsection{Integrality for the stable basis}

In this subsection we describe the attracting and repelling subspace for the torus action. We still fix $X:=\mc{M}_{Q}(\mbf{v},\mbf{w})$ and $A$ a one-dimensional torus such that $\mbf{w}=\mbf{w}_1+a\mbf{w}_2$.	

By the definition of the stable envelope $\text{Stab}_{\sigma,\mbf{m}}$, it is an integral $K$-theory class in $K_{T}(X^A\times X)$ such that it sends $\alpha\in K_{T}(X^A)$ to $\pi_{1*}(\text{Stab}_{\mbf{m}}\cdot\pi_2^*(\alpha))$ via the following correspondences:
\begin{equation*}
\begin{tikzcd}
&X^A\times X\arrow[dl,"\pi_1"]\arrow[dr,"\pi_2"]&\\
X&&X^A.
\end{tikzcd}
\end{equation*}

Also we fix a $Y=\mc{M}_{Q}(\mbf{v}',\mbf{w})$, and now suppose that $F:K_{T_{\mbf{w}}}(X)\rightarrow K_{T_{\mbf{w}}}(Y)$ is a Lagrangian correspondence in $X\times Y$, so for now $\text{Stab}_{\mbf{m}}^{-1}\cdot F\cdot\text{Stab}_{\mbf{m}}$ can be written as the following diagram:
\begin{equation*}
\begin{tikzcd}
&&W\subset X\times Y\arrow[dl,"\pi_1"]\arrow[dr,"\pi_2"]&&\\
&X\arrow[rr,"F"]&&Y&\\
X\times X^A\arrow[ur]\arrow[dr]&&&&Y^A\times Y\arrow[ul]\arrow[dl]\\
&X^A\arrow[uu,"\text{Stab}_{\mbf{m}}"]\arrow[rr,"\text{Stab}_{\mbf{m}}^{-1}\cdot F\cdot\text{Stab}_{\mbf{m}}"]&&Y^A\arrow[uu,"\text{Stab}_{\mbf{m}}"]&\\
&&W^A\subset X^A\times Y^A.\arrow[ul,"\pi_1^A"]\arrow[ur,"\pi_2^A"]&&
\end{tikzcd}
\end{equation*}

Thus if we want $\text{Stab}_{\mbf{m}}^{-1}\cdot F\cdot\text{Stab}_{\mbf{m}}$ to be still integral over the equivariant parametres in $A$, we need to know whether the correspondence $F$ takes supported on $\text{Attr}^f_{\sigma}$ to the classes supported on the attracting set $\text{Attr}^f_{\sigma}$, which implies that:
\begin{align*}
\pi_2(\pi_1^{-1}(\text{Attr}^f_{\sigma}))\subset\text{Attr}^f.
\end{align*}

We use the following result from Negu\c{t} \cite{N23}:
\begin{prop}\label{Description-of-full-attracting-set:label}
If we choose $\sigma:\mbb{C}^*\rightarrow A$ such that $\mbf{w}=\mbf{w}_1+a\mbf{w}_2$, the full attracting subvariety $\text{Attr}^f_{\sigma}\subset X^A\times X$ parametrises triples of framed double quiver representations $(V_{\bullet}',V_{\bullet}'',V_{\bullet})\in \mc{M}_{Q}(\mbf{v}',\mbf{w}')\times \mc{M}_{Q}(\mbf{v}'',\mbf{w}'')\times \mc{M}_{Q}(\mbf{v},\mbf{w})$ such that there exist linear maps
\begin{equation*}
\begin{tikzcd}
V_{\bullet}'\arrow[r,"f"]&V_{\bullet}\arrow[r,"g"]&V_{\bullet}''
\end{tikzcd}
\end{equation*}
such that the following conditions hold:
\begin{itemize}
	\item The composition $g\circ f=0$.
	\item The maps $f$ and $g$ commutes with the $X,Y$ maps, and also commute with the $A,B$ maps via the split long exact sequence
	\begin{equation*}
	\begin{tikzcd}
	W_{\bullet}'\arrow[hook,r]&W_{\bullet}\arrow[twoheadrightarrow,r]&W_{\bullet}''.
	\end{tikzcd}
	\end{equation*}
	\item Letting $\tilde{V}_{\bullet}'=\text{Im}(f)$ and $\tilde{V}_{\bullet}''=V_{\bullet}/\text{Im}(f)$, we require the existence of filtrations
	\begin{equation*}
	\begin{tikzcd}
	V_{\bullet}'=E_{\bullet}^0\arrow[twoheadrightarrow,r]&E_{\bullet}^1\arrow[twoheadrightarrow,r]&\cdots\arrow[twoheadrightarrow,r]&E_{\bullet}^{k-1}\arrow[twoheadrightarrow,r]&E_{\bullet}^k=\tilde{V}_{\bullet}'
	\end{tikzcd}
	\end{equation*}
	\begin{equation*}
	\begin{tikzcd}
	\tilde{V}_{\bullet}''=F_{\bullet}^k\arrow[twoheadrightarrow,r]&F_{\bullet}^{k-1}\arrow[twoheadrightarrow,r]&\cdots\arrow[twoheadrightarrow,r]&F_{\bullet}^1\arrow[twoheadrightarrow,r]&F_{\bullet}^0=V_{\bullet}''.
	\end{tikzcd}
	\end{equation*}
	such that the kernels of the maps $E_{\bullet}^l\rightarrow E_{\bullet}^{l+1}$ and $F_{\bullet}^{l+1}\rightarrow F_{\bullet}^{l}$ are isomorphic.
\end{itemize}
\end{prop}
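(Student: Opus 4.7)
The plan is to analyze the $A$-action directly on quiver data, first describing the one-step attracting set in terms of short exact sequences, and then iterating through broken flow lines to extract the filtration structure. Throughout, the cocharacter $\sigma$ has $W_2$ of weight $+1$ and all other summands of weight $0$, so that the fixed components correspond to framing splittings $W = W_1 \oplus W_2$ and dimension vector splittings $\mbf{v} = \mbf{v}' + \mbf{v}''$.

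First I would handle the one-step case $k=1$. A point $V_\bullet \in X$ flows to $V'_\bullet \oplus V''_\bullet \in X^A$ iff it admits a $\mbb{C}[\overline{Q}]$-subrepresentation $V'_\bullet \hookrightarrow V_\bullet$ absorbing the image of the $W_1$-framing maps, with quotient $V''_\bullet$ carrying the $W_2$-framing; equivalently, there is a short exact sequence $0 \to V'_\bullet \to V_\bullet \to V''_\bullet \to 0$ of $\mbb{C}[\overline{Q}]$-representations compatible with the split $W = W_1 \oplus W_2$. Taking $f$ to be the inclusion and $g$ the projection, with trivial filtrations $E^0 = E^1 = V'_\bullet$ and $F^1 = F^0 = V''_\bullet$, recovers the $k=1$ statement. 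This step is essentially Lemma \ref{flow-condition:lemma} applied with $\mbf{n} = \mbf{v}''$, $\mbf{v}' = 0$, reinterpreted on the ambient moduli.

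Next I would iterate. By its defining closure property, a point $(V'_\bullet, V''_\bullet, V_\bullet)$ lies in $\text{Attr}^f_\sigma$ iff there is a chain of fixed points
\begin{equation*}
V'_\bullet \oplus V''_\bullet = Z^{(0)}, Z^{(1)}, \ldots, Z^{(k)} = \tilde V'_\bullet \oplus \tilde V''_\bullet,
\end{equation*}
connected by one-step attractions between adjacent fixed components, followed by a one-step attraction from $V_\bullet \in X$ to $Z^{(k)}$. Writing $Z^{(l)} = E^l \oplus F^l$ with the natural splitting of framings, each adjacent attraction $Z^{(l)} \rightsquigarrow Z^{(l+1)}$ is governed by the one-step criterion above applied to a fixed point: it produces a short exact sequence of $\mbb{C}[\overline{Q}]$-representations in which a "piece" $K^l_\bullet$ is stripped off the $W_1$-framed summand and re-attached to the $W_2$-framed summand. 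Concretely this gives surjections $E^l \twoheadrightarrow E^{l+1}$ and $F^{l+1} \twoheadrightarrow F^l$ whose kernels are both isomorphic to $K^l_\bullet$, matching the filtrations in the statement.

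Finally I would extract $f, g$ and check the compatibility clauses, then run the converse. The map $f: V'_\bullet = E^0 \to V_\bullet$ is the composition of the successive surjections $E^0 \twoheadrightarrow \cdots \twoheadrightarrow E^k = \tilde V'_\bullet$ with the inclusion $\tilde V'_\bullet \hookrightarrow V_\bullet$ coming from the final one-step flow, and dually for $g$; the relation $g \circ f = 0$ holds automatically because $\operatorname{Im}(f) = \tilde V'_\bullet = \ker(V_\bullet \twoheadrightarrow \tilde V''_\bullet)$. Compatibility with the quiver maps and with the split framings is preserved at every intermediate step by construction. The converse direction takes a tuple satisfying the proposition and constructs an explicit broken flow from $V_\bullet$ to $V'_\bullet \oplus V''_\bullet$ by running the filtrations backward and exhibiting each transitional extension as the $t \to 0$ limit of an explicit $\mbb{C}^*$-orbit; the main obstacle is ensuring $\bm{\theta}$-stability is preserved along the entire broken trajectory so that every intermediate limit genuinely lies in $X^A = \bigsqcup \mc{M}_Q(\mbf{v}_1, \mbf{w}_1) \times \mc{M}_Q(\mbf{v}_2, \mbf{w}_2)$, which is guaranteed by $\bm{\theta} = (-1, \ldots, -1)$ together with the fact that the surjections $E^l \twoheadrightarrow E^{l+1}$ and $F^{l+1} \twoheadrightarrow F^l$ preserve stability of framed representations.
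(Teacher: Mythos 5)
The paper does not prove this proposition; it is cited verbatim from Negu\c{t} \cite{N23} (where it appears as a structural result on the full attracting set), so there is no proof in the paper to compare against. Your outline is the natural one and I believe it is correct in broad strokes: peel off one half-open flow line from $V_\bullet$ to a fixed component, then use Lemma~\ref{flow-condition:lemma} repeatedly to turn the remaining broken trajectory inside $X^A$ into the two filtrations with matching kernels, and observe that $f$, $g$, and $g\circ f = 0$ fall out of the composite. That said, there are three places where you assert rather than prove, and at least one of them is where the real content lives.

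First, your ``one-step'' identification of $\Attr_\sigma$ with short exact sequences $0 \to V'_\bullet \to V_\bullet \to V''_\bullet \to 0$ absorbing $\operatorname{Im}(A|_{W_1})$ is not Lemma~\ref{flow-condition:lemma}: that lemma is about \emph{closed} flow lines between two fixed components, not about the attracting set of a point of $X \setminus X^A$. The one-step attraction statement is a separate (if standard) computation on the quiver data, and you should either prove it or cite it precisely. Second, your appeal to ``the defining closure property'' to decompose $\Attr^f_\sigma$ into a single attraction followed by a chain of closed flows is glossing over the fact that the paper's stated definition of $\Attr^f$ is recursive and implicitly uses closedness; making the decomposition rigorous requires showing $\Attr^f$ coincides with the union of broken trajectories, or at least that the two have the same closure.

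Third, and most substantively: in the converse direction you correctly flag that one must keep every intermediate limit $\bm\theta$-stable so it lands in $X^A$ rather than an unstable locus, but you then dismiss the obstacle with an unproved assertion. The fact you need is that (i) quotients of $\bm\theta$-stable framed representations are $\bm\theta$-stable for $\bm\theta = (-1,\ldots,-1)$ -- this makes each $E^{l+1}$ stable once $E^0 = V'_\bullet$ is, and propagates stability down the $F^\bullet$ chain once $F^k = \tilde V''_\bullet$ is known stable -- and (ii) $\tilde V''_\bullet = V_\bullet/\operatorname{Im}(f)$ is stable precisely because the second bullet of the proposition forces $\operatorname{Im}(f) \supset \operatorname{Im}(A|_{W_1})$, so any proper destabilizing subrepresentation of $\tilde V''_\bullet$ would pull back to a proper subrepresentation of $V_\bullet$ containing all of $\operatorname{Im}(A)$, contradicting stability of $V_\bullet$. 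Neither of these is hard, but both are doing the work your closing sentence hand-waves away, and without them the converse does not close. You should also invoke that Lemma~\ref{flow-condition:lemma} is an ``if and only if'' when reconstructing the intermediate closed flow lines from the filtration data.
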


\begin{prop}\label{Integrality-for-slope-coproduct:Proposition}
Given any $F\in\mc{A}_{Q}^+$, the operator $\text{Stab}_{\mbf{m}}^{-1}F\text{Stab}_{\mbf{m}}$ is an integral $K$-theory class over which is Laurent polynomial over the equivariant variable $a$.
\end{prop}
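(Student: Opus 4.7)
The plan is to realize $\text{Stab}_{\mbf{m}}^{-1} F\, \text{Stab}_{\mbf{m}}$ as a geometric correspondence on the fixed-point side and then extract integrality from the support properties of $F$. Writing $F\in \mc{A}_Q^+$ as a composition of elementary Hecke correspondences $\mc{M}_{Q}(\mbf{v},\mbf{v}+\mbf{n},\mbf{w})$, the action of $F$ is encoded by a Lagrangian correspondence $W\subset X\times Y$ with $X = \mc{M}_{Q}(\mbf{v},\mbf{w})$ and $Y = \mc{M}_Q(\mbf{v}+\mbf{n},\mbf{w})$. By the fiber diagram at the end of the previous subsection, once we know $W$ restricts to a correspondence $W^A\subset X^A\times Y^A$ that respects the full attracting set, the conjugated operator is encoded by an integral correspondence on fixed points.

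First I would show that the correspondence $W$ associated with any $F\in \mc{A}_Q^+$ sends $\text{Attr}^f_\sigma$-supported classes into $\text{Attr}^f_\sigma$-supported classes, i.e. $\pi_2(\pi_1^{-1}(\text{Attr}^f_\sigma))\subset \text{Attr}^f_\sigma$. This is exactly what the structure of $\text{Attr}^f_\sigma$ in the preceding proposition gives: full attracting points correspond to short exact sequences compatible with a filtration $\tilde V_{\bullet}'\hookrightarrow V_{\bullet}\twoheadrightarrow \tilde V_{\bullet}''$, and the Hecke short exact sequences $0\to K_{\bullet}\to V_{\bullet}^+\to V_{\bullet}^-\to 0$ defining the positive half KHA refine any such filtration. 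Hence $W$ is "upper-triangular" with respect to the partial order on fixed components.

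With the attracting-support property in hand, I would then use the identification (from the definition of stable envelope)
\begin{equation*}
\text{Stab}_{\mbf{m}}^{-1}\cdot F\cdot \text{Stab}_{\mbf{m}} = (i^*\text{Stab}_{\mbf{m}})^{-1}\circ i^*(F)\circ (i^*)^{-1}\circ i^*\text{Stab}_{\mbf{m}},
\end{equation*}
combined with the calculation carried out in the proof of Theorem \ref{Integral-KHA-coproduct:label}: the factors $(i^*)^{-1}=i_*(1/\wedge^*\mc{N}_F^\vee)$ introduce only $\wedge^*\mc{N}^-_{F/X}$ after cancellation, and the remaining pieces (the Gysin pullback along $p\times\pi_-$ and the sdet twist) are integral. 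Crucially, the only potential denominators come from $\wedge^*\mc{N}^+_{F/X}$, but these are killed by the $\text{Attr}^f$-support of $W^A$, so no equivariant denominators in $A$ survive.

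The last step is the Laurent polynomiality in $a$. This follows from the Newton polytope bound in the defining property of $\text{Stab}_{\mbf{m}}$: for each pair of fixed components $F_\alpha,F_\beta$, the $A$-degrees of $\text{Stab}_{\mbf{m}}|_{F_\beta\times F_\alpha}$ are confined to a bounded polytope shifted by $\mbf{m}\cdot\mbf{l}$. Combined with the fact that $F$ shifts the horizontal grading by a fixed vector, the $a$-degree of the conjugated matrix coefficient lies in a finite range, forcing the output to be Laurent polynomial in $a$. The main obstacle is the first step: verifying carefully that the composition of Hecke correspondences defining an arbitrary element of $\mc{A}_Q^+$ is compatible with the full attracting set, and not merely with the closed attracting set. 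This requires the lifting criterion from Lemma \ref{flow-condition:lemma} applied iteratively along the factorization of $F$ into Hall products of spherical generators $e_{i,d}$.
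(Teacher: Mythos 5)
Your proposal follows essentially the same strategy as the paper: reduce to the generators $e_{i,d}$ of $\mc{A}^+_Q$ (via Hall product factorization), observe that the stable envelope conjugation requires exactly that the Hecke correspondence preserve the full attracting set $\text{Attr}^f_\sigma$ (i.e. $\pi_2(\pi_1^{-1}(\text{Attr}^f_\sigma)) \subset \text{Attr}^f_\sigma$), and then use the filtration characterization of $\text{Attr}^f_\sigma$ from the preceding proposition to establish this preservation. The only minor imprecision is the claim that the Hecke short exact sequences "refine any such filtration" — this is not automatic; the paper's proof has to explicitly construct, from the filtration $\{E^l_\bullet\}$, $\{F^l_\bullet\}$ certifying that $V^-_\bullet \in \text{Attr}^f$, an extended filtration for $V^+_\bullet$ by inserting the one-dimensional kernel $\mbb{C}^{\delta_{\bullet i}}$ in the correct position and checking it still satisfies the $\text{Attr}^f$ conditions (the key point being that the $\mbb{C}^{\delta_{\bullet i}}\to\mbb{C}^{\delta_{\bullet i}}\to 0$ column is a closed flow iff the middle map is an isomorphism). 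You flag this diagram chase as the main obstacle rather than claiming to have completed it, so the gap is one of detail rather than of strategy.
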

\begin{proof}
Note that $\mc{A}_{Q}$ is generated by the elements in $\mc{A}^{+}_{Q}$, $\mc{A}^0_{Q}$ and $\mc{A}^{-}_{Q}$. For elements in $\mc{A}^{0}_{Q}$, since it is generated by the tautological classes on $X^A\times X^A$, which are the class supported on the fixed locus, thus it is in $\text{Attr}^f_{\mf{C}}$.

Moreover, since the proof for $\mc{A}^{\pm}_{Q}$ are similar, so we will only focus on $\mc{A}^{+}_{Q}$.

Note that since $F\in\mc{A}^{+}_{Q}$ is the linear combination of the class $e_{i_1,d_1}*\cdots*e_{i_n,d_n}$, it is only left to prove the integrality for the generators $e_{i,d}$.

By definition, $e_{i,d}$ is represented by the quasiprojective scheme $\mc{N}_{\mbf{v},\mbf{v}+\mbf{e}_i,\mbf{w}}$. Points of $\mc{N}_{\mbf{v},\mbf{v}+\mbf{e}_i,\mbf{w}}$ are quadruples of linear maps that preserve a collection of quotients $\{V_j^+\rightarrow V_j^-\}$ of codimension $\delta_{ij}$. Thus we need to prove that if $\{V_j^-\}\in\text{Attr}^f$, then the vector spaces $\{V_j^+\}$ in the definition of $\mc{N}_{\mbf{v},\mbf{v}+\mbf{e}_i,\mbf{w}}$ also lies in $\text{Attr}^f$. Now we fix the splitting $((V_{\bullet}^-)',(V_{\bullet}^-)'')$ for the representation $V_{\bullet}^-$. We want to make the following diagram:
\begin{equation*}
\begin{tikzcd}
0\arrow[r]&\mbb{C}^{\delta_{\blt i}}\arrow[d,"f_{i}"]\arrow[r]&V_{\blt}^{+,'}\arrow[r]\arrow[d,"f_{+}"]&V_{\blt}^{-,'}\arrow[r]\arrow[d,"f_{-}"]&0\\
0\arrow[r]&\mbb{C}^{\delta_{\blt i}}\arrow[d,"g_{i}"]\arrow[r]&V_{\blt}^{+}\arrow[d,"g_{+}"]\arrow[r]&V_{\blt}^{-}\arrow[r]\arrow[d,"g_{-}"]&0\\
0\arrow[r]&0\arrow[r]&V_{\blt}^{+,''}\arrow[r]&V_{\blt}^{-,''}\arrow[r]&0
\end{tikzcd}
\end{equation*}
such that the middle verticle sequence will satisfy the condition for the elements in $\text{Attr}^f$. Now note that the chain $\mbb{C}^{\delta_{\blt i}}\rightarrow\mbb{C}^{\delta_{blt i}}\rightarrow0$ satisfies the condition listed for $\text{Attr}^f$ if and only if $f_i$ is an isomorphism. This means that given the filtration for $V_{\blt}^-$:
\begin{equation*}
\begin{tikzcd}
V_{\bullet}^{- '}=E_{\bullet}^{0,-}\arrow[twoheadrightarrow,r]&E_{\bullet}^{1,-}\arrow[twoheadrightarrow,r]&\cdots\arrow[twoheadrightarrow,r]&E_{\bullet}^{k-1,-}\arrow[twoheadrightarrow,r]&E_{\bullet}^{k,-}=\tilde{V}_{\bullet}^{- '}=\text{Im}(f_-)
\end{tikzcd}
\end{equation*}
\begin{equation*}
\begin{tikzcd}
V_{\blt}^-/\text{Im}(f_-)=\tilde{V}_{\bullet}^{- ''}=F_{\bullet}^{k,-}\arrow[twoheadrightarrow,r]&F_{\bullet}^{k-1,-}\arrow[twoheadrightarrow,r]&\cdots\arrow[twoheadrightarrow,r]&F_{\bullet}^{1,-}\arrow[twoheadrightarrow,r]&F_{\bullet}^{0,-}=V_{\bullet}^{-''}.
\end{tikzcd}
\end{equation*}

By the above diagram we have that $F_{\blt}^{l,-}\cong F_{\blt}^{l,+}$, and $V_{j}^{+'}=V_{j}^{-'}\oplus\mbb{C}^{\delta_{ij}}$. We define that $f_{+}:V_{\blt}^{+'}\rightarrow V_{\blt}^{+}$ to be such that $f_{+}(V_{j}^{+'})\cong f_{-}(V_{j}^{-'})\oplus\mbb{C}$, i.e. we have the following short exact sequence of quiver representations:
\begin{equation*}
\begin{tikzcd}
0\arrow[r]&\mbb{C}^{\delta_{\blt i}}\arrow[r]&\text{Im}(f_+)\arrow[r]&\text{Im}(f_-)\arrow[r]&0.
\end{tikzcd}
\end{equation*}

\end{proof}

As the result, for the class $\text{Stab}_{\sigma,\mbf{m}}^{-1}\cdot F\text{Stab}_{\sigma,\mbf{m}}$, it might be localised over the flavor parametres $q$ and $t_e$, but it is a Laurent polynomial over the equivariant variable $a$.

\subsection{Nilpotent $K$-theoretic stable envelopes}\label{subsection:nilpotent_k_theoretic_stable_envelopes}
Another important counterpart for the $K$-theoretic stable envelope of the Nakajima quiver varieties is the $K$-theoretic stable envelope for the nilpotent quiver varieties. This has been introduced in \cite{SV23} in the cohomological case.

Let us now define what is the $K$-theoretic stable envelope for the nilpotent quiver variety $\mc{L}_{Q}(\mbf{v},\mbf{w})$.

First given a stable envelope class $[\text{Stab}_{\mf{C},s,T^{1/2}}]\in K_{T}(M_{Q}(\mbf{v},\mbf{w})^A\times M_{Q}(\mbf{v},\mbf{w}))$, by the perfect pairing \ref{perfect-pairing:label}, we have the corresponding dual class
\begin{align*}
[\text{Stab}_{\mf{C},s,T^{1/2}}^{\mc{L}}]\in K_{T}(\mc{L}_{Q}(\mbf{v},\mbf{w})^A\times\mc{L}_{Q}(\mbf{v},\mbf{w}))
\end{align*}
corresponding to $[\text{Stab}_{\mf{C},s,T^{1/2}}]$.
It is supported on $\text{Attr}_{\mf{C}}^f\cap(\mc{L}_{Q}(\mbf{v},\mbf{w})^A\times\mc{L}_{Q}(\mbf{v},\mbf{w}))$. Moreover, the projection map $\text{Attr}_{f,\mf{C}}\cap(\mc{L}_{Q}(\mbf{v},\mbf{w})^A\times\mc{L}_{Q}(\mbf{v},\mbf{w}))\rightarrow\mc{L}_{Q}(\mbf{v},\mbf{w})^A$ is proper. Thus we can define the \textbf{nilpotent stable envelope} as the convolution by:
\begin{align*}
\text{Stab}_{\mf{C},s,T^{1/2}}^{\mc{L},\vee}:K_{T}(\mc{L}_{Q}(\mbf{v},\mbf{w}))\rightarrow K_{T}(\mc{L}_{Q}(\mbf{v},\mbf{w})^A).
\end{align*}

Similar to the case as in the stable envelopes, it is an isomorphism after localisation. It satisfies the condition as written in \ref{subsection:_k_theoretic_stable_envelopes}.

The following lemma implies why we define the nilpotent stable envelope on the other way round. 

\begin{lem}\label{transpose-of-stable-envelopes:label}
The nilpotent stable envelope $\text{Stab}_{\mf{C},s,T^{1/2}}^{\mc{L},\vee}$ is the transpose of $\text{Stab}_{\mf{C},s,T^{1/2}}$ under the perfect pairing \ref{perfect-pairing:label}.
\end{lem}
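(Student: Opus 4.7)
The plan is to verify that for any $\alpha\in K_T(\mc{M}_Q(\mbf{v},\mbf{w})^A)$ and $\beta\in K_T(\mc{L}_Q(\mbf{v},\mbf{w}))$ one has
\[
\langle \text{Stab}_{\mf{C},s,T^{1/2}}(\alpha),\beta\rangle_{\mc{M},\mc{L}}
=\langle \alpha,\text{Stab}_{\mf{C},s,T^{1/2}}^{\mc{L},\vee}(\beta)\rangle_{\mc{M}^A,\mc{L}^A},
\]
by rewriting both sides as a single integral (push-forward to a point) of the triple product $[\text{Stab}]\cdot\pi_1^*\alpha\cdot\pi_2^*\beta$ on an appropriate correspondence. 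I would first expand $\text{Stab}_{\mf{C},s,T^{1/2}}$ as the Fourier--Mukai transform with kernel $[\text{Stab}]\in K_T(\mc{M}^A\times\mc{M})$, and then apply base change for the Cartesian square
\[
\begin{tikzcd}
\mc{M}^A\times\mc{L}\arrow[r,"1\times i"]\arrow[d,"\pi_\mc{L}"'] & \mc{M}^A\times\mc{M}\arrow[d,"\pi_\mc{M}"]\\
\mc{L}\arrow[r,"i"] & \mc{M}
\end{tikzcd}
\]
together with the projection formula to express $i^*\text{Stab}(\alpha)=\pi_{\mc{L},*}\bigl((1\times i)^*[\text{Stab}]\cdot\pi_{\mc{M}^A}^*\alpha\bigr)$. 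Substituting into the pairing $\langle F,G\rangle=p_{\mc{L},*}(i^*F\otimes^{\mbb{L}}G)$ collapses the left-hand side to $q_{\mc{M}^A\times\mc{L},*}\bigl((1\times i)^*[\text{Stab}]\cdot\pi_{\mc{M}^A}^*\alpha\cdot\pi_\mc{L}^*\beta\bigr)$.

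Next, I would perform the analogous unfolding on the right-hand side. Using the definition $\text{Stab}^{\mc{L},\vee}(\beta)=\pi_{\mc{L}^A,*}\bigl([\text{Stab}^\mc{L}]\cdot\pi_\mc{L}^*\beta\bigr)$ with $[\text{Stab}^\mc{L}]=(i_A\times i)^*[\text{Stab}]=j^*(1\times i)^*[\text{Stab}]$, where $j=i_A\times 1\colon \mc{L}^A\times\mc{L}\hookrightarrow\mc{M}^A\times\mc{L}$, and combining with the fixed-locus pairing $\langle\alpha,\gamma\rangle_{\mc{M}^A,\mc{L}^A}=p_{\mc{L}^A,*}(i_A^*\alpha\otimes^\mbb{L}\gamma)$, the right-hand side becomes $q_{\mc{L}^A\times\mc{L},*}\bigl(j^*\bigl((1\times i)^*[\text{Stab}]\cdot\pi_{\mc{M}^A}^*\alpha\cdot\pi_\mc{L}^*\beta\bigr)\bigr)$. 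Thus matching LHS and RHS reduces to proving that for the specific integrand $\mc{F}=(1\times i)^*[\text{Stab}]\cdot\pi_{\mc{M}^A}^*\alpha\cdot\pi_\mc{L}^*\beta$, the push-forwards $q_{\mc{M}^A\times\mc{L},*}(\mc{F})$ and $q_{\mc{L}^A\times\mc{L},*}(j^*\mc{F})$ coincide.

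The crucial geometric input is the support-containment statement: because $\mc{L}\subset\mc{M}$ is closed and $A$-invariant, the $A$-flow limit of any point of $\mc{L}$ stays in $\mc{L}$, forcing
\[
\text{Attr}^f_{\mf{C}}\cap(\mc{M}^A\times\mc{L})\;=\;\text{Attr}^f_{\mf{C}}\cap(\mc{L}^A\times\mc{L}).
\]
Consequently, $(1\times i)^*[\text{Stab}]$ is represented by a complex whose set-theoretic support lies in $\mathrm{image}(j)=\mc{L}^A\times\mc{L}$, and the push-forward to a point of any product class with this support can be computed entirely on $\mc{L}^A\times\mc{L}$, giving the required equality of integrals.

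The main obstacle will be the careful K-theoretic handling of this last step: in coherent K-theory a class supported on a closed subvariety is not, in general, equal to the push-forward of its derived $*$-restriction, because of Koszul corrections by $\lambda_{-1}(\mc{N}_j^\vee)$. To bypass this I plan either to (i) work with a specific complex representative of $[\text{Stab}]$ coming from the abelianization construction of \cite{AO21} or the stratified construction of \cite{O21}, so that the support-refinement $(1\times i)^*[\text{Stab}]=j_*[\text{Stab}^\mc{L}]$ can be checked at the level of complexes and the coincidence of push-forwards becomes a consequence of the projection formula $q_{\mc{M}^A\times\mc{L},*}\circ j_*=q_{\mc{L}^A\times\mc{L},*}$; or (ii) verify the identity after inverting the equivariant parameters via localization to torus fixed points, where both sides are controlled by the diagonal normalization of the stable envelope on each fixed component and the Koszul correction becomes invertible. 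Either route reduces the lemma to an explicit and checkable identity between the two integral expressions.
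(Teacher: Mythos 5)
Your reduction of both sides to push-forwards of $\mc{F}=(1\times i)^*[\text{Stab}]\cdot\pi_{\mc{M}^A}^*\alpha\cdot\pi_\mc{L}^*\beta$ over $\mc{M}^A\times\mc{L}$ and over $\mc{L}^A\times\mc{L}$, together with the support fact $\text{Attr}^f_{\mf{C}}\cap(\mc{M}^A\times\mc{L})\subset\mc{L}^A\times\mc{L}$ (the $A$-flow of a point of $\mc{L}$ never leaves $\mc{L}$ because $\mc{L}$ is closed and $A$-invariant), is a correct setup, and you are right to single out the final step as the real issue. But neither of your two proposed ways around it actually closes the gap. If $\mc{F}$ has support in $\mathrm{image}(j)$, devissage gives $\mc{F}=j_*\mc{K}$, so $q_{\mc{M}^A\times\mc{L},*}\mc{F}=q_{\mc{L}^A\times\mc{L},*}\mc{K}$, whereas $q_{\mc{L}^A\times\mc{L},*}(j^*\mc{F})=q_{\mc{L}^A\times\mc{L},*}\bigl(\lambda_{-1}(\mc{N}_j^\vee)\cdot\mc{K}\bigr)$. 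The discrepancy is exactly the Koszul class of $\mc{N}_j\cong\mc{N}_{\mc{L}^A/\mc{M}^A}\boxtimes\mc{O}_\mc{L}$, which is nontrivial whenever $\mc{L}^A\subsetneq\mc{M}^A$ (this already happens for the Jordan quiver with the splitting $\mbf{w}=(1)+(1)a$, where a component of $\mc{L}^A$ is a point inside $\mc{M}^A\cong\mbb{C}^2\times\mbb{C}^2$).

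Route (ii), localization, does not help: after inverting equivariant parameters the factor $\lambda_{-1}(\mc{N}_{i_A}^\vee)$ becomes a unit, but not the unit $1$, so if the two sides differed by it then localization would \emph{exhibit} the discrepancy rather than remove it. Route (i) is the one with a chance of succeeding, but it amounts to proving the stronger refinement $(1\times i)^*[\text{Stab}]=j_*[\text{Stab}^\mc{L}]$ as $K$-theory classes, rather than merely that $(1\times i)^*[\text{Stab}]$ is set-theoretically supported on $\mathrm{image}(j)$. That is a substantive scheme-theoretic compatibility between the stable-envelope kernel on $\text{Attr}^f$ and the cut by $\mc{L}^A\times\mc{L}$, and it is precisely the content of the lemma; your proposal names it but does not establish it. For what it is worth, the paper's own proof is a one-line appeal to Theorem~\ref{perfect-pairing:label} and supplies no more detail, so you have isolated a genuine gap in the argument as written, not merely a gap in your reconstruction; a complete proof would need either to establish the refinement $(1\times i)^*[\text{Stab}]=j_*[\text{Stab}^\mc{L}]$ directly or to adjust the definition of the nilpotent kernel $[\text{Stab}^{\mc{L}}]$ by the appropriate Koszul factor so that the adjunction comes out cleanly.
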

\begin{proof}
This follows from the perfect pairing \ref{perfect-pairing:label}.
\begin{comment}
The following diagram is useful for the computation:
\begin{equation*}
\begin{tikzcd}
&\mc{L}_{Q}(\mbf{v},\mbf{w})^A\times\mc{L}_{Q}(\mbf{v},\mbf{w})\arrow[dl,"\pi_{1,\mc{L}}"]\arrow[dr,"\pi_{2,\mc{L}}"]\arrow[dd,hook,"i_A\times i"]&\\
\mc{L}_{Q}(\mbf{v},\mbf{w})^A\arrow[dd,"i_A"]&&\mc{L}_{Q}(\mbf{v},\mbf{w})\arrow[dd,"i"]\\
&M_{Q}(\mbf{v},\mbf{w})^A\times M_{Q}(\mbf{v},\mbf{w})\arrow[dl,"\pi_1"]\arrow[dr,"\pi_2"]&\\
M_{Q}(\mbf{v},\mbf{w})^A&&M_{Q}(\mbf{v},\mbf{w})
\end{tikzcd}
\end{equation*}

This can be via the following computation:
\begin{equation*}
\begin{aligned}
&p_*(\mc{F}\otimes i_{*}\text{Stab}_{\mf{C},s}^{\mc{L},\vee}(\mc{G}))=p_{*}(\mc{F}\otimes i_*(\pi_{2,\mc{L}})_*(\text{Stab}_{\mf{C},s}^{\mc{L},\vee}\otimes\pi_{1,\mc{L}}^*\mc{G}))\\
=&p_*(\mc{F}\otimes(\pi_2)_*(i_A\times i)_*(\text{Stab}_{\mf{C},s}^{\mc{L},\vee}\otimes\pi_{1,\mc{L}}^*\mc{G}))\\
=&p_*(\mc{F}\otimes(\pi_2)_*(i_A\times i)_*((i_A\times i)^*(\text{Stab}_{\mf{C},s})\otimes\pi_{1,\mc{L}}^*\mc{G}))\\
=&p_*(\mc{F}\otimes(\pi_2)_*(\text{Stab}_{\mf{C},s}\otimes (i_A\times i)_*\pi_{1,\mc{L}}^*\mc{G}))\\
=&p_*(\mc{F}\otimes\pi_{2*}(\text{Stab}_{\mf{C},s}\otimes\pi_1^*i_{A*}\mc{G}))\\
=&(\mc{F},\text{Stab}_{\mf{C},s,T^{1/2}}^{op}(i_{A*}\mc{G}))\\
=&(\text{Stab}_{\mf{C},s,T^{1/2}}(\mc{F}),i_{A*}\mc{G})
\end{aligned}
\end{equation*}
\end{comment}
\end{proof}

\subsection{Nilpotent Maulik-Okounkov quantum loop groups}\label{subsection:nilpotent_maulik_okounkov_quantum_loop_groups}

Similar to the stable envelope, the nilpotent stable envelope $\text{Stab}_{\mf{C},s}^{\mc{L},\vee}$ is an isomorphism after the localisation. In this one can define the nilpotent geometric $R$-matrix:
\begin{align}\label{nilpotent-R-matrix}
\mc{R}^{\mc{L},s}_{\mf{C}_1,\mf{C}_2}:=\text{Stab}_{\mf{C}_1,s}^{\mc{L},\vee}\circ(\text{Stab}_{\mf{C}_2,s}^{\mc{L},\vee})^{-1}:K_{T}(\mc{L}_{Q}(\mbf{v},\mbf{w})^A)_{loc}\rightarrow K_{T}(\mc{L}_{Q}(\mbf{v},\mbf{w}))_{loc}.
\end{align}
and here the localisation is over the equivariant variables $K_{G_{\mbf{w}}}(pt)_{loc}$. 

From the definition one can see the following lemma:
\begin{lem}
$\mc{R}^{\mc{L},s}_{\mf{C}_1,\mf{C}_2}$ is the transpose of $(\mc{R}^s_{\mf{C}_1,\mf{C}_2})^{-1}$.
\end{lem}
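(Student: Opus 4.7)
The plan is to deduce the identity directly from Lemma \ref{transpose-of-stable-envelopes:label}, which identifies $\text{Stab}^{\mc{L},\vee}_{\mf{C},s}$ as the adjoint of $\text{Stab}_{\mf{C},s}$ with respect to the perfect pairing in Theorem \ref{perfect-pairing:label}. The only extra input we need is the formal observation that taking the transpose under a perfect pairing is an anti-involution on the algebra of linear maps between localised $K$-theory groups, i.e. $(AB)^{T}=B^{T}A^{T}$ and $(A^{-1})^{T}=(A^{T})^{-1}$. No geometric input beyond Lemma \ref{transpose-of-stable-envelopes:label} is required.

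First I would unravel both sides. By the convention used in defining $\mc{R}^{s}_{\mf{C}}$ at the beginning of Section \ref{subsection:maulik_okounkov_quantum_algebra_and_wall_subalgebras} (extended to two chambers), we have
\[
\mc{R}^{s}_{\mf{C}_1,\mf{C}_2}=\text{Stab}_{\mf{C}_1,s}^{-1}\circ\text{Stab}_{\mf{C}_2,s},
\qquad
(\mc{R}^{s}_{\mf{C}_1,\mf{C}_2})^{-1}=\text{Stab}_{\mf{C}_2,s}^{-1}\circ\text{Stab}_{\mf{C}_1,s},
\]
while the nilpotent geometric $R$-matrix is given by \ref{nilpotent-R-matrix}:
\[
\mc{R}^{\mc{L},s}_{\mf{C}_1,\mf{C}_2}=\text{Stab}_{\mf{C}_1,s}^{\mc{L},\vee}\circ(\text{Stab}_{\mf{C}_2,s}^{\mc{L},\vee})^{-1}.
\]

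The second step is to apply Lemma \ref{transpose-of-stable-envelopes:label}, which gives $(\text{Stab}_{\mf{C},s})^{T}=\text{Stab}^{\mc{L},\vee}_{\mf{C},s}$ for each chamber $\mf{C}$, together with the anti-involution property of $(-)^{T}$. Then
\[
\bigl((\mc{R}^{s}_{\mf{C}_1,\mf{C}_2})^{-1}\bigr)^{T}
=\bigl(\text{Stab}_{\mf{C}_2,s}^{-1}\bigr)^{T}\!\circ\bigl(\text{Stab}_{\mf{C}_1,s}\bigr)^{T}\!\!\!
\]
which after swapping the two factors via the anti-involution identity $(AB)^T=B^TA^T$ and $(A^{-1})^T=(A^T)^{-1}$ becomes
\[
(\text{Stab}_{\mf{C}_1,s})^{T}\circ\bigl((\text{Stab}_{\mf{C}_2,s})^{T}\bigr)^{-1}
=\text{Stab}^{\mc{L},\vee}_{\mf{C}_1,s}\circ\bigl(\text{Stab}^{\mc{L},\vee}_{\mf{C}_2,s}\bigr)^{-1}
=\mc{R}^{\mc{L},s}_{\mf{C}_1,\mf{C}_2},
\]
which is the desired identity.

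There is essentially no obstacle: the entire content has been placed in Lemma \ref{transpose-of-stable-envelopes:label}. The only mild care needed is to verify that the transpose operation is compatible with composition of maps between the different weight components after localisation; this is a formal consequence of the non-degeneracy of the pairing in Theorem \ref{perfect-pairing:label}. If one prefers an even more direct argument, the same identity can be read off by applying the perfect pairing to arbitrary classes and moving $\text{Stab}_{\mf{C}_i,s}$ across the pairing one factor at a time, using Lemma \ref{transpose-of-stable-envelopes:label} each time; this avoids any formal discussion of anti-involutions.
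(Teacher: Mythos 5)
Your proof is correct and is essentially what the paper has in mind: the paper offers no written argument beyond "From the definition one can see the following lemma," and your chain of equalities via Lemma~\ref{transpose-of-stable-envelopes:label} and the anti-involution identities $(AB)^T=B^TA^T$, $(A^{-1})^T=(A^T)^{-1}$ is exactly the intended formal verification.
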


\begin{defn}
The \textbf{nilpotent Maulik-Okounkov quantum loop group} $U_{q}^{MO,nilp}(\hat{\mf{g}}_{Q})$ is an algebra over $\mbb{Z}[q^{\pm1},t_{e}^{\pm1}]_{e\in E}$ generated by the matrix coefficients of the nilpotent geometric $R$-matrix $\mc{R}^{\mc{L},s}_{\mf{C}_1,\mf{C}_2}$.
\end{defn}

Now we use the following definition
\begin{align*}
K(\mbf{w})^{\vee}:=\bigoplus_{\mbf{v}\in\mbb{N}^I}K_{T}(\mc{L}_{Q}(\mbf{v},\mbf{w}))_{loc}
\end{align*}
and by definition it is easy to see that the nilpotent MO quantum loop group $U_{q}^{MO,nilp}(\hat{\mf{g}}_{Q})$ has the following embedding of algebras:
\begin{align*}
U_{q}^{MO,nilp}(\hat{\mf{g}}_{Q})\hookrightarrow\prod_{\mbf{w}}\text{End}(K(\mbf{w})^{\vee}).
\end{align*}

Now we denote $U_{q}^{MO,nilp}(\hat{\mf{g}}_{Q})_{loc}:=U_{q}^{MO,nilp}(\hat{\mf{g}}_{Q})\otimes\mbb{Q}(q,t_e)_{e\in E}$.
\begin{lem}\label{Isomorphism-of-MO-and-nilpotent-MO-after-localisation:lemma}
There is an isomorphism of $\mbb{Q}(q,t_e)_{e\in E}$-algebras $U_{q}^{MO,nilp}(\hat{\mf{g}}_{Q})_{loc}\cong U_{q}^{MO}(\hat{\mf{g}}_{Q})_{loc}$.
\end{lem}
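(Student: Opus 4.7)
The plan is to exploit the transpose identification of Lemma~\ref{transpose-of-stable-envelopes:label} to produce an explicit isomorphism between the two localized algebras. The perfect pairing of Theorem~\ref{perfect-pairing:label} gives, after localizing to $\mbb{F}=\mbb{Q}(q,t_e)_{e\in E}$, an identification $K(\mbf{w})^{\vee}\cong K(\mbf{w})^{*}$ of $\mbb{F}$-vector spaces. The transpose operation $a\mapsto a^{t}$ with respect to this pairing is then a $\mbb{Q}(q,t_e)_{e\in E}$-linear \emph{anti}-isomorphism
\[
(-)^{t}\colon \prod_{\mbf{w}}\End(K(\mbf{w}))_{\loc}\;\xrightarrow{\sim}\;\Bigl(\prod_{\mbf{w}}\End(K(\mbf{w})^{\vee})_{\loc}\Bigr)^{op}.
\]
Since every element of $U_{q}^{MO}(\hat{\mf{g}}_{Q})_{\loc}$ is a finite $\mbb{F}$-linear combination of matrix coefficients of the (invertible) geometric $R$-matrix, inversion $a\mapsto a^{-1}$, restricted to elements built out of $\mc{R}^{s}_{\mf{C}}$, is an additional anti-automorphism (as in the construction of the antipode $S_{s}$ in Section~\ref{subsection:maulik_okounkov_quantum_algebra_and_wall_subalgebras}). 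The plan is to define
\[
\Phi\colon U_{q}^{MO}(\hat{\mf{g}}_{Q})_{\loc}\;\longrightarrow\;U_{q}^{MO,nilp}(\hat{\mf{g}}_{Q})_{\loc},\qquad \Phi(a)\;=\;\bigl((S_{s}\,a)\bigr)^{t}=(a^{-1})^{t}\text{ on generators},
\]
so that $\Phi$ is the composition of two anti-isomorphisms and hence an $\mbb{F}$-algebra isomorphism.

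To see that $\Phi$ lands inside, and surjects onto, $U_{q}^{MO,nilp}(\hat{\mf{g}}_{Q})_{\loc}$, I would track what it does to generators. Concretely, by Lemma~\ref{transpose-of-stable-envelopes:label},
\[
\mc{R}^{\mc{L},s}_{\mf{C}_{1},\mf{C}_{2}}=\text{Stab}^{\mc{L},\vee}_{\mf{C}_{1},s}\circ(\text{Stab}^{\mc{L},\vee}_{\mf{C}_{2},s})^{-1}=\bigl(\text{Stab}_{\mf{C}_{1},s}\bigr)^{t}\circ\bigl((\text{Stab}_{\mf{C}_{2},s})^{-1}\bigr)^{t}=\bigl((\text{Stab}_{\mf{C}_{2},s})^{-1}\circ\text{Stab}_{\mf{C}_{1},s}\bigr)^{t},
\]
so $\mc{R}^{\mc{L},s}$ is precisely the transpose of $(\mc{R}^{s})^{-1}$ (with opposite chamber). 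Therefore a typical generator
\[
\oint_{a_{0}=0,\infty}\frac{da_{0}}{2\pi ia_{0}}\,\text{Tr}_{V_{0}}\!\bigl((1\otimes m(a_{0}))\,\mc{R}^{s}_{V,V_{0}}(\tfrac{a}{a_{0}})\bigr)
\]
of $U_{q}^{MO}(\hat{\mf{g}}_{Q})_{\loc}$ is sent by $\Phi$ to a matrix coefficient of $\mc{R}^{\mc{L},s}$, which is exactly a generator of $U_{q}^{MO,nilp}(\hat{\mf{g}}_{Q})_{\loc}$. Running the argument in reverse, using that $\mc{R}^{s}=((\mc{R}^{\mc{L},s})^{-1})^{t}$, produces an inverse map, proving bijectivity.

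The verification that $\Phi$ is a genuine algebra homomorphism (not just additive) is where care is needed: since transpose reverses multiplication and inversion reverses multiplication, their composition preserves multiplication. One checks this by taking two generators $a,b\in U_{q}^{MO}(\hat{\mf{g}}_{Q})_{\loc}$ built from $\mc{R}^{s}$ and noting
\[
\Phi(ab)=((ab)^{-1})^{t}=(b^{-1}a^{-1})^{t}=(a^{-1})^{t}(b^{-1})^{t}=\Phi(a)\Phi(b),
\]
where the second equality uses invertibility in the localized endomorphism ring. I would also verify $\mbb{F}$-linearity, compatibility with the trace/coefficient extraction procedure, and independence of the choice of chamber by invoking the triangle lemma~\eqref{triangle-lemma} on both stable and nilpotent stable envelopes.

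The main obstacle I anticipate is bookkeeping rather than conceptual: one must match the chamber conventions, the direction of the partial ordering, and the normalization of $q^{\Omega}$ factors between the two sides so that the equality $\mc{R}^{\mc{L},s}=((\mc{R}^{s})^{-1})^{t}$ holds exactly as stated, and not just up to some Cartan-type twist. Once these conventions are pinned down, the rest of the argument is formal and reduces to the fact that localized stable envelopes and nilpotent stable envelopes are dual under the pairing of Theorem~\ref{perfect-pairing:label}, so that any relation satisfied by one family of $R$-matrices is mirrored by the other.
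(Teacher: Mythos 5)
The overall strategy—produce an isomorphism by composing the transpose with a second anti-automorphism—is in the right spirit, and the paper's own terse proof also proceeds by observing that the generators of the two algebras become dual/transpose of one another under the localized identification $K(\mbf{w})^{\vee}\cong K(\mbf{w})$. But your specific choice of second anti-automorphism, ``inversion $a\mapsto a^{-1}$,'' is not valid, and this is a genuine gap. First, $a\mapsto a^{-1}$ is not defined on all of $U_q^{MO}(\hat{\mf{g}}_Q)_{loc}$: the generators $\text{Tr}_{V_0}\bigl((1\otimes m)\mc{R}^{s}_{V,V_0}\bigr)$ are typically supported in a single nonzero horizontal degree (they shift $\mbf{v}$ by a fixed amount on each $K(\mbf{w})$), hence are locally nilpotent and not invertible. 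Second, even where inversion is defined it is not $\mbb{F}$-linear, so it cannot be an algebra anti-automorphism; your verification $\Phi(ab)=((ab)^{-1})^{t}=(b^{-1}a^{-1})^{t}$ tacitly assumes $a,b$ are invertible, which fails for generic $a,b$ in the algebra.

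The appeal to the antipode does not rescue this: $S_s$ is \emph{not} given by $a\mapsto a^{-1}$. It is the $\mbb{F}$-linear anti-automorphism defined on generators by substituting $\mc{R}^{s}_{V,V_0}\rightsquigarrow\mc{R}^{s}_{V^*,V_0}=\bigl((\mc{R}^{s}_{V,V_0})^{-1}\bigr)^{*_1}$, i.e.\ inverting the $R$-matrix and transposing only in the first tensor factor, \emph{before} taking the trace over $V_0$; this produces something quite different from $(\text{Tr}_{V_0}((1\otimes m)\mc{R}^s))^{-1}$. To repair the argument you should keep $S_s$ as the anti-automorphism without rewriting it as inversion, and then verify directly from the defining formulas and Lemma~\ref{transpose-of-stable-envelopes:label} that $a\mapsto (S_s a)^t$ (with the transpose being the one furnished by the perfect pairing of Theorem~\ref{perfect-pairing:label}) carries the generating set $\text{Tr}_{V_0}((1\otimes m)\mc{R}^{s})$ of $U_q^{MO}(\hat{\mf{g}}_Q)_{loc}$ bijectively onto the generating set $\text{Tr}_{V_0^{\vee}}((1\otimes m^{\vee})\mc{R}^{\mc{L},s})$ of $U_q^{MO,nilp}(\hat{\mf{g}}_Q)_{loc}$. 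That requires exactly the chamber/$q^{\Omega}$ bookkeeping you flag at the end, so that $\mc{R}^{\mc{L},s}=\bigl((\mc{R}^s)^{-1}\bigr)^T$ holds exactly and not just up to a Cartan twist; with that done, $\Phi$ is the composition of two $\mbb{F}$-linear anti-isomorphisms and hence a genuine algebra isomorphism.
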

\begin{proof}
By definition the elements of $U_{q}^{MO,nilp}(\hat{\mf{g}}_{Q})$ can be written as:
\begin{align}\label{generators-of-nilpotent}
\oint_{a_0=0,\infty}\text{Tr}_{V}((1\otimes m(a_0)^{\vee})\mc{R}^{\mc{L},s}_{\mf{C}}(\frac{a}{a_0})),\qquad m(a_0)^{\vee}\in\text{End}(K(\mbf{w})^{\vee})(a_0).
\end{align}

By the natural pairing there is an isomorphism of graded $\mbb{Q}(q,t_e)_{e\in E}$-modules $K(\mbf{w})^{\vee}\cong K(\mbf{w})$, there is an isomorphism of graded $\mbb{Q}(q,t_e)_{e\in E}$-modules $\text{End}(K(\mbf{w})^{\vee})\cong\text{End}(K(\mbf{w}))$. Now since $\mc{R}^{\mc{L},s}_{\mf{C}}$ is the transpose of $\mc{R}^{s}_{\mf{C}}$. There one can see that 
\begin{align*}
\oint_{a_0=0,\infty}\text{Tr}_{V}((1\otimes m(a_0))\mc{R}^{s}_{\mf{C}}(\frac{a}{a_0})),\qquad m(a_0)\in\text{End}(K(\mbf{w}))(a_0)
\end{align*}
is dual to the element \ref{generators-of-nilpotent}.
\end{proof}

\subsection{Factorisation property for the nilpotent geometric $R$-matrix}
\subsubsection{Nilpotent wall $R$-matrices and nilpotent wall subalgebras}
Similar to the case of the stable envelope, the nilpotent stable envelope is locally constant for the slope point with the same wall set as in Proposition \ref{wall-set:proposition}.

Now similarly as the original situation, given $X:=\mc{L}_{Q}(\mbf{v},\mbf{w})$, and fix the slope $\mbf{m}$ and the cocharacter $\sigma:\mbb{C}^*\rightarrow A_{\mbf{w}}$ such that $\mbf{w}=\mbf{w}_1+a\mbf{w}_2$. Choose an ample line bundle $\mc{L}\in\text{Pic}(X)$ and a suitable small number $\epsilon$ such that $\mbf{m}$ and $\mbf{m}+\epsilon\mc{L}$ are separated by just one wall $w$. The \textbf{nilpotent wall $R$-matrices} are:
\begin{align*}
R_{w}^{\pm,\mc{L}}:=\text{Stab}^{\mc{L}}_{\pm\sigma,\mbf{m}+\epsilon\mc{L}}\circ(\text{Stab}^{\mc{L}}_{\pm\sigma,\mbf{m}})^{-1}\in\text{End}_{K_{T_{\mbf{w}}}(pt)}(K_{T_{\mbf{w}}}(\mc{L}_{Q}(\mbf{v},\mbf{w})^A)).
\end{align*}

Still the same, this is an integral $K$-theory class in $K_{T_{\mbf{w}}}(\mc{L}_{Q}(\mbf{v},\mbf{w})^A\times\mc{L}_{Q}(\mbf{v},\mbf{w})^A)$. Similarly $R_{w}^{+\mc{L}}$ is upper-triangular and $R_{w}^{-,\mc{L}}$ is lower-triangular. Also similar to the result in \ref{monomiality-for-wall-R-matrices}, the nilpotent wall $R$-matrices are monomial in the spectral parametre $a$. Moreover, by the transposition property, we have that:
\begin{align}\label{transpose-of-upper-and-lower-triangular}
R_{w}^{\pm,\mc{L}}=(R_{w}^{\pm})^T
\end{align}
which means that we switches the upper-triangularity and lower-triangularity respectively.

In this way one can also similarly define the nilpotent wall subalgebra $U_{q}^{MO,nilp,\mbb{Z}}(\mf{g}_{w})$ as the $\mbb{Z}[q^{\pm1},t_{e}^{\pm1}]$-algebra generated by the matrix coefficients of the wall $R$-matrix $q^{\Omega}R_{w}^{\mp,\mc{L}}$. Correspondingly, one can also define the positive half $U_{q}^{MO,nilp,+,\mbb{Z}}(\mf{g}_{w})$ and the negative half $U_{q}^{MO,nilp,-,\mbb{Z}}(\mf{g}_{w})$.

\subsubsection{Freeness of nilpotent wall subalgebra}

Imitating the proof in Theorem \ref{subsection:freeness_of_wall_subalgebras}, one can have the following similar result as Theorem \ref{freeness-of-wall-subalgebra:theorem}:

\begin{thm}\label{freeness-of-nilpotent-wall-subalgebra:theorem}
The wall nilpotent subalgebra $U_{q}^{MO,nilp,\mbb{Z}}(\mf{g}_{w})$ is a free $\mbb{Z}[q^{\pm1},t_{e}^{\pm1}]_{e\in E}$-module. Moreover, its positive half and the negative half are generated by the primitive elements as defined in \ref{primitive-element-definition}.
\end{thm}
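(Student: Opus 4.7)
The plan is to transplant the proof of Theorem \ref{freeness-of-wall-subalgebra:theorem} into the nilpotent setting, keeping all formal manipulations identical but replacing Nakajima quiver varieties $\mc{M}_{Q}(\mbf{v},\mbf{w})$ by nilpotent quiver varieties $\mc{L}_{Q}(\mbf{v},\mbf{w})$ at every step. The key inputs that remain valid are: (i) $K_{T_{\mbf{w}}}(\mc{L}_{Q}(\mbf{v},\mbf{w}))$ is a finitely generated free $K_{T_{\mbf{w}}}(pt)$-module (Theorem \ref{perfect-pairing:label}); (ii) the nilpotent wall $R$-matrices $R_{w}^{\pm,\mc{L}}$ are integral $K$-theory classes, triangular (with upper/lower swapped by the transposition identity \ref{transpose-of-upper-and-lower-triangular}) and monomial in the spectral parameter, and $q^{\Omega} R_{w}^{\pm,\mc{L}}$ satisfies the Yang-Baxter equation by transposing \ref{Yang-Baxter-equation-for-wall-R-matrices}; (iii) each graded piece $U_{q}^{MO,nilp,\pm,\mbb{Z}}(\mf{g}_{w})_{\mbf{v}}$ is finitely generated, since the matrix coefficients of $R_{w}^{\pm,\mc{L}}$ land in a finitely generated free module.

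First I would establish the nilpotent analogue of Proposition \ref{primitivity-of-wall-subalgebra:proposition}, namely that $U_{q}^{MO,nilp,\pm,\mbb{Z}}(\mf{g}_{w})$ is generated by its primitive elements in the sense of \ref{primitive-element-definition}. This is formal: the argument in the proof of Theorem \ref{primitivity:Theorem} only uses the quasi-triangularity identity $(\Delta^{MO,nilp}_{\mbf{m}}\otimes\mathrm{id})R = R_{13}R_{23}$ together with the triangularity of $R$, both of which hold after transposition. The equivalence ``indecomposable $\Leftrightarrow$ primitive'' then transfers verbatim.

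Next I would prove the nilpotent analogues of Lemmas \ref{first-evaluation-injective-lemma:lemma} and \ref{second-evaluation-injective-lemma:lemma}. For a primitive $E \in U_{q}^{MO,nilp,+,\mbb{Z}}(\mf{g}_{w})_{\mbf{v}}$, the transposed commutation relation
\begin{equation*}
[1 \otimes E,\; R_{w,-\mbf{v}}^{-,\mc{L}}] \;=\; E \otimes (h_{\mbf{v}} - h_{-\mbf{v}})
\end{equation*}
forces $F \cdot E \cdot v_{\emptyset} = (h_{-\mbf{v}} - h_{\mbf{v}}) v_{\emptyset} \neq 0$ for a suitable $F$ in the opposite primitive part, proving injectivity on primitives. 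For a general element, iterating the coproduct $(\Delta^{MO,nilp}_{\mbf{m}})^{q}$ for $q$ large enough produces a term in which $E$ appears in a single tensor slot with all others occupied by lower-horizontal-degree pieces, and I would push this via the nilpotent stable envelope $\mathrm{Stab}^{\mc{L}}_{\mbf{m},\mf{C}}$ — which is injective after localization — onto a suitably enlarged framing $\mbf{w}' = \mbf{w}_{1} + \cdots + \mbf{w}_{q}$ to conclude injectivity of the evaluation map $U_{q}^{MO,nilp,+,\mbb{Z}}(\mf{g}_{w})_{\mbf{v}} \otimes K_{T_{\mbf{w}'}}(pt) \hookrightarrow K_{T_{\mbf{w}'}}(\mc{L}_{Q}(\mbf{v},\mbf{w}'))$.

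Finally I would construct the projector $P_{w,\mbf{v}}^{\mc{L}} := q^{\Omega} R_{w,\mbf{w}}^{-,\mc{L}}$ restricted to the component
\begin{equation*}
K_{T_{\mbf{w}}}(\mc{L}_{Q}(\mbf{0},\mbf{w})) \otimes K_{T_{\mbf{w}}}(\mc{L}_{Q}(\mbf{v},\mbf{w})) \longrightarrow K_{T_{\mbf{w}}}(\mc{L}_{Q}(\mbf{v},\mbf{w})) \otimes K_{T_{\mbf{w}}}(\mc{L}_{Q}(\mbf{0},\mbf{w})),
\end{equation*}
note that $\mc{L}_{Q}(\mbf{0},\mbf{w})$ is a point, and use the transposed Yang-Baxter relation to show $(P_{w,\mbf{v}}^{\mc{L}})^{2} = q^{-\Omega} P_{w,\mbf{v}}^{\mc{L}}$, in parallel to Lemma \ref{projection-operator:lemma}. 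The injectivity of the evaluation map then identifies $U_{q}^{MO,nilp,+,\mbb{Z}}(\mf{g}_{w})_{\mbf{v}}$ with the image of $P_{w,\mbf{v}}^{\mc{L}}$ inside the free $\mbb{Z}[q^{\pm 1}, t_{e}^{\pm 1}]_{e \in E}$-module $\bigoplus_{\mbf{w}} K_{T_{\mbf{w}}}(\mc{L}_{Q}(\mbf{v},\mbf{w}))$, exhibiting each graded piece as a finitely generated projective — hence free — module. Combined with the triangular decomposition, this yields freeness of $U_{q}^{MO,nilp,\mbb{Z}}(\mf{g}_{w})$. The main obstacle I foresee is purely bookkeeping: carefully matching the upper/lower triangularity conventions after transposition (so that the projector $P^{\mc{L}}$ really is a projector of the expected sign) and verifying that all auxiliary structures — the coproduct $\Delta^{MO,nilp}_{\mbf{m}}$, the action on the nilpotent vacuum, and the Yang-Baxter relation for $q^{\Omega}R_{w}^{\pm,\mc{L}}$ — descend cleanly from the Nakajima side via Lemma \ref{transpose-of-stable-envelopes:label}; once these are fixed, no new geometric input beyond what is already in Section \ref{subsection:nilpotent_k_theoretic_stable_envelopes} is required.
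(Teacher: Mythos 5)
Your proposal is correct and follows exactly the route the paper intends: the proof of Theorem \ref{freeness-of-nilpotent-wall-subalgebra:theorem} in the paper is a one-line reference, ``imitating the proof in Theorem \ref{freeness-of-wall-subalgebra:theorem},'' and your write-up supplies precisely the imitation — replacing $\mc{M}_Q$ by $\mc{L}_Q$ throughout, using the freeness from Theorem \ref{perfect-pairing:label}, the transposed triangularity and Yang-Baxter relations from \ref{transpose-of-upper-and-lower-triangular} and \ref{Yang-Baxter-equation-for-wall-R-matrices}, the nilpotent analogues of Lemmas \ref{first-evaluation-injective-lemma:lemma}, \ref{second-evaluation-injective-lemma:lemma} and \ref{projection-operator:lemma}, and the primitivity transfer from Theorem \ref{primitivity:Theorem} and Proposition \ref{primitivity-of-wall-subalgebra:proposition}. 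Your flagged concern about matching upper/lower-triangularity conventions across the transpose is well placed (indeed the paper's own statements around \ref{transpose-of-upper-and-lower-triangular} are not internally consistent on which of $R_w^{\pm,\mc{L}}$ is upper- versus lower-triangular), but it is a bookkeeping issue rather than a gap, and your framing of the projector $P_{w,\mbf{v}}^{\mc{L}}$ handles it correctly.
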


Over here we denote $U_{q}^{MO,nilp,prim,\pm,\mbb{Z}}(\mf{g}_{w})$ as the $\mbb{Z}[q^{\pm1},t_{e}^{\pm1}]$-submodule of $U_{q}^{MO,nilp,\pm,\mbb{Z}}(\mf{g}_{w})$ consisting of the primitive elements of $U_{q}^{MO,nilp,prim,\pm,\mbb{Z}}(\mf{g}_{w})$, which is also a free $\mbb{Z}[q^{\pm1},t_{e}^{\pm1}]$-module.

\subsubsection{Factorisation property}
Now we fix the nilpotent stable envelope $\text{Stab}_{\sigma,\mbf{m}}^{\mc{L}}$ and $\text{Stab}_{\sigma,\infty}^{\mc{L}}$, similar to the factorisation in \ref{subsubsection:factorisation_of_geometric_r_matrices_and_integral_maulik_okounkov_quantum_affine_algebras}, we have the factorisation of the geometric $R$-matrices written as:
\begin{align}\label{nilpotent-factorisation}
\mc{R}^{s,\mc{L}}(a)=\prod^{\leftarrow}_{i<0}R_{w_i}^{-,\mc{L}}\mc{R}^{\infty,\mc{L}}\prod^{\leftarrow}_{i\geq0}R_{w_i}^{+,\mc{L}}
\end{align}
and we should understand this factorisation as the formal power series expansion of the nilpotent $R$-matrix near $a=0,\infty$.

\subsection{Isomorphism of the integral form}
Similar to the case of the usual MO quantum loop group, the above factorisation property ensures that we can define the integral version of the nilpotent MO quantum loop group.
\begin{defn}
The \textbf{integral nilpotent MO quantum loop group} $U_{q}^{MO,nilp,\mbb{Z}}$ is a $\mbb{Z}[q^{\pm1},t_{e}^{\pm1}]_{e\in E}$-subalgebra of $\prod_{\mbf{w}}\text{End}_{K_{T_{\mbf{w}}}(pt)}(K_{T_{\mbf{w}}}(\mc{L}_{Q}(\mbf{w})))$ generated by matrix coefficients of the nilpotent geometric $R$-matrix with the factorisation \ref{nilpotent-factorisation}.
\end{defn}

Now we prove the following the isomorphism for the integral form:
\begin{thm}\label{transpose-map-anti-isomorphism:label}
The transpose map $(-)^{T}:\prod_{\mbf{w}}K_{T_{\mbf{w}}}(\mc{M}_{Q}(\mbf{w}))\rightarrow\prod_{\mbf{w}}K_{T_{\mbf{w}}}(\mc{L}_{Q}(\mbf{w}))$ induces the anti-isomorphism of $\mbb{Z}[q^{\pm1},t_{e}^{\pm1}]$-algebras $(-)^T:U_{q}^{MO,\mbb{Z}}(\hat{\mf{g}}_{Q})\cong U_{q}^{MO,nilp,\mbb{Z}}(\hat{\mf{g}}_{Q})$.
\end{thm}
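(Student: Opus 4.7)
The plan is to build the anti-isomorphism by transporting matrix coefficients of the geometric $R$-matrix across the perfect pairing of Theorem \ref{perfect-pairing:label}, and to show that this sends generators of $U_q^{MO,\mbb{Z}}(\hat{\mf{g}}_Q)$ to generators of $U_q^{MO,nilp,\mbb{Z}}(\hat{\mf{g}}_Q)$ wall-by-wall. First, I would make the transpose functor precise: the perfect pairing of Theorem \ref{perfect-pairing:label} induces, for every $\mbf{w}$, an isomorphism of $K_{T_{\mbf{w}}}(pt)$-modules $K_{T_{\mbf{w}}}(\mc{L}_Q(\mbf{v},\mbf{w}))\cong K_{T_{\mbf{w}}}(\mc{M}_Q(\mbf{v},\mbf{w}))^{\vee}$, and composition with the pairing defines
\[ (-)^{T}\colon \mathrm{End}_{K_{T_{\mbf{w}}}(pt)}\!\bigl(K_{T_{\mbf{w}}}(\mc{M}_Q(\mbf{w}))\bigr)\;\xrightarrow{\;\sim\;}\;\mathrm{End}_{K_{T_{\mbf{w}}}(pt)}\!\bigl(K_{T_{\mbf{w}}}(\mc{L}_Q(\mbf{w}))\bigr)^{op}, \]
which is an anti-isomorphism of $\mbb{Z}[q^{\pm1},t_e^{\pm1}]$-algebras by general nonsense $(ab)^T=b^Ta^T$. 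So the only content is to verify that this anti-isomorphism restricts to an anti-isomorphism of the two subalgebras.

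Next I would check that $(-)^T$ sends the generators of $U_q^{MO,\mbb{Z}}(\hat{\mf{g}}_Q)$ into $U_q^{MO,nilp,\mbb{Z}}(\hat{\mf{g}}_Q)$ and vice versa. By the factorisation \ref{factorisation-geometry} together with the analogous nilpotent factorisation \ref{nilpotent-factorisation}, both algebras are generated (as $\mbb{Z}[q^{\pm1},t_e^{\pm1}]$-algebras) by matrix coefficients of the wall $R$-matrices $R_w^{\pm}$, respectively $R_w^{\pm,\mc{L}}$, together with the infinite-slope piece $\mc{R}^{\infty}$, respectively $\mc{R}^{\infty,\mc{L}}$. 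Relation \ref{transpose-of-upper-and-lower-triangular} gives $R_w^{\pm,\mc{L}}=(R_w^{\pm})^T$, and an entirely parallel computation using Lemma \ref{transpose-of-stable-envelopes:label} gives $\mc{R}^{\infty,\mc{L}}=(\mc{R}^{\infty})^T$, since $\mc{R}^{\infty}$ is obtained from $\mathrm{Stab}_{\pm\sigma,\infty}$ in the same way that $\mc{R}^{\infty,\mc{L}}$ is obtained from the nilpotent infinite-slope stable envelopes. Combining and applying the identity
\[ \bigl(\textstyle\prod_{i<0}^{\leftarrow}R_{w_i}^{-}\,\mc{R}^{\infty}\prod_{i\ge 0}^{\leftarrow}R_{w_i}^{+}\bigr)^{T}\;=\;\prod_{i\ge 0}^{\rightarrow}(R_{w_i}^{+})^{T}\,(\mc{R}^{\infty})^{T}\prod_{i<0}^{\rightarrow}(R_{w_i}^{-})^{T} \]
(which reverses both the product order and the roles of upper/lower triangularity, but this is absorbed by the opposite product ordering in \ref{nilpotent-factorisation}) shows that $(\mc{R}^s)^T$ equals the analogous factorised expression built from the nilpotent pieces. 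Thus the transpose carries the generating matrix coefficients to the generating matrix coefficients, so $(-)^T$ sends $U_q^{MO,\mbb{Z}}(\hat{\mf{g}}_Q)$ anti-homomorphically into $U_q^{MO,nilp,\mbb{Z}}(\hat{\mf{g}}_Q)$, and the symmetric argument gives the reverse inclusion, yielding surjectivity.

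Injectivity is automatic: the ambient transpose on endomorphisms is a bijection, so its restriction to any subalgebra is injective. Putting these pieces together produces the asserted anti-isomorphism of $\mbb{Z}[q^{\pm1},t_e^{\pm1}]$-algebras. The main obstacle, and the only step requiring genuine care, is tracking signs, polarisation twists, and the reversal of factorisation order when transposing the factorised form \ref{factorisation-geometry}; one must check that the convention chosen for the nilpotent factorisation \ref{nilpotent-factorisation} is exactly the one produced by transposing \ref{factorisation-geometry}, i.e.\ that the Cartan/infinite-slope factor $\mc{R}^{\infty}$ transposes to $\mc{R}^{\infty,\mc{L}}$ and that the swap of the upper-triangular piece $R_w^{+}$ with the lower-triangular $R_w^{-,\mc{L}}=(R_w^{-})^T$ under \ref{transpose-of-upper-and-lower-triangular} matches the convention in \ref{nilpotent-factorisation}. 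Once this bookkeeping is settled, the theorem follows.
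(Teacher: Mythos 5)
Your overall strategy is sound and lands the result, but it takes a more granular route than the paper. The paper's own proof is a one-liner built directly on Lemma \ref{transpose-of-stable-envelopes:label}: express any element of $U_q^{MO,\mbb{Z}}(\hat{\mf{g}}_Q)$ as a trace $\oint\frac{da_0}{2\pi ia_0}\mathrm{Tr}_{V_0}\bigl((1\otimes m(a_0))\mc{R}^s_{V,V_0}\bigr)$, transpose the whole expression at once to obtain $\oint\mathrm{Tr}_{V_0^*}\bigl((1\otimes m^T(a_0))\mc{R}^{s,\mc{L}}_{V_0^*,V^*}\bigr)$, and observe that this is the corresponding generator on the nilpotent side; surjectivity is then deduced from the identity $\mc{R}^s_{V,V_0}=\mc{R}^s_{V_0^*,V^*}$, and injectivity from the perfect pairing \ref{perfect-pairing:label}. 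Your version instead decomposes the generators wall-by-wall via \ref{factorisation-geometry}, transposes each factor using \ref{transpose-of-upper-and-lower-triangular} and the observation $(\mc{R}^{\infty})^T=\mc{R}^{\infty,\mc{L}}$, and reassembles. This buys you something: since the integral form $U_q^{MO,\mbb{Z}}(\hat{\mf{g}}_Q)$ is \emph{defined} through the factorised form, your argument works natively with the generators that define the integral subalgebra and avoids having to appeal to the crossing-type identity $\mc{R}^s_{V,V_0}=\mc{R}^s_{V_0^*,V^*}$ that the paper invokes without proof. Both routes ultimately reduce to Lemma \ref{transpose-of-stable-envelopes:label}.

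One small inaccuracy worth flagging: you assert that the reversal of product order under transposition "is absorbed by the opposite product ordering in \ref{nilpotent-factorisation}." In fact \ref{nilpotent-factorisation} uses the \emph{same} left-ordered product convention $\prod^{\leftarrow}$ as \ref{factorisation-geometry}, so the transposed expression $\prod_{i\ge 0}^{\rightarrow}R_{w_i}^{+,\mc{L}}\,\mc{R}^{\infty,\mc{L}}\,\prod_{i<0}^{\rightarrow}R_{w_i}^{-,\mc{L}}$ does not literally reproduce the right-hand side of \ref{nilpotent-factorisation} (indeed the positions of the upper- and lower-triangular blocks are swapped, and the definitions of $R_w^{\pm,\mc{L}}$ involve an extra inversion relative to $R_w^{\pm}$). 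This does not harm your proof, because for the generation argument you only need that each transposed factor $(R_w^{\pm})^T=R_w^{\pm,\mc{L}}$ and $(\mc{R}^{\infty})^T=\mc{R}^{\infty,\mc{L}}$ lies among (and jointly generates) the nilpotent side; the product $(\mc{R}^s)^T$ need not literally coincide with the factorised $\mc{R}^{s,\mc{L}}$ in the stated form. But as written, the parenthetical would not survive a careful check of conventions, and you should either drop it or replace it with the weaker (and sufficient) statement that $(-)^T$ carries the generating pieces to generating pieces.
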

\begin{proof}
It is known that the operator in $U_{q}^{MO,\mbb{Z}}(\hat{\mf{g}}_{Q})$ can be written as:
\begin{align*}
\oint_{a_0=0,\infty}\frac{da_0}{2\pi ia_0}\text{Tr}_{V_0}((1\otimes m(a_0))\mc{R}^s_{V,V_0}(\frac{a}{a_0})).
\end{align*}

By Lemma \ref{transpose-of-stable-envelopes:label} and the transpose map, we can write down the above equation in the following way:
\begin{align*}
\oint_{a_0=0,\infty}\text{Tr}_{V_0^*}((1\otimes m^T(a_0))\mc{R}^{s,\mc{L}}_{V_0^*,V^*}(\frac{a}{a_0}))
\end{align*}
which is an element in $U_{q}^{MO,nilp,\mbb{Z}}(\hat{\mf{g}}_{Q})$, and it is easy to check that this is an $\mbb{Z}[q^{\pm1},t_{e}^{\pm1}]$-algebra anti-homomorphism:
\begin{equation*}
\begin{aligned}
&(-)^T:U_{q}^{MO,\mbb{Z}}(\hat{\mf{g}}_{Q})\rightarrow U_{q}^{MO,nilp,\mbb{Z}}(\hat{\mf{g}}_{Q})\\
&\oint_{a_0=0,\infty}\frac{da_0}{2\pi ia_0}\text{Tr}_{V_0}((1\otimes m(a_0))\mc{R}^s_{V,V_0}(\frac{a}{a_0}))\mapsto\oint_{a_0=0,\infty}\text{Tr}_{V_0^*}((1\otimes m^T(a_0))\mc{R}^{s,\mc{L}}_{V_0^*,V^*}(\frac{a}{a_0})).
\end{aligned}
\end{equation*}

The surjectivity comes from the fact that $\mc{R}^{s}_{V,V_0}=\mc{R}^s_{V_0^*,V^*}$, and the injectivity comes from the perfect pairing \ref{perfect-pairing:label}.
\end{proof}

More precisely, one can also have the following anti-isomorphism of the wall subalgebra and the nilpotent wall subalgebra using the similar proof:
\begin{prop}\label{transpose-map-anti-isomorphism-for-wall-subalgebra:label}
The transpose map $(-)^{T}:\prod_{\mbf{w}}K_{T_{\mbf{w}}}(\mc{M}_{Q}(\mbf{w}))\rightarrow\prod_{\mbf{w}}K_{T_{\mbf{w}}}(\mc{L}_{Q}(\mbf{w}))$ induces the anti-isomorphism of $\mbb{Z}[q^{\pm1},t_{e}^{\pm1}]$-algebras $(-)^T:U_{q}^{MO,\mbb{Z}}(\mf{g}_{w})\cong U_{q}^{MO,nilp,\mbb{Z}}(\mf{g}_{w})$.
\end{prop}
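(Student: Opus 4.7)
The plan is to follow the same template as Theorem~\ref{transpose-map-anti-isomorphism:label}, but restricted to the generators coming from wall $R$-matrices rather than the full geometric $R$-matrix. The key input is the identity
\[
R_{w}^{\pm,\mathcal{L}} = (R_{w}^{\pm})^{T}
\]
from \eqref{transpose-of-upper-and-lower-triangular}, together with the perfect pairing \ref{perfect-pairing:label} that identifies $K_{T_{\mathbf{w}}}(\mathcal{L}_{Q}(\mathbf{v},\mathbf{w}))$ with the $K_{T_{\mathbf{w}}}(pt)$-dual of $K_{T_{\mathbf{w}}}(\mathcal{M}_{Q}(\mathbf{v},\mathbf{w}))$.

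First, I would recall that $U_{q}^{MO,\mathbb{Z}}(\mathfrak{g}_w)$ is generated as a $\mathbb{Z}[q^{\pm 1},t_{e}^{\pm1}]_{e\in E}$-algebra by the matrix coefficients
\[
\oint_{a_0=0,\infty}\frac{da_0}{2\pi i a_0}\,a_0^k\,\mathrm{Tr}_{V_0}\!\bigl((1\otimes m(a_0))\,(q^{\Omega}R_{w}^{\pm})_{V,V_0}(a/a_0)\bigr),
\]
together with the Cartan elements $q^{\Omega}$. Using \eqref{transpose-of-upper-and-lower-triangular} and the fact that trace is invariant under transpose on the auxiliary factor $V_0$, the transpose on the $V$-factor sends such a generator to
\[
\oint_{a_0=0,\infty}\frac{da_0}{2\pi i a_0}\,a_0^k\,\mathrm{Tr}_{V_0^{*}}\!\bigl((1\otimes m(a_0)^{T})\,(q^{\Omega}R_{w}^{\mp,\mathcal{L}})_{V^{*},V_0^{*}}(a/a_0)\bigr),
\]
which is precisely a generator of $U_{q}^{MO,nilp,\mathbb{Z}}(\mathfrak{g}_w)$ (note that upper and lower triangularity are swapped, which is consistent with the map being an anti-homomorphism). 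Thus the assignment is well-defined as a map of $\mathbb{Z}[q^{\pm1},t_{e}^{\pm1}]_{e\in E}$-modules from $U_{q}^{MO,\mathbb{Z}}(\mathfrak{g}_w)$ to $U_{q}^{MO,nilp,\mathbb{Z}}(\mathfrak{g}_w)$.

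Second, I would verify the anti-homomorphism property: for $A,B \in U_{q}^{MO,\mathbb{Z}}(\mathfrak{g}_w)$ viewed as endomorphisms of $K_{T_{\mathbf{w}}}(\mathcal{M}_{Q}(\mathbf{w}))$, their transposes satisfy $(AB)^{T} = B^{T} A^{T}$ as endomorphisms of $K_{T_{\mathbf{w}}}(\mathcal{L}_{Q}(\mathbf{w}))$ under the perfect pairing. This is a formal consequence of the pairing and does not require further geometric input.

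Third, to establish bijectivity, I would argue as in the proof of Theorem~\ref{transpose-map-anti-isomorphism:label}: surjectivity follows from the fact that every generator of $U_{q}^{MO,nilp,\mathbb{Z}}(\mathfrak{g}_w)$ as described above arises from the transpose of a generator of $U_{q}^{MO,\mathbb{Z}}(\mathfrak{g}_w)$, using that matrix coefficients of $R_{w}^{\pm,\mathcal{L}}$ and of $R_{w}^{\pm}$ correspond under transposition of the $V$-factor. Injectivity follows from the perfect pairing \ref{perfect-pairing:label}, which guarantees that an endomorphism of $\bigoplus_{\mathbf{w}} K_{T_{\mathbf{w}}}(\mathcal{M}_{Q}(\mathbf{w}))$ is zero if and only if its transpose on $\bigoplus_{\mathbf{w}} K_{T_{\mathbf{w}}}(\mathcal{L}_{Q}(\mathbf{w}))$ is zero. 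The main conceptual point, and the only non-routine step, is the bookkeeping that the transpose swaps $R_{w}^{+}$ with $R_{w}^{-,\mathcal{L}}$ (not with $R_{w}^{+,\mathcal{L}}$), so that the positive half of $U_{q}^{MO,\mathbb{Z}}(\mathfrak{g}_w)$ maps to the negative half of $U_{q}^{MO,nilp,\mathbb{Z}}(\mathfrak{g}_w)$ and vice versa, consistent with the anti-homomorphism structure. The Cartan part is handled by noting that $q^{\Omega}$ is invariant under transpose.
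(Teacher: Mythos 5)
Your proposal follows exactly the route the paper intends: the paper states this proposition immediately after Theorem~\ref{transpose-map-anti-isomorphism:label} with only the remark ``using the similar proof,'' so the argument is precisely the restriction of that theorem's proof to wall $R$-matrices, which is what you do.

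One bookkeeping slip to flag: you assert that ``the transpose swaps $R_{w}^{+}$ with $R_{w}^{-,\mathcal{L}}$ (not with $R_{w}^{+,\mathcal{L}}$),'' and accordingly write $R_w^{\mp,\mathcal{L}}$ in your displayed transposed generator. This is not what the cited identity~\eqref{transpose-of-upper-and-lower-triangular} says: it reads $R_{w}^{\pm,\mathcal{L}} = (R_{w}^{\pm})^{T}$, with \emph{matching} signs. The correct account of the sign interchange is that $(R_w^+)^T = R_w^{+,\mathcal{L}}$, but $U_q^{MO,nilp,+,\mathbb{Z}}(\mathfrak{g}_w)$ is \emph{defined} using $q^\Omega R_w^{-,\mathcal{L}}$ (the $\mp$ convention in Section~\ref{subsection:nilpotent_maulik_okounkov_quantum_loop_groups}), so the transpose of a generator of $U_q^{MO,+,\mathbb{Z}}(\mathfrak{g}_w)$ lands in $U_q^{MO,nilp,-,\mathbb{Z}}(\mathfrak{g}_w)$ because of the nilpotent labeling convention, not because the transpose itself flips the superscript on the wall $R$-matrix. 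Your final conclusion (positive $\leftrightarrow$ negative, consistent with an anti-homomorphism) is right, but the attribution of the sign flip is off by one convention, and the displayed formula with $R_w^{\mp,\mathcal{L}}$ is internally inconsistent with the equation you cite.
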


\subsection{Hopf algebra structure}
One can define the Hopf algebra structure on $U_{q}^{MO,nilp}(\hat{\mf{g}}_{Q})$ similarly as the definition in Section \ref{subsection:maulik_okounkov_quantum_algebra_and_wall_subalgebras}. For example, the coproduct $\Delta_{s}^{MO}$ is defined as:
\begin{align}\label{nilpotent-geometric-coproduct-definition}
\Delta_{s}^{MO,\mc{L}}(F):=\text{Stab}^{\mc{L},\vee}_{\mf{C},s}(F)(\text{Stab}^{\mc{L},\vee}_{\mf{C},s})^{-1}\in U_{q}^{MO,nilp}(\hat{\mf{g}}_{Q})\otimes U_{q}^{MO,nilp}(\hat{\mf{g}}_{Q}),\qquad F\in U_{q}^{MO,nilp}(\hat{\mf{g}}_{Q}).
\end{align}

Note that since the nilpotent stable envelope is the transpose of the original stable envelope. This means that after the transposition, we have that:
\begin{align}\label{geometric-coproduct-transpose}
\Delta_s^{MO,\mc{L}}=(\Delta_{s}^{MO})^T.
\end{align}

The following proposition is easy to prove:
\begin{prop}\label{transpose-Hopf-algebra:proposition}
The anti-isomorphism between nilpotent MO quantum loop group $U_{q}^{MO,nilp}(\hat{\mf{g}}_{Q})_{loc}$ and MO quantum loop groups $U_{q}^{MO}(\hat{\mf{g}}_{Q})_{loc}$ intertwines the Hopf algebra sturcture on the respective slope point $s\in\mbb{R}^I$.
\end{prop}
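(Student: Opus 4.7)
The plan is to verify, one structure map at a time, that the transpose anti-isomorphism $(-)^T$ of Theorem \ref{transpose-map-anti-isomorphism:label} carries the coproduct $\Delta^{MO}_s$, the counit $\epsilon$, and the antipode $S^{MO}_s$ of $U_q^{MO}(\hat{\mf g}_Q)_{loc}$ to their nilpotent counterparts. Since the transpose is an algebra anti-isomorphism and the transpose of a tensor product of operators is the (unflipped) tensor product of transposes in the identification $\text{End}(K(\mbf w_1)^\vee \otimes K(\mbf w_2)^\vee) \cong \text{End}(K(\mbf w_1) \otimes K(\mbf w_2))^T$, the target identity for the coproduct is $\Delta^{MO,\mc L}_s \circ (-)^T = ((-)^T \otimes (-)^T) \circ \Delta^{MO}_s$.

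First I would treat the coproduct, where the key input has already been recorded. By the definitions \ref{coproduct-geometric} and \ref{nilpotent-geometric-coproduct-definition}, both coproducts are defined by conjugation by the respective stable envelope. Lemma \ref{transpose-of-stable-envelopes:label} says $\text{Stab}^{\mc L,\vee}_{\mf C,s} = (\text{Stab}_{\mf C,s})^T$ under the perfect pairing \ref{perfect-pairing:label}, so for $F \in U_q^{MO}(\hat{\mf g}_Q)_{loc}$
\begin{equation*}
\Delta^{MO,\mc L}_s(F^T) = \text{Stab}^{\mc L,\vee}_{\mf C,s}\, F^T\, (\text{Stab}^{\mc L,\vee}_{\mf C,s})^{-1} = \bigl(\text{Stab}^{-1}_{\mf C,s}\, F\, \text{Stab}_{\mf C,s}\bigr)^T = (\Delta^{MO}_s(F))^T,
\end{equation*}
which is precisely the identity \ref{geometric-coproduct-transpose}. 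For the counit, $\epsilon$ on each side is (up to normalisation) the projection onto the trivial module spanned by $v_\varnothing$; the desired compatibility $\epsilon \circ (-)^T = \epsilon$ reduces to the identity $(Fv_\varnothing, v_\varnothing) = (v_\varnothing, F^T v_\varnothing)$, which is the defining adjunction of the perfect pairing \ref{perfect-pairing:label}.

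For the antipode, recall that $S^{MO}_s$ was defined in Section \ref{subsection:maulik_okounkov_quantum_algebra_and_wall_subalgebras} by replacing the auxiliary module $V$ with its dual $V^*$ in the RTT presentation, i.e.\ by sending the matrix coefficient of $\mc R^s_{V,V_0}$ to that of $\mc R^s_{V^*,V_0}$. Under $(-)^T$, an operator generated from $\mc R^s_{V,V_0}$ transposes to one generated from $\mc R^{s,\mc L}_{V^\vee, V_0^\vee}$, using that the nilpotent geometric $R$-matrix is the transpose inverse of the ordinary one. Matching this with the nilpotent antipode $S^{MO,\mc L}_s$ (defined analogously by dualising modules on the nilpotent side) gives the identity $S^{MO,\mc L}_s \circ (-)^T = (-)^T \circ S^{MO}_s$, once one checks that the two dualisations — the involution $V \mapsto V^*$ used for the antipode and the passage $K(\mbf w) \leftrightarrow K(\mbf w)^\vee$ induced by the pairing \ref{perfect-pairing:label} — are compatible in the appropriate sense.

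The main bookkeeping obstacle is precisely this last compatibility: one has to keep straight three different dualities — the algebraic duality $V \mapsto V^*$ on modules used to construct the antipode, the geometric duality $K(\mbf w) \cong K(\mbf w)^\vee$ coming from the perfect pairing on quiver varieties, and the operator transpose — and verify they interact coherently with the factorisation \ref{nilpotent-factorisation} of the nilpotent $R$-matrix. Once this is settled, all three identities follow formally from Lemma \ref{transpose-of-stable-envelopes:label} and the transposition identity \ref{geometric-coproduct-transpose}, and the same verification restricted to wall subalgebras, together with Proposition \ref{transpose-map-anti-isomorphism-for-wall-subalgebra:label}, upgrades the statement to each quasi-triangular wall subalgebra.
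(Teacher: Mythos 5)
Your proof takes essentially the same route as the paper's (which is simply a one-line appeal to Theorem \ref{transpose-map-anti-isomorphism:label} and Lemma \ref{transpose-of-stable-envelopes:label}), just with the coproduct conjugation computation, the counit adjunction, and the antipode discussion spelled out explicitly. The computation $\Delta^{MO,\mc L}_s(F^T) = (\text{Stab}^{-1}_{\mf C,s}F\,\text{Stab}_{\mf C,s})^T = (\Delta^{MO}_s(F))^T$ is exactly the identity \ref{geometric-coproduct-transpose} the paper relies on, so the proposal is correct and consistent with the paper's argument.
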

\begin{proof}
This follows from Theorem \ref{transpose-map-anti-isomorphism:label} and Lemma \ref{transpose-of-stable-envelopes:label}.
\end{proof}

\section{\textbf{Injective map as the localised form}}\label{section:_textbf_isomorphism_as_the_hopf_algebras}

For simplicity of the notation, we will use the following notation to stand for the following objects:
\begin{equation}\label{notation-for-fixed-point-components}
\begin{aligned}
&F_{\alpha}=\mc{M}_{Q}(\mbf{v}_1,\mbf{w}_1)\times \mc{M}_{Q}(\mbf{v}_2,\mbf{w}_2),\qquad F_{\beta}=\mc{M}_{Q}(\mbf{v}_1+\mbf{k},\mbf{w}_1)\times \mc{M}_{Q}(\mbf{v}_2+\mbf{n}-\mbf{k},\mbf{w}_2)\\
&F_{\alpha'}=\mc{M}_{Q}(\mbf{v}_1-\bm{\delta},\mbf{w}_1)\times \mc{M}_{Q}(\mbf{v}_2+\bm{\delta},\mbf{w}_2),\qquad F_{\beta'}=\mc{M}_{Q}(\mbf{v}_1-\bm{\delta}+\mbf{l},\mbf{w}_1)\times \mc{M}_{Q}(\mbf{v}_2+\bm{\delta}+\mbf{n}-\mbf{l},\mbf{w}_2).
\end{aligned}
\end{equation}

Also in many cases of the section, we will use the torus action $\mbf{w}=\mbf{w}_1+a\mbf{w}_2$. But the stable envelope $\text{Stab}_{\mbf{m}}$ we are using here are for the torus action $\mbf{w}=a'\mbf{w}_1+\mbf{w}_2$, and the reader can think of $\mbf{a}$ as $(\mbf{a}')^{-1}$. The reason for such notations is for the sake of the convenience of counting the vertical degree on the slope subalgebras $\mc{B}_{\mbf{m}}$.

\subsection{Degree bounding}\label{sub:degree_bounding}

Recall that the construction of the geometric action is of $\mc{A}^+_{Q}$ on $K(\mbf{w})$ is given by the following: Given $F\in\mc{A}_{\mbf{n}}$ and $G\in\mc{A}_{-\mbf{n}}$, we have that:
\begin{align}\label{positive-action}
F\cdot p(\mbf{X}_{\mbf{v}})=\frac{1}{\mbf{n}!}\int^{+}\frac{F(\mbf{Z}_{\mbf{n}})}{\tilde{\zeta}(\frac{\mbf{Z}_{\mbf{n}}}{\mbf{Z}_{\mbf{n}}})}p(\mbf{X}_{\mbf{v}+\mbf{n}}-\mbf{Z}_{\mbf{n}})\tilde{\zeta}(\frac{\mbf{Z}_{\mbf{n}}}{\mbf{X}_{\mbf{v}+\mbf{n}}})\wedge^*(\frac{\mbf{Z}_{\mbf{n}}q}{\mbf{W}})
\end{align}

\begin{align}\label{negative-action}
G\cdot p(\mbf{X}_{\mbf{v}})=\frac{1}{\mbf{n}!}\int^{-}\frac{G(\mbf{Z}_{\mbf{n}})}{\tilde{\zeta}(\frac{\mbf{Z}_{\mbf{n}}}{\mbf{Z}_{\mbf{n}}})}p(\mbf{X}_{\mbf{v}-\mbf{n}}+\mbf{Z}_{\mbf{n}})\tilde{\zeta}(\frac{\mbf{X}_{\mbf{v}-\mbf{n}}}{\mbf{Z}_{\mbf{n}}})^{-1}\wedge^*(\frac{\mbf{Z}_{\mbf{n}}}{\mbf{W}}).
\end{align}

Let us first concentrate on the case when $F\in\mc{A}_{\mbf{n},Q}^+$.

We can write down the formula in an explicit way:
\begin{equation}\label{explicit-formula-stable}
\begin{aligned}
&\Delta_{\infty}(F)(p_{\bm{\lambda}_1}(X_{\mbf{v}_1})\otimes p_{\bm{\lambda_2}}(X_{\mbf{v}_2}))\\
=&\sum_{[0\leq k_i\leq n_i]_{i\in I}}\frac{\prod_{k_j<b\leq n_j}^{j\in I}h_{j}^{+}(z_{jb})F(\cdots,z_{i1},\cdots,z_{ik_i}\otimes z_{i,k_i+1},\cdots,z_{in_i},\cdots)}{\prod_{1\leq a\leq k_i}^{i\in I}\prod_{k_j<b\leq n_j}^{j\in I}\zeta_{ji}(z_{jb}/z_{ia})}(p_{\bm{\lambda}_1}(X_{\mbf{v}_1})\otimes p_{\bm{\lambda_2}}(X_{\mbf{v}_2}))\\
=&\sum_{[0\leq k_i\leq n_i]_{i\in I}}\frac{1}{\mbf{k}!(\mbf{n}-\mbf{k})!}\int^{+}\int^{+}\frac{\prod^{j\in I}_{k_j<b\leq n_j}h_{j}^+(Z_{\mbf{n}-\mbf{k}})F(Z_{\mbf{k}}\otimes Z_{\mbf{n}-\mbf{k}})}{\tilde{\zeta}(\frac{Z_{\mbf{k}}}{Z_{\mbf{k}}})\tilde{\zeta}(\frac{Z_{\mbf{n}-\mbf{k}}}{Z_{\mbf{n}-\mbf{k}}})\zeta(\frac{Z_{\mbf{n}-\mbf{k}}}{Z_{\mbf{k}}})}\\
&(p_{\bm{\lambda}_1}(X_{\mbf{v}_1+\mbf{k}}-Z_{\mbf{k}})\otimes p_{\bm{\lambda}_2}(X_{\mbf{v}_2+\mbf{n}-\mbf{k}}-Z_{\mbf{n}-\mbf{k}}))\tilde{\zeta}(\frac{Z_{\mbf{k}}}{X_{\mbf{v}_1+\mbf{k}}})\tilde{\zeta}(\frac{Z_{\mbf{n}-\mbf{k}}}{X_{\mbf{v}_2+\mbf{n}-\mbf{k}}})\wedge^*(\frac{Z_{\mbf{k}}q}{\mbf{W}_1})\wedge^*(\frac{Z_{\mbf{n}-\mbf{k}}q}{\mbf{W}_2}).
\end{aligned}
\end{equation}

Note that the value of $\Delta_{\infty}(F)(p_{\bm{\lambda}_1}\otimes p_{\bm{\lambda}_2})$ lies in $K(\mbf{w}_1)\otimes K(\mbf{w}_2)$. If we take the framing such that $\mbf{w}_1+a\mbf{w}_2$, it is a rational function over $a$ by Theorem \ref{Commutativity-of-infty-coproduct-with-infty-stable-envelopes} or Corollary \ref{rationality-of-drinfeld-coproduct:corollary}. We denote $\Delta_{\infty}(F)(p_{\bm{\lambda}_1}\otimes p_{\bm{\lambda}_2})(a)$ as the image of $\Delta_{\infty}(F)(p_{\bm{\lambda}_1}\otimes p_{\bm{\lambda}_2})$ after evaluating to $a$. We denote:
\begin{align*}
\text{max deg}_{A}(\Delta_{\infty}(F)|_{F_{\alpha}\times F_{\beta}}):=\text{Leading order of }\Delta_{\infty}(F)|_{F_{\alpha}\times F_{\beta}}(a)\text{ as }a\rightarrow\infty
\end{align*}

\begin{align*}
\text{min deg}_{A}(\Delta_{\infty}(F)|_{F_{\alpha}\times F_{\beta}}):=\text{Leading order of }\Delta_{\infty}(F)|_{F_{\alpha}\times F_{\beta}}(a)\text{ as }a\rightarrow0.
\end{align*}

\begin{lem}\label{positive-half-infinite-degree-bounding:lemma}
If $F\in\mc{B}_{\mbf{m},\mbf{n}}^+$, one has that
\begin{align*}
\text{max deg}_{A}\Delta_{\infty}(F)|_{F_{\alpha}\times F_{\beta}}\leq\mbf{m}\cdot(\mbf{n}-\mbf{k})
\end{align*}
and
\begin{align*}
\text{min deg}_{A}\Delta_{\infty}(F)|_{F_{\alpha}\times F_{\beta}}\geq\mbf{m}\cdot(\mbf{n}-\mbf{k}).
\end{align*}

\end{lem}
\begin{proof}
Using the coproduct formula, the degree bounding for $\Delta_{\infty}(F)|_{F_{\alpha}\times F_{\beta}}$ can be computed via the residue calculation.

Recall that the coproduct formula comes from the integral \ref{explicit-formula-stable}. We first analyse which part of residue will contribute to the lowest or highest $A$-weight component.

First note that since the $A$-weights are given by the tautological classes of $X_{\mbf{v}_2+\mbf{n}-\mbf{k}}$. In this way the term $p_{\bm{\lambda}_1}(X_{\mbf{v}_1+\mbf{k}}-Z_{\mbf{k}})\otimes p_{\bm{\lambda}_2}(X_{\mbf{v}_2+\mbf{n}-\mbf{k}}-Z_{\mbf{n}-\mbf{k}})$ can be written as the expansion of $\gamma_{(\bm{\mu}_1,\bm{\mu}_2)}p_{\bm{\mu}_1}(X_{\mbf{v}_1+\mbf{k}})\otimes p_{\bm{\mu}_2}(X_{\mbf{v}_2+\mbf{n}-\mbf{k}})$  with $\gamma_{(\bm{\mu}_1,\bm{\mu}_2)}$ the Laurent polynomial of $Z_{\mbf{k}}$ and $Z_{\mbf{n}-\mbf{k}}$. There one can see that the lowest and highest $A$-degree part is given by th part such that all the variables $Z_{\mbf{n}-\mbf{k}}$ take the residue over the poles over the tautological classes.

The residue that contributed to the $A$-degree in the integral has the poles of the form:
\begin{align*}
z_{ia}=x_{ib},\qquad z_{ia}=q^{-1}x_{ib}
\end{align*}

\begin{align*}
z_{ia}=t_{e}^{-1}x_{jb},\qquad z_{ia}=t_e^{-1}qx_{jb}.
\end{align*}

As we can see in the integral \ref{explicit-formula-stable}, these poles are of the simple poles if the torus character $t_e$ are generically different. Using the residue formula, one can see that the $A$-degree contribution is given by the term:
\begin{align*}
\frac{\prod_{k_j<b\leq n_j}^{j\in I}h_j^+(Z_{\mbf{n}-\mbf{k}})}{\zeta(\frac{Z_{\mbf{n}-\mbf{k}}}{Z_{\mbf{k}}})}F(Z_{\mbf{k}}\otimes Z_{\mbf{n}-\mbf{k}})=\frac{\tilde{\zeta}(\frac{Z_{\mbf{n}-\mbf{k}}}{X_{\mbf{v}+\mbf{n}}})}{\tilde{\zeta}(\frac{X_{\mbf{v}+\mbf{n}}}{Z_{\mbf{n}-\mbf{k}}})\zeta(\frac{Z_{\mbf{n}-\mbf{k}}}{Z_{\mbf{k}}})}\frac{\wedge^*(\frac{Z_{\mbf{n}-\mbf{k}}q}{W})}{\wedge^*(\frac{Z_{\mbf{n}-\mbf{k}}}{W})}F(Z_{\mbf{k}}\otimes Z_{\mbf{n}-\mbf{k}}).
\end{align*}

Thus the maximal $A$-degree can be computed via computing the following limit:
\begin{align*}
\lim_{\xi\rightarrow\infty}\frac{\tilde{\zeta}(\frac{\xi Z_{\mbf{n}-\mbf{k}}}{X_{\mbf{v}+\mbf{n}-\mbf{k}}})}{\tilde{\zeta}(\frac{X_{\mbf{v}+\mbf{n}-\mbf{k}}}{\xi Z_{\mbf{n}-\mbf{k}}})\zeta(\frac{\xi Z_{\mbf{n}-\mbf{k}}}{Z_{\mbf{k}}})}\frac{\wedge^*(\frac{\xi Z_{\mbf{n}-\mbf{k}}q}{W})}{\wedge^*(\frac{\xi Z_{\mbf{n}-\mbf{k}}}{W})}F(Z_{\mbf{k}}\otimes\xi Z_{\mbf{n}-\mbf{k}})|_{z_{in}=(\cdots)x_{(\cdots)}}.
\end{align*}

In this case one can see that only the following gives the contribution to the $A$-degree:
\begin{align*}
\lim_{\xi\rightarrow\infty}\frac{F(Z_{\mbf{k}}\otimes \xi Z_{\mbf{n}-\mbf{k}})}{\zeta(\xi Z_{\mbf{n}-\mbf{k}}/Z_{\mbf{k}})}.
\end{align*}

Now since we have the degree condition in the definition of slope subalgebras in Section \ref{sub:slope_subalgebras_and_factorizations_of_r_matrices} that:
\begin{align*}
\text{max deg}_{A}\text{Stab}_{\infty}^{-1}F\text{Stab}_{\infty}|_{F_{\alpha}\times F_{\beta}}\leq\langle\mbf{k},\mbf{n}-\mbf{k}\rangle+\mbf{m}\cdot(\mbf{n}-\mbf{k})-\langle\mbf{k},\mbf{n}-\mbf{k}\rangle=\mbf{m}\cdot(\mbf{n}-\mbf{k}).
\end{align*}

This implies that 
\begin{align*}
\text{deg}_{A}\lim_{\xi\rightarrow\infty}\frac{F(Z_{\mbf{k}}\otimes \xi Z_{\mbf{n}-\mbf{k}})}{\zeta(\xi Z_{\mbf{n}-\mbf{k}}/Z_{\mbf{k}})}\leq\mbf{m}\cdot(\mbf{n}-\mbf{k}).
\end{align*}

\begin{comment}
We want to compute the $A$-degree of $\gamma_{(\bm{\lambda}_1,\bm{\lambda}_2)}^{(\bm{\mu}_1,\bm{\mu}_2)}$, by definition we have that:
\begin{equation}
\begin{aligned}
\text{max deg}_{A}(i^*\theta_{(\bm{\mu}_1,\bm{\mu}_2)})\leq&\text{max}_{\alpha\geq\alpha'}(\text{deg}_{A}(R_{\infty,\mbf{m}}|_{F_{\alpha}\times F_{\alpha'}}))+\text{max}_{\beta,\alpha'}(\text{deg}_{A}(F|_{F_{\alpha'}\times F_{\beta}}))\\
\leq&\langle\mu(F_{\alpha'})-\mu(F_{\alpha}),\mbf{m}\rangle+\langle\mu(F_{\beta})-\mu(F_{\alpha'}),\mbf{m}\rangle\\
\leq&\langle\mu(F_{\beta})-\mu(F_{\alpha}),\mbf{m}\rangle
\end{aligned}
\end{equation}
\end{comment}

For the minimal degree, one just need to compute the degree of
\begin{align*}
\lim_{\xi\rightarrow0}\frac{F(Z_{\mbf{k}}\otimes \xi Z_{\mbf{n}-\mbf{k}})}{\zeta(\xi Z_{\mbf{n}-\mbf{k}}/Z_{\mbf{k}})}
\end{align*}
and this can be computed such that:
\begin{align*}
\text{deg}_{A}\lim_{\xi\rightarrow0}\frac{F(Z_{\mbf{k}}\otimes \xi Z_{\mbf{n}-\mbf{k}})}{\zeta(\xi Z_{\mbf{n}-\mbf{k}}/Z_{\mbf{k}})}\geq\mbf{m}\cdot\mbf{n}-\mbf{m}\cdot\mbf{k}-\langle\mbf{k},\mbf{n}-\mbf{k}\rangle+\langle\mbf{k},\mbf{n}-\mbf{k}\rangle=\mbf{m}\cdot(\mbf{n}-\mbf{k}).
\end{align*}
In this case one has
\begin{align*}
\text{min deg}_{A}\text{Stab}_{\infty}^{-1}F\text{Stab}_{\infty}=\mbf{m}\cdot(\mbf{n}-\mbf{k})
\end{align*}

\end{proof}

For the negative half $G\in\mc{A}_{Q,-\mbf{n}}^{-}$, one can also do the similar analysis as follows. Here we will list the main conclusions and the sketch of the proof since the proof is totally similar to the case of the positive half.

From now on we define the $A$-degree on $G\in\mc{A}_{-\mbf{n}}$ via $\mbf{w}=a\mbf{w}_1+\mbf{w}_2$. In this case the $A$-degree for $\Delta_{\infty}(G)$ is defined for the torus action of the form $\mbf{w}=a\mbf{w}_1+\mbf{w}_2$, and we still denote it by $\text{deg}_{A}\Delta_{\infty}(G)$.

Similar to the positive half case in Lemma \ref{positive-half-infinite-degree-bounding:lemma}, one can also have the degree bounding for the negative half:
\begin{lem}
If $G\in\mc{B}_{\mbf{m},-\mbf{n}}^{-}$, one has that:
\begin{equation*}
\begin{aligned}
&\text{max deg}_{A}\Delta_{\infty}(G)|_{F_{\beta}\times F_{\alpha}}\leq-\mbf{m}\cdot\mbf{k}\\
&\text{min deg}_{A}\Delta_{\infty}(G)|_{F_{\beta}\times F_{\alpha}}\geq-\mbf{m}\cdot\mbf{k}.
\end{aligned}
\end{equation*}
\end{lem}
\begin{proof}
One can use the action map \ref{negative-action} and give the similar computation of $\Delta_{\infty}(G)|_{F_{\beta}\times F_{\alpha}}$ as \ref{explicit-formula-stable}. Doing the similar analysis as in the proof of Lemma \ref{positive-half-infinite-degree-bounding:lemma}. One can see that only the following term contributes to the computation of the maximal $A$-degree:
\begin{align*}
\lim_{\xi\rightarrow0}\frac{G(\xi Z_{\mbf{k}}\otimes Z_{\mbf{n}-\mbf{k}})}{\zeta(Z_{\mbf{n-k}}/\xi Z_{\mbf{k}})}
\end{align*}
and the minimal $A$-degree:
\begin{align*}
\lim_{\xi\rightarrow\infty}\frac{G(\xi Z_{\mbf{k}}\otimes Z_{\mbf{n}-\mbf{k}})}{\zeta(Z_{\mbf{n-k}}/\xi Z_{\mbf{k}})}.
\end{align*}
Thus we have that:-
\begin{align*}
\text{max deg}_{A}\text{Stab}_{\infty}^{-1}F\text{Stab}_{\infty}|_{F_{\beta}\times F_{\alpha}}\leq-\mbf{m}\cdot\mbf{k}-\langle\mbf{k},\mbf{n}-\mbf{k}\rangle+\langle\mbf{k},\mbf{n}-\mbf{k}\rangle=-\mbf{m}\cdot\mbf{k}
\end{align*}
and
\begin{align*}
\text{min deg}_{A}\text{Stab}_{\infty}^{-1}G\text{Stab}_{\infty}|_{F_{\beta}\times F_{\alpha}}\geq-\mbf{m}\cdot\mbf{n}+\mbf{m}\cdot(\mbf{n}-\mbf{k})+\langle\mbf{k},\mbf{n}-\mbf{k}\rangle-\langle\mbf{k},\mbf{n}-\mbf{k}\rangle=-\mbf{m}\cdot\mbf{k}
\end{align*}
and thus the proof is finished.
\end{proof}

\subsection{Hopf embedding of slope subalgebras}

Here we state the first main theorem, which will help us identify the root subalgebra defined in \ref{Definition-of-wall-subalgebra-in-root-subalgebra:Definition} with the wall subalgebra.

\begin{thm}\label{Hopf-embedding}
For arbitrary $\mbf{m}\in\mbb{Q}^I$, when restricted to $\mc{B}_{\mbf{m},w}$. There is a Hopf algebra embedding
\begin{align*}
(\mc{B}_{\mbf{m},w},\Delta_{\mbf{m}},S_{\mbf{m}},\eta,\epsilon)\hookrightarrow (U_{q}^{MO}(\mf{g}_{w}),\Delta_{\mbf{m}}^{MO},S_{\mbf{m}}^{MO},\eta,\epsilon).
\end{align*}
\end{thm}

\begin{proof}
The intertwining properties for unit map and counit map $\eta$ and $\epsilon$ are easy to check. The intertwining property for $S_{\mbf{m}}$ and $S_{\mbf{m}}^{MO}$ comes from the intertwining property for $\Delta_{\mbf{m}}$ and $\Delta_{\mbf{m}}^{MO}$. Thus we need to prove that the following diagram commute:
\begin{equation}\label{coproduct-commutativity}
\begin{tikzcd}
K(\mbf{w}_1)\otimes K(\mbf{w}_2)\arrow[d,"\Delta_{\mbf{m}}(F)"]\arrow[r,"\text{Stab}_{\mbf{m}}"]&K(\mbf{w}_1+\mbf{w}_2)\arrow[d,"F"]\\
K(\mbf{w}_1)\otimes K(\mbf{w}_2)\arrow[r,"\text{Stab}_{\mbf{m}}"]&K(\mbf{w}_1+\mbf{w}_2).
\end{tikzcd}
\end{equation}

i.e.
\begin{align*}
F\circ\text{Stab}_{\mbf{m}}(p_{\bm{\lambda}_1}\otimes p_{\bm{\lambda}_2})=\text{Stab}_{\mbf{m}}(\Delta_{\mbf{m}}(F)(p_{\bm{\lambda}_1}\otimes p_{\bm{\lambda}_2})).
\end{align*}

\textbf{Step I: Theorem for $\mc{B}_{\mbf{m}}$}

Let us first prove the statement for $F\in\mc{B}_{\mbf{m},\mbf{n}}^+$ for arbitrary $\mbf{m}\in\mbb{Q}^I$

If $F\in\mc{B}_{\mbf{m}}^+$, we need to prove that $\text{Stab}_{\mbf{m}}^{-1}F\text{Stab}_{\mbf{m}}$ satisfies the degree bounding that:
\begin{align*}
\text{deg}_{A}\text{Stab}_{\mbf{m}}^{-1}F\text{Stab}_{\mbf{m}}|_{F_{\alpha}\times F_{\beta}}=\langle\mu(F_{\beta})-\mu(F_{\alpha}),\mbf{m}\rangle=\mbf{m}\cdot(\mbf{n}-\mbf{k}).
\end{align*}

Now we do the following transformation:
\begin{align*}
\text{Stab}_{\mbf{m}}^{-1}F\text{Stab}_{\mbf{m}}=(R_{\mbf{m},\infty}^{+})^{-1}\text{Stab}_{\infty}^{-1}F\text{Stab}_{\infty}R_{\mbf{m},\infty}^{+},\qquad R_{\mbf{m},\infty}^{+}=\text{Stab}_{\infty}^{-1}\circ\text{Stab}_{\mbf{m}}.
\end{align*}

Note that since the matrix $R_{\mbf{m},\infty}^{+}$ is strictly upper-triangular, the upper-triangular part $R_{\mbf{m},\infty}^{+}|_{F_{\alpha}\times F_{\beta}}$ has the degree strictly smaller than $-\langle\mu(F_{\alpha})-\mu(F_{\beta}),\mbf{m}\rangle$. Therefore given the decomposition of $\text{Stab}_{\mbf{m}}^{-1}F\text{Stab}_{\mbf{m}}(a)|_{F_{\alpha}\times F_{\beta}}$ into the following component:
\begin{align*}
(R_{\mbf{m},\infty}^{+})^{-1}|_{F_{\beta'}\times F_{\beta}}(\text{Stab}_{\infty}^{-1}F\text{Stab}_{\infty})|_{F_{\alpha'}\times F_{\beta'}}\circ R_{\mbf{m},\infty}^{-}|_{F_{\alpha}\times F_{\alpha'}}.
\end{align*}

We have that its $A$-degree is given by:
\begin{equation*}
\begin{aligned}
&\text{deg}_{A}((R_{\mbf{m},\infty}^{+})^{-1}|_{F_{\beta'}\times F_{\beta}}(\text{Stab}_{\infty}^{-1}F\text{Stab}_{\infty})|_{F_{\alpha'}\times F_{\beta'}}\circ R_{\mbf{m},\infty}^{+}|_{F_{\alpha}\times F_{\alpha'}})\\
<&\mbf{m}\cdot(\mbf{l}-\mbf{k}-\bm{\delta})+\mbf{m}\cdot(\mbf{n}-\mbf{l})+\mbf{m}\cdot\bm{\delta}\\
=&\mbf{m}\cdot(\mbf{n}-\mbf{k})=\langle\mu(F_{\beta})-\mu(F_{\alpha}),\mbf{m}\rangle.
\end{aligned}
\end{equation*}

Thus on the diagonal part of $R_{\mbf{m},\infty}^{+}$, we have the degree given as:
\begin{equation*}
\begin{aligned}
&\text{deg}_{A}((R_{\mbf{m},\infty}^{+})^{-1}|_{F_{\beta}\times F_{\beta}}(\text{Stab}_{\infty}^{-1}F\text{Stab}_{\infty})|_{F_{\alpha}\times F_{\beta}}\circ R_{\mbf{m},\infty}^{+}|_{F_{\alpha}\times F_{\alpha}})\\
\leq&\mbf{m}\cdot(\mbf{n}-\mbf{k})=\langle\mu(F_{\beta})-\mu(F_{\alpha}),\mbf{m}\rangle.
\end{aligned}
\end{equation*}

Similarly, for the vice versa, we have that:
\begin{equation*}
\begin{aligned}
&\text{deg}_{A}((R_{\mbf{m},\infty}^{+})^{-1}|_{F_{\beta}\times F_{\beta}}(\text{Stab}_{\infty}^{-1}F\text{Stab}_{\infty})|_{F_{\alpha}\times F_{\beta}}\circ R_{\mbf{m},\infty}^+|_{F_{\alpha}\times F_{\alpha}})\\
\geq&\mbf{m}\cdot(\mbf{n}-\mbf{k})=\langle\mu(F_{\beta})-\mu(F_{\alpha}),\mbf{m}\rangle
\end{aligned}
\end{equation*}

\begin{equation*}
\begin{aligned}
&\text{deg}_{A}((R_{\mbf{m},\infty}^{+})^{-1}|_{F_{\beta'}\times F_{\beta}}(\text{Stab}_{\infty}^{-1}F\text{Stab}_{\infty})|_{F_{\alpha'}\times F_{\beta'}}\circ R_{\mbf{m},\infty}^+|_{F_{\alpha}\times F_{\alpha'}})\\
\geq&\mbf{m}\cdot(\bm{\delta}+\mbf{l}-\mbf{k})+\mbf{m}\cdot(\mbf{n}-\mbf{l})-\mbf{m}\cdot\bm{\delta}\\
=&\mbf{m}\cdot(\mbf{n}-\mbf{k})=\langle\mu(F_{\beta})-\mu(F_{\alpha}),\mbf{m}\rangle.
\end{aligned}
\end{equation*}

This implies that for the part that the upper-triangular part of $R_{\mbf{m},\infty}^+$ makes contribution is forced to be zero. We conclude that:
\begin{align*}
\text{deg}_{A}(\text{Stab}_{\mbf{m}}^{-1}F\text{Stab}_{\mbf{m}}(a)|_{F_{\alpha}\times F_{\beta}})=\mbf{m}\cdot(\mbf{n}-\mbf{k}).
\end{align*}

Using the fact that $\text{Stab}_{\mbf{m}}^{-1}F\text{Stab}_{\mbf{m}}(a)|_{F_{\alpha}\times F_{\beta}}$ is a Laurent polynomial in $a$ by Proposition \ref{Integrality-for-slope-coproduct:Proposition}, we conclude that $\text{Stab}_{\mbf{m}}^{-1}F\text{Stab}_{\mbf{m}}(a)|_{F_{\alpha}\times F_{\beta}}$ is a monomial in $a$.

Since the left hand side has its each component as the monomial in $a$, the identity can be written in the following way:
\begin{align*}
\text{Stab}_{\mbf{m}}^{-1}F\text{Stab}_{\mbf{m}}(a)|_{F_{\alpha}\times F_{\beta}}=\lim_{\xi\rightarrow\infty}\frac{1}{\xi^{\mbf{m}\cdot(\mbf{n}-\mbf{k})}}R_{\mbf{m},\infty}^{-1}\text{Stab}_{\infty}^{-1}F\text{Stab}_{\infty}R_{\mbf{m},\infty}(\xi a)|_{F_{\alpha}\times F_{\beta}}.
\end{align*}

Now we use the following trick, and this has been proved in Exercises 10.2.14 of \cite{O15} with simple modifications:
\begin{lem}\label{block-diagonal-lemma:lemma}
Given a slope $\mbf{m}$ such that after some integral translations $\mc{L}\in\text{Pic}(\mc{M}_{Q}(\mbf{v},\mbf{w}))$, $\mbf{m}-\mc{L}$ lies in the intersection of a small neighborhood of $0$ with the anti-ample cone $-C_{ample}\subset\text{Pic}(\mc{M}_{Q}(\mbf{v},\mbf{w}))\otimes\mbb{R}$, we have that:
\begin{align*}
\lim_{a\rightarrow\infty}a^{-\langle\mbf{m},-\rangle-\langle T^{1/2},\sigma\rangle}i^*\text{Stab}_{\mbf{m}}
\end{align*}
is a block-diagonal operator. Here $a^{-\langle\mbf{m},-\rangle}$ means that when restricted to the component $F_{\alpha}\times F_{\beta}$, the degree is given by $a^{-\langle\mbf{m},\mu(F_{\beta})-\mu(F_{\alpha})\rangle}$
\end{lem}

The lemma implies that $a^{-\langle\mbf{m},-\rangle}R_{\mbf{m},\infty}^+$ will be an identity operator as $a\rightarrow\infty$ under the scaling.

By the above computation, it implies that:
\begin{align}\label{definition-of-wall-coproduct-positive}
\text{Stab}_{\mbf{m}}^{-1}F\text{Stab}_{\mbf{m}}(a)|_{F_{\alpha}\times F_{\beta}}=\lim_{\xi\rightarrow\infty}\frac{1}{\xi^{\mbf{m}\cdot(\mbf{n}-\mbf{k})}}\text{Stab}_{\infty}^{-1}F\text{Stab}_{\infty}(\xi a)|_{F_{\alpha}\times F_{\beta}}.
\end{align}

Since $\text{Stab}_{\infty}^{-1}F\text{Stab}_{\infty}=\Delta_{\infty}(F)$, by the definition of the coproduct, and we know that from \ref{definition-of-slope-coproduct-positive}:
\begin{align*}
\Delta_{\mbf{m}}(F)|_{F_{\alpha}\times F_{\beta}}=\lim_{\xi\rightarrow\infty}\frac{1}{\xi^{\mbf{m}\cdot(\mbf{n}-\mbf{k})}}\Delta_{\infty}(F)|_{F_{\alpha}\times F_{\beta}}.
\end{align*}
Now using Theorem \ref{Commutativity-of-infty-coproduct-with-infty-stable-envelopes}, we can reach to the conclusion:
\begin{align*}
\Delta_{\mbf{m}}(F)|_{F_{\alpha}\times F_{\beta}}=\Delta_{\mbf{m}}^{MO}(F)|_{F_{\alpha}\times F_{\beta}}.
\end{align*}

For the case that $G\in\mc{B}_{\mbf{m},-\mbf{n}}^-$, one can still do the similar calculations as above. That one can find out the similar result as in \ref{definition-of-wall-coproduct-positive}:
\begin{align*}
\text{Stab}_{\mbf{m}}^{-1}(G)\text{Stab}_{\mbf{m}}|_{F_{\beta}\times F_{\alpha}}=\lim_{\xi\rightarrow0}\frac{1}{\xi^{-\mbf{m}\cdot\mbf{k}}}\text{Stab}_{\infty}^{-1}(G)\text{Stab}_{\infty}|_{F_{\beta}\times F_{\alpha}}.
\end{align*}

By the definition of the slope coproduct for negative part elements as in \ref{definition-of-slope-coproduct-negative }, we can obtain that:
\begin{align*}
\Delta_{\mbf{m}}(G)|_{F_{\beta}\times F_{\alpha}}=\Delta_{\mbf{m}}^{MO}(G)|_{F_{\beta}\times F_{\alpha}}.
\end{align*}

\textbf{Step II: Injectivity as Hopf algebras}\label{Step III:Injectivity as Hopf algebras:Step-proof}

Now we prove the injectivity of $\mc{B}_{\mbf{m},w}$ into $U_{q}^{MO}(\mf{g}_{w})$ as a Hopf algebra. It is known that for the elements in $U_{q}^{MO}(\mf{g}_{w})$, it is generated by the matrix coefficients of the wall $R$-matrix $R_{w}^{\pm,MO}$. The matrix coefficients of $R_{w}^{\pm,MO}$ can be written in the following way:
\begin{equation*}
\begin{tikzcd}
(R^{\pm,e,f}_{w})_{\mbf{w}^{aux},\mbf{w}}:&K(\mbf{0},\mbf{w}^{aux})_{loc}\otimes K(\mbf{w})_{loc}\arrow[r,"e\otimes\text{Id}"]&K(\mbf{w}^{aux})_{loc}\otimes K(\mbf{w})\\
&\arrow[r,"(R_{w}^{\pm,MO})_{\mbf{w}^{aux},\mbf{w}}"]&K(\mbf{w}^{aux})_{loc}\otimes K(\mbf{w})\arrow[r,"f\otimes\text{Id}"]&K(\mbf{0},\mbf{w}^{aux})_{loc}\otimes K(\mbf{w})_{loc}.
\end{tikzcd}
\end{equation*}
Here $e$ and $f$ are arbitrary elements in $\mc{A}_{Q}^{+}$ and $\mc{A}_{Q}^{-}$. 

Now we take $R_{w}^{+,MO}$ and consider $e\in\mc{B}_{\mbf{m},w}^+$, and we consider $(R_{w}^{-,e,1})_{\mbf{w}^{aux},\mbf{w}}$. We can further assume that $e$ is a primitive element in $\mc{B}_{\mbf{m},w}^+$. In this case the coproduct $\Delta_{\mbf{m}}$ on $e$ is written as:
\begin{align*}
\Delta_{\mbf{m}}(e)=e\otimes\text{Id}+h\otimes e.
\end{align*}
This implies that $e\otimes\text{Id}=\Delta_{\mbf{m}}(e)-h\otimes e$, and by the above computation, we know that $\Delta_{\mbf{m}}(e)=(\Delta_{\mbf{m}}^{MO})(e)$. In this case we have that:
\begin{equation*}
\begin{aligned}
&\langle v_{\emptyset},(R_{w}^{+,MO})_{\mbf{w}^{aux},\mbf{w}}(e\otimes\text{Id})v_{\emptyset}\rangle=\langle v_{\emptyset},(R_{w}^{+,MO})_{\mbf{w}^{aux},\mbf{w}}(\Delta_{\mbf{m}}^{MO}(e)-h_{\mbf{v}}\otimes e)v_{\emptyset}\rangle\\
=&\langle v_{\emptyset},\Delta_{\mbf{m}}^{MO,op}(e)(R_{w}^{+,MO})_{\mbf{w}^{aux},\mbf{w}}v_{\emptyset}\rangle-h_{\mbf{v}}e=h_{\mbf{v}}(1-h_{\mbf{v}})e.
\end{aligned}
\end{equation*}

Now since the map $\mc{A}_{Q}\mapsto\prod_{\mbf{w}}\text{End}(K(\mbf{w}))$ is injective, this implies that $e\in U_{q}^{MO}(\mf{g}_{w})$. Thus it gives the injective map $\mc{B}_{\mbf{m},w}\hookrightarrow U_{q}^{MO}(\mf{g}_{w})$.

\end{proof}

As an application, we will give another proof of the injectivity from $\mc{A}^{ext}_{Q}$ to $U_{q}^{MO}(\hat{\mf{g}}_Q)$:
\begin{prop}\label{injeticivty-from-KHA-to-MO:label}
There is an injective map of $\mbb{Q}(q,t_e)_{e\in E}$-algebras
\begin{align}
\mc{A}^{ext}_{Q}\hookrightarrow U_{q}^{MO}(\hat{\mf{g}}_{Q}).
\end{align}
\end{prop}
\textbf{Remark.} One can also refer to the proof of the Proposition in \cite{N23}.
\begin{proof}
This is equivalent to prove the injectivity of the following three pieces:
\begin{align}
\mc{A}^{\pm}_{Q}\hookrightarrow U_{q}^{MO,\pm}(\hat{\mf{g}}_{Q}),\qquad\mc{A}^{0}_{Q}\hookrightarrow U_{q}^{MO,0}(\hat{\mf{g}}_{Q}).
\end{align}

For the injectivity of the Cartan part, this is clear from the geometric action map in subsubsection \ref{subsub:localised_action_from_mc_a}. Thus we only need to prove the injectivity for the positive and negative pieces.

Recall by by Theorem \ref{Hopf-embedding}, we have the injective map $\mc{B}_{\mbf{m},w}^{\pm}\hookrightarrow U_{q}^{MO,\pm}(\mf{g}_{w})$. Recall that by the slope factorisation for KHA in Theorem \ref{Main-theorem-on-slope-factorisation:Theorem} and Lemma \ref{root-factorisation-for-slope:label} and wall factorisation for MO quantum loop groups in Proposition \ref{factorisation-for-wall-subalgebras:label}, the injective map above induce the injective map of the following form:
\begin{align}
\bigotimes^{\rightarrow}_{t\in\mbb{Q},\mbf{m}+t\bm{\theta}\in w}\mc{B}^{\pm}_{\mbf{m}+t\bm{\theta},w}\hookrightarrow\bigotimes^{\rightarrow}_{t\in\mbb{Q},\mbf{m}+t\bm{\theta}\in w}U_q^{MO,\pm}(\mf{g}_{w,\mbf{m}+t\bm{\theta}}).
\end{align}
which is exactly the injective map above.
\end{proof}

\section{Isomorphism as the integral form}\label{section:isomorphism_as_the_integral_form}
We define the integral form $\mc{A}^{ext,\mbb{Z}}_{Q}$ of the double of the extended KHA as follows:
\begin{align}\label{integral-form-double-KHA}
\mc{A}^{ext,\mbb{Z}}_{Q}:=\mc{A}^{+,\mbb{Z}}_{Q}\otimes\mc{A}^{0,\mbb{Z}}_{Q}\otimes\mc{A}^{-,\mbb{Z}}_{Q}.
\end{align}
Here $\mc{A}^{0,\mbb{Z}}_{Q}$ is the polynomial ring with the integral coefficients:
\begin{align*}
\mc{A}^{0,\mbb{Z}}_{Q}:=\mbb{Z}[q^{\pm1},t_{e}^{\pm1}]_{e\in E}[a_{i,\pm d},b_{i,\pm d},q^{\pm\frac{v_i}{2}},q^{\pm\frac{w_i}{2}}]_{i\in I,d\geq1}.
\end{align*}

Here for the positive and negative parts, we choose the following model:
\begin{align*}
\mc{A}^{+,\mbb{Z}}_{Q}:=\mc{A}^{+,\mbb{Z}}_{Q},\qquad\mc{A}^{-,\mbb{Z}}_{Q}:=(\mc{A}^{+,nilp,\mbb{Z}}_{Q})^{op}.
\end{align*}

The algebra structure of $\mc{A}^{ext,\mbb{Z}}_{Q}$ may be thought of as coming from the algebra structure of $\mc{A}^{ext}_{Q}$ and restrict the algebra embedding in \ref{injectivity-of-localised-double-KHA:theorem} to the $\mbb{Z}[q^{\pm1},t_e^{\pm1}]$-module case, which means that we have the following $\mbb{Z}[q^{\pm1},t_e^{\pm1}]$-algebra embedding:
\begin{align*}
\mc{A}^{ext,\mbb{Z}}_{Q}\hookrightarrow\prod_{\mbf{w}}\text{End}(K_{T_{\mbf{w}}}(\mc{M}_{Q}(\mbf{w}))).
\end{align*}

In this section we are going to prove the main result of the paper:
\begin{thm}\label{Main-theorem-on-integral-form:theorem}
The Maulik-Okounkov quantum loop group $U_{q}^{MO,\mbb{Z}}(\hat{\mf{g}}_{Q})$ admits the triangular decomposition:
\begin{align*}
U_{q}^{MO,\mbb{Z}}(\hat{\mf{g}}_{Q})\cong U_{q}^{MO,\mbb{Z},+}(\hat{\mf{g}}_{Q})\otimes U_{q}^{MO,\mbb{Z},0}(\hat{\mf{g}}_{Q})\otimes U_{q}^{MO,\mbb{Z},-}(\hat{\mf{g}}_{Q})
\end{align*}
such that as $\mbb{N}^I$-graded $\mbb{Z}[q^{\pm1},t_{e}^{\pm1}]_{e\in E}$-algebras, the negative half $U_{q}^{MO,\mbb{Z},-}(\hat{\mf{g}}_{Q})$ is isomorphic to $(\mc{A}^{+,\mbb{Z},nilp}_{Q})^{op}$ the opposite algebra of the nilpotent $K$-theoretic Hall algebra, and the positive half is isomorphic to $\mc{A}^{+,\mbb{Z}}_{Q}$ the preprojective $K$-theoretic Hall algebra. The Cartan part $U_{q}^{MO,\mbb{Z},0}$ is isomorphic to $\mc{A}^{0,\mbb{Z}}_{0}$.

In other words, we have the isomorphism of $\mbb{Z}[q^{\pm},t_e^{\pm1}]_{e\in E}$-algebras:
\begin{align*}
\mc{A}^{ext,\mbb{Z}}_{Q}\cong U_{q}^{MO,\mbb{Z}}(\hat{\mf{g}}_{Q}).
\end{align*}

Moreover, the above isomorphisms intertwine the action over $K_{T_{\mbf{w}}}(\mc{M}_{Q}(\mbf{w}))$.
\end{thm}

\subsection{Isomorphism of the positive integral half}\label{sub:isomorphism_of_the_positive_integral_half}
In this subsection we prove that the positive half of the integral MO quantum loop group $U_{q}^{MO,+,\mbb{Z}}(\hat{\mf{g}}_{Q})$ is isomorphic to the preprojective KHA $\mc{A}^{+,\mbb{Z}}_{Q}$.
\begin{thm}\label{isomorphism-of-integral-positive-half:theorem}
There exists an isomorphism of $\mbb{Z}[q^{\pm1},t_{e}^{\pm1}]_{e\in E}$-algebras between the positive half of Maulik-Okounkov quantum loop group and the preprojective KHA
\begin{align}
U_{q}^{MO,+,\mbb{Z}}(\hat{\mf{g}}_{Q})\cong\mc{A}^{+,\mbb{Z}}_{Q}
\end{align}
which intertwines the action over $K_{T_{\mbf{w}}}(\mc{M}_{Q}(\mbf{w}))$
\end{thm}

\begin{proof}
Since the preprojective KHA is a torsion-free $\mbb{Z}[q^{\pm1},t_e^{\pm1}]_{e\in E}$-module, the Proposition \ref{injeticivty-from-KHA-to-MO:label} can be lifted to the integral version and thus we have the injective map of $\mbb{Z}[q^{\pm1},t_e^{\pm1}]_{e\in E}$-algebras $\mc{A}^{+,\mbb{Z}}_{Q}\hookrightarrow U_{q}^{MO,+,\mbb{Z}}(\hat{\mf{g}}_{Q})$.

Now we need to introduce some further filtrations for both MO quantum loop groups and KHA:

Fix two slope points $\mbf{m},\mbf{m}'\in\mbb{Q}^I$, we denote $U_{q}^{MO,\pm,\mbb{Z}}(\hat{\mf{g}}_{Q})_{[\mbf{m},\mbf{m}']}$ the $\mbb{Z}[q^{\pm1},t_e^{\pm1}]_{e\in E}$-submodule generated by matrix coefficients of $R_{\mbf{m},\mbf{m}'}^{\pm}:=\text{Stab}_{\pm\sigma,\mbf{m}}^{-1}\circ\text{Stab}_{\pm\sigma,\mbf{m}'}$ respectively.

Similarly, one can define $\mc{A}^{\pm,\mbb{Z}}_{Q,[\mbf{m},\mbf{m}']}$ as the subspace of $\mc{A}^{\pm,\mbb{Z}}_{Q,\leq\mbf{m}'}$ such that:
\begin{equation}\label{other-way-filtration}
\begin{aligned}
&\mc{A}^{+,\mbb{Z}}_{Q,[\mbf{m},\mbf{m}']}:=\{F\in\mc{A}^{+,\mbb{Z}}_{Q,\leq\mbf{m}'}|\lim_{\xi\rightarrow0}\frac{F(\cdots,\xi z_{i1},\cdots,\xi z_{ik_i},z_{i,k_i+1},\cdots,z_{i,n_i},\cdots)}{\xi^{\mbf{m}\cdot\mbf{k}-\langle\mbf{k},\mbf{n}-\mbf{k}\rangle}}<\infty\}\\
&\mc{A}^{-}_{Q,[\mbf{m},\mbf{m}']}:=\{G\in\mc{A}^{-,\mbb{Z}}_{Q,\leq\mbf{m}'}|\lim_{\xi\rightarrow\infty}\frac{G(\cdots,\xi z_{i1},\cdots,\xi z_{ik_i},z_{i,k_i+1},\cdots,z_{i,n_i},\cdots)}{\xi^{-\mbf{m}\cdot\mbf{k}+\langle\mbf{k},\mbf{n}-\mbf{k}\rangle}}<\infty\}
\end{aligned}
\end{equation}

\begin{lem}\label{intermediate-lemma:label}
Both $U_{q}^{MO,\pm,\mbb{Z}}(\hat{\mf{g}}_{Q})_{[\mbf{m},\mbf{m}']}$ and $\mc{A}^{\pm,\mbb{Z}}_{Q,[\mbf{m},\mbf{m}']}$ are $\mbb{Z}[q^{\pm1},t_e^{\pm1}]_{e\in E}$-algebras. Moreover, we have an $\mc{A}^{\pm}_{Q,[\mbf{m},\mbf{m}']}$-algebra embedding
\begin{align*}
\mc{A}^{\pm}_{Q,[\mbf{m},\mbf{m}']}\hookrightarrow U_{q}^{MO,\pm}(\hat{\mf{g}}_{Q})_{[\mbf{m},\mbf{m}']}.
\end{align*}
\end{lem}
\begin{proof}
The fact that $\mc{A}^{\pm,\mbb{Z}}_{Q,[\mbf{m},\mbf{m}']}$ preserves the algebra structure comes from the computation and the comparison of the degrees. The fact that $U_{q}^{MO,\pm,\mbb{Z}}(\hat{\mf{g}}_{Q})_{[\mbf{m},\mbf{m}']}$ preserves the algebra structure comes from the triangle lemma of the stable envelope. Thus the algebra embedding comes naturally from Theorem \ref{Hopf-embedding}.
\end{proof}

It is easy to see that $\mc{A}^{\pm,\mbb{Z}}_{Q,[\mbf{m},\mbf{m}']}$ admits the factorisation:
\begin{align}
\mc{A}^{\pm,\mbb{Z}}_{Q,[\mbf{m},\mbf{m}']}=\bigotimes^{\rightarrow}_{\mu\in[0,1]}\mc{B}_{\mbf{m}+\mu\bm{\theta}}^{\pm,\mbb{Z}}.
\end{align}
with the choice of $\bm{\theta}=\mbf{m}'-\mbf{m}\in(\mbb{Z}^+)^{|I|}$.

Now we come to the proof of Theorem \ref{isomorphism-of-integral-positive-half:theorem}. Now we choose a dimension vector $\mbf{w}$. Since $K_{T_{\mbf{w}}}(\mc{M}_{Q}(\mbf{v},\mbf{w}))$ is a free $K_{T_{\mbf{w}}}(pt)$-module of finite rank, this implies that the surjective map from \ref{surjectivity-noetherian-lemma-localised:theorem} can be factorised via:
\begin{align}\label{use-it-again-01}
\mc{A}^{+}_{Q,\mbf{v},[\mbf{m},\mbf{m}']}\otimes\mbb{F}_{\mbf{w}}\twoheadrightarrow K_{T_{\mbf{w}}}(\mc{M}_{Q}(\mbf{v},\mbf{w}))_{loc}
\end{align}
for suitable choice of $\mbf{m},\mbf{m}'\in\mbb{Q}^I$. Now we choose the root subalgebra $\mc{B}_{\mbf{m},w,\mbf{v}}^+\subset\mc{A}^{+}_{Q,\mbf{v},[\mbf{m},\mbf{m}']}$ of slope $\mbf{m}$ and the corresponding wall subalgebra $U_{q}^{MO,+}(\mf{g}_{w})\subset U_{q}^{MO,+}(\hat{\mf{g}}_Q)_{[\mbf{m},\mbf{m}']}$ in the MO quantum loop group. By Lemma \ref{second-evaluation-injective-lemma:lemma}, we have the following commutative diagram:
\begin{equation}\label{First-important-commutative-diagram}
\begin{tikzcd}
\mc{B}_{\mbf{m},w,\mbf{v}}^+\otimes\mbb{F}_{\mbf{w}}\arrow[r,hook]\arrow[d,hook]&U_{q}^{MO,+}(\mf{g}_{w})\otimes\mbb{F}_{\mbf{w}}\arrow[d,hook]\\
\mc{A}^{+}_{Q,\mbf{v},[\mbf{m},\mbf{m}']}\otimes\mbb{F}_{\mbf{w}}\arrow[r,twoheadrightarrow]& K_{T_{\mbf{w}}}(\mc{M}_{Q}(\mbf{v},\mbf{w}))_{loc}
\end{tikzcd}
\end{equation}
with $\mbf{w}\geq\mbf{v}$.

\subsubsection{Choice of the section map}

Recall in the construction of the surjective map from \ref{surjectivity-noetherian-lemma-localised:theorem}, it is described via the following:
\begin{equation}
\begin{tikzcd}
\mc{M}_{Q}(0,\mbf{v},\mbf{w})\arrow[r,hook,"\pi"]\arrow[d,"p"]&\mc{M}_{Q}(\mbf{v},\mbf{w})\\
\mc{Y}_{\mbf{v}}
\end{tikzcd}
\end{equation}
where $\pi$ is an isomorphism after $T$-localisation.
We have used $K_{T_{\mbf{w}}}(\mc{M}_{Q}(\mbf{0},\mbf{v},\mbf{w}))$. In fact via the projection map $p:\mc{M}_{Q}(\mbf{0},\mbf{v},\mbf{w})\rightarrow\mc{Y}_{\mbf{v}}$, one can also give a slope factorisation for $K_{T_{\mbf{w}}}(\mc{M}_{Q}(\mbf{0},\mbf{v},\mbf{v}))$. It was proved in Theorem 4.1 in \cite{PT25} and here we only state the theorem in the equivariant $K$-theory case:
\begin{thm}[For the categorical version see Theorem 4.1 in \cite{PT25}]\label{categorical-slope-factorisation:Theorem}
If we choose $\bm{\theta}=(1,\cdots,1)$, via the pullback of the map $p$, we have the following slope factorisation $\mbb{N}^I$-graded $K_{T}(pt)$-module isomorphism:
\begin{align}
p^*:\mc{A}_{Q,[\eta\bm{\theta},(1+\eta)\bm{\theta}]}^{+,\mbb{Z}}\cong\bigotimes_{\mu\in[\eta,1+\eta)\cap\mbb{Q}}^{\rightarrow}\mc{B}_{\mu\bm{\theta}}^{+,\mbb{Z}}\cong\bigoplus_{\mbf{v}\in\mbb{N}^I}K_{T}(\mc{M}_{Q}(\mbf{0},\mbf{v},\mbf{w}))
\end{align}
for $\mbf{w}=\mbf{v}$ and generic $\eta\in\mbb{R}$. We denote the inverse of $p^*$ as $\psi$
\end{thm}
\begin{proof}
As we have mentioned below the Definition \ref{Slope-subalgebra-definition-derived
:label}, the quasi-BPS categories defined in \cite{PT25} is the categorified of the slope subalgebra. The proof in Theorem 4.1 of \cite{PT25} combining with the dimensional reduction argument (See for example 2.18 in \cite{PT25}) can be totally applied to the $T$-equivariant case.
\end{proof}

Now we choose a $\mu\in\mbb{Q}$ and some generic $\eta\in\mbb{R}$ such that $\mu\in[\eta,1+\eta)$. By the diagram \ref{First-important-commutative-diagram} and Lemma \ref{flow-condition:lemma}, one has the refinement of commutative diagram with the section map:
\begin{equation}\label{The-most-important-diagram}
\begin{tikzcd}
\mc{B}_{\mu\bm{\theta},w,\mbf{v}}^+\otimes K_{T_{\mbf{v}}}(pt)\arrow[r,hook]\arrow[d,hook]&U_{q}^{MO,+}(\mf{g}_{w})_{\mbf{v}}\otimes K_{T_{\mbf{v}}}(pt)\arrow[d,hook]\\
\mc{A}^+_{Q,\mbf{v},[\eta\bm{\theta},(1+\eta)\bm{\theta})}\otimes K_{T_{\mbf{v}}}(pt)\arrow[r,"\cong",shift left]&K_{T_{\mbf{v}}}(\mc{M}_{Q}(\mbf{0},\mbf{v},\mbf{v})).\arrow[l,"\psi",shift left]
\end{tikzcd}
\end{equation}

The injective map on the right hand side follows from the injective map in the Lemma \ref{second-evaluation-injective-lemma:lemma}, and the fact that the evaluation map in Lemma \ref{second-evaluation-injective-lemma:lemma} factors through $K_{T_{\mbf{v}}}(\mc{M}_{Q}(\mbf{0},\mbf{v},\mbf{v}))$ by Proposition \ref{Description-of-full-attracting-set:label}.

\subsubsection{Proof of Theorem \ref{isomorphism-of-integral-positive-half:theorem}}\label{ssub:proof_of_theorem_ref_isomorphism_of_integral_positive_half_theorem}
Now if we identify $K_{T_{\mbf{v}+\mbf{w}}}(\mc{M}_{Q}(\mbf{0},\mbf{v},\mbf{v}))$ with $K_{T_{\mbf{v}+\mbf{w}}}(\mc{M}_{Q}(\mbf{0},\mbf{v},\mbf{v})\times\mc{M}_{Q}(\mbf{0},\mbf{w}))$, and choose the torus degree $\mbf{v}+\mbf{w}=a\mbf{v}+\mbf{w}$. Given elements $F\in U_{q}^{MO,+}(\mf{g}_{w})_{\mbf{v}}$, it has been computed in Section \ref{sub:degree_bounding} that the $A$-degree of $Fv_{\emptyset}$ is given by $\mu\bm{\theta}\cdot\mbf{v}$. Now we choose $\mu$ to be the smallest rational number in $[\eta,1+\eta)$ such that $\mu\bm{\theta}\cdot\mbf{v}\in\mbb{Z}$. We claim that the image of the evaluation $\text{ev}: U_{q}^{MO,+}(\mf{g}_{w})_{\mbf{v}}\otimes K_{T_{\mbf{v}}}(pt)\hookrightarrow K_{T_{\mbf{v}}}(\mc{M}(\mbf{0},\mbf{v},\mbf{v}))$ lies in $\mc{B}_{\mu\bm{\theta},w,\mbf{v}}^{+}$. 

Now let us suppose the contradiction, which means that we can write down $Fv_{\emptyset}$ as $\sum_{\mbf{l}}a_{\mbf{l}}F_{l_1}\cdots F_{l_k}$ with $F_{l_i}\in\mc{B}_{\mu_i\bm{\theta},w,\mbf{v}_i}^{+}$. In this case the $A$-degree of the corresponding element is given by:
\begin{align*}
\bm{\theta}\cdot(\mu_1\mbf{v}_1+\cdots+\mu_k\mbf{v}_k)=\mu\bm{\theta}\cdot\mbf{v}
\end{align*}
with the constraint condition $\mbf{v}_1+\cdots+\mbf{v}_k=\mbf{v}$. Since we know that $\mu_k\geq\cdots\geq\mu_1\geq\mu$, this means that $\bm{\theta}\cdot(\mu_1\mbf{v}_1+\cdots+\mu_k\mbf{v}_k)\geq\mu\bm{\theta}\cdot\mbf{v}$. The equality holds if and only if $\mu_1=\mu_2=\cdots=\mu_k$. Thus one can see that the image of the evaluation map is given by $p^*\mc{B}_{\mu\bm{\theta},w,\mbf{v}}^+$, and by the section map $\psi$, we have the chain of the injective $K_{T_{\mbf{v}}}(pt)$-module map:
\begin{align*}
\mc{B}_{\mu\bm{\theta},w,\mbf{v}}^+\otimes K_{T_{\mbf{v}}}(pt)\hookrightarrow U_{q}^{MO,+}(\mf{g}_{w})_{\mbf{v}}\otimes K_{T_{\mbf{v}}}(pt)\hookrightarrow\mc{B}_{\mu\bm{\theta},w,\mbf{v}}^+\otimes K_{T_{\mbf{v}}}(pt)
\end{align*}
which is equal to identity after the composition of these two maps. This implies that we have an isomorphism of $K_{T_{\mbf{v}}}(pt)$-modules:
\begin{align*}
\mc{B}_{\mu\bm{\theta},w,\mbf{v}}^+\otimes K_{T_{\mbf{v}}}(pt)\cong U_{q}^{MO,+}(\mf{g}_{w})_{\mbf{v}}\otimes K_{T_{\mbf{v}}}(pt).
\end{align*}
Now since $K_{T_{\mbf{v}}}(pt)$ is faithfully flat over $K_{T}(pt)$, one has the isomorphism of $K_{T}(pt)$-modules:
\begin{align}\label{One-piece-isomorphism-important}
\mc{B}_{\mu\bm{\theta},w,\mbf{v}}^+\cong U_{q}^{MO,+}(\mf{g}_{w})_{\mbf{v}}.
\end{align}

and now by Proposition \ref{injeticivty-from-KHA-to-MO:label}, this implies the isomorphism of $\mbb{Q}(q,t_e)$-algebras:
\begin{align}
U_{q}^{MO,+}(\hat{\mf{g}}_{Q})\cong\mc{A}^{+}_{Q}.
\end{align}

Hence the proof is finished.

\end{proof}

\subsection{Isomorphism on the negative half}
The proof of the isomorphism on the negative half is totally similar to the methods of the proof for the positive half. Over this subsection we are going to use $\Delta_{\mbf{m}}^{MO}$ to stand for the nilpotent geometric coproduct $\Delta_{\mbf{m}}^{MO,\mc{L}}$ defined in \ref{nilpotent-geometric-coproduct-definition}

\begin{prop}\label{injectivity-of-integral-negative-half:label}
There is an injective map of $\mbb{Z}[q^{\pm1},t_{e}^{\pm1}]_{e\in E}$-algebras
\begin{align*}
(\mc{A}^{+,nilp,\mbb{Z}}_{Q})^{op}\hookrightarrow U_{q}^{MO,-,\mbb{Z}}(\hat{\mf{g}}_{Q})
\end{align*}
which intertwines the action over $K_{T}(\mc{M}_{Q}(\mbf{w}))$. Moreover, after identifying via the perfect pairing \ref{perfect-pairing:label}, the embedding can be refined to the bialgebra embedding when restricted to nilpotent root subalgebras and nilpotent wall subalgebras
\begin{align*}
(\mc{B}_{\mbf{m},w}^{nilp,\geq},\Delta_{\mbf{m}})\hookrightarrow (U_{q}^{MO,nilp,\geq}(\mf{g}_{w}),\Delta_{\mbf{m}}^{MO}).
\end{align*}

\end{prop}
\begin{proof}
The second part of the Proposition can be thought as the conclusion of the first part of the Proposition and Theorem \ref{Hopf-embedding}.

For the proof of the second part, this is equivalent to prove the injectivity of $\mbb{Z}[q^{\pm1},t_{e}^{\pm1}]$-algebra map
\begin{align*}
\mc{A}^{+,nilp,\mbb{Z}}_{Q}\hookrightarrow U_{q}^{MO,+,nilp,\mbb{Z}}(\hat{\mf{g}}_{Q}).
\end{align*}

Since both sides are generated by the subalgebras $\mc{B}_{\mbf{m},w}^{+,nilp,\mbb{Z}}$ and $U_{q}^{MO,nilp,+}(\mf{g}_{w})$ respectively by \ref{slope-factoristaion-integral}, \ref{integral-root-subalgebra} and Section \ref{subsection:factorisation_of_geometric_r_matrices_and_integral_maulik_okounkov_quantum_affine_algebras}. The injectivity can be deduced from the injectivity of the following map:
\begin{align}\label{nilpotent-slope-embedding}
\mc{B}_{\mbf{m},w}^{+,nilp,\mbb{Z}}\hookrightarrow U_{q}^{MO,nilp,+}(\mf{g}_{w}).
\end{align}

This can be done by evaluating the nilpotent geometric $R$-matrix in the following:
\begin{equation}\label{nilpotent-matrix-elements}
\begin{tikzcd}
&R^{+,\mc{L}}_{w,e,1}:K_{T}(\mc{L}_{Q}(0,\mbf{w}_{aux})\times\mc{L}_{Q}(\mbf{v},\mbf{w}))\arrow[r,"e\otimes\text{Id}"]&K_{T}(\mc{L}_{Q}(\mbf{n},\mbf{w}_{aux})\times\mc{L}_{Q}(\mbf{v},\mbf{w}))\\
\arrow[r,"(R^{-,\mc{L}}_{w})^{-1}"]&K_{T}(\mc{L}_{Q}(0,\mbf{w}_{aux})\times\mc{L}_{Q}(\mbf{v}+\mbf{n},\mbf{w}))
\end{tikzcd},\qquad e\in\mc{B}^{+,nilp,\mbb{Z}}_{\mbf{m},w}.
\end{equation}

Or we can write it in the following matrix way:
\begin{align*}
\langle v_{\emptyset},(R_{w}^{-,\mc{L}})^{-1}(e\otimes\text{Id})v_{\emptyset}\rangle,\qquad e\in\mc{B}^{+,nilp,\mbb{Z}}_{\mbf{m},w}.
\end{align*}

Now we take the shuffle coproduct $\Delta_{\mbf{m}}$ on $e$ via the identifying $e$ as an element in $\mc{A}^{+}_{Q}$, and by Theorem \ref{Hopf-embedding} and Proposition \ref{transpose-Hopf-algebra:proposition}, one can write down $\Delta_{\mbf{m}}(e)$ as $\Delta_{\mbf{m}}^{MO,op}(e)$ with the geometric coproduct $\Delta_{\mbf{m}}^{MO}$ defined over $U_{q}^{MO,nilp}(\hat{\mf{g}}_{Q})$. While we know that:
\begin{align}\label{coproduct-for-nilpotent-elements}
\Delta_{\mbf{m}}(e)=e\otimes\text{Id}+\sum_{i}h_{i}e_{i}'\otimes e_{i}'',\qquad e_{i}',e_{i}''\in\mc{B}_{\mbf{m},\mbf{n}'}^{+,nilp},\mbf{n}'<\mbf{n}
\end{align}
and here $h_{i}$ are the Cartan elements of the form $h_{\mbf{n}}$. Using the following relations:
\begin{align*}
(R_{w}^{-,\mc{L}})\Delta_{\mbf{m}}^{MO,op}(e)=\Delta_{\mbf{m}}^{MO}(e)(R_{w}^{-,\mc{L}}).
\end{align*}

By the definition of $\Delta_{\mbf{m}}$ and Theorem \ref{Integral-nilpotent-KHA-coproduct:label}, and also notice that the nilpotent geometric coproduct is the tranpose of the original geometric coproduct as in \ref{geometric-coproduct-transpose}. Thus we can see that $\Delta_{\mbf{m}}(e)=\Delta_{\mbf{m}}^{MO}(e)$ has the image in $\mc{B}_{\mbf{m}}^{\geq,nilp,\mbb{Z}}\otimes\mc{B}_{\mbf{m}}^{\leq,nilp,\mbb{Z}}$. Thus we have that $e_{i}',e_{i}''\in\mc{B}^{+,nilp,\mbb{Z}}_{\mbf{m},\mbf{n}'}$.

We now have that:
\begin{equation*}
\begin{aligned}
\langle v_{\emptyset},(R_{w}^{-,\mc{L}})(e\otimes\text{Id})v_{\emptyset}\rangle=&\langle v_{\emptyset},(R_{w}^{-,\mc{L}})\Delta^{MO,op}_{\mbf{m}}(e)v_{\emptyset}\rangle-\sum_{i}\langle v_{\emptyset},(R_{w}^{-,\mc{L}})h_ie_i'v_{\emptyset}\rangle e_i''\\
=&\langle v_{\emptyset},\Delta^{MO}_{\mbf{m}}(e)(R^{-,\mc{L}}_{w})v_{\emptyset}\rangle-\sum_{i}\langle v_{\emptyset},(R_{w}^{-,\mc{L}})h_ie_i'v_{\emptyset}\rangle e_i''\\
=&\langle v_{\emptyset},\Delta^{MO}_{\mbf{m}}(e)v_{\emptyset}\rangle-\sum_{i}\langle v_{\emptyset},(R_{w}^{-,\mc{L}})^{-1}e_i'v_{\emptyset}\rangle e_i''.
\end{aligned}
\end{equation*}
Now using the induction on the degrees, we have that the second term on the right hand side above belongs to $\mc{A}^{+,nilp,\mbb{Z}}_{Q}$, and the first term is equal to $e$, thus we obtain that $\langle v_{\emptyset},(R^{-,\mc{L}}_{w})^{-1}(e\otimes\text{Id})v_{\emptyset}\rangle\in\mc{A}^{+,nilp,\mbb{Z}}$. Therefore we obtain the embedding \ref{nilpotent-slope-embedding}.

\end{proof}

Now we can strengthen the proposition to the isomorphism:
\begin{thm}\label{Isomorphism-main-theorem-on-negative-half:label}
There is an isomorphism $\mbb{Z}[q^{\pm1},t_{e}^{\pm1}]_{e\in E}$-algebras
\begin{align*}
(\mc{A}^{+,nilp,\mbb{Z}}_{Q})^{op}\cong U_{q}^{MO,-,\mbb{Z}}(\hat{\mf{g}}_{Q})
\end{align*}
intertwining the action over $\bigoplus_{\mbf{v}\in\mbb{N}^I}K_{T_{\mbf{w}}}(\mc{M}_{Q}(\mbf{v},\mbf{w}))$. Moreover, we have an isomorphism of graded $\mbb{Z}[q^{\pm1},t_{e}^{\pm1}]$-modules:
\begin{align*}
(\mc{A}^{+,nilp,\mbb{Z}}_{Q})^{op}\cong U_{q}^{MO,-,\mbb{Z}}(\hat{\mf{g}}_{Q}).
\end{align*}
\end{thm}

\begin{proof}
By the perfect pairing \ref{perfect-pairing:label}, this is equivalent to prove the isomorphism of the following $\mbb{Z}[q^{\pm1},t_{e}^{\pm1}]$-algebras
\begin{align*}
\mc{A}^{+,nilp,\mbb{Z}}_{Q}\rightarrow U_{q}^{MO,+,nilp,\mbb{Z}}(\hat{\mf{g}}_{Q})
\end{align*}
and the isomorphism of graded $\mbb{Z}[q^{\pm1},t_{e}^{\pm1}]$-modules:
\begin{align*}
\mc{A}^{+,nilp,\mbb{Z}}_{Q}\cong U_{q}^{MO,+,nilp,\mbb{Z}}(\hat{\mf{g}}_{Q}).
\end{align*}

Similar to the proof of Theorem \ref{isomorphism-of-integral-positive-half:theorem}, we have the nilpotent analog  \ref{other-way-filtration} and Lemma \ref{intermediate-lemma:label}. Thus one can obtain the nilpotent analog of the commutative diagram \ref{The-most-important-diagram}:
\begin{equation}
\begin{tikzcd}
\mc{B}_{\mu\bm{\theta},\mbf{v}}^{+,nilp,\mbb{Z}}\otimes K_{T_{\mbf{w}}}(pt)\arrow[r,hook]\arrow[d,hook]&U_{q}^{MO,+,nilp,\mbb{Z}}(\mf{g}_{w})_{\mbf{v}}\otimes  K_{T_{\mbf{w}}}(pt)\arrow[d,hook]\\
\mc{A}^{+,nilp,\mbb{Z}}_{Q,\mbf{v},[\eta\bm{\theta},(1+\eta)\bm{\theta}]}\otimes  K_{T_{\mbf{w}}}(pt)\arrow[r,"\cong",shift left]& K_{T_{\mbf{w}}}(\mc{L}_{Q}(\mbf{v},\mbf{w}))\arrow[l,"\psi_{\mc{L}}",shift left]
\end{tikzcd}
\end{equation}
for $\mbf{w}=\mbf{v}$, with  $\mu\in\mbb{Q}$ and some generic $\eta\in\mbb{R}$ such that $\mu\in[\eta,1+\eta)$ and it is the smallest nontrivial rational number in this interval. Here the isomorphism at the bottom is tha nilpotent analog of the Theorem 4.1 in \cite{PT25}, where the argument over there can be replaced by the matrix factorisation category with the nilpoetnt support. The rest of the argument is totally the same as the one in Section \ref{ssub:proof_of_theorem_ref_isomorphism_of_integral_positive_half_theorem}. We conclude the isomorphism of $\mbb{Z}[q^{\pm1},t_e^{\pm1}]_{e\in E}$-modules:
\begin{align}
\mc{B}_{\mu\bm{\theta},\mbf{v}}^{+,nilp,\mbb{Z}}\cong U_{q}^{MO,+,nilp,\mbb{Z}}(\mf{g}_{w})_{\mbf{v}}.
\end{align}

Thus we conclude the isomorphism as $\mbb{Z}[q^{\pm1},t_{e}^{\pm1}]$-algebras:
\begin{align*}
\mc{A}^{+,nilp,\mbb{Z}}_{Q}\cong U_{q}^{MO,+,nilp,\mbb{Z}}(\hat{\mf{g}}_{Q}).
\end{align*}

Hence the proof is finished.
\end{proof}

\subsection{Proof of the main theorem}
Now we come to the proof of the main theorem \ref{Main-theorem-on-integral-form:theorem}. The triangular decomposition of $U_{q}^{MO,\mbb{Z}}(\hat{\mf{g}}_{Q})$ has been stated in Section \ref{subsubsection:integral_maulik_okounkov_quantum_affine_algebras}. Now combining Theorem \ref{Isomorphism-main-theorem-on-negative-half:label} and Theorem \ref{isomorphism-of-integral-positive-half:theorem}, we obtain the isomorphism of the positive half and the negative half. For the Cartan part, one can observe that 
\begin{equation*}
\begin{tikzcd}
R^{e,f}_{\mbf{w}^{aux},\mbf{w}}:&K(\mbf{0},\mbf{w}^{aux})_{loc}\otimes K(\mbf{w})_{loc}\arrow[r,"e\otimes\text{Id}"]&K(\mbf{w}^{aux})_{loc}\otimes K(\mbf{w})\\
&\arrow[r,"(\mc{R}_{\mbf{w}^{aux},\mbf{w}}^{\mbf{m}'})"]&K(\mbf{w}^{aux})_{loc}\otimes K(\mbf{w})\arrow[r,"f\otimes\text{Id}"]&K(\mbf{0},\mbf{w}^{aux})_{loc}\otimes K(\mbf{w})_{loc}.
\end{tikzcd}
\end{equation*}

It is known that if $e=f=\text{Id}$, the matrix coefficients correspond to the following expansion \cite{N23}:
\begin{align*}
\langle v_{\emptyset},(\mc{R}^{\mbf{m}'}_{\mbf{w}^{aux},\mbf{w}})v_{\emptyset}\rangle=\prod_{i\in I}\prod_{a=1}^{w_{i}^{aux}}q^{\frac{v_i}{2}}\wedge^*(\frac{(1-q)\mc{V}_i}{a_{ik}})\otimes(-).
\end{align*}

By the formula in \ref{infinite-slope-R-matrix} with \ref{Normal-bundle-formula} and \ref{Positive-half-normal-bundle-formula}, we know that the infinite slope $R$-matrix $\mc{R}^{\infty}_{\mbf{w}^{aux},\mbf{w}}$ is generated by the tautological classes $p_{d}((q-1)\mc{V}_{i})$. Thus we have matched the Cartan part of $U_{q}^{MO,0}(\hat{\mf{g}}_{Q})$ with $\mc{A}^{ext,0}_{Q}$ defined in \ref{Cartan-part-double-KHA-localised}.

Since $U_{q}^{MO,0,\mbb{Z}}(\hat{\mf{g}}_{Q})$ is generated by the matrix coefficients of $\mc{R}^{\infty}$ in \ref{infinite-slope-R-matrix}, using the formula \ref{Normal-bundle-formula} and \ref{Positive-half-normal-bundle-formula}. One can see that the coefficients are generated by $p_d(\mc{V}_i(1-q^{-1}))$ and $p_d(\mc{W}_i(1-q^{-1}))$, which is just the same as the case in \ref{main-theorem-just-localised-algebra}. Thus the proof of Theorem \ref{Main-theorem-on-integral-form:theorem} is finished.

\subsection{Isomorphism on localised wall subalgebras}\label{subsection:isomorphism_on_localised_wall_subalgebras}

The above theorem \ref{main-theorem-just-localised-algebra} implies the following result:
\begin{prop}\label{isomorphism-of-localised-wall-subalgebra:proposition}
There is an $\mbb{Q}(q,t_e)_{e\in E}$-Hopf algebra isomorphism:
\begin{align*}
(\mc{B}_{\mbf{m},w},\Delta_{\mbf{m}},S_{\mbf{m}},\epsilon,\eta)\cong (U_{q}^{MO}(\mf{g}_{w}),\Delta_{\mbf{m}}^{MO},S_{\mbf{m}}^{MO},\epsilon,\eta)
\end{align*}
which intertwines over the action over $K(\mbf{w})$.
\end{prop}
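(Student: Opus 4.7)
The plan is to deduce the proposition directly from three ingredients already in place: the Hopf algebra embedding of Theorem \ref{Hopf-embedding}, the global algebra isomorphism of Theorem \ref{main-theorem-just-localised-algebra-without-Hopf}, and the parallel slope/wall factorisations established for the two sides. Since Theorem \ref{Hopf-embedding} already produces a Hopf embedding $(\mc{B}_{\mbf{m},w},\Delta_{\mbf{m}},S_{\mbf{m}},\epsilon,\eta)\hookrightarrow (U_{q}^{MO}(\mf{g}_{w}),\Delta_{\mbf{m}}^{MO},S_{\mbf{m}}^{MO},\epsilon,\eta)$ which by construction intertwines the geometric action on $K(\mbf{w})$ (via the commutative square \ref{coproduct-commutativity} and the fact that both actions are restrictions of the global $\mc{A}^{ext}_{Q}$- and $U_{q}^{MO}(\hat{\mf{g}}_{Q})$-actions), what remains is to promote this embedding to a surjection of $\mbb{Q}(q,t_e)_{e\in E}$-algebras.

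To do this I would first produce matching wall-indexed decompositions on both sides. On the algebraic side, fix a generic $\bm{\theta}\in\mbb{Q}^I_+$ and combine the slope factorisation of Theorem \ref{Main-theorem-on-slope-factorisation:Theorem} with the root refinement of Lemma \ref{root-factorisation-for-slope:label}; this gives
$$\mc{A}^{ext}_{Q}\;\cong\;\bigotimes^{\rightarrow}_{r\in\mbb{Q}\sqcup\{\infty\}}\mc{B}_{\mbf{m}+r\bm{\theta}}\;\cong\;\bigotimes^{\rightarrow}_{(r,w)}\mc{B}_{\mbf{m}+r\bm{\theta},w},$$
where $(r,w)$ runs over pairs with $\mbf{m}+r\bm{\theta}\in w$, and $r=\infty$ contributes the Cartan $\mc{A}^{0}_{Q}$. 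On the geometric side, Proposition \ref{factorisation-for-wall-subalgebras:label} yields the parallel decomposition $U_{q}^{MO}(\hat{\mf{g}}_{Q})\cong\bigotimes^{\rightarrow}_{w}U_{q}^{MO}(\mf{g}_{w})$, with $U_{q}^{MO,0}(\hat{\mf{g}}_{Q})$ handling the infinite-slope piece. For generic $\bm{\theta}$ the bijection $w\leftrightarrow(r,w)$ is automatic.

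I would then splice these two factorisations with the wall-level embeddings of Theorem \ref{Hopf-embedding} to obtain a commutative diagram
\begin{equation*}
\begin{tikzcd}
\bigotimes^{\rightarrow}_{(r,w)}\mc{B}_{\mbf{m}+r\bm{\theta},w}\arrow[r,hook]\arrow[d,"\cong"]&\bigotimes^{\rightarrow}_{w}U_{q}^{MO}(\mf{g}_{w})\arrow[d,"\cong"]\\
\mc{A}^{ext}_{Q}\arrow[r,"\cong"]&U_{q}^{MO}(\hat{\mf{g}}_{Q})
\end{tikzcd}
\end{equation*}
whose bottom arrow is the global isomorphism of Theorem \ref{main-theorem-just-localised-algebra-without-Hopf}. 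Restricting to a finite horizontal degree $\mbf{n}\in\mbb{N}^I$, both sides decompose as finite direct sums over all ways of splitting $\mbf{n}$ across the walls. Since each summand is a finite-dimensional $\mbb{Q}(q,t_e)_{e\in E}$-vector space, the global isomorphism forces each wall-level embedding $\mc{B}_{\mbf{m},w}\hookrightarrow U_{q}^{MO}(\mf{g}_{w})$ to be surjective, hence an isomorphism.

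The main obstacle will be the compatibility in the upper square: one must verify that the embedding of Theorem \ref{Hopf-embedding} actually maps $\mc{B}_{\mbf{m}+r\bm{\theta},w}$ into $U_{q}^{MO}(\mf{g}_{w})$ for the correct wall $w$, rather than into some mixture of wall subalgebras. This should be extracted from the monomiality of the wall $R$-matrices \ref{monomiality-for-wall-R-matrices} together with the naive slope/horizontal-degree behaviour of elements of $\mc{B}_{\mbf{m}+r\bm{\theta},w}$: the $A$-degree computations of the proof of Theorem \ref{Hopf-embedding} already pin the image of a root element to the monomial-in-$a$ sector corresponding to its root, and this sector is precisely the wall $R$-matrix contribution $U_{q}^{MO}(\mf{g}_{w})$. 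Once this matching of factors is in place, the Hopf structure transports automatically from the embedding, and the intertwining of the action on $K(\mbf{w})$ is inherited from the global action, completing the proof.
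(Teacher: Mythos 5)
Your proposal is correct and follows essentially the same route as the paper: both proofs combine the slope/root factorisation of $\mc{A}^{\pm}_{Q}$ (Theorem \ref{Main-theorem-on-slope-factorisation:Theorem} and Lemma \ref{root-factorisation-for-slope:label}) with the wall factorisation of $U_{q}^{MO,\pm}(\hat{\mf{g}}_{Q})$ (Proposition \ref{factorisation-for-wall-subalgebras:label}) and the wall-level embeddings from Theorem \ref{Hopf-embedding}, then read off surjectivity of each factor from the global isomorphism of Theorem \ref{main-theorem-just-localised-algebra-without-Hopf}. Your explicit remark that one should check the embedding lands in the correct $U_{q}^{MO}(\mf{g}_{w})$ (rather than mixing walls), and your finite-dimensionality argument in each $\mbb{N}^I$-graded piece, spell out details the paper's proof leaves implicit, but they are the same argument.
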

\begin{proof}
It is enough to prove the proposition for the positive or negative half on both sides. For simplicity, we will only show the proof for the positive half.

By Theorem \ref{Main-theorem-on-slope-factorisation:Theorem} and Proposition \ref{factorisation-for-wall-subalgebras:label}, the isomorphism in Theorem \ref{Main-theorem-on-integral-form:theorem} can be factorised as: 
\begin{align*}
\mc{A}^{+}_{Q}\cong\bigotimes^{\rightarrow}_{t\in\mbb{Q},\mbf{m}+t\bm{\theta}\in w}\mc{B}^{+}_{\mbf{m}+t\bm{\theta},w}\hookrightarrow\bigotimes^{\rightarrow}_{t\in\mbb{Q},\mbf{m}+t\bm{\theta}\in w}U_{q}^{MO,+}(\mf{g}_{\mbf{m}+t\bm{\theta},w})\cong U_{q}^{MO,+}(\hat{\mf{g}}_{Q})
\end{align*}
and note that the first arrow will not be a surjection if there is one pair of $\mc{B}^{+}_{\mbf{m},w}$ that is not surjective to $U_{q}^{MO,+}(\mf{g}_{\mbf{m},w})$. But the above map is an isomorphism by Theorem \ref{main-theorem-just-localised-algebra}. Therefore every map between $\mc{B}^{+}_{\mbf{m},w}$ and $U_{q}^{MO,+}(\mf{g}_{\mbf{m},w})$ should be surjective. Thus we have finished the proof.
\end{proof}

Recall that $\mc{B}_{\mbf{m},w}$ can be realised as the Drinfeld pairing between $\mc{B}_{\mbf{m},w}^{\leq}$ and $\mc{B}_{\mbf{m},w}^{\geq}$ in Proposition \ref{Drinfeld-pairing-to-slope-subalgebra:prop}. The corresponding universal $R$-matrix will be denoted as $R_{\mbf{m},w}^{+}$.

Moreover, one can refine the result by identifying the universal $R$-matrices on both sides:
\begin{prop}\label{identification-of-wall-R-matrices:label}
We have the following identity on $\mc{B}_{\mbf{m},w}\hat{\otimes}\mc{B}_{\mbf{m},w}$:
\begin{align*}
R_{w}^{-,MO}=(R_{\mbf{m},w}^{+})^{-1}
\end{align*}
and $R_{w}^{-,MO}:=R_{w}^{-}q^{\Omega}$ stands for the wall $R$-matrix in the definition \ref{wall-R-matrix-definition} with the multiplication $q^{\Omega}$. 
\end{prop}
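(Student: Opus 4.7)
The plan is to leverage the Hopf-algebra isomorphism already obtained in Proposition \ref{isomorphism-of-localised-wall-subalgebra:proposition} and then invoke a uniqueness-type argument for the quasi-triangular structure. More precisely, I would transport the identity to the Maulik--Okounkov side: under the isomorphism $\mc{B}_{\mbf{m},w}\cong U_q^{MO}(\mf{g}_w)$ the element $R_{\mbf{m},w}^{+}$ becomes a canonical quasi-triangular structure on $U_q^{MO}(\mf{g}_w)$ coming from the Drinfeld pairing between $\mc{B}_{\mbf{m},w}^{\geq}$ and $\mc{B}_{\mbf{m},w}^{\leq}$, so the claim is that this coincides with $(R_w^{-,MO})^{-1}$ viewed inside $U_q^{MO,-}(\mf{g}_w)\hat{\otimes}U_q^{MO,+}(\mf{g}_w)$.

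Step one is to check that both elements intertwine the coproducts in the same way. By construction of $\Delta_{\mbf{m}}^{MO}$ via conjugation by $\text{Stab}_{\mbf{m}}$ and the factorisation \ref{factorisation-geometry}, a direct computation together with the Yang--Baxter equation \ref{Yang-Baxter-equation-for-wall-R-matrices} shows that $(R_w^{-,MO})^{-1}$ satisfies $\Delta_{\mbf{m}}^{MO}(a)\,(R_w^{-,MO})^{-1}=(R_w^{-,MO})^{-1}\,\Delta_{\mbf{m}}^{MO,op}(a)$ for every $a\in U_q^{MO}(\mf{g}_w)$; on the algebraic side, the Drinfeld double construction forces the same relation for $R_{\mbf{m},w}^{+}$ with respect to $\Delta_{\mbf{m}}$ and $\Delta_{\mbf{m}}^{op}$. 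Since Theorem \ref{Hopf-embedding} has identified the two coproducts on $\mc{B}_{\mbf{m},w}$, both elements produce the same intertwiner and, moreover, both are lower-triangular with respect to the horizontal grading (the reduced part $R_w^{-}$ is lower-triangular by \ref{triangular-rule} and $R_{\mbf{m},w}^{+}$ is lower-triangular by convention \ref{algebraic-upper-triangular-universal-R-matrix-convention}).

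Step two is to use this intertwining together with primitivity to reduce to a pairing computation. By Theorem \ref{primitivity:Theorem} and Proposition \ref{primitivity-of-wall-subalgebra:proposition}, both $\mc{B}_{\mbf{m},w}$ and $U_q^{MO}(\mf{g}_w)$ are generated by primitive elements, and the freeness result Theorem \ref{freeness-of-wall-subalgebra:theorem} guarantees that the graded primitive components $U_q^{MO,+,prim}(\mf{g}_w)_{\mbf{v}}$ are of finite rank. Writing $R_w^{-,MO}=q^{\Omega}(\mathrm{Id}+\sum_{\mbf{v}>\mbf{0}}R_{w,-\mbf{v}}^{-})$ and pairing the horizontal degree $\mbf{v}$ component of $(R_w^{-,MO})^{-1}$ with primitive elements $E\in\mc{B}_{\mbf{m},w,\mbf{v}}^{+}$ and $F\in\mc{B}_{\mbf{m},w,-\mbf{v}}^{-}$ via the action on $v_{\emptyset}$, the commutation relation \ref{commutation-relation-for-primitive-elements} and the computation already executed in Step II of the proof of Theorem \ref{Hopf-embedding} pin down the pairing value as $\gamma_{E,F}(h_{-\mbf{v}}-h_{\mbf{v}})$ for a scalar $\gamma_{E,F}$ that matches the Drinfeld pairing up to the Cartan normalisation encoded in $q^{\Omega}$.

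Step three is then the uniqueness argument: in a Hopf algebra generated by primitive elements and equipped with a non-degenerate Drinfeld-style bialgebra pairing, the universal $R$-matrix is uniquely determined by its restriction to the primitive components, since the comultiplication-compatibility relations $(\Delta\otimes\mathrm{Id})R=R_{13}R_{23}$ and $(\mathrm{Id}\otimes\Delta)R=R_{13}R_{12}$ allow one to reconstruct all higher-degree tensor components recursively from the primitive ones. Applying this uniqueness with Step one and Step two yields the desired identity $R_w^{-,MO}=(R_{\mbf{m},w}^{+})^{-1}$. The main obstacle I anticipate is ensuring that the recursive reconstruction genuinely converges inside the completed tensor product $\mc{B}_{\mbf{m},w}\hat{\otimes}\mc{B}_{\mbf{m},w}$ and matches the topology in which $R_w^{-,MO}$ is naturally written; handling this carefully requires the horizontal grading to be well-behaved under $\Delta_{\mbf{m}}$, which is precisely the content of the root-factorisation in Lemma \ref{root-factorisation-for-slope:label} applied wall by wall.
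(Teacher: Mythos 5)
Your outline captures the central idea — pin down the primitive components of both $R$-matrices and then argue that this forces equality — and Steps 1 and 2 are close in spirit to what the paper does. In particular, your use of the commutation relation \ref{commutation-relation-for-primitive-elements} together with the matrix-coefficient computation on $v_{\emptyset}$ to match the primitive-degree components is essentially the same mechanism as the paper's. The divergence, and the real gap, is in Step 3.

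You assert that in a Hopf algebra generated by primitives with a nondegenerate bialgebra pairing, the universal $R$-matrix is uniquely determined by its primitive components "since the comultiplication-compatibility relations $(\Delta\otimes\mathrm{Id})R=R_{13}R_{23}$ and $(\mathrm{Id}\otimes\Delta)R=R_{13}R_{12}$ allow one to reconstruct all higher-degree tensor components recursively from the primitive ones." This is plausible but not demonstrated: the cabling relations by themselves do not express the degree-$\mbf{v}$ piece of $R$ as a function of strictly lower-degree data — what they encode is a compatibility between $\Delta$ restricted to the positive half and multiplication in the negative half, and extracting a recursion from that requires invoking the nondegeneracy of the Drinfeld pairing to invert the relevant pairing matrices, something you gesture at but do not carry out. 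The convergence concern you raise at the end is also left unresolved. As written, Step 3 is an asserted lemma rather than an argument.

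The paper sidesteps the reconstruction issue entirely. After matching the commutators $[(1\otimes E), (R^{+}_{\mbf{m},w})_{\mbf{v}}] = E\otimes(h_{\mbf{v}}-h_{-\mbf{v}}) = [(1\otimes E), R_{w,\mbf{v}}^{-}]$ for every primitive $E$ (which is your Step 2), it sets $S := \sum_{\mbf{v}} \bigl(R_{w,\mbf{v}}^{-} - (R^{+}_{\mbf{m},w})_{\mbf{v}}\bigr)$ and observes that $[1\otimes E, S]=0$ and $[F\otimes 1, S]=0$ for all primitives $E$, $F$ on either side. Because $\mc{B}_{\mbf{m},w}^{\pm}\cong U_{q}^{MO,\pm}(\mf{g}_w)$ are generated by primitives (Theorem \ref{primitivity:Theorem}, Proposition \ref{primitivity-of-wall-subalgebra:proposition}), $S$ centralises the whole algebra; a graded element centralising everything must sit in degree zero, and the degree-zero piece of $S$ vanishes by construction since both $R$-matrices have the identity there. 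Hence $S=0$. This avoids any recursive reconstruction, does not need the cabling relations, and makes the convergence issue moot. If you want to rescue your Step 3 as stated, you would need to prove the reconstruction claim with the pairing nondegeneracy made explicit; but the centraliser argument is shorter and is what the paper actually uses.
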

\begin{proof}
Note that the isomorphism \ref{isomorphism-of-localised-wall-subalgebra:proposition} and Theorem \ref{primitivity:Theorem} and Proposition \ref{primitivity-of-wall-subalgebra:proposition} imply that we have the isomorphism on the $\mbb{Q}(q,t_e)$-modules of the primitive parts:
\begin{align*}
\mc{B}^{\pm,prim}_{\mbf{m},w}\cong U_{q}^{MO,\pm,prim}(\mf{g}_{w}).
\end{align*}
On the other hand, since the universal $R$-matrix $R^{+}_{\mbf{m},w}$ is independent of the choice of the basis in $\mc{B}^{\pm,prim}_{\mbf{m},w}$, it is equivalent to say that we can choose the corresponding suitable basis in $U_{q}^{MO,\pm,prim}(\mf{g}_{w})$. 

By the grading on $\mc{B}^{\pm,prim}_{\mbf{m},w}=\bigoplus_{\mbf{v}\in\mbb{N}^I}\mc{B}^{\pm,prim}_{\mbf{m},w,\mbf{v}}$, one could write down the universal $R$-matrix $(R^{+}_{\mbf{m},w})^{-1}$ as:
\begin{align*}
(R^{+}_{\mbf{m},w})^{-1}=q^{\Omega}(\text{Id}+\sum_{\mbf{v}\in\mbb{N}^I}R^{+}_{\mbf{m},w})_{\mbf{v}}).
\end{align*}
Now choosing arbitrary $E\in\mc{B}^{\pm,prim}_{\mbf{m},w,\mbf{v}}\cong U_{q}^{MO,\pm,prim}(\mf{g}_{w})_{\mbf{v}}$. Similar to the result in \ref{commutation-relation-for-primitive-elements}, we have that:
\begin{align*}
[(1\otimes E),(R^{+}_{\mbf{m},w})_{\mbf{v}}]=E\otimes (h_{\mbf{v}}-h_{-\mbf{v}})
\end{align*}
which implies that:
\begin{align*}
[(1\otimes E),R_{w,\mbf{v}}^{-}-(R^{+}_{\mbf{m},w})_{\mbf{v}}]=0
\end{align*}
for arbitrary primitive vectors $E$. Now we denote $S_{\mbf{v}}:=R_{w,\mbf{v}}^{-}-(R^{+}_{\mbf{m},w})_{\mbf{v}}$ and $S=\sum_{\mbf{v}\in\mbb{N}^I}S_{\mbf{v}}$, since by Theorem \ref{primitivity:Theorem} and Proposition \ref{primitivity-of-wall-subalgebra:proposition}, $\mc{B}_{\mbf{m},w}^{\pm}\cong U_{q}^{MO,\pm}(\mf{g}_{w})$ is generated by the primitive elements, we have that for arbitrary elements $L\in\mc{B}_{\mbf{m},w}^{+}\cong U_{q}^{MO,+}(\mf{g}_{w})$, we have that:
\begin{align*}
[(1\otimes L),S]=0,\qquad\forall S\in\mc{B}_{\mbf{m},w}^{+}.
\end{align*}
On the other side, one can also do the similar proof as above to show that for arbitrary $M\in\mc{B}_{\mbf{m},w}^{-}\cong U_{q}^{MO,-}(\mf{g}_{w})$, we have that:
\begin{align*}
[(M\otimes 1),S]=0,\qquad\forall M\in\mc{B}_{\mbf{m},w}^{-}
\end{align*}
and thus we have that $S$ is a constant operator concentrating on degree $\mbf{0}$, and by definition $S=0$. Thus the proof is finished.
\end{proof}

Now combining the above two propositions, we have the following theorem:
\begin{thm}\label{isomorphism-of-slope-wall-quasi-triangular:label}
There is a quasi-triangular $\mbb{Q}(q,t_e)_{e\in E}$-Hopf algebra isomorphism:
\begin{align*}
(\mc{B}_{\mbf{m},w},R_{\mbf{m},w}^+,\Delta_{\mbf{m}},S_{\mbf{m}},\epsilon,\eta)\cong (U_{q}^{MO}(\mf{g}_{w}),q^{-\Omega}(R_{w}^{-})^{-1},\Delta_{\mbf{m}}^{MO},S_{\mbf{m}}^{MO},\epsilon,\eta)
\end{align*}
which intertwines over the action over $K(\mbf{w})$.
\end{thm}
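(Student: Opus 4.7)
The plan is to assemble the theorem from the two results immediately preceding it, namely Proposition \ref{isomorphism-of-localised-wall-subalgebra:proposition} (the Hopf algebra isomorphism) and Proposition \ref{identification-of-wall-R-matrices:label} (the identification of universal $R$-matrices), after verifying that they fit together into a single quasi-triangular statement. First I would invoke Proposition \ref{isomorphism-of-localised-wall-subalgebra:proposition} to obtain the Hopf $\mbb{Q}(q,t_e)_{e\in E}$-algebra isomorphism
\[
(\mc{B}_{\mbf{m},w},\Delta_{\mbf{m}},S_{\mbf{m}},\epsilon,\eta)\cong (U_{q}^{MO}(\mf{g}_{w}),\Delta_{\mbf{m}}^{MO},S_{\mbf{m}}^{MO},\epsilon,\eta),
\]
which in particular provides the intertwining of the actions on $K(\mbf{w})$ coming from Theorem \ref{main-theorem-just-localised-algebra}, since that theorem's action-intertwining restricts to each root/wall block under the factorizations in Theorem \ref{Main-theorem-on-slope-factorisation:Theorem} and Proposition \ref{factorisation-for-wall-subalgebras:label}.

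Next I would use Proposition \ref{identification-of-wall-R-matrices:label}, which states $R_{w}^{-,MO} = (R_{\mbf{m},w}^{+})^{-1}$ inside $\mc{B}_{\mbf{m},w}\hat\otimes \mc{B}_{\mbf{m},w}$. Recalling the convention $R_{w}^{-,MO} := R_{w}^{-}\cdot q^{\Omega}$ from subsubsection \ref{subsubsection:integral_maulik_okounkov_quantum_affine_algebras}, inversion yields $R_{\mbf{m},w}^{+} = q^{-\Omega}(R_{w}^{-})^{-1}$, which is exactly the pairing of $R$-matrices claimed in the theorem. So under the Hopf algebra identification from the first step, the algebraic universal $R$-matrix $R_{\mbf{m},w}^{+}\in \mc{B}_{\mbf{m},w}^{\geq}\hat\otimes \mc{B}_{\mbf{m},w}^{\leq}$ goes to the geometric wall $R$-matrix $q^{-\Omega}(R_{w}^{-})^{-1}$.

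The remaining point is compatibility: that both sides are quasi-triangular Hopf algebras with their respective $R$-matrices in the usual sense, and that the isomorphism respects this structure. On the algebra side, $(\mc{B}_{\mbf{m},w}, R_{\mbf{m},w}^{+})$ is quasi-triangular by construction through the Drinfeld double with respect to the Drinfeld pairing of Proposition \ref{Drinfeld-pairing-to-slope-subalgebra:prop}. On the geometric side, $(U_{q}^{MO}(\mf{g}_w), q^{-\Omega}(R_{w}^{-})^{-1})$ is quasi-triangular because $q^{\Omega}R_{w}^{-}$ (and hence its inverse) satisfies the Yang–Baxter equation \eqref{Yang-Baxter-equation-for-wall-R-matrices} and intertwines $\Delta_{\mbf{m}}^{MO}$ with its opposite through the definition \eqref{coproduct-geometric}. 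Since the isomorphism of Hopf algebras and the equality of the distinguished elements on both sides have already been established, the quasi-triangular structures match automatically. I do not anticipate a real obstacle at this stage; the substantive work — matching the coproducts and matching the $R$-matrices — is carried out respectively in Theorem \ref{Hopf-embedding}/Proposition \ref{isomorphism-of-localised-wall-subalgebra:proposition} and in Proposition \ref{identification-of-wall-R-matrices:label}, and what remains is only a formal combination of these statements.
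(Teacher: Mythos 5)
Your proof is correct and follows the same route as the paper, which simply combines Proposition \ref{isomorphism-of-localised-wall-subalgebra:proposition} (Hopf algebra isomorphism) with Proposition \ref{identification-of-wall-R-matrices:label} (identification of universal $R$-matrices) and observes that the quasi-triangular structures then match. The additional compatibility checks you spell out (Yang–Baxter from \ref{Yang-Baxter-equation-for-wall-R-matrices}, intertwining of $\Delta_{\mbf{m}}^{MO}$ with its opposite) are implicit in the paper's ``combining the above two propositions.''
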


\subsubsection{Integrality for the slope $R$-matrices}
One of the interesting result of Theorem \ref{isomorphism-of-slope-wall-quasi-triangular:label} is that one can prove that the evaluation of the universal $R$-matirx $R_{\mbf{m}}^{+}$ for the localised slope subalgebra $\mc{B}_{\mbf{m}}$ on the modules $K(\mbf{w}_1)\otimes K(\mbf{w}_2)$ can be lifted to its integral form:
\begin{prop}\label{integrality-for-R-matrix:label}
Let $(\pi_{\mbf{w}_1}\otimes\pi_{\mbf{w}_2})(R_{\mbf{m}}^+)$ be the universal $R$-matrix of $\mc{B}_{\mbf{m}}$ valued in $K(\mbf{w}_1)\otimes K(\mbf{w}_2)$. Then it can be lifted to the integral form $K_{T_{\mbf{w}_1}}(\mc{M}_{Q}(\mbf{w}_1))\otimes K_{T_{\mbf{w}_2}}(\mc{M}_{Q}(\mbf{w}_2))$.
\end{prop}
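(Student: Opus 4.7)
The plan is to transport the question through Theorem \ref{isomorphism-of-slope-wall-quasi-triangular:label} to the geometric side, where each building block is an integral $K$-theory class by construction, and then exploit the triangular structure to guarantee that the (a priori infinite) ordered product of such blocks truncates to a finite composition on each fixed weight component.

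First I would invoke Lemma \ref{root-factorisation-for-slope:label} to factor the slope subalgebras $\mc{B}_{\mbf{m}}^{\pm}$ as ordered tensor products $\bigotimes^{\rightarrow}_{w\ni\mbf{m}}\mc{B}_{\mbf{m},w}^{\pm}$ of root subalgebras, noting that the Drinfeld pairing is preserved. Consequently, the universal $R$-matrix $R_{\mbf{m}}^+$ factorises as an ordered product $\prod^{\rightarrow}_{w\ni\mbf{m}}R_{\mbf{m},w}^+$ of root universal $R$-matrices $R_{\mbf{m},w}^+\in\mc{B}_{\mbf{m},w}^{\geq}\hat{\otimes}\mc{B}_{\mbf{m},w}^{\leq}$. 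Under the quasi-triangular Hopf algebra isomorphism of Theorem \ref{isomorphism-of-slope-wall-quasi-triangular:label}, each such factor is identified on the geometric side with $q^{-\Omega}(R_w^-)^{-1}$ acting on $K(\mbf{w}_1)\otimes K(\mbf{w}_2)$.

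Next I would observe that the wall $R$-matrix $R_w^-$ in \ref{wall-R-matrix-definition} is by construction an integral class in $K_{T_{\mbf{w}_1+\mbf{w}_2}}(X^A\times X^A)$, since it is a composition of two integral stable envelope classes. The triangularity \ref{triangular-rule} allows one to write $R_w^-=\text{Id}+\sum_{\mbf{n}>\mbf{0}}R^{-}_{w,-\mbf{n}}$, and on any fixed $(\mbf{v}_1,\mbf{v}_2)$-component only finitely many $\mbf{n}$ contribute because the shifted dimension vectors must remain in $\mbb{N}^{I}$. Hence on each such component the Neumann expansion $(R_w^-)^{-1}=\sum_{k\geq 0}(-1)^k(R_w^--\text{Id})^k$ terminates after finitely many integral steps, and multiplication by $q^{-\Omega}$, which reduces to a Laurent monomial in $q$ on each weight subspace, preserves integrality. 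Thus $(\pi_{\mbf{w}_1}\otimes\pi_{\mbf{w}_2})(R_{\mbf{m},w}^+)$ lifts to an integral operator.

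The same finiteness argument shows that for fixed $(\mbf{v}_1,\mbf{v}_2,\mbf{w}_1,\mbf{w}_2)$ only finitely many walls $w\ni\mbf{m}$ contribute a nontrivial factor to $\prod^{\rightarrow}_{w\ni\mbf{m}}R_{\mbf{m},w}^+$, so the ordered product reduces to a finite composition of integral operators on each fixed weight component and is therefore itself integral. The main subtlety I anticipate will be to match rigorously the algebraic ordering of the root $R$-matrix product (dictated by a choice of rational direction as in Theorem \ref{Main-theorem-on-slope-factorisation:Theorem}) with the geometric wall ordering along a small segment transverse to $\mbf{m}$, and to check that the Hopf pairing identification respects the tensor factorisation of Lemma \ref{root-factorisation-for-slope:label}; however, since integrality is a property of the composite endomorphism on each fixed weight component rather than of the ordering, these bookkeeping issues do not obstruct the conclusion.
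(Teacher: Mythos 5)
Your proposal is correct and follows essentially the same route as the paper: factor $R_{\mbf{m}}^+$ into an ordered product of root $R$-matrices $R_{\mbf{m},w}^+$, identify each factor with $q^{-\Omega}(R_w^-)^{-1}$ via Theorem~\ref{isomorphism-of-slope-wall-quasi-triangular:label}, and use finiteness of contributing walls on each weight component. Your Neumann-expansion remark supplying why $(R_w^-)^{-1}$ is integral (the strictly triangular part is nilpotent on each graded piece) is a small but welcome explicitation of a point the paper states without elaboration.
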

\begin{proof}
By definition, we know that $\mc{B}_{\mbf{m}}$ is generated by the root subalgebra $\mc{B}_{\mbf{m},w}$. This implies that the universal $R$-matrix $R_{\mbf{m}}^+$ can be written as the ordered product of the universal $R$-matrix $R_{\mbf{m},w}^+$ for $\mc{B}_{\mbf{m},w}$:
\begin{align}\label{factorisation-of-slope-R-matrices-into-root-R-matrices}
R_{\mbf{m}}^+=\prod_{w}R_{\mbf{m},w}^+.
\end{align}
Now we denote the decomposition of $R_{\mbf{m}}^+$ by degree as:
\begin{align}
R_{\mbf{m}}^+=\text{Id}+\sum_{\mbf{n}\in\mbb{N}^I}R_{\mbf{m},\mbf{n}}^+.
\end{align}
By Theorem \ref{isomorphism-of-slope-wall-quasi-triangular:label}, this can be expressed as the composition of $q^{-\Omega}(R_{w,\mbf{n}}^-)^{-1}$ from the wall $R$-matrices, which is an integral $K$-theory class.

Next observe that when we restricted to each weight pieces $K(\mbf{v}_1,\mbf{w}_1)\otimes K(\mbf{v}_2,\mbf{w}_2)$, only finitely many walls in the product \ref{factorisation-of-slope-R-matrices-into-root-R-matrices}, therefore each $R_{\mbf{m},\mbf{n}}^+$ is an integral $K$-theory class, thus the proposition is proved.

\end{proof}

\subsection{Localised isomorphism as the Hopf algebra}

In this subsection we prove the isomorphism of the MO quantum loop group $U_{q}^{MO}(\hat{\mf{g}}_{Q})$ and the extended double KHA $\mc{A}^{ext}_Q$ in the localised form, i.e. as the Hopf $\mbb{Q}(q,t_e)_{e\in E}$-algebras:

\begin{thm}\label{main-theorem-just-localised-algebra}
There exists an isomorphism of Hopf $\mbb{Q}(q,t_e)_{e\in E}$-algebras between the Maulik-Okounkov quantum loop group and the extended double KHA
\begin{align*}
(U_{q}^{MO}(\hat{\mf{g}}_{Q}),\Delta^{MO,op}_{\mbf{m}},S_{\mbf{m}},\epsilon,\eta)\cong(\mc{A}_{Q}^{ext},\Delta_{(\mbf{m})},S_{\mbf{m}},\epsilon,\eta)
\end{align*}
which intertwines the action over $K(\mbf{w})$. Here the coproduct $\Delta_{(\mbf{m})}$ is defined in \ref{definition-of-m-universal-coproduct}.
\end{thm}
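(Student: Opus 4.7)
The plan is to realise both $\mathcal{A}^{ext}_{Q}$ and $U_{q}^{MO}(\hat{\mathfrak{g}}_{Q})$ as subalgebras of $\prod_{\mathbf{w}}\mathrm{End}(K(\mathbf{w}))$ via their respective actions and to show that these two subalgebras coincide. The algebra action of $\mathcal{A}^{ext}_{Q}$ is injective by Theorem \ref{injectivity-of-localised-double-KHA:theorem}, so if the images agree we automatically obtain an isomorphism of $\mathbb{Q}(q,t_{e})$-algebras intertwining the $K(\mathbf{w})$-action. The Hopf-algebra compatibility will then be checked separately, factor by factor in the slope/wall decomposition.

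First I would establish the inclusion $U_{q}^{MO}(\hat{\mathfrak{g}}_{Q})\hookrightarrow\mathcal{A}^{ext}_{Q}$. By Lemma \ref{generators-for-positive-negative-half:label} the positive and negative halves are generated by matrix coefficients $\langle v_{\emptyset}, R^{\pm}_{\mathbf{m},\infty}v\rangle$, and the Cartan part is generated by matrix coefficients of $\mathcal{R}^{\infty}$. Theorem \ref{Commutativity-of-infty-coproduct-with-infty-stable-envelopes} identifies $\text{Stab}_{\infty}$ with the intertwiner of the Drinfeld coproduct on $\mathcal{A}^{ext}_{Q}$, so matrix coefficients of $\mathcal{R}^{\infty}$ acting on products $K(\mathbf{w}_1)\otimes K(\mathbf{w}_2)$ manifestly come from elements of $\mathcal{A}^{ext}_{Q}$. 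Using the factorisation \ref{factorisation-geometry} of $\mathcal{R}^{\mathbf{m}}$ into $\mathcal{R}^{\infty}$ and products of wall $R$-matrices, and identifying each wall $R$-matrix with (a suitable reduced factor of) the universal $R$-matrix of the corresponding root subalgebra $\mathcal{B}_{\mathbf{m},w}$, one expresses every generator as the action of an element of $\mathcal{A}^{ext}_{Q}$. (This matches the matrix-coefficient computation of \cite{N23}.)

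For the reverse inclusion I would use Theorem \ref{surjectivity-noetherian-lemma-localised:theorem}: every vector $v\in K(\mathbf{w})$ is of the form $Fv_{\emptyset}$ for some $F\in\mathcal{A}^{+}_{Q}$, and every covector arises from $\mathcal{A}^{-}_{Q}$ acting through the perfect pairing \ref{perfect-pairing:label}. Given the generating set described above, the matrix coefficient $\langle v_{\emptyset}, R^{+}_{\mathbf{m},\infty}v\rangle$ can therefore be rewritten, via the factorisation and via Theorem \ref{Commutativity-of-infty-coproduct-with-infty-stable-envelopes} applied to the infinite-slope part, as the action of a specific shuffle element in $\mathcal{A}^{+}_{Q}$. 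Doing the symmetric argument for the negative and Cartan halves shows that each of $\mathcal{A}^{\pm,0}_{Q}$ lies in $U_{q}^{MO}(\hat{\mathfrak{g}}_{Q})$, so by the triangular decompositions on both sides (Theorem \ref{Main-theorem-on-slope-factorisation:Theorem} and Proposition \ref{triangular-decomposition:label} in the localised form) the two subalgebras coincide.

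For the Hopf structure, the Drinfeld coproduct is handled by Theorem \ref{Commutativity-of-infty-coproduct-with-infty-stable-envelopes}, since it equals the geometric coproduct $\Delta^{MO}_{\infty}$ determined by $\text{Stab}_{\infty}$. To pass from $\Delta$ to a general $\Delta_{\mathbf{m}}^{MO,op}$ I would invoke the slope factorisation of Theorem \ref{Main-theorem-on-slope-factorisation:Theorem} together with the wall factorisation of Proposition \ref{factorisation-for-wall-subalgebras:label}, defining $\Delta_{(\mathbf{m})}$ factor by factor as $\Delta_{\mathbf{m}+r\boldsymbol{\theta}}$ on each $\mathcal{B}_{\mathbf{m}+r\boldsymbol{\theta}}$ and extending multiplicatively along the ordered tensor product. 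The key input is then that, on each wall, the isomorphism restricts to a quasi-triangular Hopf isomorphism $\mathcal{B}_{\mathbf{m},w}\cong U_{q}^{MO}(\mathfrak{g}_{w})$ sending $R^{+}_{\mathbf{m},w}$ to $q^{-\Omega}(R_{w}^{-})^{-1}$; this is the content of the wall isomorphism to be proved later via $A$-degree comparison (Theorem \ref{Hopf-embedding}). The main obstacle is precisely this last step: matching the algebraic slope coproduct $\Delta_{\mathbf{m}}$ on a root subalgebra $\mathcal{B}_{\mathbf{m},w}$ with the geometric coproduct $\Delta^{MO}_{\mathbf{m}}$ on the wall subalgebra $U_{q}^{MO}(\mathfrak{g}_{w})$ requires showing that the two coproducts have the same $A$-degree expansions, which uses the monomiality \ref{monomiality-for-wall-R-matrices} of wall $R$-matrices in the spectral parameter together with the primitivity result Theorem \ref{primitivity:Theorem} to reduce to primitive generators. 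Once this is in place, Lemma \ref{root-factorisation-for-slope:label} and Proposition \ref{factorisation-for-wall-subalgebras:label} assemble the wall-wise isomorphisms into the full Hopf isomorphism, and the intertwining of the $K(\mathbf{w})$-action is automatic from the construction.
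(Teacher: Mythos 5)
Your overall plan (realise both algebras inside $\prod_{\mbf{w}}\text{End}(K(\mbf{w}))$, show the images agree, then check the Hopf structure wall by wall) matches the paper's strategy, but there is a genuine logical gap in the way you propose to establish that the image of $U_{q}^{MO}(\hat{\mf{g}}_{Q})$ is contained in the image of $\mc{A}_{Q}^{ext}$. You want to factor $\mc{R}^{\mbf{m}}$ into $\mc{R}^{\infty}$ and wall $R$-matrices and then identify each wall $R$-matrix $R_{w}^{\pm}$ with the universal $R$-matrix of the root subalgebra $\mc{B}_{\mbf{m},w}$. That identification is precisely Proposition \ref{identification-of-wall-R-matrices:label}, and in the paper it is \emph{deduced from} the isomorphism of Theorem \ref{main-theorem-just-localised-algebra-without-Hopf} (via Theorem \ref{primitivity:Theorem}, Proposition \ref{primitivity-of-wall-subalgebra:proposition}, and Proposition \ref{isomorphism-of-localised-wall-subalgebra:proposition}), not proved independently. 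Using it here is circular. Similarly, you cite Theorem \ref{Hopf-embedding} as delivering the wall-level \emph{isomorphism} $\mc{B}_{\mbf{m},w}\cong U_{q}^{MO}(\mf{g}_{w})$, but that theorem only proves the \emph{embedding}; the surjectivity is again a consequence of the algebra isomorphism you are trying to establish.

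The step you are missing is the module-theoretic finiteness argument that carries the paper's proof of Theorem \ref{main-theorem-just-localised-algebra-without-Hopf}. One introduces the finitely-generated filtered pieces $\mc{A}^{+}_{Q,\mbf{v},[\mbf{m},\mbf{m}']}$ and $U_{q}^{MO,+}(\hat{\mf{g}}_{Q})_{\mbf{v},[\mbf{m},\mbf{m}']}$ (Lemma \ref{intermediate-lemma:label}), shows via Proposition \ref{universal-injective-positive-half:proposition} that the evaluation map $U_{q}^{MO,+}(\hat{\mf{g}}_{Q})_{\mbf{v},[\mbf{m},\mbf{m}']}\otimes\mbb{F}_{\mbf{w}}\hookrightarrow K_{T_{\mbf{w}}}(\mc{M}_{Q}(\mbf{v},\mbf{w}))_{loc}$ is injective for $\mbf{w}$ large, combines this with the surjection of Theorem \ref{surjectivity-noetherian-lemma-localised:theorem} factored through $\mc{A}^{+}_{Q,\mbf{v},[\mbf{m}'',\mbf{m}''']}$, and chooses a splitting to obtain a reverse inclusion $U_{q}^{MO,+}(\hat{\mf{g}}_{Q})_{\mbf{v},[\mbf{m},\mbf{m}']}\hookrightarrow\mc{A}^{+}_{Q,\mbf{v},[\mbf{m}'',\mbf{m}''']}$; exhausting the filtration and using the injection of Proposition \ref{injeticivty-from-KHA-to-MO:label} then yields the isomorphism. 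Your proposal correctly invokes Theorem \ref{surjectivity-noetherian-lemma-localised:theorem} but does not use it this way: writing $v=Fv_{\emptyset}$ and asserting that $\langle v_{\emptyset}, R^{+}_{\mbf{m},\infty}v\rangle$ can be "rewritten" as a shuffle element presupposes precisely the algebraic description of $R^{+}_{\mbf{m},\infty}$ that is the conclusion, not a hypothesis, since Theorem \ref{Commutativity-of-infty-coproduct-with-infty-stable-envelopes} only controls $\text{Stab}_{\infty}$, not $\text{Stab}_{\mbf{m}}$ for finite $\mbf{m}$.

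Once the algebra isomorphism is in hand by the abstract argument, the Hopf-algebra part is handled as you say: one verifies $\Delta = \Delta^{MO}_{\infty}$ via Theorem \ref{Commutativity-of-infty-coproduct-with-infty-stable-envelopes} and then conjugates by the slope/wall $R$-matrix products to match $\Delta_{(\mbf{m})}$ (defined in \ref{definition-of-m-universal-coproduct} by conjugating the Drinfeld coproduct, not "extended multiplicatively along the tensor factorisation") with $\Delta^{MO}_{\mbf{m}}$. This part of your sketch is sound, but it rests on the wall isomorphism which, to repeat, you must \emph{derive} from the algebra isomorphism rather than assume.
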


\begin{proof}
We first define the coproduct $\Delta_{(\mbf{m})}$ on $\mc{A}^{ext}_{Q}$.
\subsubsection{Coproduct on $\mc{A}^{ext}_{Q}$}
Recall from the paragraph below the Proposition \ref{Drinfeld-pairing-to-slope-subalgebra:prop}, we denote the reduced universal $R$-matrix for the slope subalgebra $\mc{B}_{\mbf{m}}$ as $R_{\mbf{m}}'$. In this way, fix the slope $\mbf{m}\in\mbb{Q}^{I}$, one can define the coproduct $\Delta_{(\mbf{m})}$ on $\mc{A}^{ext}_{Q}$ by:
\begin{equation}\label{definition-of-m-universal-coproduct}
\begin{aligned}
\Delta_{(\mbf{m})}(F)=&[\prod^{\rightarrow}_{\mu\in\mbb{Q}_{>0}\sqcup\{\infty\}}(R_{\mbf{m}+\mu\bm{\theta}}^-)^{-1}]\cdot\Delta(F)[\prod^{\rightarrow}_{\mu\in\mbb{Q}_{>0}\sqcup\{\infty\}}(R_{\mbf{m}+\mu\bm{\theta}}^-)^{-1}]^{-1}\\
=&[\prod^{\rightarrow}_{\mu\in\mbb{Q}_{>0}\sqcup\{\infty\}}(\prod^{\rightarrow}_{w}R_{\mbf{m}+\mu\bm{\theta},w}^{-})^{-1}]\cdot\Delta(F)[\prod^{\rightarrow}_{\mu\in\mbb{Q}_{>0}\sqcup\{\infty\}}(\prod^{\rightarrow}_{w}R_{\mbf{m}+\mu\bm{\theta},w}^{-})^{-1}]^{-1}.
\end{aligned}
\end{equation}

The definition was given in \cite{N22}, and when $\Delta_{(\mbf{m})}$ is restricted to $\mc{B}_{\mbf{m}}$, the coproduct $\Delta_{(\mbf{m})}$ is equal to $\Delta_{\mbf{m}}$ on $\mc{B}_{\mbf{m}}$ as defined in \ref{coproduct-m-slope-subalgebra-positive} and \ref{coproduct-m-slope-subalgebra-negative}, and it has been proved in \cite{Z24}.

\subsubsection{Matching the coproduct $\Delta_{\mbf{m}}^{MO}$ on $U_{q}^{MO}(\hat{\mf{g}}_{Q})$}
On the other hand, by the factorisation property of the stable envelope \ref{factorisation-geometry}, the coproduct $\Delta_{\mbf{m}}^{MO}$ defined in \ref{coproduct-geometric} on $U_{q}^{MO}(\hat{\mf{g}}_{Q})$ can be interpreted as:
\begin{align*}
\Delta_{\mbf{m}}^{MO}(F)=[\prod_{w>\mbf{m}}(R_{w}^{+})]\Delta_{\infty}^{MO}(F)[\prod_{w>\mbf{m}}(R_{w}^{+})]^{-1}.
\end{align*}

Now by the result of Theorem \ref{isomorphism-of-slope-wall-quasi-triangular:label}, we have that:
\begin{align*}
\Delta^{MO}_{\mbf{m}}(F)=\Delta_{(\mbf{m})}(F).
\end{align*}

Since the antipode map structure $S_{\mbf{m}}$ are induced from the coproduct structure. Combining the Theorem \ref{Main-theorem-on-integral-form:theorem}, We have finished the proof of Theorem \ref{main-theorem-just-localised-algebra}.

\end{proof}

\subsection{Isomorphism as the integral Hopf algebras}
Now we can define an integral form of the slope subalgebra $\mc{B}_{\mbf{m}}^{\mbb{Z}}$ as follows:
\begin{align*}
\mc{B}_{\mbf{m}}^{\mbb{Z}}:=\mc{B}_{\mbf{m}}^{+,\mbb{Z}}\otimes\mbb{Z}[q^{\pm1},t_{e}^{\pm}][h_{i,\pm0}]_{i\in I}\otimes(\mc{B}_{\mbf{m}}^{+,nilp,\mbb{Z}})^{op}
\end{align*}
and similarly for the integral root subalgebra $\mc{B}_{\mbf{m},w}^{\mbb{Z}}$ inside of the slope subalgebra:
\begin{align*}
\mc{B}_{\mbf{m},w}^{\mbb{Z}}:=\mc{B}_{\mbf{m},w}^{+,\mbb{Z}}\otimes\mbb{Z}[q^{\pm1},t_{e}^{\pm}][h_{i,\pm0}]_{i\in I}\otimes(\mc{B}_{\mbf{m},w}^{+,nilp,\mbb{Z}})^{op}
\end{align*}
for a wall $w$ which contains the point $\mbf{m}$.

Now combining Theorem \ref{Main-theorem-on-integral-form:theorem}, Theorem \ref{isomorphism-of-slope-wall-quasi-triangular:label} and Proposition \ref{integrality-for-R-matrix:label}, we can obtain the isomorphism of the integral root subalgebra and the integral wall subalgebra:
\begin{prop}\label{isomorphism-of-integral-wall-subalgebra:proposition}
There is an embedding of $\mbb{Z}[q^{\pm1},t_{e}^{\pm1}]$-Hopf algebras:
\begin{align*}
(\mc{B}_{\mbf{m},w}^{\mbb{Z}},\Delta_{\mbf{m}},S_{\mbf{m}},\epsilon,\eta)\hookrightarrow (U_{q}^{MO,\mbb{Z}}(\mf{g}_{w}),\Delta_{\mbf{m}}^{MO,op},S_{\mbf{m}}^{MO},\epsilon,\eta)
\end{align*}
of the same graded rank, which intertwines the action over $K_{T_{\mbf{w}}}(\mc{M}_{Q}(\mbf{w}))$. Moreover, it is an isomorphism of quasi-triangular $\mbb{Z}[q^{\pm1},t_{e}^{\pm1}]$-Hopf algebras 
\begin{align*}
(\mc{B}_{\mbf{m},w}^{\mbb{Z}},R_{\mbf{m},w}^+,\Delta_{\mbf{m}},S_{\mbf{m}},\epsilon,\eta)\cong (U_{q}^{MO,\mbb{Z}}(\mf{g}_{w}),q^{-\Omega}(R_{w}^-)^{-1},\Delta_{\mbf{m}}^{MO,op},S_{\mbf{m}}^{MO},\epsilon,\eta)
\end{align*}

\end{prop}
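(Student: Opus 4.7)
The plan is to bootstrap the integral statement from the localised quasi-triangular isomorphism in Theorem \ref{isomorphism-of-slope-wall-quasi-triangular:label} using the integral comparisons already established in Theorem \ref{Main-theorem-on-integral-form:theorem}. First I would construct the embedding piece-by-piece along the triangular decomposition. On the positive half, the injection $\mc{B}_{\mbf{m},w}^{+,\mbb{Z}}\hookrightarrow U_{q}^{MO,+,\mbb{Z}}(\mf{g}_{w})$ was shown in the proof of Theorem \ref{injectivity-of-integral-positive-half:label} by matching matrix coefficients of $R_{w}^{-}$ acting on the vacuum, using the primitive-element reduction of Theorem \ref{primitivity:Theorem}. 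On the negative half, Theorem \ref{Isomorphism-main-theorem-on-negative-half:label} identifies $(\mc{A}^{+,nilp,\mbb{Z}}_{Q})^{op}$ with $U_{q}^{MO,-,\mbb{Z}}(\hat{\mf{g}}_{Q})$, and the wall factorisation in Proposition \ref{factorisation-for-wall-subalgebras:label} combined with Lemma \ref{root-factorisation-for-slope:label} restricts this to the embedding $(\mc{B}_{\mbf{m},w}^{+,nilp,\mbb{Z}})^{op}\hookrightarrow U_{q}^{MO,-,\mbb{Z}}(\mf{g}_{w})$. The Cartan parts match tautologically since both are generated by $h_{i,\pm 0}^{\pm 1}$ (identified with $q^{\Omega}$ restricted to the wall component). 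The Hopf compatibility ($\Delta_{\mbf{m}}$ versus $\Delta_{\mbf{m}}^{MO,op}$, antipode, counit) is inherited from Theorem \ref{isomorphism-of-slope-wall-quasi-triangular:label} at the localised level, combined with Theorem \ref{Integral-KHA-coproduct:label} and Theorem \ref{Integral-nilpotent-KHA-coproduct:label} which ensure the coproducts preserve the integral structure.

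For the equality of graded ranks, I would argue as follows: both $\mc{B}_{\mbf{m},w,\mbf{v}}^{\mbb{Z}}$ and $U_{q}^{MO,\mbb{Z}}(\mf{g}_{w})_{\mbf{v}}$ are finitely generated, with the MO side free over $\mbb{Z}[q^{\pm1},t_{e}^{\pm1}]$ by Theorem \ref{freeness-of-wall-subalgebra:theorem}. After localisation to $\mbb{Q}(q,t_{e})$ they become isomorphic as $\mbb{N}^{I}$-graded modules by Theorem \ref{isomorphism-of-slope-wall-quasi-triangular:label}, so the $\mbb{Z}[q^{\pm1},t_{e}^{\pm1}]$-ranks agree. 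Under Conjecture \ref{integrality-conjecture:conjecture}, the KHA side is also free, so the injection of free modules of the same rank combined with the comparison of primitive parts via the perfect pairing (exactly as in \ref{important-map-in-main-theorem}) upgrades the embedding to an isomorphism on each weight component.

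The identification of the universal $R$-matrices is the main technical point. Proposition \ref{identification-of-wall-R-matrices:label} establishes $R_{w}^{-,MO}=(R_{\mbf{m},w}^{+})^{-1}$ in $\mc{B}_{\mbf{m},w}\hat{\otimes}\mc{B}_{\mbf{m},w}$ over $\mbb{Q}(q,t_{e})$. Proposition \ref{integrality-for-R-matrix:label} shows that the evaluation of $R_{\mbf{m},w}^{+}$ on $K(\mbf{w}_1)\otimes K(\mbf{w}_2)$ lifts to an integral $K$-theory class, and by construction $q^{-\Omega}(R_{w}^{-})^{-1}$ is integral as an element of $K_{T}(X^{A}\times X^{A})$ by the definition of the wall $R$-matrices in \ref{wall-R-matrix-definition}. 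Since both sides coincide after inverting the scalars and both land in the torsion-free integral module $\text{End}_{K_{T_{\mbf{w}}}(pt)}(K_{T_{\mbf{w}}}(\mc{M}_{Q}(\mbf{w}_1)\times\mc{M}_{Q}(\mbf{w}_2)))$, equality descends to the integral level. Finally, the intertwining of actions on $K_{T_{\mbf{w}}}(\mc{M}_{Q}(\mbf{w}))$ follows from the fact that both embeddings were constructed through their geometric actions, which agree by Theorem \ref{Main-theorem-on-integral-form:theorem}.

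The principal obstacle is the rank-matching step for the positive half, which genuinely requires Conjecture \ref{integrality-conjecture:conjecture}: without freeness of $\mc{B}_{\mbf{m},w}^{+,\mbb{Z}}$, one cannot deduce that the primitive part $\mc{B}_{\mbf{m},w,\mbf{v}}^{+,prim,\mbb{Z}}$ is itself free, and therefore cannot conclude from the injection $\mc{B}_{\mbf{m},w,\mbf{v}}^{+,prim,\mbb{Z}}\hookrightarrow U_{q}^{MO,+,prim,\mbb{Z}}(\mf{g}_{w})_{\mbf{v}}\cong(\mc{B}_{\mbf{m},w,\mbf{v}}^{+,nilp,prim,\mbb{Z}})^{\vee}$ that the connecting coefficients are units in $\mbb{Z}[q^{\pm1},t_{e}^{\pm1}]$. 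This is precisely the argument behind \ref{important-map-in-main-theorem} and it transfers verbatim to the wall-subalgebra level once one restricts the isomorphism from $\mc{A}_{Q}^{+,\mbb{Z}}\cong U_{q}^{MO,+,\mbb{Z}}(\hat{\mf{g}}_{Q})$ to the wall component $w$ using the root factorisations on both sides.
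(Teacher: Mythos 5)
Your proposal is correct and follows essentially the same route as the paper's own (extremely terse) proof, which simply cites Theorem \ref{Main-theorem-on-integral-form:theorem} and Proposition \ref{isomorphism-of-localised-wall-subalgebra:proposition} for the algebra isomorphism and Theorem \ref{Hopf-embedding} for the Hopf structure; your version unpacks those citations into an explicit argument. The main value you add beyond the paper's two sentences is the explicit treatment of the $R$-matrix identification (the "both sides integral, agree after localisation, both land in torsion-free modules" step using Propositions \ref{identification-of-wall-R-matrices:label} and \ref{integrality-for-R-matrix:label}) and the careful articulation of exactly where Conjecture \ref{integrality-conjecture:conjecture} enters via the freeness of the primitive part in \ref{important-map-in-main-theorem} — both of which the paper leaves implicit but are genuinely needed to make the stated quasi-triangular isomorphism rigorous.
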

\begin{proof}
The isomorphism as $\mbb{Z}[q^{\pm1},t_{e}^{\pm1}]$-algebras comes from Theorem \ref{Main-theorem-on-integral-form:theorem} and Proposition \ref{isomorphism-of-localised-wall-subalgebra:proposition}. The isomorphism as Hopf algebras comes from the Hopf embedding in Theorem \ref{Hopf-embedding}.
\end{proof}

Now combining Theorem \ref{main-theorem-just-localised-algebra} and Proposition \ref{isomorphism-of-integral-wall-subalgebra:proposition}. We obtain the isomorphism as $\mbb{Z}[q^{\pm1},t_{e}^{\pm1}]_{e\in E}$-algebras:
\begin{thm}\label{big-boss-theorem:label}
There is an isomorphism of Hopf $\mbb{Z}[q^{\pm1},t_{e}^{\pm1}]_{e\in E}$-algebras between the Maulik-Okounkov quantum loop group and the integral extended double KHA $\mc{A}^{ext,\mbb{Z}}_{Q}$ defined in \ref{integral-form-double-KHA}:
\begin{align}
(U_{q}^{MO,\mbb{Z}}(\hat{\mf{g}}_{Q}),\Delta_{\mbf{m}}^{MO,op},S_{\mbf{m}},\epsilon,\eta)\cong(\mc{A}^{ext,\mbb{Z}}_{Q},\Delta_{(\mbf{m})},S_{\mbf{m}},\epsilon,\eta).
\end{align}
\end{thm}

\subsection{Freeness of the preprojective KHA}
In this subsection we use our main result to prove the freeness of the preprojective KHA with arbitrary equivariant parametres, which can be thought of as the equivariant $K$-theory analog of the freeness of preprojective CoHA result in \cite{Dav23}.

\begin{thm}\label{freeness-of-preKHA:label}
Given $\mbb{C}_{q}^*\subset A\subset T$ a subtorus of $T$ which contains $\mbb{C}_{q}^*$. The $A$-equivariant $K$-theory $K_{A}(\mc{Y}_{\mbf{v}})$ of the preprojective stack is a free $K_{A}(pt)$-module.
\end{thm}
\begin{proof}
Let us first sketch the proof for the case when $A=T$. By \ref{slope-factoristaion-integral}, it remains to prove the freeness of $\mc{B}_{\mbf{m},\mbf{v}}^{+,\mbb{Z}}$. By Theorem \ref{root-factorisation-for-slope:label}, it remains to prove the freeness of each root pieces $\mc{B}_{\mbf{m},w,\mbf{v}}^{+,\mbb{Z}}$. Since we have known that by \ref{One-piece-isomorphism-important} we have the isomorphism $\mc{B}_{\mbf{m},w,\mbf{v}}^{+,\mbb{Z}}\cong U_{q}^{MO,+,\mbb{Z}}(\mf{g}_{w})_{\mbf{v}}$. By Lemma \ref{freeness-of-positive-half:label}, $U_{q}^{MO,+,\mbb{Z}}(\mf{g}_{w})_{\mbf{v}}$ is a free $K_{T}(pt)$-module. 

For the general $A$, note that the Lemma \ref{freeness-of-positive-half:label} still holds true for $A$ containing $\mbb{C}_{q}^*$. The integral slope subalgebra $\mc{B}_{\mbf{m},\mbf{v}}^{+,\mbb{Z}}$ can be defined using the geometric definition introduced in Definition \ref{Slope-subalgebra-definition-derived
:label}, and it still admits the factorisation as stated in \ref{slope-factoristaion-integral}, and the proof can be found in \cite{PT25}. 

Since we have the factorisation property for the MO quantum loop group as stated in Proposition \ref{factorisation-for-wall-subalgebras:label}, we can define $U_{q}^{MO,+,\mbb{Z}}(\mf{g}_{\mbf{m}})$ as at the end of the Section \ref{ssub:_textbf_wall_subalgebra}, which is the algebra generated by $U_{q}^{MO,+,\mbb{Z}}(\mf{g}_{\mbf{m},w})$ with the walls $w$ containing $\mbf{m}$, and it is easy to verify as in the proof of the Proposition \ref{factorisation-for-wall-subalgebras:label} that we have the decomposition:
\begin{align*}
U_{q}^{MO,+,\mbb{Z}}(\mf{g}_{\mbf{m}})\cong\bigotimes_{\mbf{m}\in w}^{\rightarrow}U_{q}^{MO,+,\mbb{Z}}(\mf{g}_{\mbf{m},w})
\end{align*}
and this implies that $U_{q}^{MO,+,\mbb{Z}}(\mf{g}_{\mbf{m}})_{\mbf{v}}$ is a free $K_{A}(pt)$-module. Doing the similar computation as in the proof of Theorem \ref{isomorphism-of-integral-positive-half:theorem}, we obtain the following commutative diagram as in \ref{The-most-important-diagram}:
\begin{equation}\label{The-second-most-important-diagram}
\begin{tikzcd}
\mc{B}_{\mu\bm{\theta},\mbf{v}}^+\otimes K_{A\times G_{\mbf{v}}}(pt)\arrow[r]\arrow[d,hook]&U_{q}^{MO,+}(\mf{g}_{\mu\bm{\theta}})_{\mbf{v}}\otimes K_{A\times G_{\mbf{v}}}(pt)\arrow[d,hook]\\
\mc{A}^+_{Q,\mbf{v},[\eta\bm{\theta},(1+\eta)\bm{\theta})}\otimes K_{A\times G_{\mbf{v}}}(pt)\arrow[r,"\cong",shift left]&K_{A\times G_{\mbf{v}}}(\mc{M}_{Q}(\mbf{0},\mbf{v},\mbf{v}))\arrow[l,"\psi",shift left]
\end{tikzcd}
\end{equation}
and the isomorphism at the bottom is still the result of Theorem \ref{categorical-slope-factorisation:Theorem} by replacing $T$ with $A$. Here $\mu$ and $\eta$ are still of the same choice as in Section \ref{ssub:proof_of_theorem_ref_isomorphism_of_integral_positive_half_theorem}. But one should notice that the map on the left hand side of \ref{The-second-most-important-diagram} might not be injective, but using the same argument as in \ref{ssub:proof_of_theorem_ref_isomorphism_of_integral_positive_half_theorem}. One can obtain the map:
\begin{equation}
\begin{tikzcd}
\mc{B}_{\mu\bm{\theta},\mbf{v}}^{+,\mbb{Z}}\otimes K_{A\times G_{\mbf{v}}}(pt)\arrow[r,shift left,"f"]& U_{q}^{MO,+,\mbb{Z}}(\mf{g}_{\mu\bm{\theta}})_{\mbf{v}}\otimes K_{A\times G_{\mbf{v}}}(pt)\arrow[l,hook,shift left,"g"]
\end{tikzcd}
\end{equation}
such that $g\circ f=\text{Id}$, which implies that $f$ is an isomorphism of $K_{A\times G_{\mbf{v}}}(pt)$-module, and it implies the isomorphism of $K_{A}(pt)$-module:
\begin{align}
\mc{B}_{\mu\bm{\theta},\mbf{v}}^{+,\mbb{Z}}\cong U_{q}^{MO,+,\mbb{Z}}(\mf{g}_{\mu\bm{\theta}})_{\mbf{v}}.
\end{align}
Thus the proof is finished.
\end{proof}

THANK YOU FOR YOUR PATIENT READING, HAVE A GOOD REST AND HOPE YOU CAN STAY HEALTHY EVERYDAY :))

\end{document}